\theoremstyle{break}
\newtheorem{theo}{Theorem}[section]
\newtheorem{defi}{Definition}[section]
\newtheorem{prop}{Proposition}[section]
\newtheorem{lemm}{Lemma}[section]
\newtheorem{cor}{Corollary}[section]
\newtheorem{rem}{Remark}[section]
\begin{document}

\begin{center}
\LARGE
\textbf{Parametric Stokes Phenomenon for the second Painlev$\acute{\rm \textbf{e}}$ 
equation with a large parameter} \\[+1.em]

\large
Kohei Iwaki \\[+.5em]

Research Institute for Mathematical Sciences,  
Kyoto University, \\[+.2em]

Kyoto, 606-8502 Japan \\[+.5em]

iwaki@kurims.kyoto-u.ac.jp \\[+1.5em]
\end{center}

\begin{abstract}
The second Painlev\'e equation with a large parameter ($P_{\rm II}$) 
is analyzed by using the exact WKB analysis. 
The purpose of this study is to investigate the problem of the degeneration of 
$P$-Stokes geometry of ($P_{\rm II}$), 
which relates to a kind of Stokes phenomena for asymptotic (formal) solutions of ($P_{\rm II}$). 
We call this Stokes phenomenon a ``parametric Stokes phenomenon". 
We formulate the connection formula for this Stokes phenomenon, and confirm it in two ways: 
the first one is by computing the ``Voros coefficient" of ($P_{\rm II}$), 
and the second one is by using the isomonodromic deformation theory. 
Our main claim is that the connection formulas derived by these 
two completely different methods coincide. 
\end{abstract}

\tableofcontents

\section{Introduction and main results}

The theory of the exact WKB analysis of Painlev\'e equations 
with a large parameter has been established 
in the series of papers \cite{KT WKB Painleve I}, \cite{AKT Painleve WKB} and  
\cite{KT WKB Painleve III}. %(See also \cite{KT iwanami}.) 
The main result of these papers is that any 2-parameter (formal) solution of the 
$J$-th Painlev\'e equation ($J$ = II, $\cdots$, VI), which has been 
constructed through multiple-scale analysis in \cite{AKT Painleve WKB}, 
can be transformed formally to a 2-parameter solution of the first Painlev\'e  
equation ($P_{\rm I}$) near a simple $P$-turning point. 
(In this paper we call turning points (resp. Stokes curves) of Painlev\'e equations, 
the definition of which is given in \cite{KT WKB Painleve I}, 
$P$-turning points (resp. $P$-Stokes curves). 
These terminologies have been already used in the exact WKB analysis of higher order 
Painlev\'e equations: cf.\ \cite{KT WKB higher Painleve}.)
Moreover, the connection formulas for 2-parameter solutions of ($P_{\rm I}$)
on $P$-Stokes curves were also discussed in \cite{Takei Painleve} 
by using the isomonodromic deformation method,
that is, by analyzing linear differential equations associated with ($P_{\rm I}$) 
through the exact WKB analysis.

The analysis presented in the above papers is concerned with 
the local theory near a simple $P$-turning point. 
But, for ($P_{J}$) ($J$ = II, $\cdots$, VI), two or more $P$-turning points appear in general  
and ``a degeneration of $P$-Stokes geometry" sometimes occurs;  
that is,  a $P$-Stokes curve may connect two $P$-turning points
when parameters contained in ($P_J$) take some special values.  
For example, the second Painlev\'e equation with a large parameter $\eta$
\[
(P_{\rm II}) : \frac{d^2 \lambda}{dt^2} =  \eta^2 (2 \lambda^3 + t \lambda + c) 
\]
has three $P$-turning points if $c \ne 0$, 
and Figure \ref{fig:P_II,argc=0.5Pi-epsilon} $\sim$ \ref{fig:P_II,argc=0.5Pi+epsilon} 
describe the $P$-Stokes curves of ($P_{\rm II}$) near arg \hspace{-.5em} $c = \frac{\pi}{2}$.
%%%%%%%%%%%%%%%%%%%%%%%%%%%%%%%%%%%%%%%%%%%%%%%%%%%%%%%%%%%%%%%%%%%%%%%%%%%%%%%%%%%%%%%%%%
  \begin{figure}[h]
  \begin{minipage}{0.32\hsize}
  \begin{center}
  \includegraphics[width=45mm]{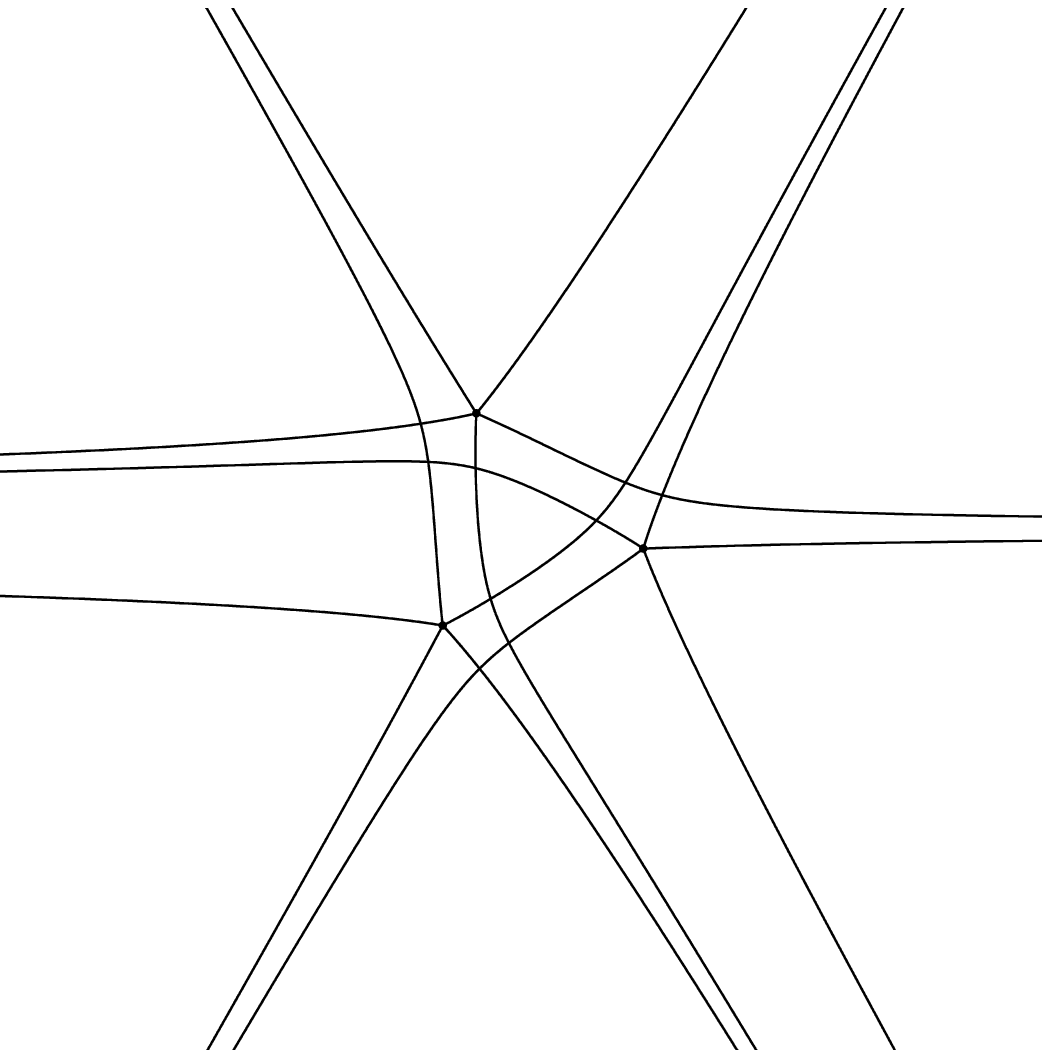}
  \end{center}
  \caption{\small{$P$-Stokes curves when arg \hspace{-.5em} $c = \frac{\pi}{2}-\varepsilon$.}}
  \label{fig:P_II,argc=0.5Pi-epsilon}
  \end{minipage} \hspace{+.3em}
  \begin{minipage}{0.32\hsize}
  \begin{center}
  \includegraphics[width=45mm]{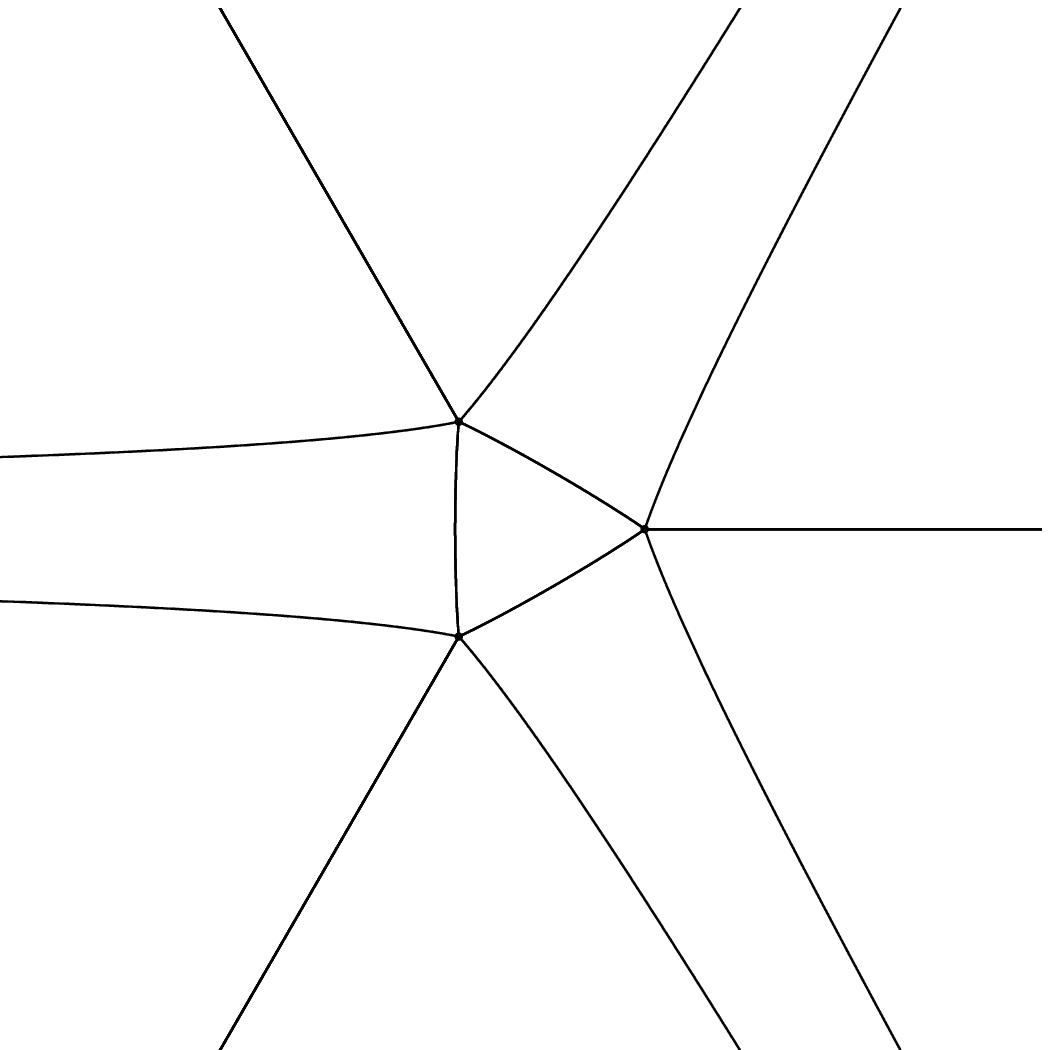}
  \end{center}
  \caption{\small{$P$-Stokes curves when arg \hspace{-.5em} $c = \frac{\pi}{2}$.}}
  \label{fig:P_II,argc=0.5Pi}
  \end{minipage} \hspace{+.3em}
  \begin{minipage}{0.32\hsize}
  \begin{center}
  \includegraphics[width=45mm]{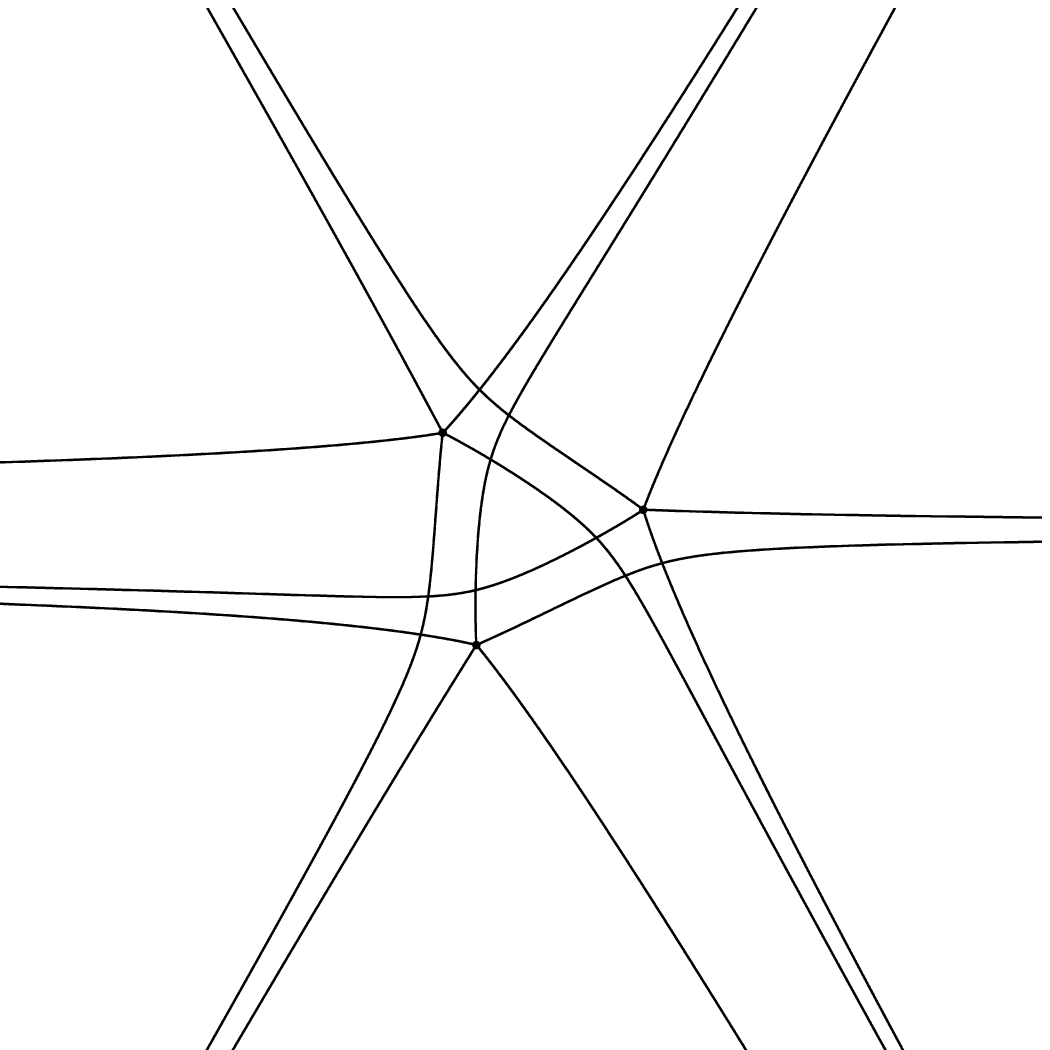}
  \end{center}
  \caption{\small{$P$-Stokes curves when arg \hspace{-.5em} $c = \frac{\pi}{2}+\varepsilon$}.}
  \label{fig:P_II,argc=0.5Pi+epsilon}
  \end{minipage}
  \end{figure}
%%%%%%%%%%%%%%%%%%%%%%%%%%%%%%%%%%%%%%%%%%%%%%%%%%%%%%%%%%%%%%%%%%%%%%%%%%%%%%%%%%%%%%%%%%
(See Section 2.2 for the definitions of $P$-turning points and $P$-Stokes curves of ($P_{\rm II}$).) 
As is clear from Figure \ref{fig:P_II,argc=0.5Pi-epsilon} $\sim$ \ref{fig:P_II,argc=0.5Pi+epsilon}, 
a degeneration of $P$-Stokes geometry is observed when arg \hspace{-.4em} $c = \frac{\pi}{2}$. 
This degeneration suggests that a kind of Stokes phenomena 
occurs when $c$ varies near arg \hspace{-.4em} $c = \frac{\pi}{2}$, that is, 
the correspondence between asymptotic solutions and 
true solutions of ($P_{\rm II}$) %through the Borel summation method 
changes discontinuously before and after the degeneration. 
We call this phenomenon ``a parametric Stokes phenomenon" because 
this Stokes phenomenon (or the degeneration of $P$-Stokes geometry) occurs when 
the parameter $c$ contained in ($P_{\rm II}$) varies. 
The purpose of this paper is to investigate the parametric Stokes phenomenon  
for ($P_{\rm II}$) in an explicit manner. 

In this paper we mainly discuss the parametric Stokes phenomenon for the 
following 1-parameter family of transseries solutions (1-parameter solutions) of ($P_{\rm II}$):  
\begin{equation}
\lambda(t,c,\eta;\alpha)  =  \lambda^{(0)}(t,c,\eta) + 
\alpha \eta^{-\frac{1}{2}} \lambda^{(1)}(t,c,\eta) \hspace{+.2em} {{e}}^{\eta \phi_{\rm{II}}} + 
(\alpha \eta^{-\frac{1}{2}})^2 \lambda^{(2)}(t,c,\eta) \hspace{+.2em} {{e}}^{2 \eta \phi_{\rm{II}}} 
+ \cdots . 
\label{eq:1-parameter solution}
\end{equation}
Here $\alpha$ is a free parameter, 
$\lambda^{(k)}(t,c,\eta) = \lambda^{(k)}_0(t,c) + 
\eta^{-1} \lambda^{(k)}_1(t,c) + \eta^{-2} \lambda^{(k)}_2(t,c) + \cdots$ 
($k \ge 0$) are formal power series of $\eta^{-1}$ and 
$\phi_{\rm II} = \phi_{\rm II}(t,c)$ is some function. 
(Note that $\lambda^{(0)}(t,c,\eta)$ itself is a formal power series solution of ($P_{\rm II}$), 
called 0-parameter solution.) 
In Section 3.1 we introduce two normalizations of 1-parameter solutions;  
let $\lambda_{\infty}(t,c,\eta;\alpha)$(resp. $\lambda_{\tau}(t,c,\eta;\alpha)$)
be a 1-parameter solution which is normalized at $t = \infty$ 
(resp. at some $P$-turning point $t = \tau$).
(A construction of 1-parameter solutions are briefly reviewed in Section 2.1.) 
Our aim is to explicitly determine the connection formulas 
for the parametric Stokes phenomenon occurring 
to 1-parameter solutions $\lambda_{\infty}(t,c,\eta;\alpha)$ and $\lambda_{\tau}(t,c,\eta;\alpha)$  
when arg \hspace{-.4em} $c = \frac{\pi}{2}$. 

The motivation of this paper comes from the result of Takei 
for the exact WKB analysis of the Weber equation
\begin{equation}
\Bigl( \frac{d^2}{d x^2}-\eta^2(E-\frac{1}{4}x^2) \Bigr) \psi = 0, 
    \label{eq:Weber equation1}
\end{equation}
which is a linear ordinary differential equation 
having two simple turning points at $x = \pm 2 \sqrt{E}$ if $E \ne 0$. 
A degeneration of Stokes geometry of \eqref{eq:Weber equation1} occurs 
when arg \hspace{-.3em} $E = 0$. 
In \cite{Takei Sato conjecture} Takei investigated the parametric Stokes phenomenon 
(with respect to the parameter $E$) of \eqref{eq:Weber equation1} 
and obtained a connection formula for WKB solutions of \eqref{eq:Weber equation1}
(\cite[Theorem 2.1]{Takei Sato conjecture}).
According to \cite{Takei Sato conjecture}, the parametric Stokes phenomenon  
is caused by some singularities of the Borel transform of WKB solutions, 
and furthermore, these singularities originate from the singularities of the Borel transform of
the ``Voros coefficient" of \eqref{eq:Weber equation1}. 
The Voros coefficient is a formal power series of $\eta^{-1}$ 
defined by some integral (cf.\ \cite{Takei Sato conjecture}). 
The explicit representation of the Voros coefficient of \eqref{eq:Weber equation1} 
(see \eqref{eq:Voros coefficient of Weber equation})
was first conjectured by Mikio Sato, and a proof of it based on the use of 
the creation operator of \eqref{eq:Weber equation1} was given by Takei in \cite{Takei Sato conjecture}. 
The connection formula for the parametric Stokes phenomenon was obtained as a 
corollary of it. In this paper we extend the analysis presented in \cite{Takei Sato conjecture}  
to ($P_{\rm II}$). 

Having the results of \cite{Takei Sato conjecture} for \eqref{eq:Weber equation1} in mind, 
we will introduce the ``Voros coefficient of ($P_{\rm II}$)" 
(or the ``$P$-Voros coefficient", for short) in Section 3.1, 
and obtain the explicit representation of it in Section 3.2. 
The following is one of the main results of this paper: 
%%%%%%%%%%%%%%%%%%%%%%%%%%%%%%%%%%%%%%%%%%%%%%%%%%%%%%%%%%%%%%%%%%%%%%%%%%%%%%%%%%%%%%%%%%
\begin{theo} [{Theorem \ref{main theorem 1}}] \label{main theorem 1 in introduction} 
The $P$-Voros coefficient $W(c, \eta)$ is represented explicitly as follows:
\begin{equation}
W(c,\eta) 
 = - \sum_{n = 1}^{\infty}  
 \frac{2^{1-2n}-1}{2n(2n-1)}B_{2n} (c \hspace{+.1em} \eta)^{1 - 2n} ,
\label{eq:expression of PIIVoros1}
\end{equation}
where $B_{2n}$ is the 2n-th Bernoulli number defined by
\begin{equation}
\frac{w}{e^w - 1} = 1 - \frac{w}{2} + \sum_{n = 1}^{\infty} \frac{B_{2n}}{(2n)!} \hspace{+.2em} w^{2n}  .
\label{eq:Bernoulli number}
\end{equation}
\end{theo}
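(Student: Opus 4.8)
The strategy I would follow mirrors the Weber-equation computation in \cite{Takei Sato conjecture}, adapted to ($P_{\rm II}$). The $P$-Voros coefficient $W(c,\eta)$ is defined (in Section 3.1) as an integral of the difference between the formal series built from the 1-parameter solution $\lambda_\infty$ and the one built from $\lambda_\tau$ — concretely, an integral along a path from the $P$-turning point $\tau$ to $t=\infty$ of the odd part of a logarithmic derivative associated with $\phi_{\rm II}$ and the correction terms $\lambda^{(k)}_j$. The first step is therefore to write $W(c,\eta) = \int_\tau^\infty \bigl(\text{(odd part of the relevant 1-form)}\bigr)$ explicitly, expand the integrand as a formal power series in $\eta^{-1}$, and integrate term by term. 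Since the coefficients $\lambda^{(k)}_j(t,c)$ are determined recursively from ($P_{\rm II}$), each coefficient of $\eta^{-(2n-1)}$ in the integrand is an explicit algebraic function of $t$ and $c$, and the endpoint behaviour (vanishing at $\tau$, and the prescribed normalization at $\infty$) makes the integral convergent.

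The key simplification I expect — and the heart of the argument — is that after the change of variables that straightens out $\phi_{\rm II}$ (i.e.\ using $\phi_{\rm II}$ or an associated variable such as $s = c\,\eta$ as the integration variable), the integrand collapses to something essentially identical to the Weber case up to the substitution of $c\eta$ for the Weber variable. In other words, I would aim to show that the generating function $\sum_n w_n(c\eta)^{1-2n}$ with $w_n$ the $n$-th integrated coefficient satisfies the same difference equation / has the same Borel transform as the Weber Voros coefficient, whose closed form involves $\dfrac{B_{2n}(2^{1-2n}-1)}{2n(2n-1)}$. Equivalently, one can compute the $\eta^{-1}$-coefficient of $W$ directly, recognize the pattern $-\tfrac{1}{12}\cdot(\tfrac{1}{2}-1)=\tfrac{1}{24}$ matching $n=1$, and then invoke the recursion satisfied by the $\lambda^{(k)}_j$ (which, via the Riccati-type equation for the logarithmic derivative, reduces to a recursion among the $w_n$) to pin down all higher terms at once.

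Concretely the steps are: (i) recall from Section 3.1 the integral representation of $W(c,\eta)$ and the explicit leading data $\lambda^{(0)}_0$, $\phi_{\rm II}$, $\lambda^{(1)}$; (ii) derive the Riccati equation governing the exponent correction and extract its formal-series solution, identifying the odd part; (iii) perform the term-by-term integration from $\tau$ to $\infty$, checking that all boundary contributions are accounted for by the two normalizations and that only odd powers $\eta^{1-2n}$ survive; (iv) identify the resulting coefficients with $-\dfrac{2^{1-2n}-1}{2n(2n-1)}B_{2n}$, either by matching with the known Weber Voros coefficient \eqref{eq:Voros coefficient of Weber equation} under the correspondence $E \leftrightarrow$ (appropriate function of $c$) or by a direct generating-function computation using \eqref{eq:Bernoulli number}. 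The main obstacle I anticipate is step (iii): ensuring that the improper integral at $t=\infty$ is correctly regularized (the integrand decays only polynomially, so one must subtract the right asymptotic tail, which is exactly what the "$\infty$-normalization" encodes) and that the contribution from the turning point $\tau$ does not produce spurious terms — in the Weber case this is automatic, but for ($P_{\rm II}$) the presence of three $P$-turning points and the need to choose the correct path means the bookkeeping of which turning point and which branch of $\phi_{\rm II}$ one uses must be done carefully and consistently with Section 2.2.
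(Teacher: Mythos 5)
Your proposal correctly recalls the definition of $W$ as a regularized integral of $R_{\rm odd}-\eta R_{-1}$ from $\tau_1$ to $\infty$ and correctly guesses that the answer should be pinned down by showing $W$ obeys the same structure as the Weber Voros coefficient, but it is missing the one idea that actually makes the paper's proof work: the B\"acklund transformation of $(H_{\rm II})$ (Lemma \ref{Backlund}, after Jimbo--Miwa), which sends a solution with parameter $c$ to one with parameter $c-\eta^{-1}$. Applying it to a 1-parameter solution and comparing the two WKB-type expressions of the transformed $\tilde\lambda^{(1)}$ yields the exact identity \eqref{eq:difference equation for R}, i.e.\ $R(t,c,\eta)-R(t,c-\eta^{-1},\eta)$ is a total $t$-derivative of an explicit logarithm. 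Integrating along $\Gamma_t$, using the asymptotics of Lemma \ref{behavior of coeff of 1-parameter solution} as $t\to\infty$ along the $P$-Stokes curve (and the residue computation showing $I_0=0$), one obtains the closed difference equation of Proposition \ref{proposition for Voros coeff} for $W(c,\eta)-W(c-\eta^{-1},\eta)$. Combined with the homogeneity \eqref{eq:homogenity of W}, which forces $W$ to be a series in $(c\eta)^{-1}$ with constant coefficients, and the uniqueness statement of Lemma \ref{lemma for Voros coeff}, this gives \eqref{eq:expression of PIIVoros}. Nothing in your outline produces such a difference equation in $c$: the recursions \eqref{eq:recurrence relation of lambda(0)}, \eqref{eq:R_k+1}, \eqref{eq:lambda_(k)l} are recursions in the order of $\eta^{-1}$, not in $c$, and "invoking the recursion to pin down all higher terms at once" is a hope, not an argument.

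The two concrete alternatives you offer would both run into trouble. A change of variables "straightening $\phi_{\rm II}$" cannot collapse the problem to the Weber case, because the equation \eqref{eq:lambda(1)} governing $\tilde\lambda^{(1)}$ has potential $6\lambda^{(0)}(t,c,\eta)^2+t$, a full formal power series in $\eta^{-1}$, and the identity $W(c,\eta)=-2V_{\rm Weber}(-ic,\eta)$ is observed in the paper only a posteriori, by comparing the two explicit formulas (it is used in Corollary \ref{connection formula for PII Voros}, not in the proof of the theorem); establishing it a priori would require a global transformation theory that neither you nor the paper provides, since the Voros coefficient involves the integral out to $t=\infty$, not just local data near the merging $P$-turning points. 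Likewise, direct term-by-term integration of $R_{2n-1}$ (whose coefficients have the increasingly complicated algebraic form \eqref{eq:expression of R_2n + 1}) gives no visible route to the closed Bernoulli-number pattern for all $n$; verifying the first coefficient $1/24$ does not determine the series. So the proposal, as it stands, has a genuine gap at the step where the coefficients are to be identified for every $n$.
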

%%%%%%%%%%%%%%%%%%%%%%%%%%%%%%%%%%%%%%%%%%%%%%%%%%%%%%%%%%%%%%%%%%%%%%%%%%%%%%%%%%%%%%%%%%

\noindent
Using this expression, 
we can derive the following connection formula for the parametric Stokes phenomenon 
under the assumption of Borel summability of 1-parameter solutions:  \\[+.6em]
%%%%%%%%%%%%%%%%%%%%%%%%%%%%%%%%%%%%%%%%%%%%%%%%%%%%%%%%%%%%%%%%%%%%%%%%%%%%%%%%%%%%%%%%%%
\noindent
\textbf{Connection formula for 1-parameter solutions of ($P_{\rm II}$).} \hspace{+.1em}
\textit{Let $\varepsilon$ be a sufficiently small positive number. }

\noindent
\textit{ (i)  If the true solutions represented by 
$\lambda_{\infty}(t,c,\eta;\alpha)$ for arg c $ = \frac{\pi}{2} - \varepsilon$ and those by 
$\lambda_{\infty}(t,c,\eta;\tilde{\alpha})$ for arg c $ = \frac{\pi}{2} + \varepsilon$ coincide, 
then the following holds: }
\begin{equation}
\tilde{\alpha} = \alpha . 
\label{connection formula for 1-parameter solution normalized at infinity in introduction}
\end{equation}

\noindent
\textit{ (ii) If the true solutions represented by 
$\lambda_{\tau}(t,c,\eta;\alpha)$ for arg c $ = \frac{\pi}{2} - \varepsilon$ and those by 
$\lambda_{\tau}(t,c,\eta;\tilde{\alpha})$ for arg c $ = \frac{\pi}{2} + \varepsilon$ coincide,
then the following holds: }
\begin{equation}
\tilde{\alpha} = (1 + e^{2 \pi i c \eta}) \hspace{+.2em} \alpha . 
\label{connection formula for 1-parameter solution normalized at tau_1 in introduction}
\end{equation}
%%%%%%%%%%%%%%%%%%%%%%%%%%%%%%%%%%%%%%%%%%%%%%%%%%%%%%%%%%%%%%%%%%%%%%%%%%%%%%%%%%%%%%%%%%

On the other hand, we can also derive the connection formula for the parametric 
Stokes phenomenon by using the isomonodromic deformation of ($SL_{\rm II}$), 
which is a second order linear ordinary differential equation relevant to ($P_{\rm II}$):  
\[
(SL_{\rm{II}}) : \hspace{+.2em}
\Bigl( \frac{\partial^2}{\partial x^2} - \eta^2 Q_{\rm{II}} \Bigr) \hspace{+.2em} \psi = 0 ,
\]
where
\[
Q_{\rm{II}} = x^4 + t x^2 +2 c x + 2 K_{\rm{II}} 
 - \eta^{-1} \frac{\nu}{x-\lambda} + \eta^{-2} \frac{3}{4(x-\lambda)^2} ,
\]
\[
K_{\rm{II}} = \frac{1}{2} 
 [ \nu^2 - (\lambda^4 + t \lambda^2 + 2 c \lambda) ] . 
\]
We compute the Stokes multipliers of ($SL_{\rm II}$) around $x = \infty$ by using the 
``Iso-Monodromic" WKB solutions $\psi_{\pm, \rm IM}$, which satisfy both 
($SL_{\rm II}$) and it's deformation equation ($D_{\rm II}$). 
(The construction of $\psi_{\pm, \rm IM}$ is explained in Section 4.2.) 
Let ${\mathfrak s}_j$ (resp. ${{\mathfrak s}_j}'$) be the Stokes multipliers computed 
by using $\psi_{\pm, {\rm IM}}$ when arg \hspace{-.5em} $c = \frac{\pi}{2} - \varepsilon$ 
(resp. arg \hspace{-.5em} $c = \frac{\pi}{2} + \varepsilon$) with a sufficiently 
small positive number $\varepsilon$ ($1 \leq j \leq 6$). 
The results of the computations (those are presented in Section 6.3) 
are given by the following lists: 
\\[-.5em]

\noindent
\textbf{Stokes multipliers of ($SL_{\rm II}$) around $x = \infty$.} 
\\[-1.em]

\noindent
\textit{(i) If the 1-parameter solution substituted into the coefficients of ($SL_{\rm II}$) 
and ($D_{\rm II}$) is normalized at $\infty$, 
the Stokes multipliers of ($SL_{\rm II}$) are given by the following}:
\begin{align}
\begin{cases}
{\mathfrak s}_{1}  =  i \hspace{+.3em} (1 + {{e}}^{2 \pi i c \eta}) \hspace{+.2em} {{e}}^{U - 2V} \\
{\mathfrak s}_{2}  =  i \hspace{+.3em}{{e}}^{- 2 \pi i c \eta} \hspace{+.2em} {{e}}^{2V - U} \\
{\mathfrak s}_{3}  = 
i \hspace{+.3em} (1 + {{e}}^{2 \pi i c \eta}) \hspace{+.2em} {{e}}^{- 2 \pi i c \eta} \hspace{+.2em}
{{e}}^{U - 2V} \hspace{+2.3em} \\ 
{\mathfrak s}_{4}  =  - 2 \sqrt{\pi} \hspace{+.1em} \alpha \\
{\mathfrak s}_{5}  =  0 \\
{\mathfrak s}_{6}  =  2 \sqrt{\pi} \hspace{+.1em} \alpha + i \hspace{+.3em} {{e}}^{2V - U} .
\end{cases}
 & \hspace{+.5em}
\begin{cases}
{\mathfrak s}_{1}'  =  i \hspace{+.3em} {{e}}^{U - 2V} \\
{\mathfrak s}_{2}'  =  
i \hspace{+.3em} (1 + {{e}}^{2 \pi i c \eta}) \hspace{+.2em} {{e}}^{- 2 \pi i c \eta} \hspace{+.2em} 
{{e}}^{2V - U} \hspace{+2.3em} \\
{\mathfrak s}_{3}'  = i \hspace{+.3em} {{e}}^{- 2 \pi i c \eta} \hspace{+.2em} {{e}}^{U - 2V} \\ 
{\mathfrak s}_{4}'  =  - 2 \sqrt{\pi} \hspace{+.1em} \alpha \\
{\mathfrak s}_{5}'  =  0 \\
{\mathfrak s}_{6}'  =  2 \sqrt{\pi} \hspace{+.1em} \alpha + 
i \hspace{+.3em} (1 + {{e}}^{2 \pi i c \eta}) \hspace{+.2em} {{e}}^{2V - U} .
\end{cases}
\label{eq:list1}
\end{align}

\noindent
\textit{(ii) If the 1-parameter solution substituted into the coefficients of ($SL_{\rm II}$) 
and ($D_{\rm II}$) is normalized at $\tau$, 
the Stokes multipliers of ($SL_{\rm II}$) are given by the following}:
\begin{align}
\begin{cases}
{\mathfrak s}_{1}  =  i \hspace{+.3em} (1 + {{e}}^{2 \pi i c \eta}) \hspace{+.2em} {{e}}^{U - 2V} \\
{\mathfrak s}_{2}  =  i \hspace{+.3em}{{e}}^{- 2 \pi i c \eta} \hspace{+.2em} {{e}}^{2V - U} \\
{\mathfrak s}_{3}  = 
i \hspace{+.3em} (1 + {{e}}^{2 \pi i c \eta}) \hspace{+.2em} {{e}}^{- 2 \pi i c \eta} \hspace{+.2em} 
{{e}}^{U - 2V} \hspace{+2.3em} \\ 
{\mathfrak s}_{4}  =  - 2 \sqrt{\pi} \hspace{+.1em} \alpha \hspace{+.2em} {e}^{W} \\
{\mathfrak s}_{5}  =  0 \\
{\mathfrak s}_{6}  =  2 \sqrt{\pi} \hspace{+.1em} \alpha \hspace{+.1em} {e}^{W} + 
i \hspace{+.3em} {{e}}^{2V - U} .
\end{cases}
 & \hspace{+.5em}
\begin{cases}
{\mathfrak s}_{1}'  =  i \hspace{+.3em} {{e}}^{U - 2V} \\
{\mathfrak s}_{2}'  =  
i \hspace{+.3em} (1 + {{e}}^{2 \pi i c \eta}) \hspace{+.2em} {{e}}^{- 2 \pi i c \eta} \hspace{+.2em} 
{{e}}^{2V - U} \hspace{+2.3em} \\
{\mathfrak s}_{3}'  = i \hspace{+.3em} {{e}}^{- 2 \pi i c \eta} \hspace{+.2em} {{e}}^{U - 2V} \\ 
{\mathfrak s}_{4}'  =  - 2 \sqrt{\pi} \hspace{+.1em} \alpha \hspace{+.2em} {e}^{W} \\
{\mathfrak s}_{5}'  =  0 \\
{\mathfrak s}_{6}'  =  
2 \sqrt{\pi} \hspace{+.1em} \alpha \hspace{+.1em} {e}^{W} + 
i \hspace{+.3em} (1 + {{e}}^{2 \pi i c \eta}) \hspace{+.2em} {{e}}^{2V - U} .
\end{cases}
\label{eq:list2}
\end{align}
\textit{Here $\alpha$ is the free parameter contained 
in the 1-parameter solution substituted into 
the coefficients of ($SL_{\rm II}$) and ($D_{\rm II}$), 
$U = U(t,c,\eta;\alpha)$ is given by the following integral 
\begin{equation}
U = \eta \int_{\infty}^{t} \bigl( \lambda(t,c,\eta;\alpha) - \lambda^{(0)}_0(t,c) \bigr) \hspace{+.2em} dt , 
\label{eq:U in introduction}
\end{equation}
$V = V(t,c,\eta;\alpha)$ is the Voros coefficient of ($SL_{\rm II}$) 
and $W = W(c,\eta)$ is the $P$-Voros coefficient.} \\[-.5em]

Precisely speaking, the Stokes multipliers are the Borel sums of the formal series in the above lists. 
They should be independent of $t$ because $\psi_{\pm, {\rm IM}}$ 
satisfies the deformation equation ($D_{\rm II}$).  
This fact can be confirmed by the following theorem, 
which will be verified through the analysis of 
the Voros coefficient $V$ of ($SL_{\rm II}$) in Section 5.  
%%%%%%%%%%%%%%%%%%%%%%%%%%%%%%%%%%%%%%%%%%%%%%%%%%%%%%%%%%%%%%%%%%%%%%%%%%%%%%%%%%%%%%%%%%
\begin{theo} [{Theorem \ref{main theorem 2}}] 
The Voros coefficient $V$ of ($SL_{\rm II}$) and $U$ given by 
\eqref{eq:U in introduction} are related as follows: 
\begin{equation}
2V(t,c,\eta) - U(t,c,\eta) = 
- \sum_{n = 1}^{\infty} \frac{2^{1-2n} - 1}{2n(2n - 1)} B_{2n} (c \hspace{+.1em} \eta)^{1-2n} ,
\label{eq:2V-U in introduction}
\end{equation}
where $B_{2n}$ is the 2n-th Bernoulli number defined by \eqref{eq:Bernoulli number}.
\end{theo}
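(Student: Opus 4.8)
The strategy is to study the formal power series $2V-U$, where $\lambda$, $\nu$, $K_{\rm II}$ in the coefficients of $(SL_{\rm II})$ and $(D_{\rm II})$ are specialised to the $0$-parameter solution $\lambda^{(0)}(t,c,\eta)=\lambda^{(0)}_0(t,c)+\eta^{-1}\lambda^{(0)}_1(t,c)+\cdots$ (so $\alpha=0$ and $S_{\rm odd}$, $V$, $U$ are genuine formal power series in $\eta^{-1}$ with coefficients holomorphic in $(t,c)$), in two stages: first showing that $2V-U$ is independent of $t$, then evaluating the resulting series in $(c\eta)^{-1}$. Recall that, writing the WKB solution of $(SL_{\rm II})$ as $\psi_\pm=S_{\rm odd}^{-1/2}\exp(\pm\int^{x}S_{\rm odd}\,dx)$ with $S=\eta S_{-1}+S_0+\eta^{-1}S_1+\cdots$ solving $\eta^{-1}S'+S^2=\eta^2Q_{\rm II}$, the Voros coefficient $V$ is the integral of $S_{\rm odd}$ from a turning point of $(SL_{\rm II})$ to $x=\infty$, regularised by removing the finitely many terms that fail to be integrable at $x=\infty$, while $\partial_t U=\eta(\lambda^{(0)}-\lambda^{(0)}_0)$ directly from \eqref{eq:U in introduction}.

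\textbf{Step 1 ($t$-independence).} The iso-monodromic WKB solution $\psi_{\pm,{\rm IM}}$ also solves the deformation equation $(D_{\rm II})$, $\partial_t\psi=\mathcal{A}\,\partial_x\psi+\mathcal{B}\,\psi$, with $\mathcal{A}$, $\mathcal{B}$ rational in $x$ and built from $\lambda^{(0)}$, $\nu^{(0)}$; substituting $\psi=\exp\int^{x}S\,dx$ and using the compatibility of $(SL_{\rm II})$ with $(D_{\rm II})$ gives $\partial_t S=\partial_x(\mathcal{A}S+\mathcal{B})$, whose odd-in-$\eta$ part reads $\partial_t S_{\rm odd}=\partial_x R$ for an explicit function $R$. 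Integrating from the turning point to $x=\infty$: the turning-point end contributes nothing, because of the branch structure of $S_{\rm odd}$ there; the apparent singularity of $Q_{\rm II}$ at $x=\lambda^{(0)}$ contributes no residue, because the singular part $-\eta^{-1}\nu/(x-\lambda)+\eta^{-2}\frac{3}{4(x-\lambda)^2}$ of $Q_{\rm II}$ was arranged exactly so that $(SL_{\rm II})$ has trivial local monodromy there; and the end $x=\infty$ produces, after inserting the leading behaviour of $\mathcal{A}$, $\mathcal{B}$ and $S_{\rm odd}$, precisely $\frac12\,\partial_t U$. Hence $\partial_t(2V-U)=0$, so $2V-U$ is a formal power series in $\eta^{-1}$ whose coefficients depend only on $c$.

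\textbf{Step 2 (evaluation).} To pin down $2V-U$ as a formal power series in $(c\eta)^{-1}$ I would use the B\"acklund (Schlesinger) transformation of $(SL_{\rm II})$ and $(D_{\rm II})$ that shifts the parameter by $c\eta\mapsto c\eta+1$ and carries the $0$-parameter solution to the $0$-parameter solution with the shifted parameter; tracking its action on $\psi_{\pm,{\rm IM}}$ and on the relevant action integrals yields a first-order difference equation for $2V-U$ in the variable $c\eta$ with an elementary inhomogeneous term (morally $\log(c\eta+1/2)$). Together with the requirement, guaranteed by Step 1, that $2V-U$ be a formal power series in $(c\eta)^{-1}$ with no constant term, this determines $2V-U$ uniquely, and carrying out the summation reproduces (with the overall minus sign) the transcendental part of the Stirling expansion of $\log\Gamma(c\eta+1/2)$ — namely $\sum_{n\ge1}\frac{(2^{1-2n}-1)B_{2n}}{2n(2n-1)}(c\eta)^{1-2n}$, the half-integer shift being responsible for the Bernoulli-polynomial value $B_{2n}(1/2)=(2^{1-2n}-1)B_{2n}$ — which is exactly the right-hand side of \eqref{eq:2V-U in introduction}. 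As a consistency check the first few $\eta^{-1}$-coefficients can be read off directly from the $\eta^{-1}$-expansion of $S_{\rm odd}$ and compared with the explicit formula for $W$ in Theorem~\ref{main theorem 1}; the resulting identity $2V-U=W$ is precisely the statement that the Stokes multipliers in \eqref{eq:list1} and \eqref{eq:list2} are $t$-independent.

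\textbf{Main obstacle.} The crux is Step 2: one has to keep track of the normalisation of the WKB solutions — the choice of turning point, the branch of $\sqrt{Q_{\rm II}}$ and the homotopy class of the integration path — uniformly along the B\"acklund transformation, so that the difference equation and, above all, its inhomogeneous term are computed correctly; this is where the factor $2^{1-2n}-1$ is forced, and I expect it to trace back to a collision of two turning points of $(SL_{\rm II})$ (so that the contribution is ``twice'' a Weber-type one, in the spirit of \cite{Takei Sato conjecture}). In Step 1 the points requiring care are the vanishing of the boundary term at $x=\lambda^{(0)}$ and the matching of the boundary term at $x=\infty$ with $\frac12\,\partial_t U$ — local computations, but the ones that fix the crucial factor $2$ in $2V-U$.
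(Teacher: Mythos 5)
Your route is the same as the paper's: your Step 1 is Proposition \ref{prop for t-derivation of V} together with \eqref{eq:t-derivation of U}, and your Step 2 is the Schlesinger transformation of Lemma \ref{Schlesinger} leading to a difference equation in $c\eta$ and then to Lemma \ref{lemma for Voros coeff}. But one link in Step 2 is a genuine gap as written: the claim that Step 1 ``guarantees'' that $2V-U$ is a formal power series in $(c\eta)^{-1}$. $t$-independence only gives that the coefficient of $\eta^{-n}$ is a function of $c$ alone, and on such series the kernel of the difference operator $F(c,\eta)\mapsto F(c,\eta)-F(c-\eta^{-1},\eta)$ consists precisely of series with $c$-independent coefficients (order by order the homogeneous equation forces $F_n'(c)=0$ and nothing more). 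Hence the difference equation alone determines $2V-U$ only up to an arbitrary constant-coefficient series in $\eta^{-1}$; the uniqueness statement of Lemma \ref{lemma for Voros coeff} (ii) needs the ansatz $\sum_{n\ge1}F_n(c\,\eta)^{-n}$ as an input. The paper supplies this by the scaling argument of Appendix A (homogeneity \eqref{eq:homogenity of U}, \eqref{eq:homogenity of V} under $(x,t,c,\eta)\mapsto(r^{-1/3}x,r^{-2/3}t,r^{-1}c,r\eta)$), and you must add some such argument. Relatedly, the clean ``Weber-type'' inhomogeneous term does not come out at finite $t$: the Schlesinger transformation first yields \eqref{eq:difference equation2}, whose right-hand side still involves $t$ through $\lambda_0$ and $\lambda^{(0)}$, and the elementary form appears only after taking $t\to\infty$ along the $P$-Stokes curve $\Gamma$ (Proposition \ref{computation of V(0)(infinity)}), where also $U^{(0)}\to0$; your plan of ``tracking the action on $\psi_{\pm,{\rm IM}}$ and the action integrals'' needs this limiting step (or an equivalent) made explicit, since that is where the asymptotics of Lemma \ref{behavior of coeff of 1-parameter solution} enter.

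A smaller omission: the theorem concerns $U$ of \eqref{eq:U in introduction} and $V$ with the 1-parameter solution substituted, so a priori both depend on $\alpha$; by setting $\alpha=0$ at the outset you prove only the 0-parameter case. The missing reduction is exactly the paper's Proposition \ref{proposition for 2V - U}: $t$-independence of $2V-U$ forces the coefficients of $e^{k\eta\phi_{\rm II}}$, $k\ge1$, to vanish, whence $2V-U=2V^{(0)}-U^{(0)}$. Since your Step 1 computation works verbatim for general $\alpha$, this is easy to incorporate, but it is part of the statement (and is what makes the Stokes multipliers in \eqref{eq:list1} and \eqref{eq:list2} genuinely $t$- and $\alpha$-consistent), so it should not be suppressed.
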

%%%%%%%%%%%%%%%%%%%%%%%%%%%%%%%%%%%%%%%%%%%%%%%%%%%%%%%%%%%%%%%%%%%%%%%%%%%%%%%%%%%%%%%%%
%Making use of this expression, we can analyze the Stokes multipliers of ($SL_{\rm II}$). 
If two 1-parameter solutions of ($P_{\rm II}$) are given and 
true solutions of ($P_{\rm II}$) represented by these 1-parameter solutions coincide, 
then the corresponding Stokes multipliers of ($SL_{\rm II}$) should coincide. 
Thus, making use of the expression \eqref{eq:2V-U in introduction}, 
we can derive the connection formulas which describe the parametric Stokes phenomenon  
occurring to 1-parameter solutions substituted into the coefficients of ($SL_{\rm II}$) and ($D_{\rm II}$). 
The details will be explained in Section 6.4. 
Furthermore, the connection formulas obtained in this way coincide with 
\eqref{connection formula for 1-parameter solution normalized at infinity in introduction} and 
\eqref{connection formula for 1-parameter solution normalized at tau_1 in introduction},
that is, the connection formulas 
\eqref{connection formula for 1-parameter solution normalized at infinity in introduction} and 
\eqref{connection formula for 1-parameter solution normalized at tau_1 in introduction} 
describing the parametric Stokes phenomenon for 1-parameter solutions 
can be confirmed also by the isomonodromic deformation method. 
This is our main claim. 

\subsection*{Acknowledgment}
The author is very grateful to Professor Yoshitsugu Takei, Professor Takahiro Kawai, 
Professor Takashi Aoki, Professor Tatsuya Koike and Professor Yousuke Ohyama
for helpful advices and constant encouragements.

%%%%%%%%%%%%%%%%%%%%%%%%%%%%%%%%%%%%%%%%%%%%%%%%%%%%%%%%%%%%%%%%%%%%%%%%%%%%%%%%%%%%%%%%%%
%%%%%%%%%%%%%%%%%%%%%%%%%%%%%%%%%%%%%%%%%%%%%%%%%%%%%%%%%%%%%%%%%%%%%%%%%%%%%%%%%%%%%%%%%%
%%%%%%%%%%%%%%%%%%%%%%%%%%%%%%%%%%%%%%%%%%%%%%%%%%%%%%%%%%%%%%%%%%%%%%%%%%%%%%%%%%%%%%%%%%
%%%%%%%%%%%%%%%%%%%%%%%%%%%%%%%%%%%%%%%%%%%%%%%%%%%%%%%%%%%%%%%%%%%%%%%%%%%%%%%%%%%%%%%%%%
%%%%%%%%%%%%%%%%%%%%%%%%%%%%%%%%%%%%%%%%%%%%%%%%%%%%%%%%%%%%%%%%%%%%%%%%%%%%%%%%%%%%%%%%%%
%%%%%%%%%%%%%%%%%%%%%%%%%%%%%%%%%%%%%%%%%%%%%%%%%%%%%%%%%%%%%%%%%%%%%%%%%%%%%%%%%%%%%%%%%%

\section{1-parameter solutions and the $P$-Stokes geometry of ($P_{\rm II}$)}

The general theory of the exact WKB analysis of Painlev$\acute{\rm e}$ equations ($P_{\rm J}$) 
(${\rm J = I, \cdots ,VI}$) was developed in a series of papers \cite{KT WKB Painleve I}, 
\cite{AKT Painleve WKB}, \cite{KT WKB Painleve III} and 1-parameter solutions of ($P_J$) 
were discussed in \cite{KT WKB Deformation of SL_J}. (See also \cite[$\S$4]{KT iwanami}.) 
In this section, we review the core part of the exact WKB analysis of ($P_J$) in the case of ($P_{\rm II}$). 
\[
(P_{\rm II}) : \frac{d^2 \lambda}{dt^2} =  \eta^2 (2 \lambda^3 + t \lambda + c). 
\]

%%%%%%%%%%%%%%%%%%%%%%%%%%%%%%%%%%%%%%%%%%%%%%%%%%%%%%%%%%%%%%%%%%%%%%%%%%%%%%%%%%%%%%%%%%
%%%%%%%%%%%%%%%%%%%%%%%%%%%%%%%%%%%%%%%%%%%%%%%%%%%%%%%%%%%%%%%%%%%%%%%%%%%%%%%%%%%%%%%%%%

\subsection{0-parameter solutions and 1-parameter solutions of ($H_{\rm II}$)}

($P_{\rm II}$) is equivalent to the following Hamiltonian system ($H_{\rm II}$):
\begin{eqnarray*}
(H_{\rm II}) : 
 \left\{
\begin{array}{ll}
\displaystyle \frac{d \lambda}{d t} = \eta \hspace{+.2em} \nu \hspace{+.2em}, \\[+1.em]
\displaystyle \frac{d \nu}{d t} = \eta \hspace{+.2em} ( 2 \lambda^3 + t \lambda + c) .
\end{array} \right. 
\end{eqnarray*}

\noindent
($H_{\rm II}$) has a formal power series solution ($\lambda^{(0)}, \nu^{(0)}$) of $\eta^{-1}$  
called a 0-parameter solution of ($H_{\rm II}$). 
\begin{eqnarray*}
\left\{
\begin{array}{ll}
\lambda^{(0)} (t,c,\eta) = \lambda^{(0)}_0(t,c) + \eta^{-1} \lambda^{(0)}_1(t,c) + 
\eta^{-2} \lambda^{(0)}_2(t,c) + \cdots  \hspace{+.2em},  \\[+.5em]
\nu^{(0)} (t,c,\eta) = \nu^{(0)}_0(t,c) + \eta^{-1} \nu^{(0)}_1(t,c) 
+ \eta^{-2} \nu^{(0)}_2(t,c) + \cdots \hspace{+.2em}.
\end{array} \right. 
\end{eqnarray*}

\noindent
Here $\lambda^{(0)}_0$ satisfies 
\begin{equation}
2 {\lambda^{(0)}_0}^3 + t \lambda^{(0)}_0 + c = 0   \label{eq:lambda_0}
\end{equation}
and 
\begin{equation}
\nu^{(0)}_0 = 0 .  \label{eq:nu_0}
\end{equation}
In what follows we abbreviate $\lambda^{(0)}_0$ and $\nu^{(0)}_0$ 
to $\lambda_0$ and $\nu_0$, respectively.
Once the branch of $\lambda_0$ is fixed,
the coefficients ($\lambda^{(0)}_k$, $\nu^{(0)}_k$) 
of $\eta^{-k}$ in ($\lambda^{(0)}$ , $\nu^{(0)}$) 
are determined by the following recursive relations:
\begin{equation}
(6 \lambda_0^2 + t) \lambda^{(0)}_k + 2 \sum_{
\tiny
\begin{array}{ll}
k_1+k_2+k_3 = k  \\ 
\hspace{+1.4em} 0 \le k_j < k
\end{array} 
}
\normalsize
\lambda^{(0)}_{k_1} \lambda^{(0)}_{k_2} \lambda^{(0)}_{k_3} = 
\frac{d^2 \lambda^{(0)}_{k-2}}{d t^2} \hspace{+1.em} ( k \ge 1) \hspace{+.1em},
\label{eq:recurrence relation of lambda(0)}
\end{equation}
\begin{equation}
\nu^{(0)}_k = \frac{d \lambda^{(0)}_{k-1}}{dt} \hspace{+1.em}(k \ge 1) .
\label{eq:relation between lambda(0)_k and nu(0)_k}
\end{equation}

We can also construct a 1-parameter family of formal solutions, 
called 1-parameter solution, of ($H_{\rm II}$) of the following form:
\begin{eqnarray*}
\left\{
\begin{array}{ll}
\lambda(t,c,\eta;\alpha)  =  \lambda^{(0)}(t,c,\eta) + 
\alpha \eta^{-\frac{1}{2}} \lambda^{(1)}(t,c,\eta) {{e}}^{\eta \phi_{\rm{II}}} + 
(\alpha \eta^{-\frac{1}{2}})^2 \lambda^{(2)}(t,c,\eta) {{e}}^{2 \eta \phi_{\rm{II}}} 
+ \cdots \hspace{+.2em},
\label{eq:1-parameterlambda} \\[+.4em]
\nu(t,c,\eta;\alpha)  =  \nu^{(0)}(t,c,\eta) + 
\alpha \eta^{-\frac{1}{2}} \nu^{(1)}(t,c,\eta) {{e}}^{\eta \phi_{\rm{II}}} + 
(\alpha \eta^{-\frac{1}{2}})^2 \nu^{(2)}(t,c,\eta) {{e}}^{2 \eta \phi_{\rm{II}}} 
+ \cdots \hspace{+.2em},
\label{eq:1-parameternu}
\end{array} \right.
\end{eqnarray*}
where $\alpha$ is a free parameter, 
$\lambda^{(k)}$ and $\nu^{(k)}$ are formal power series of $\eta^{-1}$,
\[
\lambda^{(k)}(t,c,\eta) 
 = \lambda_0^{(k)}(t,c) + \eta^{-1} \lambda_1^{(k)}(t,c) + \eta^{-2} \lambda_2^{(k)}(t,c) 
 + \cdots  ,
\]
\[
\nu^{(k)}(t,c,\eta) 
 = \nu_0^{(k)}(t,c) + \eta^{-1} \nu_1^{(k)}(t,c) + \eta^{-2} \nu_2^{(k)}(t,c)  
 + \cdots  ,
\]
and 
\[
\phi_{\rm{II}}(t,c) = \int^t \sqrt{\Delta(t,c)} \hspace{+.2em} dt  ,
\]
\[
\Delta(t,c) = 6 \lambda_0(t,c)^2 + t .
\]
%This is said to be a 1-parameter solution of ($H_{\rm II}$). 
In what follows 1-parameter solutions are considered in a domain 
where the real part of $\phi_{\rm II}$ is negative, i.e.,  
$e^{\eta \phi_{\rm II}}$ is exponentially small when $\eta \rightarrow \infty$.

Here we briefly recall the construction of a 1-parameter solution. 
(It is similar to the construction of the so-called transseries solution; cf. \cite[pp.90-91]{Costin}.) 
First, $\lambda^{(0)}$ is a 0-parameter solution constructed above. Second, if 
$\alpha \eta^{-\frac{1}{2}} \lambda^{(1)}(t,c,\eta) {{e}}^{\eta \phi_{\rm{II}}}$ 
is denoted by $\tilde{\lambda}^{(1)}$, 
then $\tilde{\lambda}^{(1)}$ is a solution of the following second order linear differential equation: 
\begin{equation}
\frac{d^2 \tilde{\lambda}^{(1)}}{d t^2} = 
\eta^2 (6 {\lambda^{(0)}(t,c,\eta)}^2 + t) \hspace{+.2em} \tilde{\lambda}^{(1)} ,
\label{eq:lambda(1)} 
\end{equation}
that is, the Fr$\acute{\rm e}$chet derivative of ($P_{\rm II}$) at $\lambda = \lambda^{(0)}$.
Thus $\tilde{\lambda}^{(1)}$ can be taken as a WKB solution 
(see \cite[$\S$2.1]{KT iwanami} for example) of \eqref{eq:lambda(1)} of the form
\begin{eqnarray}
\tilde{\lambda}^{(1)}
 & = & 
\alpha \hspace{+.1em} \frac{1}{\sqrt{R_{\rm{odd}}(t,c,\eta)}} \hspace{+.2em} {\rm{exp}} 
\Bigl( \int^t R_{\rm{odd}}(t,c,\eta) dt \Bigr)  
\label{eq:lambda(1)tilde}  \\
 & = & 
\alpha \eta^{- \frac{1}{2}}
(\lambda_0^{(1)}(t,c) + \eta^{-1} \lambda_1^{(1)}(t,c) + \eta^{-2} \lambda_2^{(1)}(t,c) + \cdots) 
{e}^{\eta \phi_{\rm{II}}} ,
    \label{eq:expansion of lambda(1)tilde}
\end{eqnarray}
where $R_{\rm odd}$ is the odd part (in the sense of Remark \ref{odd part} below) of 
a formal power series solution $R = \eta R_{-1} + R_0 + \eta^{-1} R_1 + \cdots$ 
of the Riccati equation 
\begin{equation}
R^2 + \frac{dR}{dt} = \eta^2 (6 {\lambda^{(0)}(t,c,\eta)}^ 2 + t)  \label{eq:R}
\end{equation}
associated with \eqref{eq:lambda(1)}.
%%%%%%%%%%%%%%%%%%%%%%%%%%%%%%%%%%%%%%%%%%%%%%%%%%%%%%%%%%%%%%%%%%%%%%%%%%%%%%%%%%%%%%%%%%
\begin{rem}\label{odd part} \normalfont
The odd part of $R$ is defined as follows.
The coefficients $R_k$ of $\eta^{-k}$ in $R$ are determined by the following recursive relations:
\[
R_{-1}^2 = \Delta = 6 \lambda_0^2 + t,
\]
\begin{equation}
2 R_{-1} R_{k+1} + \hspace{-1.em}
\sum_{
\tiny
\begin{array}{ll}
k_1+k_2 = k  \\ 
\hspace{+1.5em} 0 \le k_j 
\end{array} 
\normalsize} \hspace{-1.em}
R_{k_1} R_{k_2} 
 + \frac{d R_k}{d t} 
 = 6 \hspace{-1.em}
\sum_{
\tiny
\begin{array}{ll}
l_1+l_2 = k + 2  \\ 
\hspace{+2.em} 0 \le l_j
\end{array} 
\normalsize} \hspace{-1.em}
\lambda^{(0)}_{l_1} \lambda^{(0)}_{l_2}  \hspace{+.7em}\hspace{+.3em}(k \ge -1).
\label{eq:R_k+1}
\end{equation}
Once the branch of 
\begin{equation}
R_{-1} = \sqrt{\Delta} \hspace{+.2em} 
\label{eq:R_-1}
\end{equation}
is fixed, $R_k$ ($k \ge 0$) is determined uniquely by \eqref{eq:R_k+1}. 
Hence we obtain a formal solution $R = R(t,c,\eta)$ of \eqref{eq:R}.  
Similarly we obtain a formal solution $R^{\dagger} = R^{\dagger}(t,c,\eta)$ of \eqref{eq:R}, 
starting with $R^{\dagger}_{-1} = - \sqrt{\Delta}$. 
Then, we define the odd and even part of $R$ by
\begin{eqnarray}
R_{\rm odd}(t,c,\eta) & = & \frac{1}{2} \bigl( R(t,c,\eta) - R^{\dagger}(t,c,\eta) \bigr), \\
R_{\rm even}(t,c,\eta) & = & \frac{1}{2} \bigl( R(t,c,\eta) + R^{\dagger}(t,c,\eta) \bigr).
\end{eqnarray}
We also note that,
as in \cite[$\S$2.1]{KT iwanami}, 
\begin{equation}
R_{\rm{even}} = - \frac{1}{2} \frac{1}{R_{\rm odd}} \frac{d R_{\rm odd}}{d t}
 = - \frac{1}{2} \frac{d}{dt} \hspace{.2em} {\rm log} R_{\rm{odd}}.
\label{eq:R_odd and R_even}
\end{equation}

\end{rem}
%%%%%%%%%%%%%%%%%%%%%%%%%%%%%%%%%%%%%%%%%%%%%%%%%%%%%%%%%%%%%%%%%%%%%%%%%%%%%%%%%%%%%%%%%%

An important fact is that, once we fix a normalization (i.e., the lower endpoint) of the integral of 
$R_{\rm odd}$ in \eqref{eq:lambda(1)tilde}, 
the coefficients $\lambda^{(k)}_\ell$ of $\lambda^{(k)}$ ($k \ge 2$, $\ell \ge 0$) 
are determined uniquely by the following recursive relations.
\begin{eqnarray}
(k^2 - 1) \Delta(t,c) \hspace{+.2em} \lambda_\ell^{(k)} & = & 
6 \hspace{-1.em} \sum_{
\tiny
\begin{array}{ll}
\ell_1 + \ell_2 + \ell_3 = \ell \\
\hspace{+2.em} \ell_3 < \ell
\end{array} } \hspace{-1.em}
\lambda_{\ell_1}^{(0)} \lambda_{\ell_2}^{(0)} \lambda_{\ell_3}^{(k)}  \nonumber \\
&  &
+ \hspace{+.2em}
2 \hspace{-1.em} \sum_{
\tiny
\begin{array}{ll}
k_1 + k_2 + k_3 = k  \\
\ell_1 + \ell_2 + \ell_3 = \ell \\
\hspace{+2.em} k_j < k
\end{array} } \hspace{-1.em}
\lambda_{\ell_1}^{(k_1)} \lambda_{\ell_2}^{(k_2)} \lambda_{\ell_3}^{(k_3)} \nonumber \\
&   &
-
2 k \frac{d \phi_{\rm{II}}}{dt} \frac{d \lambda_{\ell - 1}^{(k)}}{dt}
-
k \frac{d^2 \phi_{\rm{II}}}{dt^2} \lambda_{\ell - 1}^{(k)}
-
\frac{d^2 \lambda_{\ell - 2}^{(k)}}{dt^2} .
\label{eq:lambda_(k)l}
\end{eqnarray}
Thus we can construct a 1-parameter solution $\lambda(t,c,\eta;\alpha)$ 
including a free parameter $\alpha$. 
(Normalization of the integral in \eqref{eq:lambda(1)tilde} will be discussed in Section 3.)
Since $\nu = \eta^{-1} \frac{d \lambda}{dt}$ follows from ($H_{\rm II}$), 
the formal power series $\nu^{(k)}$ ($k \ge 0$) are determined by
\begin{equation}
\nu^{(k)} = k \frac{d \phi_{\rm{II}}}{dt} \lambda^{(k)} + 
\eta^{-1} \frac{\hspace{+.5em} d \lambda^{(k)}}{dt} .
\label{eq:relation between lambda(k) and nu(k) in full order}
\end{equation}
Especially, since $\tilde{\lambda}^{(1)} = \alpha \eta^{-\frac{1}{2}} \lambda^{(1)} e^{\eta \phi_{\rm II}}$ 
can be written also as
\[
\tilde{\lambda}^{(1)} = \alpha \eta^{-\frac{1}{2}} C(\eta) \hspace{+.2em} {\rm exp}
\Bigl( \int^{t} R(t,c,\eta) \hspace{+.2em} dt \Bigr) 
\]
with a formal power series $C(\eta)$ of $\eta^{-1}$ 
whose coefficients are independent of $t$, we have 
\begin{equation}
\nu^{(1)} = \eta^{-1} R \hspace{+.2em} \lambda^{(1)} .  \label{eq:nu(1)}
\end{equation}

%%%%%%%%%%%%%%%%%%%%%%%%%%%%%%%%%%%%%%%%%%%%%%%%%%%%%%%%%%%%%%%%%%%%%%%%%%%%%%%%%%%%%%%%%%
%%%%%%%%%%%%%%%%%%%%%%%%%%%%%%%%%%%%%%%%%%%%%%%%%%%%%%%%%%%%%%%%%%%%%%%%%%%%%%%%%%%%%%%%%%

\subsection{$P$-Stokes geometry of ($P_{\rm II}$)}

Next, we recall the definition of turning points (``$P$-turning points") and 
Stokes curves (``$P$-Stokes curves") of ($P_{\rm II}$).

%%%%%%%%%%%%%%%%%%%%%%%%%%%%%%%%%%%%%%%%%%%%%%%%%%%%%%%%%%%%%%%%%%%%%%%%%%%%%%%%%%%%%%%%%%
\begin{defi} [{\cite[$\S$4, Definition 4.5]{KT iwanami}}]
\normalfont
(i) A point $t$ is called a \textit{$P$-turning point of $(P_{\rm II})$} if $t$ satisfies 
$\Delta = 6 \lambda_0^2 + t = 0$. 

\noindent
(ii) For a $P$-turning point $t = \tau$, a real one-dimensional curve defined by
\[
{\rm Im} \int_{\tau}^t \sqrt{\Delta(t,c)} \hspace{+.2em} dt = 0
\]
is said to be a \textit{$P$-Stokes curve of $(P_{\rm II})$}.
\end{defi}
%%%%%%%%%%%%%%%%%%%%%%%%%%%%%%%%%%%%%%%%%%%%%%%%%%%%%%%%%%%%%%%%%%%%%%%%%%%%%%%%%%%%%%%%%%

The $P$-turning points and the $P$-Stokes curves are the 
turning points and the Stokes curves of the linear equation \eqref{eq:lambda(1)}. 
Since $P$-turning points are also zeros of discriminant of the algebraic equation \eqref{eq:lambda_0}
for $\lambda_0$, there are three $P$-turning points at 
$t = \tau_j := -6 \hspace{+.2em} \bigl( c / 4 \bigr)^{2/3} \omega^j$ 
$(\omega = e^{\frac{2 \pi i}{3}}, j=1,2,3)$ in the case $c \ne 0$. 
Figure \ref{fig:P_II,argc=0} describes $P$-Stokes curves when arg \hspace{-.5em} $c = 0$. 
%%%%%%%%%%%%%%%%%%%%%%%%%%%%%%%%%%%%%%%%%%%%%%%%%%%%%%%%%%%%%%%%%%%%%%%%%%%%%%%%%%%%%%%%%%
 \begin{figure}[h]
 \begin{center}
 \includegraphics[width=50mm]{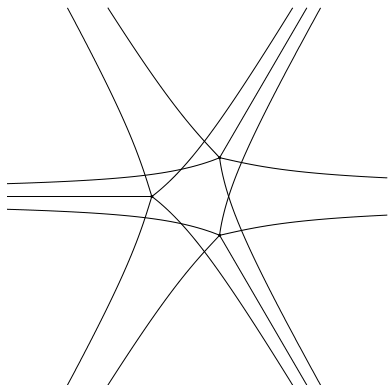}
 \end{center}
 \caption{$P$-Stokes curve when arg \hspace{-.5em} $c =0$.}
 \label{fig:P_II,argc=0}
 \end{figure}
%%%%%%%%%%%%%%%%%%%%%%%%%%%%%%%%%%%%%%%%%%%%%%%%%%%%%%%%%%%%%%%%%%%%%%%%%%%%%%%%%%%%%%%%%%
As we saw in Section 1, some degeneration of $P$-Stokes geometry occurs 
when arg \hspace{-.4em} $c = \frac{\pi}{2}$
(Figure \ref{fig:P_II,argc=0.5Pi-epsilon} $\sim$ \ref{fig:P_II,argc=0.5Pi+epsilon}). 
This degeneracy can be analytically confirmed by the relation 
\[
\int_{\tau_1}^{\tau_2} \sqrt{\Delta} \hspace{+.2em} dt = \pm 2 \pi i c , 
\]
which we will show in Proposition \ref{integral of top terms} in Section 4. 
(The choice of the sign on the right-hand side of the above relation
depends on the determination of branch of $\sqrt{\Delta}$.) 
%%%%%%%%%%%%%%%%%%%%%%%%%%%%%%%%%%%%%%%%%%%%%%%%%%%%%%%%%%%%%%%%%%%%%%%%%%%%%%%%%%%%%%%%%%
\begin{rem} \label{remark for the branch of sqrt-Delta}
\normalfont
Since $P$-turning points and $P$-Stokes curves 
are defined in terms of $\Delta = 6 \lambda_0^2 + t$, 
it is natural to lift them onto the Riemann surface of $\lambda_0$. 
Figure \ref{fig:Riemann surface of lambda_0} 
describes the lift of $P$-Stokes curves 
onto the Riemann surface of $\lambda_0$ when arg \hspace{-.5em} $c = \frac{\pi}{2}$. 
%%%%%%%%%%%%%%%%%%%%%%%%%%%%%%%%%%%%%%%%%%%%%%%%%%%%%%%%%%%%%%%%%%%%%%%%%%%%%%%%%%
  \begin{figure}[h]
  \begin{minipage}{0.32\hsize}
  \begin{center}
  \includegraphics[width=45mm]{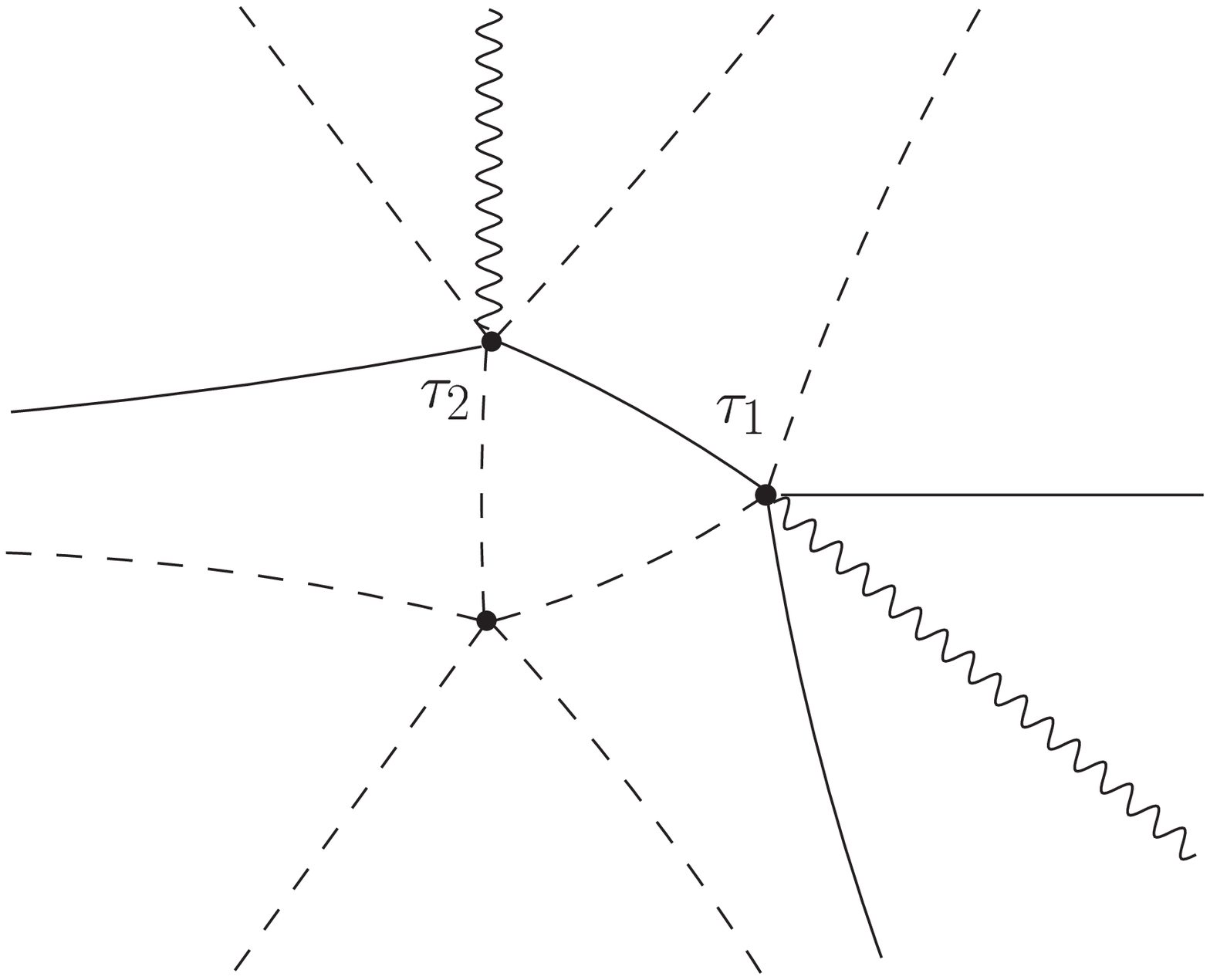}
  \end{center}
  \end{minipage}
  \begin{minipage}{0.32\hsize}
  \begin{center}
  \includegraphics[width=45mm]{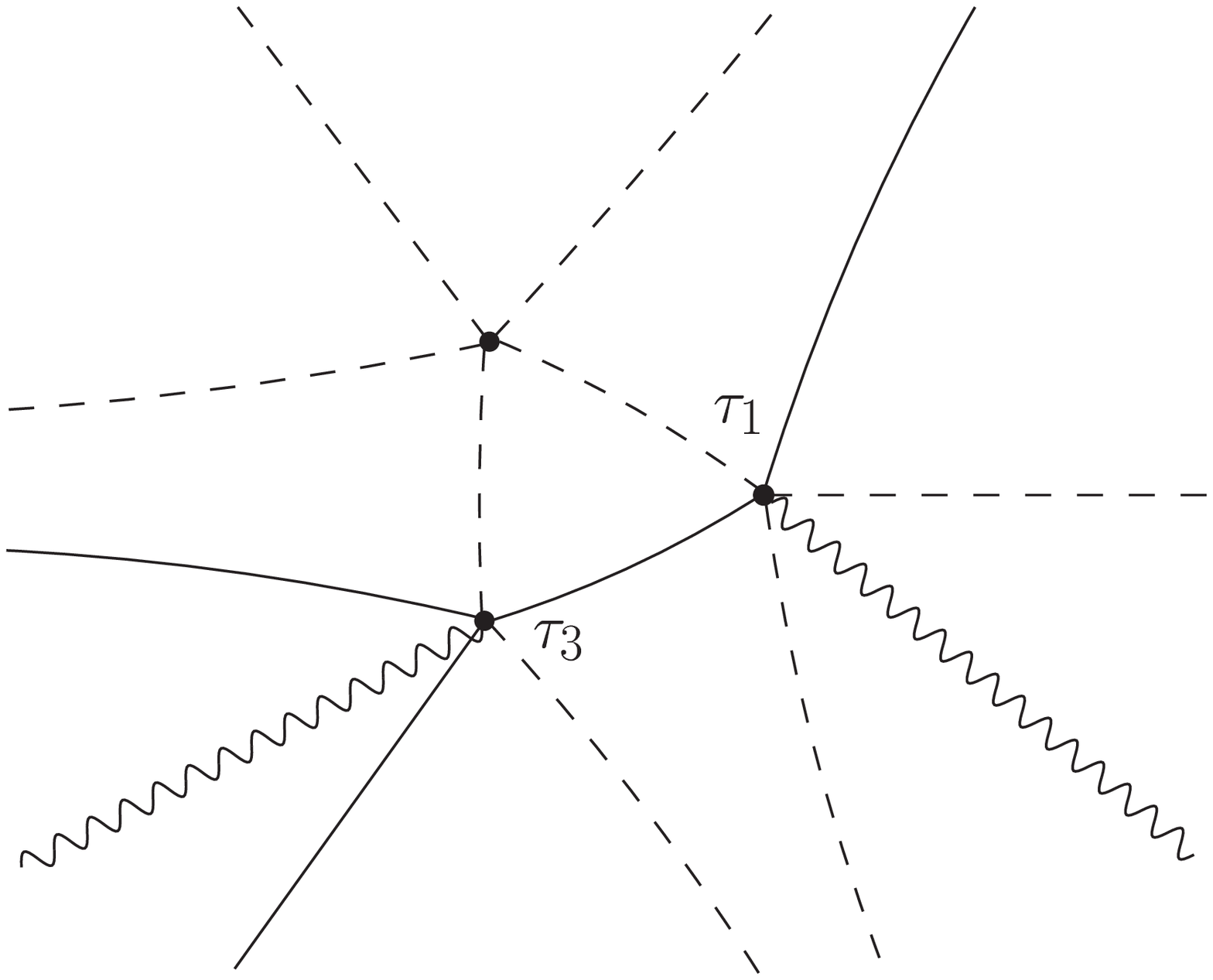}
  \end{center}
  \caption{\small{The lift of $P$-Stokes curves onto the Riemann surface of $\lambda_0$}}
  \label{fig:Riemann surface of lambda_0}
  \end{minipage}
  \begin{minipage}{0.32\hsize}
  \begin{center}
  \includegraphics[width=45mm]{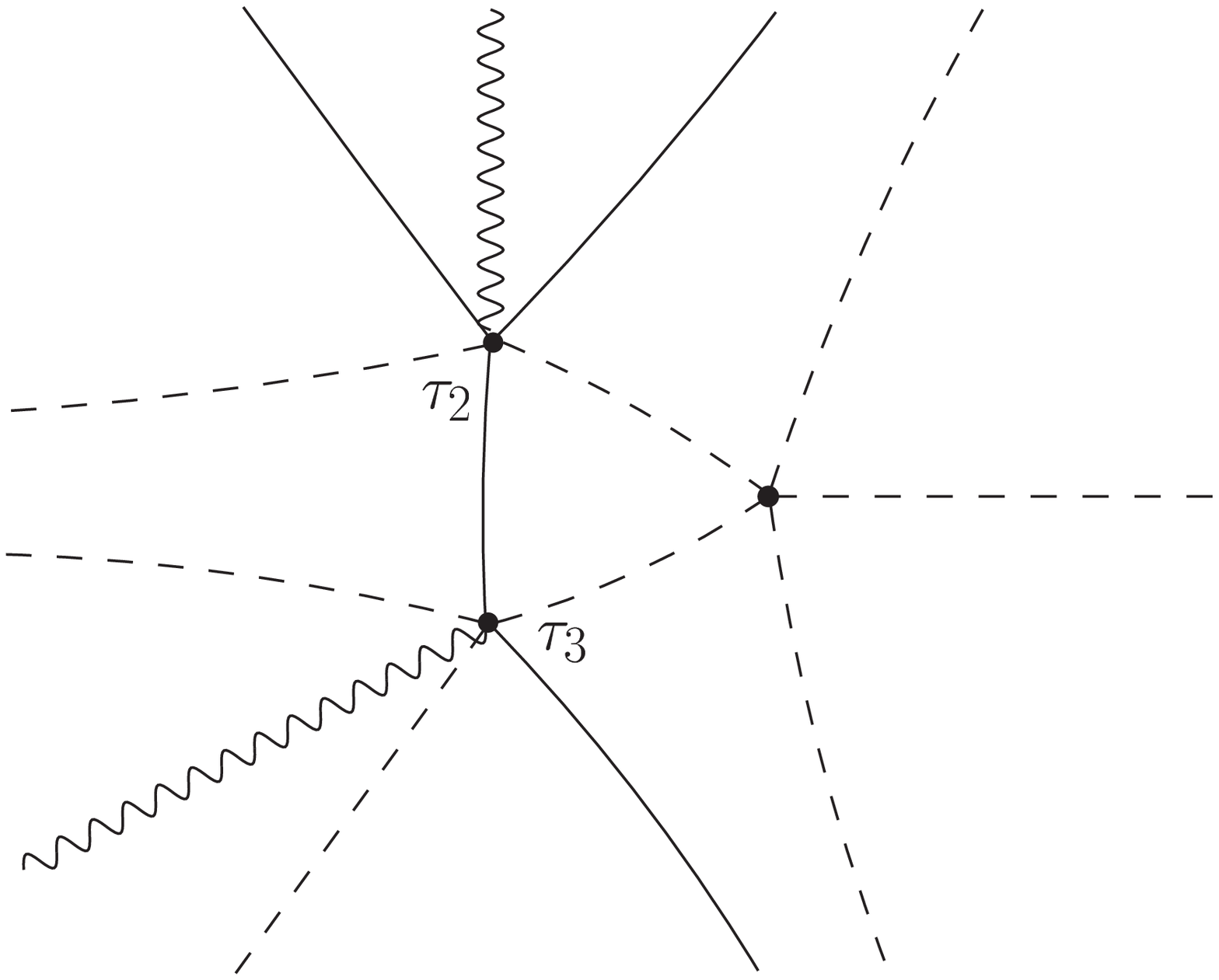}
  \end{center}
  \end{minipage}
  \end{figure}
%%%%%%%%%%%%%%%%%%%%%%%%%%%%%%%%%%%%%%%%%%%%%%%%%%%%%%%%%%%%%%%%%%%%%%%%%%%%%%%%%%
Wiggly lines, solid lines and dotted lines in Figure \ref{fig:Riemann surface of lambda_0}
represent cuts to define the Riemann surface of $\lambda_0$, 
$P$-Stokes curves on the sheet under consideration 
and $P$-Stokes curves on the other sheets, respectively. 
In this paper we only consider the situation 
where arg \hspace{-.5em} $c$ is sufficiently close to $\frac{\pi}{2}$ 
and $t$ moves in the shaded domain in Figure \ref{fig:Riemann surface of sqrt Delta} below. 
%%%%%%%%%%%%%%%%%%%%%%%%%%%%%%%%%%%%%%%%%%%%%%%%%%%%%%%%%%%%%%%%%%%%%%%%%%%%%%%%%%
  \begin{figure}[h]
  \begin{minipage}{0.32\hsize}
  \begin{center}
  \includegraphics[width=45mm]{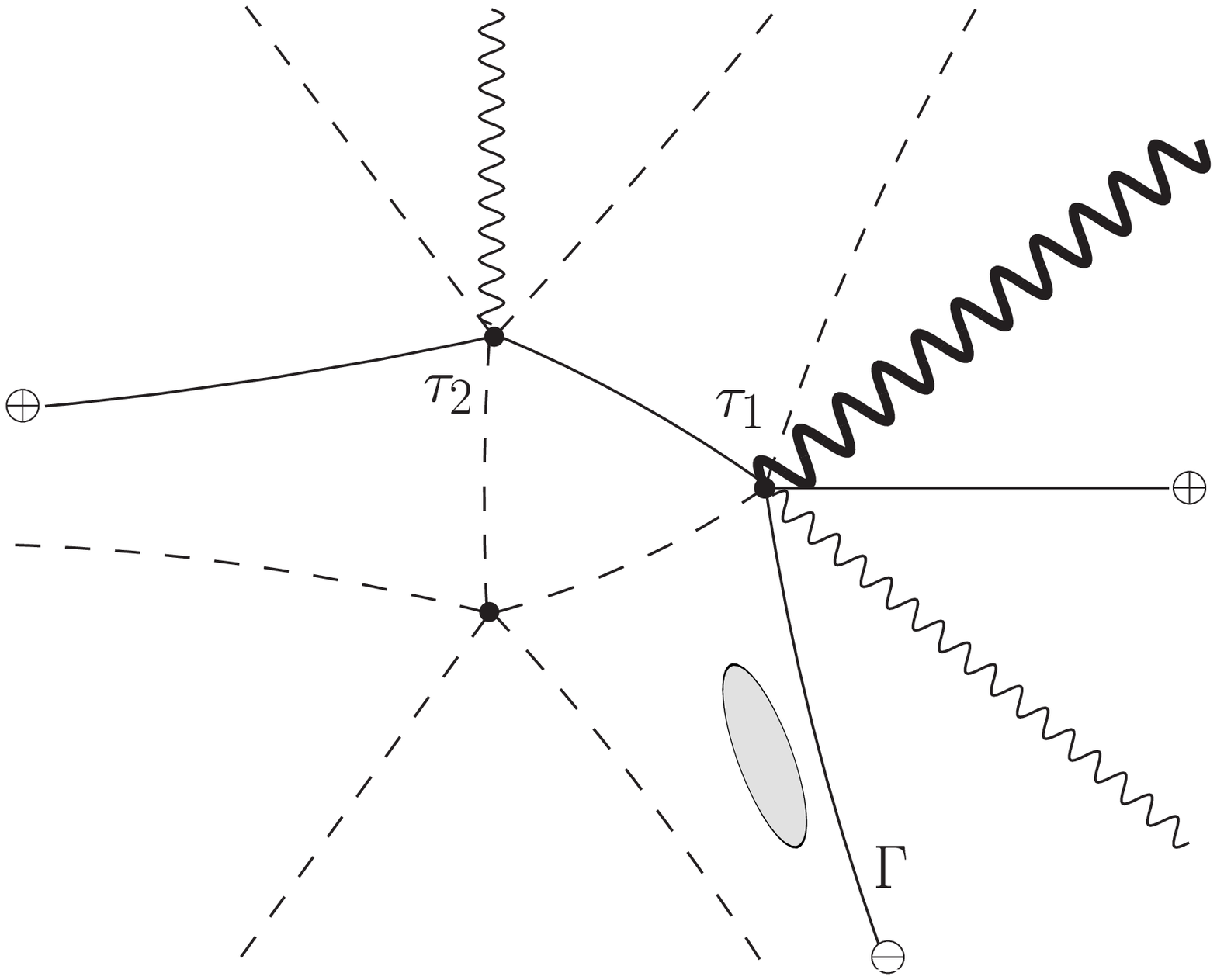}
  \end{center}
  \end{minipage}
  \begin{minipage}{0.32\hsize}
  \begin{center}
  \includegraphics[width=45mm]{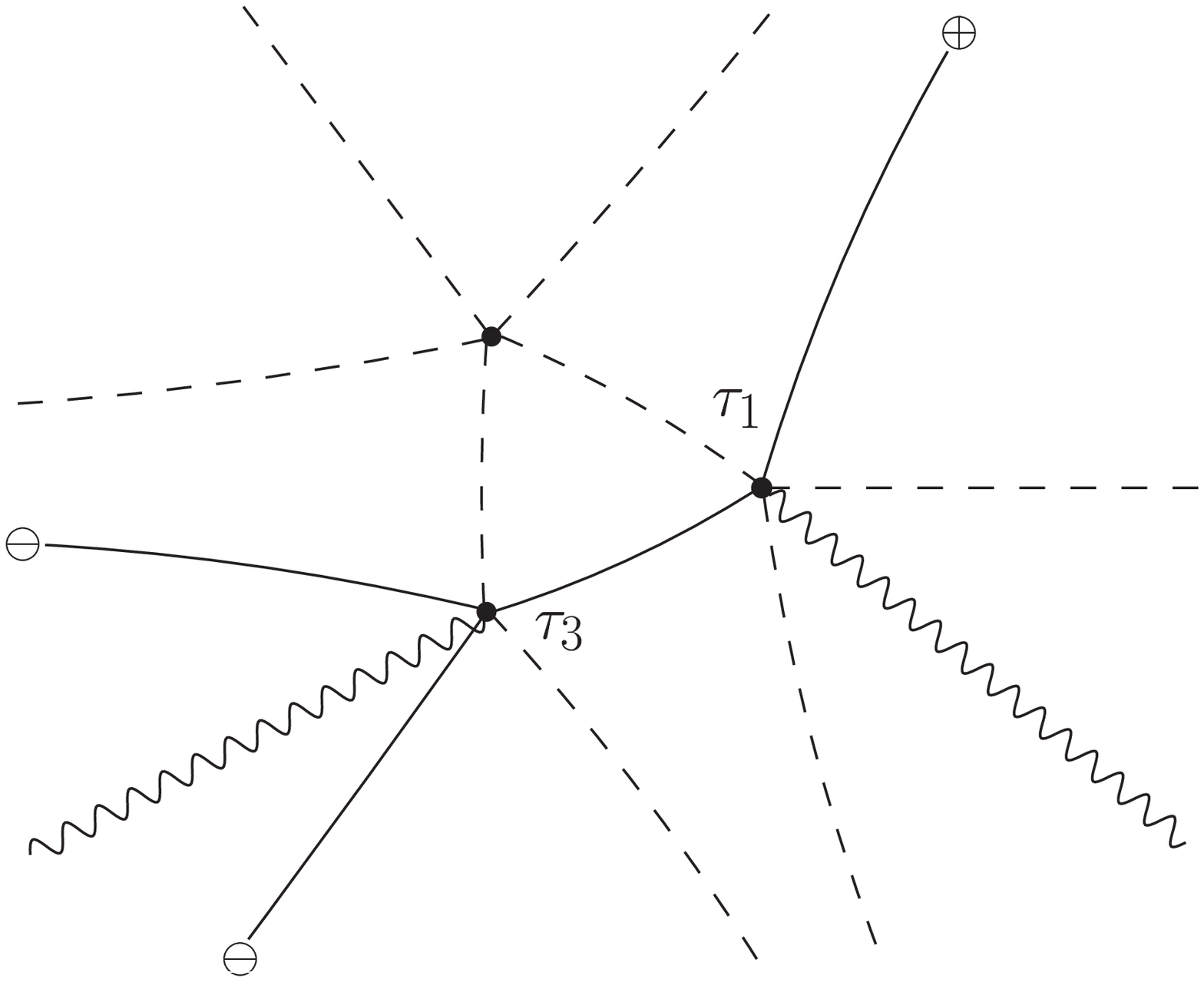}
  \end{center}
  \caption{\small{Domain of $t$.}}
  \label{fig:Riemann surface of sqrt Delta}
  \end{minipage}
  \begin{minipage}{0.32\hsize}
  \begin{center}
  \includegraphics[width=45mm]{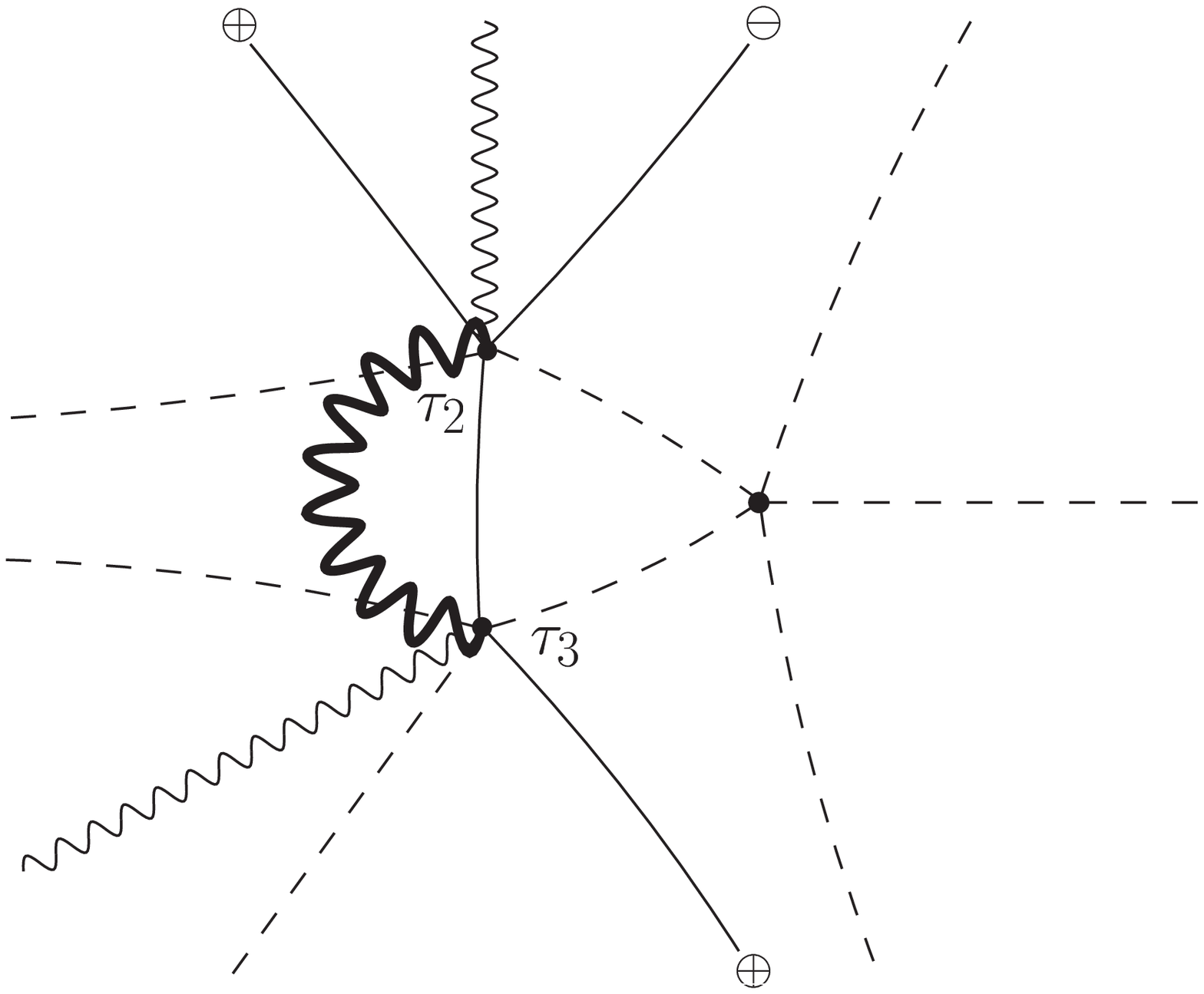}
  \end{center}
  \end{minipage}
  \end{figure}
%%%%%%%%%%%%%%%%%%%%%%%%%%%%%%%%%%%%%%%%%%%%%%%%%%%%%%%%%%%%%%%%%%%%%%%%%%%%%%%%%%
In Figure \ref{fig:Riemann surface of sqrt Delta} thick wiggly lines designate cuts 
for the determination of the branch of $\sqrt{\Delta}$ 
and the symbols $\oplus$ and $\ominus$ represent the ``sign of $P$-Stokes curves". 
Here the sign of  a $P$-Stokes curve is defined by the sign of 
\[
{\rm Re} \int_{\tau}^t \sqrt{\Delta} \hspace{+.2em} dt ,
\]
where $t$ is a point on the $P$-Stokes curve in question and 
$\tau$ is a $P$-turning point which the $P$-Stokes curve emanates from. 
\end{rem}
%%%%%%%%%%%%%%%%%%%%%%%%%%%%%%%%%%%%%%%%%%%%%%%%%%%%%%%%%%%%%%%%%%%%%%%%%%%%%%%%%%%%%%%%%%%

As we mentioned in Introduction, 
the degeneration of $P$-Stokes geometry observed when arg \hspace{-.3em} $c = \frac{\pi}{2}$ 
suggests that the ``parametric Stokes phenomenon" occurs when $c$ varies 
near arg \hspace{-.5em} $c = \frac{\pi}{2}$. 
%That is, the correspondence between asymptotic solutions(1-parameter solutions in this case) 
%and true solutions of ($P_{\rm II}$) changes discontinuously before and after the degeneration 
%of $P$-Stokes curves. 
To formulate the connection formula for this Stokes phenomenon,
we define and analyze ``the Voros coefficient of ($P_{\rm II}$)" in the next section. 
The Voros coefficient plays an important role in the analysis 
of the parametric Stokes phenomenon.

The following lemma will be used in the next section. 
%%%%%%%%%%%%%%%%%%%%%%%%%%%%%%%%%%%%%%%%%%%%%%%%%%%%%%%%%%%%%%%%%%%%%%%%%%%%%%%%%%%%%%%%%%%
\begin{lemm} \label{behavior of coeff of 1-parameter solution}
We have the following asymptotic behaviors when $t \rightarrow \infty$ 
along the $P$-Stokes curve $\Gamma$ in Figure \ref{fig:Riemann surface of sqrt Delta}. 
\begin{eqnarray}
\lambda_0(t,c) & = & 
- \frac{i}{\sqrt{2}} \hspace{+.1em} t^{\frac{1}{2}} + \frac{1}{2} \hspace{+.1em} c \hspace{+.1em} t^{-1}  
- \frac{3 \sqrt{2} \hspace{+.2em} i}{8} \hspace{+.1em} c^2 t^{-\frac{5}{2}} + {O}(t^{-4}) .  
    \label{eq:behavior of lambda_0}  \\ 
\lambda^{(0)}_2(t,c) & = &  
- \frac{\sqrt{2} \hspace{+.2em} i}{16} \hspace{+.1em} t^{-\frac{5}{2}} + {O}(t^{-4}) .  \\
\lambda^{(0)}_{2k}(t,c) & = & {O}(t^{-\frac{11}{2}}) \hspace{+1.em} (k \ge 2) .
\end{eqnarray}
\begin{eqnarray}
\nu^{(0)}_1(t,c) & = & 
- \frac{i}{2\sqrt{2}} \hspace{+.1em} t^{-\frac{1}{2}} - \frac{1}{2} \hspace{+.1em} c \hspace{+.1em} t^{-2} 
+ \frac{15 \sqrt{2} \hspace{+.2em} i}{16} \hspace{+.1em} c^2 t^{-\frac{7}{2}} + {O}(t^{-5}) . \\
\nu^{(0)}_3(t,c) & = &  
\frac{5 \sqrt{2} \hspace{+.2em} i}{32} \hspace{+.1em} t^{-\frac{7}{2}} + {O}(t^{-5}) . \\
\nu^{(0)}_{2k + 1}(t,c) & = & {O}(t^{-\frac{13}{2}}) \hspace{+1.em} (k \ge 2) .
\end{eqnarray}
\begin{eqnarray}
R_{-1}(t,c) & = &  
- \sqrt{2} \hspace{+.1em} i \hspace{+.1em} t^{\frac{1}{2}}  
+ \frac{3}{2} \hspace{+.1em} c \hspace{+.1em} t^{-1}  
- \frac{21 \sqrt{2} \hspace{+.2em} i}{16} \hspace{+.1em} c^2 t^{-\frac{5}{2}} + {O}(t^{-4}) .  
    \label{eq:behavior of R_-1}  \\
R_0(t,c) & = & 
- \frac{1}{4} \hspace{+.1em} t^{-1}  
+ \frac{9 \sqrt{2} \hspace{+.2em} i}{16} \hspace{+.1em} c \hspace{+.1em} t^{-\frac{5}{2}} + {O}(t^{-4})  . \\
R_1(t,c) & = &  
- \frac{17 \sqrt{2} \hspace{+.2em} i}{64} \hspace{+.1em} t^{- \frac{5}{2}} + {O}(t^{-4})  .  \\
R_k(t,c) & = & {O}(t^{-4}) \hspace{+1.em} (k \ge 2)   .
\end{eqnarray}
Here the branch of $t^{\frac{1}{2}}$ is chosen so that 
Re \hspace{-.5em} $t^{\frac{1}{2}} > 0$ holds on $\Gamma$.
\end{lemm}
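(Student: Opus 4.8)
The plan is to read the lemma as a bookkeeping statement about the asymptotic weights carried by the coefficients, organised by the scaling symmetry of $(P_{\rm II})$. The substitution $t\mapsto\sigma^{2}t$, $c\mapsto\sigma^{3}c$, $\lambda\mapsto\sigma\lambda$, $\eta\mapsto\sigma^{-3}\eta$ leaves $(P_{\rm II})$ — hence also the algebraic relation \eqref{eq:lambda_0} and the recursions \eqref{eq:recurrence relation of lambda(0)}, \eqref{eq:relation between lambda(0)_k and nu(0)_k}, \eqref{eq:R_k+1} — invariant. First I would use this to write $\lambda_0(t,c)=t^{1/2}\,\Lambda_0(c\,t^{-3/2})$, where $\Lambda_0(u)$ is the solution of $2\Lambda^{3}+\Lambda+u=0$ with $\Lambda_0(0)=-i/\sqrt2$; since $6\Lambda_0(0)^{2}+1=-2\neq0$, the implicit function theorem makes $\Lambda_0$ holomorphic near $u=0$, and computing its first Taylor coefficients (by differentiating $2\Lambda_0^{3}+\Lambda_0+u=0$) and setting $u=c\,t^{-3/2}$ yields \eqref{eq:behavior of lambda_0}. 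Fixing once and for all the branch of $t^{1/2}$ with $\mathrm{Re}\,t^{1/2}>0$ on $\Gamma$ makes all of this consistent; the expansions are then valid throughout the sector in which this branch and the corresponding sheet of $\lambda_0$ are defined, the restriction to $\Gamma$ only serving to pin down that branch (and to keep $\lambda_0$ a simple root of \eqref{eq:lambda_0}, so that no degeneration interferes).

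Next I would handle the $\lambda^{(0)}_k$ by induction on $k$ using \eqref{eq:recurrence relation of lambda(0)}. A preliminary observation: the right-hand side of \eqref{eq:recurrence relation of lambda(0)} vanishes for $k=1$, so $\lambda^{(0)}_1\equiv0$, and an immediate induction then gives $\lambda^{(0)}_k\equiv0$ for every odd $k$ — whence $\nu^{(0)}_k\equiv0$ for every even $k$ by \eqref{eq:relation between lambda(0)_k and nu(0)_k} — which is why only even $\lambda^{(0)}$'s and odd $\nu^{(0)}$'s occur in the statement. For the even coefficients, the scaling relation reads $\lambda^{(0)}_{2m}(t,c)=t^{(1-6m)/2}\,\Lambda_{2m}(c\,t^{-3/2})$ with $\Lambda_{2m}$ holomorphic at $0$ (holomorphy propagates through \eqref{eq:recurrence relation of lambda(0)} because the only division is by $\Delta$, which at the reference point equals $6\Lambda_0^{2}+1\neq0$ near $u=0$); equivalently, one checks directly in \eqref{eq:recurrence relation of lambda(0)} that the terms $\Delta\,\lambda^{(0)}_k$, the cubic sum, and $d^{2}\lambda^{(0)}_{k-2}/dt^{2}$ all have weight $t^{(3-3k)/2}$, so $\lambda^{(0)}_k=O(t^{(1-3k)/2})$. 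For $m=1$ one has $\Delta\,\lambda^{(0)}_2=d^{2}\lambda_0/dt^{2}$, and substituting \eqref{eq:behavior of lambda_0} gives the leading term $-\tfrac{\sqrt2\,i}{16}t^{-5/2}$; for $m\ge2$ the exponent $(1-6m)/2\le-11/2$ gives the stated bound. Differentiating these via \eqref{eq:relation between lambda(0)_k and nu(0)_k} produces the estimates for $\nu^{(0)}_{2m+1}=d\lambda^{(0)}_{2m}/dt$, the leading coefficients of $\nu^{(0)}_1$ and $\nu^{(0)}_3$ coming from differentiating the leading terms of $\lambda_0$ and $\lambda^{(0)}_2$.

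Finally the $R_k$'s. From $R_{-1}=\sqrt{\Delta}$ and the homogeneity $\Delta(t,c)=t\,\Delta(1,c\,t^{-3/2})$ one gets $R_{-1}(t,c)=t^{1/2}\sqrt{\Delta(1,c\,t^{-3/2})}$, holomorphic in $c\,t^{-3/2}$ since $\Delta(1,0)=-2\neq0$; choosing the branch with $R_{-1}\sim-\sqrt2\,i\,t^{1/2}$ on $\Gamma$ and expanding gives \eqref{eq:behavior of R_-1}. For $k\ge0$ I would run the induction on \eqref{eq:R_k+1}: assuming $R_{-1},\dots,R_k$ satisfy $R_j=O(t^{(-2-3j)/2})$ (together with the $\lambda^{(0)}_l$ weights above), each of $\sum_{k_1+k_2=k}R_{k_1}R_{k_2}$, $dR_k/dt$, and $6\sum_{l_1+l_2=k+2}\lambda^{(0)}_{l_1}\lambda^{(0)}_{l_2}$ has weight $t^{(-4-3k)/2}$, so dividing by $2R_{-1}\sim t^{1/2}$ forces $R_{k+1}=O(t^{(-5-3k)/2})$, i.e.\ exactly the next weight $t^{(-2-3(k+1))/2}$. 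The cases $k=0,1$ then give $R_0$ and $R_1$ explicitly once \eqref{eq:behavior of lambda_0}, the expansion of $\lambda^{(0)}_2$, and \eqref{eq:behavior of R_-1} are inserted into \eqref{eq:R_k+1} — alternatively $R_0=-\tfrac12\,d(\log R_{-1})/dt$ from \eqref{eq:R_odd and R_even} yields $R_0$ directly — and for $k\ge2$ the exponent $(-2-3k)/2\le-4$ is the asserted $O(t^{-4})$. The only genuinely delicate point throughout is this weight bookkeeping: one must verify that differentiation (which shifts the $t$-power by $-1$, not by $-3/2$) is exactly balanced by the division by $R_{-1}$ or by $\Delta$, so that each recursion neither gains nor loses relative to the scaling weight; granting that, the remainder is a short finite list of explicit Taylor expansions.
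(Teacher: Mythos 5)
Your proposal is correct and follows essentially the same route as the paper: determine the relevant branches of $\lambda_0$ and of $R_{-1}=\sqrt{\Delta}$ at $t=\infty$ along $\Gamma$, then push the expansions through the recursions \eqref{eq:recurrence relation of lambda(0)}, \eqref{eq:relation between lambda(0)_k and nu(0)_k} and \eqref{eq:R_k+1}; your homogeneity bookkeeping ($\lambda^{(0)}_{k}=t^{(1-3k)/2}\Lambda_{k}(c\,t^{-3/2})$, $R_{k}=t^{-(3k+2)/2}\rho_{k}(c\,t^{-3/2})$, in the spirit of Appendix A) merely makes explicit the tail estimates that the paper compresses into ``obtained from the recursive relations''. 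The only difference is that you fix the branches ($\Lambda_0(0)=-i/\sqrt{2}$ and $R_{-1}\sim-\sqrt{2}\,i\,t^{1/2}$) by fiat, whereas the paper identifies them from the sheet of $\lambda_0$ and the $\ominus$ sign of $\Gamma$ in Figure \ref{fig:Riemann surface of sqrt Delta}; since these are conventions fixed elsewhere in the paper, this is not a substantive gap.
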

%%%%%%%%%%%%%%%%%%%%%%%%%%%%%%%%%%%%%%%%%%%%%%%%%%%%%%%%%%%%%%%%%%%%%%%%%%%%%%%%%%%%%%%%%%
\begin{proof}
It follows from \eqref{eq:lambda_0} that $\lambda_0$ has the following three possible 
asymptotic behaviors when $t \rightarrow \infty$.
\begin{eqnarray*}
\lambda_0 \sim 
\begin{cases}
\displaystyle  + \frac{i}{\sqrt{2}} \hspace{+.1em} t^{\frac{1}{2}} , \\[+1.em]
\displaystyle  - \frac{i}{\sqrt{2}} \hspace{+.1em} t^{\frac{1}{2}} , \\[+1.em]
\displaystyle  - c \hspace{+.1em} t^{-1} .
\end{cases}
\end{eqnarray*}
Especially, the behavior of $\lambda_0$ when $t \rightarrow \infty$ along 
the $P$-Stokes curve $\Gamma$ is given by 
\[
\lambda_0 \sim - \frac{i}{\sqrt{2}} \hspace{+.1em} t^{\frac{1}{2}} .
\]
Thus we have \eqref{eq:behavior of lambda_0}.
By \eqref{eq:behavior of lambda_0} we fined that 
$R_{-1} = \sqrt{\Delta}$ has two possible asymptotic behaviors below. 
\begin{eqnarray*}
R_{-1} \sim 
\begin{cases}
\displaystyle  + \sqrt{2} \hspace{+.1em} i \hspace{+.1em} t^{\frac{1}{2}} , \\[+.5em]
\displaystyle  - \sqrt{2} \hspace{+.1em} i \hspace{+.1em} t^{\frac{1}{2}} .
\end{cases}
\end{eqnarray*}
Because the sign of the $P$-Stokes curve $\Gamma$ is $\ominus$, 
we have
\[
R_{-1} \sim \hspace{+.1em}   - \sqrt{2} \hspace{+.1em} i \hspace{+.1em} t^{\frac{1}{2}}.
\]
Thus we obtain \eqref{eq:behavior of R_-1}. 
The other asymptotic behaviors are obtained from 
\eqref{eq:behavior of lambda_0}, \eqref{eq:behavior of R_-1} and the recursive relations
(cf. \eqref{eq:recurrence relation of lambda(0)}, \eqref{eq:relation between lambda(0)_k and nu(0)_k} 
and \eqref{eq:R_k+1}).
\end{proof}

%%%%%%%%%%%%%%%%%%%%%%%%%%%%%%%%%%%%%%%%%%%%%%%%%%%%%%%%%%%%%%%%%%%%%%%%%%%%%%%%%%%%%%%%%%
%%%%%%%%%%%%%%%%%%%%%%%%%%%%%%%%%%%%%%%%%%%%%%%%%%%%%%%%%%%%%%%%%%%%%%%%%%%%%%%%%%%%%%%%%%
%%%%%%%%%%%%%%%%%%%%%%%%%%%%%%%%%%%%%%%%%%%%%%%%%%%%%%%%%%%%%%%%%%%%%%%%%%%%%%%%%%%%%%%%%%
%%%%%%%%%%%%%%%%%%%%%%%%%%%%%%%%%%%%%%%%%%%%%%%%%%%%%%%%%%%%%%%%%%%%%%%%%%%%%%%%%%%%%%%%%%
%%%%%%%%%%%%%%%%%%%%%%%%%%%%%%%%%%%%%%%%%%%%%%%%%%%%%%%%%%%%%%%%%%%%%%%%%%%%%%%%%%%%%%%%%%
%%%%%%%%%%%%%%%%%%%%%%%%%%%%%%%%%%%%%%%%%%%%%%%%%%%%%%%%%%%%%%%%%%%%%%%%%%%%%%%%%%%%%%%%%%

\section{The Voros coefficient and the parametric Stokes phenomena of ($P_{\rm II}$)}

To formulate the connection formula for the 1-parameter solutions of ($P_{\rm II}$), 
first we introduce two normalizations of 1-parameter solutions. 
The Voros coefficient of ($P_{\rm II}$) is defined as the difference of these two normalizations.

%%%%%%%%%%%%%%%%%%%%%%%%%%%%%%%%%%%%%%%%%%%%%%%%%%%%%%%%%%%%%%%%%%%%%%%%%%%%%%%%%%%%%%%%%%
%%%%%%%%%%%%%%%%%%%%%%%%%%%%%%%%%%%%%%%%%%%%%%%%%%%%%%%%%%%%%%%%%%%%%%%%%%%%%%%%%%%%%%%%%%

\subsection{The Voros coefficient of ($P_{\rm II})$}

We introduce two normalizations of the integral 
\[
\int^t R_{\rm odd} dt
\]
in \eqref{eq:lambda(1)tilde}. 
Because the coefficients $R_{2k - 1}$ of $\eta^{-(2k - 1)}$ in $R_{\rm odd}$
have a singularity of the form $(t - \tau_1)^{- \frac{l}{4}}$ 
(where $l$ is an odd integer) at a $P$-turning point $t = \tau_1$, 
we can define the integral of $R_{\rm odd}$ 
from $t = \tau_1$ as a contour integral:
\begin{eqnarray}
\tilde{\lambda}_{\tau_1}^{(1)}(t,c,\eta;\alpha) & = & 
 \alpha \frac{1}{\sqrt{R_{\rm{odd}}}} \hspace{+.2em} 
{\rm{exp}} \biggl( \int_{\tau_1}^{t} R_{\rm{odd}} \hspace{+.2em} dt \biggr) , 
\label{eq:normalized at tau1} \\
{\lambda}_{\tau_1}^{(1)}(t,c,\eta) & = & 
\Delta^{-\frac{1}{4}} + \eta^{-1} \Delta^{- \frac{1}{4}} 
\Bigl( \int_{\tau_1}^{t} R_1 \hspace{+.em} dt \Bigr) + \cdots .
\label{eq:normalized at tau1-2}
\end{eqnarray}
Here the integral $\int_{\tau_1}^{t} R_{\rm{odd}} \hspace{+.2em} dt$ in \eqref{eq:normalized at tau1} 
is defined by $\frac{1}{2} \int_{\Gamma_t} R_{\rm{odd}} \hspace{+.2em} dt$, 
where $\Gamma_t$ is a path on the Riemann surface of $\sqrt{\Delta}$ shown 
in Figure \ref{fig:normalization path of lambda(1)_tau_1} and 
%%%%%%%%%%%%%%%%%%%%%%%%%%%%%%%%%%%%%%%%%%%%%%%%%%%%%%%%%%%%%%%%%%%%%%%%%%%%%%%%%%
  \begin{figure}[h]
  \begin{minipage}{0.32\hsize}
  \begin{center}
  \includegraphics[width=48mm]{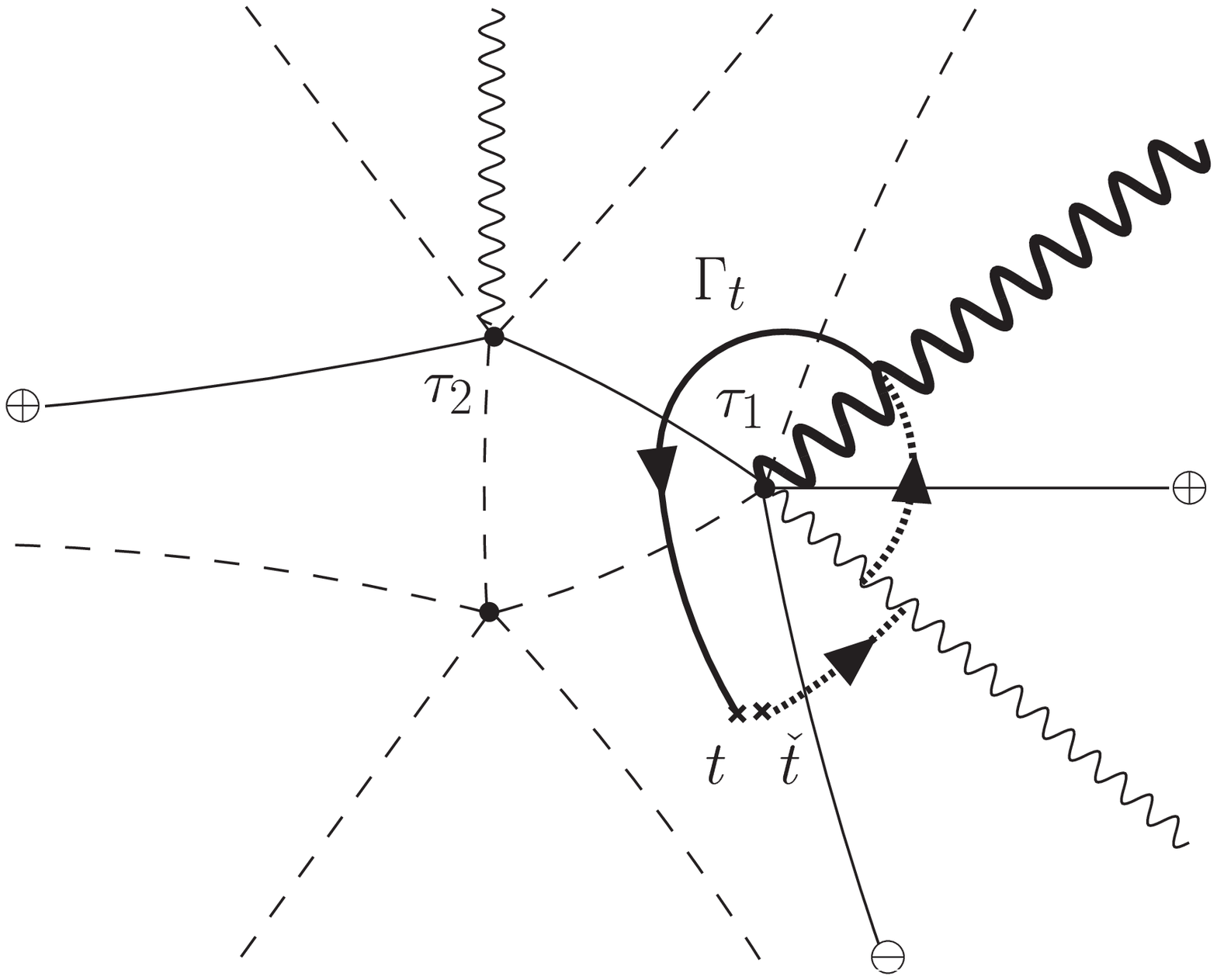}
  \end{center}
  \end{minipage}
  \begin{minipage}{0.32\hsize}
  \begin{center}
  \includegraphics[width=48mm]{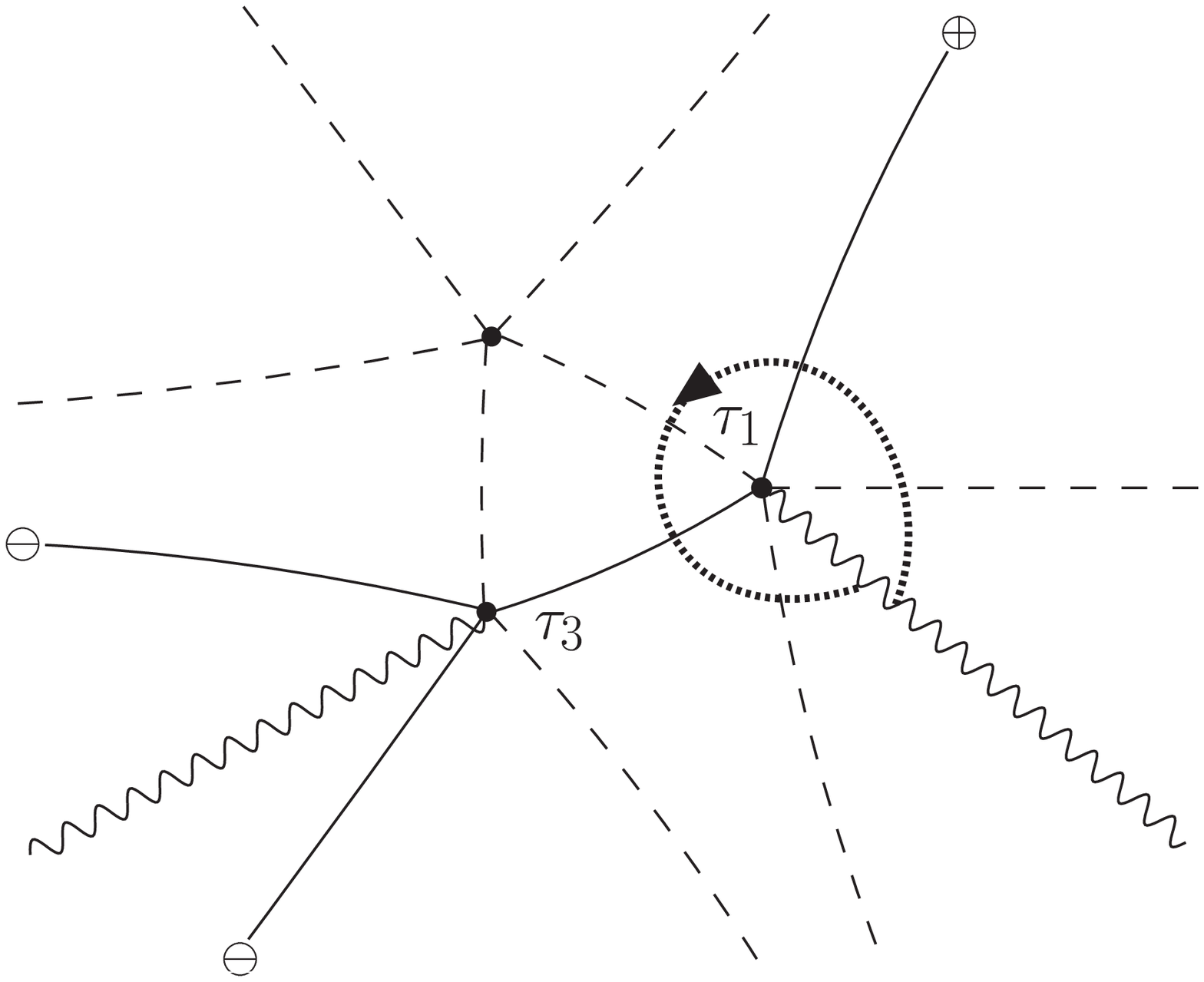}
  \end{center}
  \caption{Normalization path $\Gamma_t$ of $\tilde{\lambda}^{(1)}_{\tau_1}$.}
  \label{fig:normalization path of lambda(1)_tau_1}
  \end{minipage}
  \begin{minipage}{0.32\hsize}
  \begin{center}
  \includegraphics[width=48mm]{P-Stokes-Curve-sheet3-2.eps}
  \end{center}
  \end{minipage}
  \end{figure}
%%%%%%%%%%%%%%%%%%%%%%%%%%%%%%%%%%%%%%%%%%%%%%%%%%%%%%%%%%%%%%%%%%%%%%%%%%%%%%%%%%
$\check{t}$ represents a point on the Riemann surface of $\sqrt{\Delta}$ 
satisfying that $\lambda_0(\check{t},c) = \lambda_0(t,c)$ and 
$\sqrt{\Delta(\check{t},c)} = - \sqrt{\Delta(t,c)}$. 
(The dotted part of $\Gamma_t$ represents a path on the other sheet of the Riemann surface 
of $\sqrt{\Delta}$.)
On the other hand, since $R_{2k - 1}$ ($k \ge 1$) are integrable at $t = \infty$ 
by Lemma \ref{behavior of coeff of 1-parameter solution},  
the following integral is well-defined: 
\begin{eqnarray}
\tilde{\lambda}_{\infty}^{(1)}(t,c,\eta;\alpha) & = & \alpha \frac{1}{\sqrt{R_{\rm{odd}}}} 
\hspace{+.2em} {\rm{exp}} \biggl( \eta \int_{\tau_1}^{t} R_{-1} \hspace{+.2em} dt  
+ \int_{\infty}^{t} \bigl( R_{\rm{odd}} - \eta R_{-1} \bigr) \hspace{+.2em} dt \biggr) , 
\label{eq:normalized at infinity}  \\
\lambda_{\infty}^{(1)}(t,c,\eta) & = & 
\Delta^{- \frac{1}{4}} + \eta^{-1} \Delta^{- \frac{1}{4}} 
\Bigl( \int_{\infty}^{t} R_1 \hspace{+.2em} dt \Bigr) + \cdots , 
\label{eq:normalized at infinity-2}
\end{eqnarray}
where the path of integral from infinity is taken as in 
Figure \ref{fig:normalization path of lambda(1)_infinity}. 
%%%%%%%%%%%%%%%%%%%%%%%%%%%%%%%%%%%%%%%%%%%%%%%%%%%%%%%%%%%%%%%%%%%%%%%%%%%%%%%%%%%%%%%%%%
 \begin{figure}[h]
 \begin{center}
 \includegraphics[width=50mm]{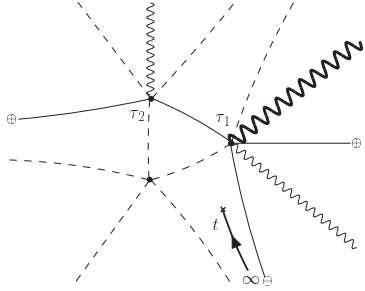}
 \end{center}
 \caption{Normalization path of $\tilde{\lambda}^{(1)}_{\infty}$.}
 \label{fig:normalization path of lambda(1)_infinity}
 \end{figure}
%%%%%%%%%%%%%%%%%%%%%%%%%%%%%%%%%%%%%%%%%%%%%%%%%%%%%%%%%%%%%%%%%%%%%%%%%%%%%%%%%%%%%%%%%%
As we noted in Section 2, once the normalization of $\tilde{\lambda}^{(1)}$ is fixed, 
a 1-parameter solution is uniquely determined. We define the 1-parameter solution 
$\lambda_{\tau_1}(t,c,\eta;\alpha)$(resp. $\lambda_{\infty}(t,c,\eta;\alpha)$) 
by using $\tilde{\lambda}_{\tau_1}^{(1)}$(resp. $\tilde{\lambda}_{\infty}^{(1)}$)
for the normalization of $\tilde{\lambda}^{(1)}$.
From now on we consider 1-parameter solutions normalized at 
either $t = \tau_1$ or $t = \infty$. 
Therefore, $\phi_{\rm II}$ is always normalized as
\[
\phi_{\rm II} = \int_{\tau_1}^{t} \sqrt{\Delta} \hspace{+.2em} dt.
\]

Next, we define the Voros coefficient of ($P_{\rm II}$). 
There is a relation between the above two normalizations of $\tilde{\lambda}^{(1)}$: 
\begin{equation}
\tilde{\lambda}_{\tau_1}^{(1)} = {{e}}^W \hspace{+.2em} \tilde{\lambda}_{\infty}^{(1)} , 
\label{eq:relation for two normalizations}
\end{equation}
where
\begin{eqnarray}
W = W(c,\eta) & = & \int_{\tau_1}^{\infty} 
\bigl(R_{\rm{odd}}(t,c,\eta) - \eta \hspace{+.1em} R_{-1}(t,c) \bigr) dt 
\label{eq:PIIVoros}  \\
& = & 
\frac{1}{2} \int_{\Gamma_{\infty}} 
\bigl(R_{\rm{odd}}(t,c,\eta) - \eta \hspace{+.1em} R_{-1}(t,c) \bigr) dt .
\nonumber
\end{eqnarray}
Here $\Gamma_{\infty}$ is a path on the Riemann surface of $\sqrt{\Delta}$ 
shown in Figure \ref{fig:integral path of Gamma_infinity}. 

%%%%%%%%%%%%%%%%%%%%%%%%%%%%%%%%%%%%%%%%%%%%%%%%%%%%%%%%%%%%%%%%%%%%%%%%%%%%%%%%%%%%%%%%%%
 \begin{figure}[h]
  \begin{minipage}{0.32\hsize}
  \begin{center}
  \includegraphics[width=48mm]{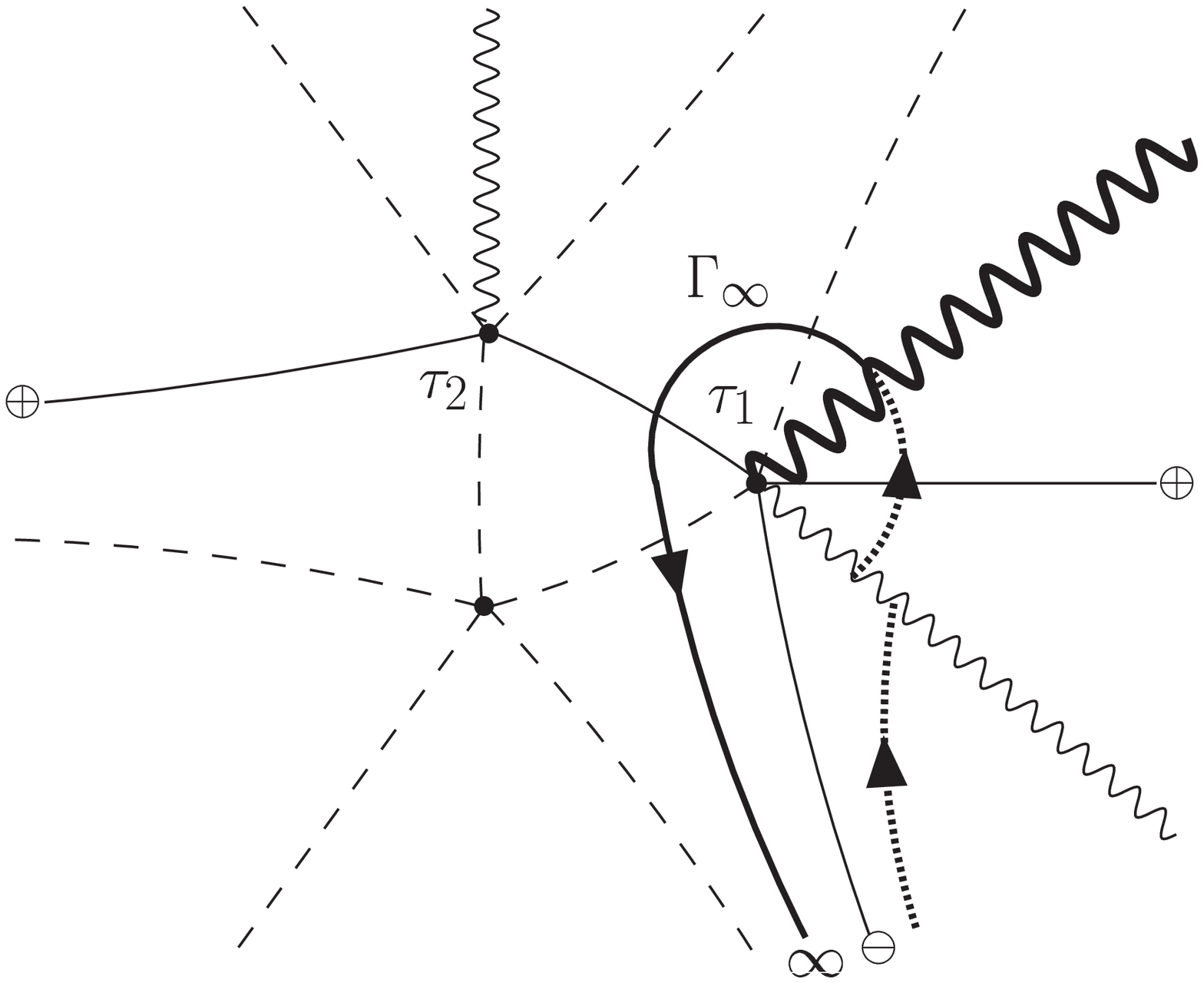}
  \end{center}
  \end{minipage}
  \begin{minipage}{0.32\hsize}
  \begin{center}
  \includegraphics[width=48mm]{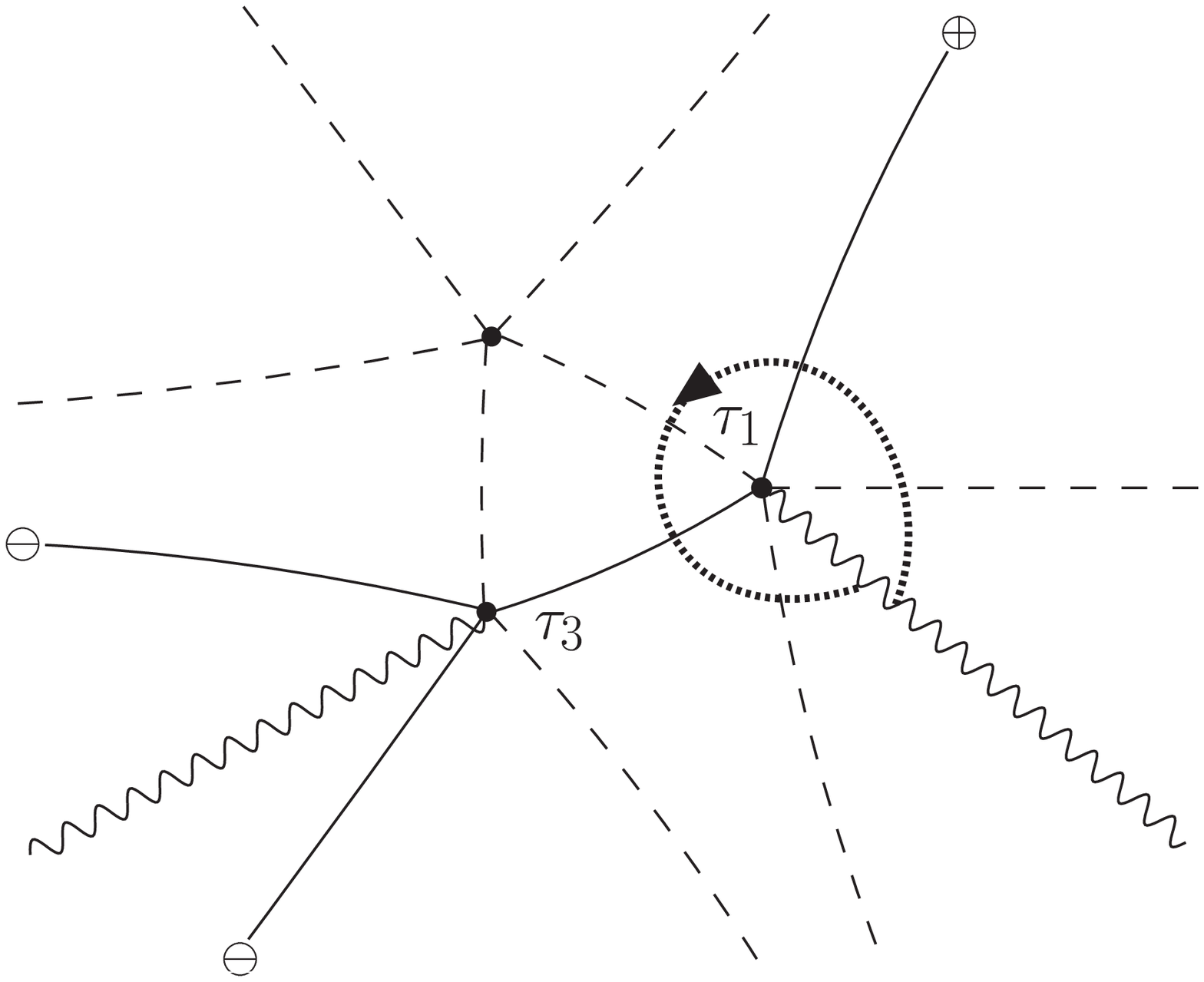}
  \end{center}
  \caption{{Integration path $\Gamma_{\infty}$.}}
  \label{fig:integral path of Gamma_infinity}
  \end{minipage}
  \begin{minipage}{0.32\hsize}
  \begin{center}
  \includegraphics[width=48mm]{P-Stokes-Curve-sheet3-2.eps}
  \end{center}
  \end{minipage}
  \end{figure}
%%%%%%%%%%%%%%%%%%%%%%%%%%%%%%%%%%%%%%%%%%%%%%%%%%%%%%%%%%%%%%%%%%%%%%%%%%%%%%%%%%%%%%%%%%

%%%%%%%%%%%%%%%%%%%%%%%%%%%%%%%%%%%%%%%%%%%%%%%%%%%%%%%%%%%%%%%%%%%%%%%%%%%%%%%%%%%%%%%%%%
\begin{defi} \label{definition of the P-Voros coefficient}
$W(c,\eta)$ 
{\rm defined by \eqref{eq:PIIVoros} is called} 
the Voros coefficient of ($P_{\rm{II}}$), 
{\rm or the} $P$-Voros coefficient  {\rm for short}.
\end{defi}
%%%%%%%%%%%%%%%%%%%%%%%%%%%%%%%%%%%%%%%%%%%%%%%%%%%%%%%%%%%%%%%%%%%%%%%%%%%%%%%%%%%%%%%%%%
%%%%%%%%%%%%%%%%%%%%%%%%%%%%%%%%%%%%%%%%%%%%%%%%%%%%%%%%%%%%%%%%%%%%%%%%%%%%%%%%%%%%%%%%%%
\begin{rem} \label{relation between two normalizations}
\normalfont
By the uniqueness of $\lambda^{(k)}$ ($k \ge 2$), 
the relation \eqref{eq:relation for two normalizations} induces the relation 
between the two normalizations of 1-parameter solutions as follows:
\begin{equation}
\lambda_{\tau_1} (t,c,\eta;\alpha) = \lambda_{\infty} (t,c,\eta;\alpha \hspace{+.1em} {{e}}^{W}) .  
\label{eq:relation between two normalizations}
\end{equation}
\end{rem}
%%%%%%%%%%%%%%%%%%%%%%%%%%%%%%%%%%%%%%%%%%%%%%%%%%%%%%%%%%%%%%%%%%%%%%%%%%%%%%%%%%%%%%%%%%

%%%%%%%%%%%%%%%%%%%%%%%%%%%%%%%%%%%%%%%%%%%%%%%%%%%%%%%%%%%%%%%%%%%%%%%%%%%%%%%%%%%%%%%%%%
%%%%%%%%%%%%%%%%%%%%%%%%%%%%%%%%%%%%%%%%%%%%%%%%%%%%%%%%%%%%%%%%%%%%%%%%%%%%%%%%%%%%%%%%%%

\subsection{Determination of the Voros coefficient of ($P_{\rm II}$)}

We have an explicit description of the $P$-Voros coefficient. 
%%%%%%%%%%%%%%%%%%%%%%%%%%%%%%%%%%%%%%%%%%%%%%%%%%%%%%%%%%%%%%%%%%%%%%%%%%%%%%%%%%%%%%%%%%
\begin{theo} \label{main theorem 1} 
The $P$-Voros coefficient \eqref{eq:PIIVoros} is represented as follows:
\begin{equation}
W(c,\eta) 
 = - \sum_{n = 1}^{\infty}  
 \frac{2^{1-2n}-1}{2n(2n-1)}B_{2n} (c \hspace{+.1em} \eta)^{1 - 2n} ,
\label{eq:expression of PIIVoros}
\end{equation}
where $B_{2n}$ is the 2n-th Bernoulli number defined by \eqref{eq:Bernoulli number}.
\end{theo}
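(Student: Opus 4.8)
The plan is to compute $W(c,\eta)$ by first reducing it to a simpler contour integral via the structure of $R_{\rm odd}$, then evaluating that integral term by term in $\eta^{-1}$ and recognizing the resulting numerical series. First I would use the defining relation \eqref{eq:R} together with the recursion \eqref{eq:R_k+1} to understand the $\eta$-expansion of $R_{\rm odd} = \eta R_{-1} + \eta^{-1} R_1 + \eta^{-3} R_3 + \cdots$; the integrand $R_{\rm odd} - \eta R_{-1}$ is then a formal series in $\eta^{-1}$ starting at order $\eta^{-1}$, and by Lemma \ref{behavior of coeff of 1-parameter solution} each coefficient $R_{2n-1}$ ($n \ge 1$) is $O(t^{-5/2})$ as $t \to \infty$ along $\Gamma$, hence integrable, so $W$ is a well-defined formal power series in $\eta^{-1}$ with only odd negative powers. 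The key reduction is to exploit the quasi-homogeneity of $(P_{\rm II})$: under the scaling $t \mapsto \mu^{?} t$, $c \mapsto \mu^{?} c$, $\lambda_0$ and $\Delta$ transform homogeneously, and because $W$ is built from $R_{\rm odd}$ (which is intrinsic to the equation \eqref{eq:lambda(1)}), one deduces that $W$ depends on $c$ and $\eta$ only through the combination $c\eta$. This already forces $W = \sum_{n\ge1} w_n (c\eta)^{1-2n}$ for some universal constants $w_n$, reducing the problem to computing the numbers $w_n$.

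Next I would compute the $w_n$ by a convenient specialization or by a direct residue computation. The cleanest route: differentiate $W$ with respect to a suitable parameter (or use $\partial_c$) to turn the contour integral of $R_{\rm odd} - \eta R_{-1}$ into an integral whose integrand is an exact derivative plus a manageable remainder, using the identity \eqref{eq:R_odd and R_even} relating $R_{\rm even}$ to $\frac{d}{dt}\log R_{\rm odd}$ and the Riccati equation to express $R_{\rm odd}$ in closed-enough form order by order. Alternatively — and this is likely what the paper does — one changes variables from $t$ to $\lambda_0$ (legitimate since everything lives on the Riemann surface of $\lambda_0$, cf. Remark \ref{remark for the branch of sqrt-Delta}), under which $\tau_1$ and $\infty$ become explicit points and the integrals $\int R_{2n-1}\,dt$ become elementary algebraic integrals / residues that produce rational multiples of Bernoulli numbers. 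The appearance of $B_{2n}$ and the factor $\frac{2^{1-2n}-1}{2n(2n-1)}$ strongly suggests that the generating function of the $w_n$ is, up to elementary factors, $\log\Gamma$-type: indeed $\sum_{n\ge1}\frac{B_{2n}}{2n(2n-1)}z^{1-2n}$ is the asymptotic expansion of $\log\Gamma(z) - (z-\tfrac12)\log z + z - \tfrac12\log 2\pi$ (Stirling), and the extra $2^{1-2n}-1$ is exactly what converts $\log\Gamma(z)$ into the combination $2\log\Gamma(z/2) - \log\Gamma(z) - \cdots$ or $\log\Gamma(z) - \log\Gamma(z+\tfrac12) - \cdots$. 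So I would aim to show directly that the Borel transform, or the generating series, of $W$ equals such a difference of Stirling remainders; equivalently, that $\sum_n w_n z^{1-2n}$ satisfies a first-order difference equation in $z$ forced by a recursion among the $\int R_{2n-1}\,dt$ coming from \eqref{eq:R_k+1}.

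Concretely the steps are: (1) establish that $W = W(c\eta)$ depends only on $c\eta$, using scaling invariance of \eqref{eq:lambda(1)}; (2) compute the leading coefficient $w_1$ explicitly from $R_1$ via Lemma \ref{behavior of coeff of 1-parameter solution} and the path $\Gamma_\infty$, getting $w_1 = -\frac{2^{-1}-1}{2\cdot1}B_2 = \frac{1}{24}\cdot\frac12 = \frac{1}{48}$ (checking signs against the stated formula); (3) derive a recursion or a functional/difference equation for the generating series of $w_n$ from the Riccati recursion \eqref{eq:R_k+1}, integrated along $\Gamma_\infty$, integration by parts killing the total-derivative pieces; (4) identify the solution with the Stirling-type expansion and read off \eqref{eq:expression of PIIVoros}. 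The main obstacle I anticipate is step (3)–(4): controlling the contour integrals of the subleading $R_{2n-1}$ uniformly — in particular justifying the termwise integration, handling the branch of $\sqrt\Delta$ and the two-sheeted path $\Gamma_\infty$ correctly so that the odd part really is what gets integrated, and then massaging the resulting recursion into a closed form. A secondary subtlety is making sure the lower endpoint $\tau_1$ contributes nothing divergent: the half-integer-power singularities $(t-\tau_1)^{-l/4}$ of $R_{2n-1}$ are odd, so the contour integral $\frac12\int_{\Gamma_\infty}$ around the cut extracts a finite value, and this must be tracked carefully. Once the recursion is in hand, recognizing the Bernoulli-number closed form is the kind of generating-function bookkeeping that, while delicate, is routine.
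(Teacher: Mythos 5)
Your overall architecture --- (i) use scaling/homogeneity to show $W$ is a series in $(c\eta)^{-1}$ with $c$-independent coefficients, (ii) derive a functional/difference equation for it, (iii) identify the solution with the Bernoulli/Stirling-type series --- matches the skeleton of the paper's proof, and step (i) is exactly what the paper does via the homogeneity listed in Appendix A together with the uniqueness part of Lemma \ref{lemma for Voros coeff}. But there is a genuine gap at your step (3), which you yourself flag as the anticipated obstacle without supplying the idea that closes it. The paper's difference equation in the variable $c\eta$ does \emph{not} come from the Riccati recursion \eqref{eq:R_k+1}: that recursion runs over the order in $\eta^{-1}$ and there is no evident mechanism by which termwise integration of it along $\Gamma_\infty$, or a change of variables $t\mapsto\lambda_0$ with residue evaluation, produces a first-order difference equation relating $W(c,\eta)$ to $W(c-\eta^{-1},\eta)$; at best such computations give the individual coefficients $\int_{\Gamma_\infty}R_{2n-1}\,dt$ one at a time, and recognizing the closed form from those is precisely the hard bookkeeping you defer. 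The missing ingredient is the B\"acklund transformation of $(H_{\rm II})$ (Lemma \ref{Backlund}), the discrete symmetry shifting $c\mapsto c-\eta^{-1}$: applied to the $0$- and $1$-parameter solutions it yields the exact identity \eqref{eq:difference equation for R}, expressing $R(t,c,\eta)-R(t,c-\eta^{-1},\eta)$ as a total logarithmic derivative, after which (using Lemma \ref{expression of W} to trade $R_{\rm odd}$ for $R-\eta R_{-1}-R_0$ and the asymptotics of Lemma \ref{behavior of coeff of 1-parameter solution} in the limit $t\to\infty$ along $\Gamma$) one obtains Proposition \ref{proposition for Voros coeff}, i.e.\ exactly the difference equation \eqref{eq:difference equation} characterizing the Bernoulli series. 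Without this parameter-shift symmetry your plan does not reach the functional equation, so the proof as proposed would stall at step (3).

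A secondary remark: your check of the leading coefficient is miscomputed --- from \eqref{eq:expression of PIIVoros} the $n=1$ term is $-\frac{2^{-1}-1}{2\cdot 1}B_2\,(c\eta)^{-1}=\frac{1}{4}\cdot\frac{1}{6}\,(c\eta)^{-1}=\frac{1}{24}(c\eta)^{-1}$, not $\frac{1}{48}(c\eta)^{-1}$. This does not affect the structural issue above, but it matters if you intend to anchor the identification of the series by its first coefficient.
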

%%%%%%%%%%%%%%%%%%%%%%%%%%%%%%%%%%%%%%%%%%%%%%%%%%%%%%%%%%%%%%%%%%%%%%%%%%%%%%%%%%%%%%%%%%

In what follows we prove this theorem by using the idea of \cite{Takei Sato conjecture}.
%%%%%%%%%%%%%%%%%%%%%%%%%%%%%%%%%%%%%%%%%%%%%%%%%%%%%%%%%%%%%%%%%%%%%%%%%%%%%%%%%%%%%%%%%%
\begin{lemm} [{\cite[Lemma 1.2]{Takei Sato conjecture}}] \label{lemma for Voros coeff}
{(i)} The formal power series
\begin{equation}
F(c,\eta) = - \sum_{n = 1}^{\infty}  
 \frac{2^{1-2n}-1}{2n(2n-1)}B_{2n} (c \hspace{+.1em} \eta)^{1 - 2n}  
\label{eq:WeberVoros2}
\end{equation}
satisfies the following difference equation formally:
\begin{equation}
F(c,\eta) - F(c - \eta^{-1},\eta) = 
- 1 + c \hspace{+.1em} \eta \hspace{+.2em}
{\rm{log}} 
\Bigl( 1+\frac{1}{c \hspace{+.1em} \eta-1} \Bigr) - 
{\rm{log}}
\Bigl( 1+\frac{1}{2(c \hspace{+.1em} \eta - 1)} \Bigr). 
\label{eq:difference equation}
\end{equation}
{(ii)} Conversely, if $F(c,\eta) = \sum_{n = 1}^{\infty} F_n (c \hspace{+.1em} \eta)^{-n}$ 
where $F_n \in {\mathbb C}$ is independent of $c$ is a formal solution of \eqref{eq:difference equation}, 
then it coincides with \eqref{eq:WeberVoros2}.
\end{lemm}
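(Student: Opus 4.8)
The plan is to treat the two parts separately. For part (i), the strategy is the classical one for verifying that a Bernoulli-type generating series solves a prescribed difference equation: expand both sides in powers of $(c\eta)^{-1}$ and match coefficients. First I would rewrite the right-hand side of \eqref{eq:difference equation} using $\log(1 + z^{-1}) = -\log(1 - (z+1)^{-1})$ type manipulations, or more directly expand $c\eta\,\log(1 + (c\eta - 1)^{-1})$ and $\log(1 + \tfrac12 (c\eta-1)^{-1})$ as formal power series in $w := (c\eta)^{-1}$ after substituting $c\eta - 1 = (1-w)/w$. This turns the right-hand side into an explicit formal series $\sum_{k\ge 1} a_k w^k$ whose coefficients $a_k$ involve only elementary rational numbers. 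On the left-hand side, $F(c,\eta) - F(c-\eta^{-1},\eta)$ becomes $G(w) - G(w')$ where $w' = (c\eta - 1)^{-1} = w/(1-w)$, and one expands $G(w/(1-w))$ as a formal series in $w$. The identity to be checked is then a countable family of rational-number identities, and the cleanest route is to recognize both sides as coming from a single generating function built out of $w/(e^w - 1)$; concretely, the combination $2^{1-2n} - 1$ attached to $B_{2n}$ is exactly what one gets from the difference $\frac{w}{e^w-1} - \frac{w/2}{e^{w/2}-1}$ (equivalently, from $\coth$ versus $\tanh$ type identities), so I would set up the appropriate auxiliary generating function, differentiate/integrate formally in $w$ as needed to account for the $\frac{1}{2n(2n-1)}$ denominator, and reduce \eqref{eq:difference equation} to a known functional equation for $w/(e^w-1)$.

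The main obstacle I anticipate is purely bookkeeping: correctly handling the shift $c \mapsto c - \eta^{-1}$ (which is a nonlinear substitution $w \mapsto w/(1-w)$ at the level of the expansion variable) while simultaneously tracking the $\log$ terms on the right-hand side, and making sure the $n=1$ term (where $\frac{1}{2n(2n-1)} = \frac12$ and $2^{1-2n}-1 = -\tfrac12$) together with the non-summed pieces $-1$ and the leading terms of the logarithms all balance. It is easy to be off by a constant or by a factor of $2$; the safe approach is to verify the first two or three coefficients by hand as a sanity check before invoking the closed-form generating-function identity for the general coefficient.

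For part (ii), uniqueness, the argument is short: the difference equation \eqref{eq:difference equation} determines the coefficients $F_n$ recursively. Writing $F(c,\eta) = \sum_{n\ge 1} F_n (c\eta)^{-n}$ and expanding the left-hand side $F(c,\eta) - F(c-\eta^{-1},\eta)$, the coefficient of $(c\eta)^{-n}$ on the left is $F_n$ plus an explicit $\mathbb{Q}$-linear combination of $F_1, \dots, F_{n-1}$ (coming from expanding $(c\eta - 1)^{-m} = (c\eta)^{-m}(1 - (c\eta)^{-1})^{-m}$), while the right-hand side contributes a fixed known rational number. Hence $F_n$ is uniquely determined by $F_1, \dots, F_{n-1}$, and $F_1$ is determined by matching the $(c\eta)^{-1}$ coefficient directly; by induction all $F_n$ are forced, so any formal solution equals \eqref{eq:WeberVoros2}. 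The only point requiring a word of care is that the expansion of $(c\eta-1)^{-m}$ contributes to the coefficient of $(c\eta)^{-n}$ only for $m \le n$, and in particular the "new" unknown $F_n$ appears with coefficient exactly $1 - 1 = 0$ from the shift? — no: $F_n (c\eta)^{-n}$ from the first term and $F_n (c\eta - 1)^{-n} = F_n (c\eta)^{-n} + (\text{higher order})$ from the second, so the $(c\eta)^{-n}$ coefficients cancel. This means $F_n$ is actually pinned down by the coefficient of $(c\eta)^{-n-1}$ (or one reads it off from lower terms via the log expansion on the right). I would state the recursion carefully so that this index shift is transparent, and note that it is exactly the mechanism by which the right-hand side's explicit logarithmic terms seed the recursion.
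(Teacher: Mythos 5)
The paper itself offers no proof of this lemma --- it is quoted verbatim from Takei [T2, Lemma 1.2] --- so your argument has to be judged on its own. Your part (ii) is correct and is the standard recursion: expanding $(c\eta-1)^{-n}$ in powers of $(c\eta)^{-1}$, the coefficient of $(c\eta)^{-m}$ on the left-hand side of \eqref{eq:difference equation} equals $-\sum_{n=1}^{m-1}\binom{m-1}{m-n}F_n$, in which $F_{m-1}$ enters with the nonzero factor $-(m-1)$; hence each $F_n$ is determined by the coefficient of $(c\eta)^{-(n+1)}$ together with $F_1,\dots,F_{n-1}$. That is exactly the index shift you arrived at after your mid-paragraph correction (your first formulation, that $F_1$ is read off from the $(c\eta)^{-1}$ coefficient, is false --- that coefficient vanishes identically on both sides --- but your corrected version is right), and combined with part (i) it gives the claim.

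Part (i) as written has a genuine gap: the entire burden is placed on ``the closed-form generating-function identity for the general coefficient'', which you never state, and which is precisely the content of the assertion; checking the first few coefficients and then ``invoking'' that identity is circular. Your sketch also conflates two different variables called $w$: in the expansion variable $w=(c\eta)^{-1}$ the shift acts by $w\mapsto w/(1-w)$, whereas $w/(e^w-1)$ generates $B_{2n}/(2n)!$, not the $B_{2n}/\bigl(2n(2n-1)\bigr)$ occurring in $F$, and formal differentiation or integration in that same variable does not bridge the factorial discrepancy. Two concrete ways to close the gap: (a) pass to the Borel transform in $c\eta$, i.e.\ $(c\eta)^{-m}\mapsto w^{m-1}/(m-1)!$, under which the shift $c\mapsto c-\eta^{-1}$ becomes multiplication by $e^{w}$ and $w^{2}\hat F(w)=\frac{w}{e^{w}-1}-\frac{w}{e^{w/2}-1}+1$; the difference equation then becomes an identity between elementary functions of $w$ that can be verified outright. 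Or (b) set $G(z)=\sum_{n\ge1}\frac{B_{2n}}{2n(2n-1)}\,z^{1-2n}$ (the Stirling tail), note that $F(c,\eta)=G(c\eta)-G(2c\eta)$, prove the classical formal shift identity $G(z)-G(z-1)=1-\bigl(z-\tfrac12\bigr)\log\bigl(1+\tfrac{1}{z-1}\bigr)$ (itself provable by the same Borel-transform computation), and apply it at the arguments $z$, $2z$ and $2z-1$; the three logarithms recombine, via $\frac{2z}{2z-1}\cdot\frac{2z-1}{2z-2}=\frac{z}{z-1}$, into exactly the right-hand side of \eqref{eq:difference equation}. Either route supplies the missing general-$n$ step; without one of them your part (i) is only a plan, not a proof.
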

%%%%%%%%%%%%%%%%%%%%%%%%%%%%%%%%%%%%%%%%%%%%%%%%%%%%%%%%%%%%%%%%%%%%%%%%%%%%%%%%%%%%%%%%%%

\noindent
By \eqref{eq:homogenity of W} in Appendix, 
\begin{equation}
W(r^{-1}c,r \eta) = W(c,\eta)
\end{equation}
holds for all positive real numbers $r$. Thus $W(c,\eta)$ is written in the form 
$\sum_{n = 1}^{\infty} W_n (c \hspace{+.1em} \eta)^{-n}$ and $W_n$ ($n \ge 1$) is independent of $c$ .
Hence it is sufficient to show that $W(c,\eta)$ satisfies the difference equation 
\eqref{eq:difference equation}. 

%%%%%%%%%%%%%%%%%%%%%%%%%%%%%%%%%%%%%%%%%%%%%%%%%%%%%%%%%%%%%%%%%%%%%%%%%%%%%%%%%%%%%%%%%%
\begin{lemm}\label{expression of W} 
\[
W(c,\eta) = \frac{1}{2} \int_{\Gamma_{\infty}}
\bigl( R(t,c,\eta) - \eta R_{-1}(t,c) - R_0(t,c) \bigr) dt  .
\]
\end{lemm}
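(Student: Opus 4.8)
The plan is to split $R$ into its odd and even parts and to show that, once the two ``leading'' terms $\eta R_{-1}$ and $R_{0}$ are subtracted, the even part contributes nothing to the integral over $\Gamma_{\infty}$.

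First I would write $R = R_{\rm odd} + R_{\rm even}$ as in Remark \ref{odd part}. Since $\eta R_{-1}$ is exactly the $\eta^{1}$-part of $R$ (lying entirely inside $R_{\rm odd}$) while $R_{0}$ is the $\eta^{0}$-part of $R$, one has the purely algebraic identity
\[
R - \eta R_{-1} - R_{0} \;=\; \bigl( R_{\rm odd} - \eta R_{-1} \bigr) + \bigl( R_{\rm even} - R_{0} \bigr) .
\]
The first bracket is $\eta^{-1}R_{1} + \eta^{-3}R_{3} + \cdots$ and the second is $\eta^{-2}R_{2} + \cdots$, so by Lemma \ref{behavior of coeff of 1-parameter solution} both are integrable at $t = \infty$ (and, being contour integrals, also at $\tau_{1}$); integrating over $\Gamma_{\infty}$ and recalling \eqref{eq:PIIVoros}, the lemma reduces to $\int_{\Gamma_{\infty}} ( R_{\rm even} - R_{0} )\, dt = 0$.

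Next I would make $R_{\rm even} - R_{0}$ an exact $t$-derivative. From \eqref{eq:recurrence relation of lambda(0)} with $k = 1$ the cubic sum is empty and the right-hand side is $0$, so $\Delta\,\lambda^{(0)}_{1} = 0$, i.e.\ $\lambda^{(0)}_{1} \equiv 0$ (and, inductively, $\lambda^{(0)}_{2k+1} \equiv 0$ for every $k$, so $6(\lambda^{(0)})^{2}+t$ is even in $\eta$ and the $\eta^{0}$-part of $R_{\rm odd}$ vanishes). Feeding $\lambda^{(0)}_{1} = 0$ into \eqref{eq:R_k+1} with $k = -1$ gives $R_{0} = -\tfrac12\,\frac{d}{dt}\log R_{-1}$, so together with \eqref{eq:R_odd and R_even} this yields
\[
R_{\rm even} - R_{0} \;=\; -\frac12\,\frac{d}{dt}\log R_{\rm odd} + \frac12\,\frac{d}{dt}\log R_{-1} \;=\; -\frac12\,\frac{d}{dt}\log\frac{R_{\rm odd}}{\eta R_{-1}} .
\]
The ratio $R_{\rm odd}/(\eta R_{-1})$ is a formal power series in $\eta^{-1}$ with leading term $1$, and both $R_{\rm odd}$ and $R_{-1}$ change sign under the deck transformation of the Riemann surface of $\sqrt{\Delta}$, so the ratio — hence, coefficientwise in $\eta^{-1}$, its logarithm — is single-valued on that surface; in particular, around $\tau_{1}$ the fractional orders $(t-\tau_{1})^{-l/4}$ ($l$ odd) of the singular coefficients make the corresponding $1$-form residue-free, so the primitive is single-valued along $\Gamma_{\infty}$.

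Finally I would evaluate $\int_{\Gamma_{\infty}}(R_{\rm even} - R_{0})\,dt$ as the difference of the primitive $-\tfrac12\log\bigl(R_{\rm odd}/(\eta R_{-1})\bigr)$ between the two endpoints of $\Gamma_{\infty}$, which both lie over $t = \infty$. By Lemma \ref{behavior of coeff of 1-parameter solution} one has $R_{j}(t,c) = O(t^{-5/2})$ for $j \ge 1$ and $R_{-1}(t,c) \sim -\sqrt{2}\,i\,t^{1/2}$ as $t \to \infty$ along $\Gamma$, so $R_{\rm odd}/(\eta R_{-1}) \to 1$ there, and by the deck-invariance of the ratio the same limit holds at $\infty$ on the other sheet. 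Hence the primitive tends to $-\tfrac12\log 1 = 0$ at both endpoints, the integral vanishes, and the lemma follows. The part I expect to be most delicate is this last step — more precisely, the verification implicit in it that the primitive is genuinely single-valued along $\Gamma_{\infty}$ (especially around the $P$-turning point $\tau_{1}$, where $R_{\rm odd}$ and $R_{\rm even}$ themselves are singular); once that is secured, what remains is the odd/even bookkeeping and a direct appeal to the asymptotics already recorded in Lemma \ref{behavior of coeff of 1-parameter solution}.
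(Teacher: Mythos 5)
Your proof is correct and follows essentially the same route as the paper: reduce the claim to $\int_{\Gamma_{\infty}}\bigl(R_{\rm even}-R_{0}\bigr)\,dt=0$, use \eqref{eq:R_odd and R_even} together with the identity $R_{0}=-\tfrac{1}{2}\tfrac{1}{R_{-1}}\tfrac{dR_{-1}}{dt}$, and observe that the remaining total derivative $-\tfrac12\tfrac{d}{dt}\log\bigl(R_{\rm odd}/(\eta R_{-1})\bigr)$ integrates to zero along $\Gamma_{\infty}$. You merely make explicit two points the paper leaves tacit (that $\lambda^{(0)}_{1}=0$ forces the identification of the first term with $R_{0}$, and the endpoint/single-valuedness argument for the vanishing of the log term), both of which are sound.
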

%%%%%%%%%%%%%%%%%%%%%%%%%%%%%%%%%%%%%%%%%%%%%%%%%%%%%%%%%%%%%%%%%%%%%%%%%%%%%%%%%%%%%%%%%%
\begin{proof}
We have to prove that the following relation holds:
\[
\int_{\Gamma_{\infty}} R_{\rm{even}} (t,c,\eta) \hspace{+.2em} dt 
 =  \int_{\Gamma_{\infty}} R_0 (t,c) \hspace{+.2em} dt .
\]
By the relation \eqref{eq:R_odd and R_even}, we obtain
\begin{eqnarray*}
R_{\rm{even}} & = & - \frac{1}{2} \frac{d}{dt} \hspace{+.2em} \rm{log} R_{\rm{odd}} \\
& = & - \frac{1}{2 R_{-1}} \hspace{+.1em} \frac{d R_{-1}}{dt} 
 - \frac{1}{2} \hspace{+.1em} \frac{d}{dt} \hspace{+.2em} {\rm{log}}  
\Bigl(1 + \frac{R_{\rm{odd}}-\eta R_{-1}}{\eta R_{-1}} \Bigr) .
\end{eqnarray*}
Since the first term of the right-hand side coincides with $R_0$ 
and the integral of the second term along $\Gamma_{\infty}$ vanishes,  
we obtain the desired relation.
\end{proof}

\noindent
By Lemma \ref{expression of W}, if we define
\begin{eqnarray*}
I(t,c,\eta) & = & \int_{\Gamma_{t}}R(t,c,\eta) \hspace{+.2em} dt 
 - \int_{\Gamma_{t}}R(t,c-\eta^{-1},\eta) \hspace{+.2em} dt  \hspace{+.2em},  \label{eq:I}  \\
I_{j}(t,c,\eta) & = & \int_{\Gamma_{t}}R_{j}(t,c) \hspace{+.2em} dt  
 - \int_{\Gamma_{t}}R_{j}(t,c-\eta^{-1}) \hspace{+.2em} dt   \hspace{+1.em}(j \ge -1)  \hspace{+.2em},
        \label{eq:I_j}
\end{eqnarray*}
then we obtain
\begin{equation}
W(c,\eta) - W(c - \eta^{-1},\eta) 
 = \frac{1}{2} 
  \lim_{t \rightarrow \infty} \bigl( I(t,c,\eta) - \eta \hspace{+.1em} I_{-1}(t,c,\eta) - I_{0}(t,c,\eta) \bigr) , 
\label{eq:difference limit}
\end{equation}
where the limit is taken along the $P$-Stokes curve $\Gamma$  
in Figure \ref{fig:Riemann surface of sqrt Delta}. 
To calculate the right-hand side of \eqref{eq:difference limit}, we employ the so-called 
B$\ddot{\rm a}$cklund transformation which induces the translation of the parameter 
$c \mapsto c - \eta^{-1}$. 

%%%%%%%%%%%%%%%%%%%%%%%%%%%%%%%%%%%%%%%%%%%%%%%%%%%%%%%%%%%%%%%%%%%%%%%%%%%%%%%%%%%%%%%%%%
\begin{lemm} [{\cite[pp.423-424]{Jimbo-Miwa}}] \label{Backlund}
Let $(\lambda,\nu)$ be a solution of $(H_{\rm{II}})$.
%\footnote{ When we want to make the dependence on $c$ explicit, 
%we write $(H_{\rm{II}})_{c}$ instead of $(H_{\rm{II}})$. }.
If we define
\begin{eqnarray}
\left\{
\begin{array}{ll}
\displaystyle \Lambda(\lambda,\nu) = 
 -\lambda + \frac{c - \frac{1}{2}\eta^{-1}}{\nu - \lambda^2 - \frac{1}{2}t} , \\[+1.4em]
\displaystyle {\cal N}(\lambda,\nu) = - \nu + 
 \frac{2(c - \frac{1}{2}\eta^{-1})\lambda}{\nu - \lambda^2 - \frac{1}{2}t} -
 \Bigl( \frac{c - \frac{1}{2}\eta^{-1}}{\nu - \lambda^2 - \frac{1}{2}t} \Bigr)^2 ,
\end{array} \right.  
\label{eq:Backlund}
\end{eqnarray}
then $\bigl( \Lambda, {\cal N} \bigr) = \bigl( \Lambda(\lambda,\nu),{\cal N}(\lambda,\nu) \bigr)$ 
satisfies the following Hamiltonian system: 
\begin{eqnarray}
\left\{
\begin{array}{ll}
\displaystyle 
\frac{d \Lambda}{dt} = \eta \hspace{+.2em} {\cal N} , \\[+1.4em]
\displaystyle 
\frac{d{\cal N}}{dt} = \eta \hspace{+.2em} 
(2 \Lambda^3 + t \Lambda + c - \eta^{-1}) . 
\end{array} \right.  
\label{eq:translated Hamiltonian system}
\end{eqnarray}
Thus $\Lambda = \Lambda(\lambda, \nu)$ satisfies 
\begin{eqnarray}
\frac{d^2 \Lambda}{dt^2} = \eta^2 
(2 \Lambda^3 + t \Lambda + c - \eta^{-1}). \label{eq:translated PII}
\end{eqnarray}
\end{lemm}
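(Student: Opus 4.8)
The plan is to prove the lemma by a direct verification starting from the Hamiltonian system $(H_{\rm II})$. Write $a = c - \tfrac{1}{2}\eta^{-1}$ and abbreviate the common denominator occurring in \eqref{eq:Backlund} by $\delta = \nu - \lambda^2 - \tfrac{1}{2}t$, so that $\Lambda = -\lambda + a/\delta$ and $\mathcal N = -\nu + 2a\lambda/\delta - a^2/\delta^2$. The first step is to record how $\delta$ evolves along a solution of $(H_{\rm II})$: differentiating $\delta$ and inserting $d\lambda/dt = \eta\nu$, $d\nu/dt = \eta(2\lambda^3 + t\lambda + c)$ one gets
\begin{equation}
\frac{d\delta}{dt}
= \eta\bigl(2\lambda^3 + t\lambda + c - 2\lambda\nu\bigr) - \frac{1}{2}
= \eta\bigl(a - 2\lambda\,\delta\bigr),
\label{eq:delta-evolution}
\end{equation}
the second equality being the elementary identity $2\lambda^3 + t\lambda + c - 2\lambda\nu - \tfrac12\eta^{-1} = a - 2\lambda\delta$, checked by expanding $\delta$. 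This one-line relation \eqref{eq:delta-evolution} is the engine of the whole argument.

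Using \eqref{eq:delta-evolution} the first equation of \eqref{eq:translated Hamiltonian system} is immediate: differentiating $\Lambda = -\lambda + a/\delta$ gives $d\Lambda/dt = -\eta\nu - a\,\delta^{-2}\,d\delta/dt = -\eta\nu + 2\eta a\lambda/\delta - \eta a^2/\delta^2 = \eta\,\mathcal N$, so $d\Lambda/dt = \eta\,\mathcal N$ holds essentially by the very definition of $\mathcal N$, with no further computation. It then remains to verify the second equation $d\mathcal N/dt = \eta(2\Lambda^3 + t\Lambda + c - \eta^{-1})$; since we already have $d\Lambda/dt = \eta\,\mathcal N$, this is equivalent to showing that $\Lambda$ alone satisfies the translated equation \eqref{eq:translated PII}, i.e.\ $d^2\Lambda/dt^2 = \eta^2(2\Lambda^3 + t\Lambda + c - \eta^{-1})$.

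I would carry this out by differentiating $d\Lambda/dt = \eta\,\mathcal N$ once more, substituting $d\nu/dt$ from $(H_{\rm II})$ and $d\delta/dt$ from \eqref{eq:delta-evolution} on the left, while on the right expanding $\Lambda^3 = (-\lambda + a/\delta)^3$ and collecting the coefficients of $\delta^{0},\delta^{-1},\delta^{-2},\delta^{-3}$. The two sides are then matched term by term, eliminating $\nu$ via $\nu = \delta + \lambda^2 + \tfrac12 t$ and using the original relation $2\lambda^3 + t\lambda + c = \eta^{-1}\,d\nu/dt$. Once \eqref{eq:translated Hamiltonian system} is established, the last assertion---that $\Lambda(\lambda,\nu)$ satisfies \eqref{eq:translated PII}---follows by eliminating $\mathcal N$ between the two equations of \eqref{eq:translated Hamiltonian system}.

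The main obstacle is purely the algebraic bookkeeping in this last step: the powers $\delta^{-1},\delta^{-2},\delta^{-3}$ must all cancel simultaneously, and the $\delta^{-3}$ coefficient in particular only vanishes when the shift parameter is taken in the precise form $a = c - \tfrac12\eta^{-1}$---which is exactly what forces the translation $c \mapsto c - \eta^{-1}$ in \eqref{eq:translated PII}. Alternatively one could avoid the computation altogether by citing the affine-Weyl-group symmetry structure of Painlev\'e II, of which \eqref{eq:Backlund} is the standard generator realised on the Hamiltonian variables (this is precisely the source \cite{Jimbo-Miwa}); but since \eqref{eq:delta-evolution} makes the direct check short, I would keep the proof self-contained.
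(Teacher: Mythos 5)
Your proposal is correct and takes essentially the approach the paper itself intends: the paper gives no details, stating only that the lemma ``is easily confirmed by straightforward computations,'' and your relation $\frac{d\delta}{dt}=\eta(a-2\lambda\delta)$ followed by term-by-term matching in powers of $\delta^{-1}$ is exactly that computation, with the first Hamilton equation indeed falling out immediately. One small correction to your closing remark: carrying out the matching shows that the $\delta^{-1},\delta^{-2},\delta^{-3}$ coefficients cancel identically for \emph{any} constant $a$; it is the $\delta^{0}$ terms ($-c+2a$ on one side versus $c-\eta^{-1}$ on the other) that force $a=c-\tfrac{1}{2}\eta^{-1}$ and hence the shift $c\mapsto c-\eta^{-1}$ in \eqref{eq:translated PII}.
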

%%%%%%%%%%%%%%%%%%%%%%%%%%%%%%%%%%%%%%%%%%%%%%%%%%%%%%%%%%%%%%%%%%%%%%%%%%%%%%%%%%%%%%%%%%

\noindent
This lemma is easily confirmed by straightforward computations. 
With the aid of Lemma \ref{Backlund}, we can calculate the difference 
$R(t,c,\eta) - R(t,c - \eta^{-1},\eta)$ in the following manner.

%%%%%%%%%%%%%%%%%%%%%%%%%%%%%%%%%%%%%%%%%%%%%%%%%%%%%%%%%%%%%%%%%%%%%%%%%%%%%%%%%%%%%%%%%%
\begin{lemm} \label{application of Backlund transformation}
Let $(\lambda^{(0)}, \nu^{(0)})$ = $(\lambda^{(0)}(t,c,\eta), \nu^{(0)}(t,c,\eta))$
be a 0-parameter solution of $(H_{\rm{II}})$.

\noindent 
{(i)} 
\begin{eqnarray}
\Lambda \bigl( \lambda^{(0)}(t,c,\eta),\nu^{(0)}(t,c,\eta) \bigr)  
& = & \lambda^{(0)}(t,c-\eta^{-1},\eta) , \label{eq:Backlund lambda(0)}  \\
{\cal N} \bigl( \lambda^{(0)}(t,c,\eta),\nu^{(0)}(t,c,\eta) \bigr) 
& = & \nu^{(0)}(t,c-\eta^{-1},\eta) . \label{eq:Backlund nu(0)} 
\end{eqnarray}

\noindent
{(ii)} The formal solution $R(t,c,\eta)$ of \eqref{eq:R} satisfies the following:
\begin{equation}
R(t,c,\eta) - R(t,c-\eta^{-1},\eta) = 
 - \frac{d}{dt} \hspace{+.2em} {\rm{log}}
 \Biggl\{
 1 + \Bigl(c-\frac{1}{2}\eta^{-1} \Bigr) 
 \frac{\eta^{-1}R(t,c,\eta)-2\lambda^{(0)}(t,c,\eta)}
{(\nu^{(0)}(t,c,\eta) - \lambda^{(0)}(t,c,\eta)^2 - \frac{1}{2} t)^2}  
 \Biggr\} .
\label{eq:difference equation for R}
\end{equation}
\end{lemm}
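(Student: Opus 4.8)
The plan has three steps. For part (i), the route is the uniqueness of $0$-parameter solutions. By Lemma~\ref{Backlund} the pair $\bigl(\Lambda(\lambda^{(0)},\nu^{(0)}),\,\mathcal N(\lambda^{(0)},\nu^{(0)})\bigr)$ already solves the Hamiltonian system \eqref{eq:translated Hamiltonian system} with parameter $c-\eta^{-1}$, so it suffices to check that this pair is again a formal power series in $\eta^{-1}$ with the correct leading term; then \eqref{eq:Backlund lambda(0)}--\eqref{eq:Backlund nu(0)} follow at once. The only point needing care is the invertibility in $\mathbb C[[\eta^{-1}]]$ of the denominator $\nu^{(0)}-(\lambda^{(0)})^2-\tfrac12 t$ in \eqref{eq:Backlund}: using $\nu_0=0$ from \eqref{eq:nu_0} together with $2\lambda_0^3+t\lambda_0+c=0$ from \eqref{eq:lambda_0}, its leading term equals $-\lambda_0^2-\tfrac12 t=c/(2\lambda_0)$, which is non-zero precisely because $c\neq0$; and the same relation shows the leading term of $\Lambda(\lambda^{(0)},\nu^{(0)})$ is $-\lambda_0+c\cdot(2\lambda_0/c)=\lambda_0$, matching the branch of $\lambda^{(0)}(t,c-\eta^{-1},\eta)$.

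For part (ii), the idea is to differentiate the B\"acklund transformation along the $1$-parameter family. Apply \eqref{eq:Backlund} to a $1$-parameter solution $(\lambda(t,c,\eta;\alpha),\nu(t,c,\eta;\alpha))$ of $(H_{\rm II})$ and differentiate in $\alpha$ at $\alpha=0$. Writing $\mu:=\partial_\alpha\lambda|_{\alpha=0}=\eta^{-1/2}\lambda^{(1)}e^{\eta\phi_{\rm II}}$ and $\rho:=\partial_\alpha\nu|_{\alpha=0}$, the chain rule gives $M:=\partial_\alpha[\Lambda(\lambda,\nu)]|_{\alpha=0}=\Lambda_\lambda\,\mu+\Lambda_\nu\,\rho$ with $\Lambda_\lambda,\Lambda_\nu$ evaluated at $(\lambda^{(0)},\nu^{(0)})$, and since $\rho=\eta^{-1}R\,\mu$ by \eqref{eq:nu(1)} we get $M=\bigl(\Lambda_\lambda+\eta^{-1}R\,\Lambda_\nu\bigr)\mu$. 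Now I would compute $\tfrac{d}{dt}\log M$ in two ways. On one hand $\mu$ is, up to a $t$-independent factor, the recessive WKB solution $\tilde\lambda^{(1)}$ of the Fr\'echet derivative \eqref{eq:lambda(1)}, so $\tfrac{d}{dt}\log\mu=R(t,c,\eta)$ and hence $\tfrac{d}{dt}\log M=\tfrac{d}{dt}\log\bigl(\Lambda_\lambda+\eta^{-1}R\,\Lambda_\nu\bigr)+R(t,c,\eta)$. On the other hand, by part (i) the series $\Lambda(\lambda,\nu)$ solves $(P_{\rm II})$ with parameter $c-\eta^{-1}$ and reduces at $\alpha=0$ to $\lambda^{(0)}(t,c-\eta^{-1},\eta)$, so $M$ solves the corresponding Fr\'echet derivative equation; being recessive (see below), $M$ is a $t$-independent multiple of that equation's recessive WKB solution, whose logarithmic derivative is $R(t,c-\eta^{-1},\eta)$ — the latter being the branch-preserving substitution $c\mapsto c-\eta^{-1}$ in the Riccati solution of \eqref{eq:R}. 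Equating the two expressions,
\[
R(t,c,\eta)-R(t,c-\eta^{-1},\eta)=-\frac{d}{dt}\log\bigl(\Lambda_\lambda+\eta^{-1}R\,\Lambda_\nu\bigr)\big|_{(\lambda^{(0)},\nu^{(0)})}.
\]
It then remains only to differentiate $\Lambda=-\lambda+(c-\tfrac12\eta^{-1})(\nu-\lambda^2-\tfrac12 t)^{-1}$ and simplify, which gives $\Lambda_\lambda+\eta^{-1}R\,\Lambda_\nu=-\bigl(1+(c-\tfrac12\eta^{-1})(\eta^{-1}R-2\lambda^{(0)})(\nu^{(0)}-(\lambda^{(0)})^2-\tfrac12 t)^{-2}\bigr)$; the constant factor $-1$ drops out of the logarithmic derivative, leaving exactly \eqref{eq:difference equation for R}.

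The step I expect to be the main obstacle is justifying that $M$ is the \emph{recessive} WKB solution of the translated Fr\'echet equation, i.e. that $\tfrac{d}{dt}\log M=R(t,c-\eta^{-1},\eta)$ rather than the companion Riccati solution $R^\dagger(t,c-\eta^{-1},\eta)$. Here one uses that the B\"acklund transformation is an invertible rational transformation whose linearization has coefficients $\Lambda_\lambda,\Lambda_\nu\in\mathbb C[[\eta^{-1}]]$ with finite, non-zero leading terms (again by the computation in (i), which needs $c\neq0$): such coefficients can neither annihilate nor alter the exponential type $e^{\eta\phi_{\rm II}}$ of $\mu$ and $\rho$, so $M$ is again a non-zero formal series times $e^{\eta\phi_{\rm II}}$ and is therefore recessive in the domain under consideration (where $\mathrm{Re}\,\phi_{\rm II}<0$), while invertibility of the linearized map guarantees $M\neq0$. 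Everything else is a formal manipulation of logarithmic derivatives together with the routine differentiation of $\Lambda$ and a simplification via \eqref{eq:lambda_0}.
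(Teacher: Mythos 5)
Your proposal is correct and follows essentially the same route as the paper: part (i) by the uniqueness of $0$-parameter solutions of the translated Hamiltonian system after matching the leading term $\lambda_0(t,c)$, and part (ii) by applying the B\"acklund transformation to a $1$-parameter solution and comparing two expressions for the coefficient of $\alpha\,e^{\eta\phi_{\rm II}}$ (your $\partial_\alpha|_{\alpha=0}$ is exactly the paper's extraction of the first instanton term, and your identity $\Lambda_\lambda+\eta^{-1}R\,\Lambda_\nu=-\bigl(1+(c-\tfrac12\eta^{-1})(\eta^{-1}R-2\lambda^{(0)})(\nu^{(0)}-(\lambda^{(0)})^2-\tfrac12 t)^{-2}\bigr)$ reproduces the paper's bracket), then taking logarithmic derivatives. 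Your extra remarks — invertibility of the denominator via $-\lambda_0^2-\tfrac12 t=c/(2\lambda_0)$ and the argument that the transformed instanton term is the recessive solution attached to $R(t,c-\eta^{-1},\eta)$ rather than $R^\dagger$ — only make explicit points the paper leaves implicit, so no change of method is involved.
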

%%%%%%%%%%%%%%%%%%%%%%%%%%%%%%%%%%%%%%%%%%%%%%%%%%%%%%%%%%%%%%%%%%%%%%%%%%%%%%%%%%%%%%%%%%
%%%%%%%%%%%%%%%%%%%%%%%%%%%%%%%%%%%%%%%%%%%%%%%%%%%%%%%%%%%%%%%%%%%%%%%%%%%%%%%%%%%%%%%%%%
\begin{proof}
(i) By Lemma \ref{Backlund},  
$\bigl( \Lambda \bigl( \lambda^{(0)},\nu^{(0)} \bigr), {\cal N} \bigl( \lambda^{(0)}, \nu^{(0)} \bigr) \bigr)$ 
is a formal power series solution of \eqref{eq:translated Hamiltonian system}, 
%$\bigl( \Lambda \bigl( \lambda^{(0)},\nu^{(0)} \bigr), {\cal N} \bigl( \lambda^{(0)}, \nu^{(0)} \bigr) \bigr)$ 
so it is a 0-parameter solution of \eqref{eq:translated Hamiltonian system}. 
On the other hand, $\bigl( \lambda^{(0)}(t,c - \eta^{-1},\eta)$, $\nu^{(0)}(t,c - \eta^{-1},\eta) \bigr)$ 
is also a 0-parameter solution of \eqref{eq:translated Hamiltonian system}.
Because it follows from \eqref{eq:lambda_0} and \eqref{eq:nu_0} that 
the leading terms of $\Lambda \bigl( \lambda^{(0)},\nu^{(0)} \bigr)$ 
and $\lambda^{(0)}(t,c - \eta^{-1},\eta)$ both coincide with $\lambda_0(t,c)$ ,  
we obtain the relations \eqref{eq:Backlund lambda(0)} and \eqref{eq:Backlund nu(0)} 
due to the uniqueness of the coefficients of $\eta^{-k}$ ($k \ge 1$) 
of 0-parameter solutions of \eqref{eq:translated Hamiltonian system}.

\noindent
(ii) We apply the B$\ddot{\rm a}$cklund transformation \eqref{eq:Backlund} 
to a 1-parameter solution ($\lambda(t,c,\eta;\alpha)$, $\nu(t,c,\eta;\alpha)$) of $(H_{\rm II})$. 
Since $\Lambda(\lambda,\nu)$ is expanded as
\[
\Lambda(\lambda,\nu) =  
\Lambda(\lambda^{(0)},\nu^{(0)})
%\Biggl\{ 
%-{\lambda}^{(0)} + \frac{c - \frac{1}{2}\eta^{-1}}{\nu^{(0)} - {\lambda^{(0)}}^2 - \frac{1}{2}t}
%\Biggr\}  
+ 
\alpha \eta^{-\frac{1}{2}}
\Biggl\{ 
-\lambda^{(1)} -(c-\frac{1}{2}\eta^{-1}) 
\frac{\nu^{(1)} - 2 \lambda^{(0)} \lambda^{(1)}}
{(\nu^{(0)} - {\lambda^{(0)}}^2 - \frac{1}{2}t)^2}
\Biggr\} {{e}}^{\eta \phi_{\rm{II}}} + \cdots ,
\]
($\Lambda(\lambda,\nu)$, ${\cal N}(\lambda,\nu) $) is a 1-parameter solution 
of \eqref{eq:translated Hamiltonian system}.  
Hence
\[
\varphi^{(1)} = 
\alpha \eta^{-\frac{1}{2}}
\Biggl\{ 
-\lambda^{(1)} -(c-\frac{1}{2}\eta^{-1}) 
\frac{\nu^{(1)} - 2 \lambda^{(0)} \lambda^{(1)}}
{(\nu^{(0)} - {\lambda^{(0)}}^2 - \frac{1}{2}t)^2}
\Biggr\} {{e}}^{\eta \phi_{\rm{II}}}
\]
is a WKB solution of the linear differential equation obtained as the
Fr$\acute{\rm e}$chet derivative of \eqref{eq:translated PII}  
at $\Lambda = \Lambda(\lambda^{(0)}, \nu^{(0)}) = \lambda^{(0)}(t,c-\eta^{-1},\eta)$;
\[
\biggl( 
\frac{d^2}{d t^2} - \eta^2 \bigl(6 {\lambda^{(0)}(t,c-\eta^{-1},\eta)}^2 + t \bigr)  
\biggr) \varphi^{(1)} = 0 .
\]
This implies that $\varphi^{(1)}$ can be written as  
$\alpha \eta^{-\frac{1}{2}} C(\eta) \hspace{+.2em} {\rm{exp}}  
\Bigl( \int^{t} R(t,c-\eta^{-1},\eta) dt  \Bigr)$  
for some formal power series $C(\eta)$ of $\eta^{-1}$ whose coefficients
are independent of $t$. 
On the other hand, by \eqref{eq:nu(1)}, $\varphi^{(1)}$ is also expressed as 
\[
\alpha \eta^{-\frac{1}{2}} \tilde{C}(\eta) \hspace{+.2em} 
\Biggl\{ 
1 + \bigl( c-\frac{1}{2}\eta^{-1} \bigr) 
\frac{\eta^{-1} R(t,c,\eta) - 2 \lambda^{(0)}(t,c,\eta)}
{(\nu^{(0)}(t,c,\eta) - {\lambda^{(0)}(t,c,\eta)}^2 - \frac{1}{2}t)^2}
\Biggr\} {\rm{exp}}\Bigl( \int^{t} R(t,c,\eta) dt  \Bigr)   
\]
for some formal series $\tilde{C}(\eta)$ of $\eta^{-1}$ whose coefficients are independent of $t$. 
Therefore, by taking the logarithmic derivative with respect to $t$ 
of these two different expressions of $\varphi^{(1)}$, 
we obtain the relation \eqref{eq:difference equation for R}.
\end{proof}
%%%%%%%%%%%%%%%%%%%%%%%%%%%%%%%%%%%%%%%%%%%%%%%%%%%%%%%%%%%%%%%%%%%%%%%%%%%%%%%%%%%%%%%%%%

\noindent
By \eqref{eq:difference equation for R}, $I(t,c,\eta)$ is represented as
\begin{eqnarray*}
 I(t,c,\eta) & = &  
 - {\rm{log}}
\Biggl\{
 1 + \Bigl(c-\frac{1}{2}\eta^{-1} \Bigr) 
 \frac{\eta^{-1}R(t,c,\eta)-2\lambda^{(0)}(t,c,\eta)}
{(\nu^{(0)}(t,c,\eta) - \lambda^{(0)}(t,c,\eta)^2 - \frac{1}{2} t)^2}
\Biggr\} \nonumber \\
&  & +
{\rm{log}}
\Biggl\{
 1 + \Bigl(c-\frac{1}{2}\eta^{-1} \Bigr) 
 \frac{\eta^{-1}R(\check{t},c,\eta)-2\lambda^{(0)}(t,c,\eta)}
{(\nu^{(0)}(t,c,\eta) - \lambda^{(0)}(t,c,\eta)^2 - \frac{1}{2} t)^2}
\Biggr\} .
\end{eqnarray*}
Using Lemma \ref{behavior of coeff of 1-parameter solution} in Section 2 
and $R_k(\check{t},c) = (-1)^k R_k(t,c)$ ($k \ge -1$), 
we then obtain the following asymptotic behavior of $I(t,c,\eta)$ when 
taking the limit $t \rightarrow \infty$ along the 
$P$-Stokes curve $\Gamma$ in Figure \ref{fig:Riemann surface of sqrt Delta}:
\begin{equation}
I(t,c,\eta) =
- {\rm{log}} \Bigl(-\frac{(1 - 2 c\eta)^2}{128} \eta^{-2} t^{-3} \Bigl) + {O}(t^{-1}) .
\label{eq:behavior of I}
\end{equation}
Because $R_{-1}$ can be integrated explicitly as
\[
\int_{\Gamma_{t}} R_{-1}(t,c) dt = \frac{4}{3} t \sqrt{\Delta(t,c)}  
- 2 c \hspace{+.2em} {\rm{log}} 
\Biggl\{
 \frac{2 \lambda_0(t,c) - \sqrt{\Delta(t,c)}}{2 \lambda_0(t,c) + \sqrt{\Delta(t,c)}} 
\Biggr\} ,
\]
we also have the following asymptotic behavior of $I_{-1}(t,c,\eta)$:
\begin{equation}
\eta \hspace{+.1em} I_{-1} (t,c,\eta) = 2  
+ 2 c \eta \hspace{+.2em} {\rm{log}}
\Bigl(
\frac{c \eta - 1}{c \eta}
\Bigr)
 + 
\hspace{+.em} {\rm{log}}
\Bigl(
- \frac{32}{(c \eta - 1)^2} \eta^2 t^3
\Bigr)
+
{O}(t^{-1}) .
\label{eq:behavior of I_-1}
\end{equation}
Furthermore, since  $R_{0} = - \frac{1}{2} \frac{1}{R_{-1}} \frac{d R_{-1}}{dt}$ 
has a singularity of the form $R_0 \sim - \frac{1}{8} \hspace{+.1em} (t - \tau_1)^{-1} 
$ at $t = \tau_1$, we obtain
\[
\int_{\Gamma_{t}} R_0 \hspace{+.2em} dt  = 
4 \pi i \hspace{+.2em} {\rm Res}_{t = \tau_1} R_0  =  - \frac{\pi i}{2} .  
\]
This implies that
\begin{equation}
I_0(t,c,\eta) = 0 .
\label{eq:behavior of I_0}
\end{equation}
Making use of \eqref{eq:difference limit}, \eqref{eq:behavior of I}, 
\eqref{eq:behavior of I_-1} and \eqref{eq:behavior of I_0} 
and taking the limit $t \rightarrow \infty$, 
we thus obtain the following. 

%%%%%%%%%%%%%%%%%%%%%%%%%%%%%%%%%%%%%%%%%%%%%%%%%%%%%%%%%%%%%%%%%%%%%%%%%%%%%%%%%%%%%%%%%%
\begin{prop} \label{proposition for Voros coeff}
The $P$-Voros coefficient $W(c,\eta)$ is a formal solution of the 
following difference equation:
\[
W(c,\eta) - W(c - \eta^{-1},\eta) = - 1 + 
c\eta \hspace{+.2em} \rm{log}
\Bigl( 1 + \frac{1}{c \eta - 1} \Bigr)
- {\rm{log}}
\Bigl( 1 + \frac{1}{2(c\eta - 1)} \Bigr) .
\]
\end{prop}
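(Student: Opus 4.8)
The statement we need to establish is Proposition \ref{proposition for Voros coeff}: that $W(c,\eta)$ satisfies the difference equation
\[
W(c,\eta) - W(c - \eta^{-1},\eta) = - 1 +
c\eta \log\Bigl( 1 + \frac{1}{c \eta - 1} \Bigr)
- \log\Bigl( 1 + \frac{1}{2(c\eta - 1)} \Bigr).
\]
Nearly all of the ingredients have already been assembled in the excerpt, so the plan is essentially to combine them carefully and take a limit. First I would recall the key reduction: by Lemma \ref{expression of W}, $W(c,\eta)=\frac12\int_{\Gamma_\infty}\bigl(R-\eta R_{-1}-R_0\bigr)\,dt$, and hence the difference $W(c,\eta)-W(c-\eta^{-1},\eta)$ equals $\frac12\lim_{t\to\infty}\bigl(I(t,c,\eta)-\eta I_{-1}(t,c,\eta)-I_0(t,c,\eta)\bigr)$ as in \eqref{eq:difference limit}, where the limit is taken along the $P$-Stokes curve $\Gamma$. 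So the task is to evaluate this limit.

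The three pieces are handled separately. For $I_0$, I would use the fact that $R_0=-\frac12 R_{-1}^{-1}\,dR_{-1}/dt$ has only a simple pole at $t=\tau_1$ with residue $-\tfrac18$ on the relevant sheet, so the contour integral over $\Gamma_t$ picks up $4\pi i\cdot(-\tfrac18)=-\tfrac{\pi i}{2}$, a $t$-independent constant; therefore $I_0(t,c,\eta)\equiv 0$, which is \eqref{eq:behavior of I_0}. For $\eta I_{-1}$, I would substitute the explicit antiderivative $\int_{\Gamma_t}R_{-1}\,dt = \tfrac43 t\sqrt{\Delta} - 2c\log\bigl\{(2\lambda_0-\sqrt\Delta)/(2\lambda_0+\sqrt\Delta)\bigr\}$, form the difference between the $c$ and $c-\eta^{-1}$ versions, and expand using the asymptotics of $\lambda_0$ and $R_{-1}$ from Lemma \ref{behavior of coeff of 1-parameter solution}; the $\tfrac43 t\sqrt\Delta$ contributions must cancel up to lower order, leaving \eqref{eq:behavior of I_-1}. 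For $I$ itself, I would start from the closed-form expression for $I(t,c,\eta)$ obtained from Lemma \ref{application of Backlund transformation}(ii) as a difference of two logarithms involving $R(t,c,\eta)$ and $R(\check t,c,\eta)$, use $R_k(\check t,c)=(-1)^kR_k(t,c)$ to see that the leading $\eta R_{-1}$ terms have opposite signs at $t$ and $\check t$, and plug in the large-$t$ asymptotics to get \eqref{eq:behavior of I}, namely $I=-\log\bigl(-\tfrac{(1-2c\eta)^2}{128}\eta^{-2}t^{-3}\bigr)+O(t^{-1})$.

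The final step is purely algebraic: add $I$, subtract $\eta I_{-1}$ and $I_0$, and check that the $t$-dependent pieces (the $\log t^{3}$ and $\log t^{-3}$ terms) and the $O(t^{-1})$ remainders cancel, so that the limit exists and equals $-2 - 2c\eta\log\bigl((c\eta-1)/(c\eta)\bigr) - \log\bigl(-\tfrac{(1-2c\eta)^2}{128}\eta^{-2}\bigr) - \log\bigl(-32/(c\eta-1)^2\cdot\eta^2\bigr)$ up to the constant from $I_0$; then I would simplify the constant logarithms (the $128$, $32$, and sign factors combine to give the $\log(1+\tfrac{1}{2(c\eta-1)})$ term, the $(1-2c\eta)^2=4(c\eta-\tfrac12)^2$ factor contributes, and the $(c\eta-1)$ powers recombine into $c\eta\log(1+\tfrac1{c\eta-1})$), multiply by $\tfrac12$, and verify the result is exactly the right-hand side of the claimed difference equation. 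The main obstacle I anticipate is bookkeeping rather than conceptual: keeping the branches of $\sqrt\Delta$, the logarithms, and the various signs ($\ominus$ sign of $\Gamma$, the factor $-1$ inside the logs coming from $-\tfrac{(1-2c\eta)^2}{128}\eta^{-2}t^{-3}$) consistent throughout, and confirming that the $\tfrac43 t\sqrt\Delta$-type growing terms in $\eta I_{-1}$ genuinely cancel in the difference so that the $t\to\infty$ limit is finite. Once that is done, Proposition \ref{proposition for Voros coeff} follows, and combined with Lemma \ref{lemma for Voros coeff}(ii) it yields Theorem \ref{main theorem 1}.
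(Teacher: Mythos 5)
Your proposal is correct and follows essentially the same route as the paper: reduce via Lemma \ref{expression of W} to the limit formula \eqref{eq:difference limit}, evaluate $I$, $\eta I_{-1}$, $I_0$ using the B\"acklund-transformation identity \eqref{eq:difference equation for R}, the explicit antiderivative of $R_{-1}$, the residue of $R_0$ at $\tau_1$, and the asymptotics of Lemma \ref{behavior of coeff of 1-parameter solution}, then combine the logarithms and take $t\to\infty$. The final algebraic simplification you sketch indeed reproduces the right-hand side of the difference equation, so no gap remains.
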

%%%%%%%%%%%%%%%%%%%%%%%%%%%%%%%%%%%%%%%%%%%%%%%%%%%%%%%%%%%%%%%%%%%%%%%%%%%%%%%%%%%%%%%%%%

\noindent
By Lemma \ref{lemma for Voros coeff} and Proposition \ref{proposition for Voros coeff}, 
the proof of Theorem \ref{main theorem 1} is completed.  $\Box$
\\[-.5em]

Using Theorem \ref{main theorem 1}, we can explicitly analyze the parametric Stokes phenomenon 
for the $P$-Voros coefficient  which occurs when arg \hspace{-.5em} $c$ varies near $\frac{\pi}{2}$.
%%%%%%%%%%%%%%%%%%%%%%%%%%%%%%%%%%%%%%%%%%%%%%%%%%%%%%%%%%%%%%%%%%%%%%%%%%%%%%%%%%%%%%%%%%
\begin{cor} \label{connection formula for PII Voros}
By denoting the Borel resummation operator by ${\cal S}$,    
we obtain the following:
\begin{equation}
{\cal S}\bigl[ {{e}}^{W(c,\eta)}\bigl|_{{\rm{arg}}c = \frac{\pi}{2} - \varepsilon} \bigr] = 
(1 + {{e}}^{2 \pi i c \eta}) \hspace{+.2em}
{\cal S}\bigl[ {{e}}^{W(c,\eta)}\bigl|_{{\rm{arg}}c = \frac{\pi}{2} + \varepsilon} \bigr] ,
\label{eq:connection formula for PII Voros}
\end{equation}
where $\varepsilon$ is a sufficiently small positive number.
\end{cor}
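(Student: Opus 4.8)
The plan is to combine the explicit formula for $W(c,\eta)$ given by Theorem \ref{main theorem 1} with an analysis in the Borel plane, exactly as Takei does for the Weber equation in \cite{Takei Sato conjecture}: I would write the Borel transform of $W$ in closed form, identify its singularities, and read off the discontinuity of the Borel sum as $\arg c$ crosses $\frac{\pi}{2}$. After exponentiation, the residue contributions of the singularities that sweep across the summation ray will assemble into the factor $1+e^{2\pi i c\eta}$.

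First I would compute the Borel transform of \eqref{eq:expression of PIIVoros} with respect to $\eta$. Denoting by $y$ the variable conjugate to $\eta$ and using $\frac{1}{2n(2n-1)}=\frac{(2n-2)!}{(2n)!}$, the term-by-term transform is $W_B(c,y)=-\sum_{n\ge1}(2^{1-2n}-1)\frac{B_{2n}}{(2n)!}c^{1-2n}y^{2n-2}$. Setting $w=y/c$ and applying the generating function \eqref{eq:Bernoulli number} twice, with arguments $w$ and $w/2$, collapses the sum to the closed form
\[
W_B(c,y)=\frac{(e^{w}-1)-w\,e^{w/2}}{c\,w^{2}\,(e^{w}-1)},\qquad w=\frac{y}{c}.
\]
In particular $W_B$ is holomorphic at $y=0$ — so $W$ is Borel summable in non-Stokes directions — and extends to a meromorphic function on ${\mathbb C}$ whose only singularities are simple poles at $y=2\pi i m c$ for $m\in{\mathbb Z}\setminus\{0\}$, with ${\rm Res}_{\,y=2\pi i m c}\,W_B(c,y)=\dfrac{(-1)^{m}i}{2\pi m}$.

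Next I would locate these poles relative to the summation ray $\arg y=0$. For $\arg c=\frac{\pi}{2}\mp\varepsilon$ the poles with $m<0$, namely $y=-2\pi i k c$ with $k\ge1$, have argument $\mp\varepsilon$, hence lie just below, resp.\ just above, the ray, whereas the poles with $m>0$ cluster near the negative real axis and are irrelevant. Thus precisely the poles $y=-2\pi i k c$ cross the ray as $\arg c$ increases through $\frac{\pi}{2}$, so comparing the two Borel sums by the residue theorem and using ${\rm Res}_{\,y=-2\pi i k c}W_B=\frac{(-1)^{k+1}i}{2\pi k}$ gives
\begin{align*}
{\cal S}\bigl[W|_{\arg c=\frac{\pi}{2}-\varepsilon}\bigr]-{\cal S}\bigl[W|_{\arg c=\frac{\pi}{2}+\varepsilon}\bigr]
&=-2\pi i\sum_{k\ge1}e^{2\pi i k c\eta}\,{\rm Res}_{\,y=-2\pi i k c}\,W_B(c,y) \\
&=\sum_{k\ge1}\frac{(-1)^{k+1}}{k}\,e^{2\pi i k c\eta} \\
&=\log\bigl(1+e^{2\pi i c\eta}\bigr),
\end{align*}
the last series converging because $e^{2\pi i c\eta}$ is exponentially small in the region considered. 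Finally, since $W$ has no constant term, $e^{W}$ is again Borel summable in these directions and ${\cal S}[e^{W}]=e^{{\cal S}[W]}$ (Borel summation being compatible with products, hence with composition by $\exp$); combining this with the identity just displayed yields ${\cal S}\bigl[e^{W}|_{\arg c=\frac{\pi}{2}-\varepsilon}\bigr]=(1+e^{2\pi i c\eta})\,{\cal S}\bigl[e^{W}|_{\arg c=\frac{\pi}{2}+\varepsilon}\bigr]$, which is \eqref{eq:connection formula for PII Voros}.

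The closed-form evaluation of $W_B$ and of its residues is routine. The point requiring care is the bookkeeping in the contour deformation: one must verify that $\arg y=0$ is the correct summation ray, that only the poles with $m<0$ actually reach it at $\arg c=\frac{\pi}{2}$ (those with $m>0$ lying on the opposite ray), and that the orientation of the wedge swept out by the contour yields the prefactor $-2\pi i$ rather than $+2\pi i$. It is this sign, together with that of the residue, which produces $\log(1+e^{2\pi i c\eta})$ with the correct sign — and hence the factor $1+e^{2\pi i c\eta}$ itself, not its reciprocal.
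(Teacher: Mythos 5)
Your proof is correct, but it follows a different route from the paper. The paper disposes of Corollary \ref{connection formula for PII Voros} in two lines: it quotes Takei's explicit formula \eqref{eq:Voros coefficient of Weber equation} for the Voros coefficient of the Weber equation, observes the identity $W(c,\eta)=-2\,V_{\rm Weber}(-ic,\eta)$, and then invokes the connection formula of \cite[Theorem 2.1]{Takei Sato conjecture} verbatim. You instead work directly from Theorem \ref{main theorem 1}: your closed form $W_B(c,y)=\bigl((e^{w}-1)-w e^{w/2}\bigr)/\bigl(c\,w^{2}(e^{w}-1)\bigr)$, $w=y/c$, is the correct Borel transform (I checked it against \eqref{eq:Bernoulli number}); the residues $\frac{(-1)^{m}i}{2\pi m}$ at $y=2\pi i m c$ are right; only the $m<0$ poles cross the summation ray $\arg y=0$ as $\arg c$ passes $\frac{\pi}{2}$ (the $m>0$ poles cross the negative real axis); and the clockwise orientation of the swept contour indeed gives ${\cal S}^{-}[W]-{\cal S}^{+}[W]=\sum_{k\ge 1}\frac{(-1)^{k+1}}{k}e^{2\pi i k c\eta}=\log(1+e^{2\pi i c\eta})$, hence the stated factor and not its reciprocal. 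In effect you have reproved, for $W$ itself, the mechanism underlying Takei's Theorem 2.1, so the mathematics is the same one level down; what your version buys is self-containedness and an explicit identification of where the factor $1+e^{2\pi i c\eta}$ comes from (the poles at $y=-2\pi i k c$), together with a direct check of lateral Borel summability, while the paper's version buys brevity by delegating both of these to the citation. Two small points you should state explicitly if this is written up: the identity is to be read via analytic continuation in $c$ across $\arg c=\frac{\pi}{2}$ (the two lateral sums live a priori on different sectors), and the step ${\cal S}[e^{W}]=e^{{\cal S}[W]}$ uses that Borel summation in a fixed (lateral) direction is an algebra homomorphism applied to the series $e^{W}=\sum_{j\ge 0}W^{j}/j!$, which is legitimate here since $W$ has no constant term and $W_B$ decays along the ray.
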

%%%%%%%%%%%%%%%%%%%%%%%%%%%%%%%%%%%%%%%%%%%%%%%%%%%%%%%%%%%%%%%%%%%%%%%%%%%%%%%%%%%%%%%%%%
\begin{proof}
By \cite[Theorem 1.1]{Takei Sato conjecture}, 
the Voros coefficient $V_{\rm Weber}(E,\eta)$ of the Weber equation \eqref{eq:Weber equation1}
%\begin{equation}
%\Bigl( \frac{d^2}{d x^2}-\eta^2(E-\frac{1}{4}x^2) \Bigr) \psi = 0 
%    \label{eq:Weber equation}
%\end{equation}
is represented as 
\begin{equation}
V_{\rm{Weber}}(E,\eta) = \frac{1}{2} \sum_{n = 1}^{\infty} 
\frac{2^{1-2n}-1}{2n (2n-1)} B_{2n} (i E \eta)^{1-2n} .
\label{eq:Voros coefficient of Weber equation}
\end{equation}
Because the Voros coefficient of the Weber equation $V_{\rm Weber}(E,\eta)$ 
and the $P$-Voros coefficient $W(c,\eta)$ are related as
\[
W(c,\eta) = 
- 2 \hspace{+.2em} V_{\rm{Weber}}(- i c,\eta) ,
\]
the relation \eqref{eq:connection formula for PII Voros} 
follows from \cite[Theorem 2.1]{Takei Sato conjecture} immediately. 
\end{proof}

%%%%%%%%%%%%%%%%%%%%%%%%%%%%%%%%%%%%%%%%%%%%%%%%%%%%%%%%%%%%%%%%%%%%%%%%%%%%%%%%%%%%%%%%%%
%%%%%%%%%%%%%%%%%%%%%%%%%%%%%%%%%%%%%%%%%%%%%%%%%%%%%%%%%%%%%%%%%%%%%%%%%%%%%%%%%%%%%%%%%%

\subsection{Derivation of the connection formula for the parametric Stokes phenomena 
through the Voros coefficient of ($P_{\rm II}$)}

We determine the connection formula for the parametric Stokes phenomenon 
which occurs when arg \hspace{-.5em} $c$ varies near $\frac{\pi}{2}$ 
for the 1-parameter solutions $\lambda_{\infty}$ and $\lambda_{\tau_1}$ of ($P_{\rm II}$).  

We know the following result about the Borel summability of WKB solutions 
of second order linear differential equations.
%%%%%%%%%%%%%%%%%%%%%%%%%%%%%%%%%%%%%%%%%%%%%%%%%%%%%%%%%%%%%%%%%%%%%%%%%%%%%%%%%%%%%%%%%%
\begin{theo} [{\cite[Theorem 1.2.2]{Pham}}] \label{theorem for fixed singularity}
Consider a second order linear differential equation of the form
\[
\Bigl( \frac{d^2}{d x^2} - \eta^2 Q(x) \Bigr) \psi = 0 ,
\]
where $Q(x)$ is a polynomial of $x$.
Let $\psi_{\pm}$ be a WKB solution of the above equation normalized at infinity 
(like \eqref{eq:normalized at infinity}), 
where the path of integration from $x = \infty$ is assumed to touch with 
no turning points and no Stokes curves. 
Then, $\psi_{\pm}$ is Borel summable even under the situation where the 
degeneration of Stokes curves occurs.
\end{theo}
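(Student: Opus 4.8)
The plan is to reduce the assertion to a holomorphy-and-growth estimate for the Borel transform of the formal Riccati data attached to $Q$, to establish that estimate by a majorant/successive-approximation argument, and finally to observe that a degeneration of Stokes curves merely rearranges the singularities of that Borel transform among themselves, without moving any of them onto the ray of Borel summation. First I would write the WKB solution, as in \eqref{eq:normalized at infinity}, in the form
\[
\psi_{\pm} = S_{\mathrm{odd}}^{-1/2}\exp\left(\pm\int_{\infty}^{x}\bigl(S_{\mathrm{odd}}-\eta\sqrt{Q}\bigr)\,dx \pm \eta\int_{x_{0}}^{x}\sqrt{Q}\,dx\right),
\]
where $S=\eta\sqrt{Q}+S_{0}+\eta^{-1}S_{1}+\cdots$ solves the Riccati equation $S^{2}+S'=\eta^{2}Q$, $S_{\mathrm{odd}}$ is its odd part, and $x_{0}$ is a finite base point (whose choice only alters $\psi_{\pm}$ by a factor $e^{\mathrm{const}\cdot\eta}$, hence is irrelevant for summability). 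Since the leading exponential is elementary and both the prefactor $S_{\mathrm{odd}}^{-1/2}$ and the subtracted integral $\sigma:=\int_{\infty}^{x}(S_{\mathrm{odd}}-\eta\sqrt{Q})\,dx$ are, after the normalization at $x=\infty$, honest formal power series in $\eta^{-1}$ with coefficients holomorphic along the chosen path $\gamma$, Borel summability of $\psi_{\pm}$ along $\gamma$ is equivalent to Borel summability of $\sigma$ and of $\log S_{\mathrm{odd}}-\log(\eta\sqrt{Q})$. Setting $r:=S-\eta\sqrt{Q}$ and inserting this into the Riccati equation gives $2\eta\sqrt{Q}\,r+r^{2}+r'+(\eta\sqrt{Q})'=0$, whose Borel transform in $\eta$ (dual variable $y$) is a Volterra-type convolution equation
\[
2\sqrt{Q}\,\partial_{y}r_{B}+\partial_{x}r_{B}+r_{B}*_{y}r_{B}=g_{B},
\]
with $g_{B}$ an explicit holomorphic inhomogeneity built from $\sqrt{Q}$; the transport operator $2\sqrt{Q}\,\partial_{y}+\partial_{x}$ has characteristics along which $y-2\int\sqrt{Q}\,dx$ is constant, which is exactly the bookkeeping that locates the singularities of $r_{B}$.

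Next I would solve this convolution equation by iteration in a Banach space of functions $r_{B}(x,y)$ holomorphic on a domain $\{(x,y):x\in\gamma_{\delta},\ y\in D_{x}\}$, where $\gamma_{\delta}$ is a thin complex neighborhood of $\gamma$ and $D_{x}$ is a $y$-domain containing a full neighborhood of $\mathbb{R}_{\ge 0}$ while avoiding the ``forbidden'' points $y=2\int_{a}^{x}\sqrt{Q}\,dx$ attached to the turning points $a$ of $Q$ together with their translates by periods of $\sqrt{Q}\,dx$, equipped with a weighted norm $\|r_{B}\|=\sup|r_{B}(x,y)|\,e^{-M|y|}$. The hypothesis that $\gamma$ meets no turning point and no Stokes curve is used precisely here: it guarantees that every forbidden point stays off $\mathbb{R}_{\ge 0}$ as $x$ runs over $\gamma$, so such a $D_{x}$ exists and may be chosen to vary continuously along $\gamma$. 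Standard majorant estimates for the Volterra integral in $y$ and the transport in $x$ then make the Neumann iterates contractive in this norm once $M$ is large enough, producing a solution $r_{B}$ holomorphic on the stated domain with at worst exponential growth in $y$. Exponentiating and integrating preserve these properties, so the Laplace integral along $\mathbb{R}_{\ge 0}$ converges and reconstructs a function asymptotic to the formal series; together with the Gevrey-$1$ bound for the coefficients coming from the recursion (equivalently, holomorphy of $r_{B}$ near $y=0$), this is exactly the statement that $\psi_{\pm}$ is Borel summable along $\gamma$. (Alternatively one can follow Pham's resurgence route: prove endless continuability of the Borel transform first and then invoke a summability criterion of Nevanlinna--Sokal/Watson type.)

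Finally, for the clause ``even under the situation where the degeneration of Stokes curves occurs'': a degeneration means two turning points $a,b$ are joined by a Stokes curve, i.e.\ $\int_{a}^{b}\sqrt{Q}\,dx\in\mathbb{R}$, which forces two of the forbidden points above --- equivalently two singularities of $r_{B}$ --- to share the same imaginary part for every $x$. Since $\gamma$ is still assumed to avoid all Stokes curves, that common imaginary part is nonzero along $\gamma$, so both of these singularities remain off $\mathbb{R}_{\ge 0}$ and the construction of $D_{x}$ goes through verbatim; hence $\psi_{\pm}$ is still Borel summable. (By contrast, quantities that genuinely ``see'' the period $2\int_{a}^{b}\sqrt{Q}\,dx$, such as the Voros coefficients and the connection matrices, do develop Borel singularities on $\mathbb{R}_{\ge 0}$ at the degeneration --- which is the very mechanism of the parametric Stokes phenomenon studied in this paper.) I expect the one genuine difficulty to be uniformity: one must check that $D_{x}$, the complex neighborhood $\gamma_{\delta}$, and the constant $M$ can all be chosen independently of the external parameter throughout a full neighborhood of its degenerate value, which is a continuity/compactness argument on the Stokes geometry of $Q$ that must be carried out with some care.
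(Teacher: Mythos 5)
You should first be aware that the paper contains no proof of this statement to compare against: it is imported verbatim from Delabaere--Pham \cite{Pham} (their Theorem 1.2.2), so what you have written is a reconstruction of a cited external result. As a reconstruction, your route is the integral-equation one (in the spirit of Koike--Sch\"afke): split off the elementary exponential, reduce summability of $\psi_\pm$ to that of $\int_{\infty}^{x}(S_{\rm odd}-\eta S_{-1})\,dx$ and of $\log S_{\rm odd}$, Borel-transform the Riccati equation into a transport-plus-convolution equation, and run a weighted fixed-point argument on $y$-domains containing $\mathbb{R}_{\ge 0}$ and avoiding the points $2\int_{a}^{x}\sqrt{Q}\,dx$ and their period translates. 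That is a genuinely different route from Delabaere--Pham's resurgent proof (endless continuability of the Borel transform plus a Watson/Nevanlinna-type summability criterion), and your final paragraph does capture the correct mechanism for the clause about degeneration: the degeneracy only forces certain fixed singular points to share an imaginary part, and since the path avoids Stokes curves that common imaginary part is nonzero, so the summation ray stays clear. This is exactly why $\lambda_\infty$ behaves differently from $\lambda_{\tau_1}$ in the paper.

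However, as a proof there are genuine gaps at the technical core. (a) The decisive step is buried in ``standard majorant estimates make the Neumann iterates contractive'': the normalization point is $x=\infty$, so the $x$-domain is unbounded, and you must show that the weight $M$, the neighborhood $\gamma_\delta$, and the domains $D_x$ can be chosen uniformly as $x\to\infty$ along the path; this is where the decay of the coefficients of $S_{\rm odd}-\eta S_{-1}$ at infinity (the analogue of Lemma \ref{lemma for behaviors of S}) must enter, and without it the contraction and the convergence of the outer integral are not established. Moreover, the assertion that the only ``forbidden'' points are $2\int_{a}^{x}\sqrt{Q}\,dx$ plus periods is an output of the analysis, not an input: in the fixed-point framework you must prove that the characteristics of $2\sqrt{Q}\,\partial_y+\partial_x$ combined with the convolution never push singularities onto $\mathbb{R}_{\ge 0}$, i.e.\ that the iterates remain holomorphic and suitably bounded up to the boundary of $D_x$; that is the delicate part and is only asserted. (b) A smaller but real point: the inhomogeneity $\eta(\sqrt{Q})'$ has positive order in $\eta$, so it has no Borel transform as a function; you must first peel off $S_0=-\tfrac14 Q'/Q$ (and fix the convention under which multiplication by $\eta$ becomes $\partial_y$ with the correct contribution at $y=0$) before your convolution equation makes sense. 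Finally, note that even a complete proof of the theorem as stated would not fully justify the use made of it in Section 3.3, where it is applied to \eqref{eq:lambda(1)} whose potential $6(\lambda^{(0)})^2+t$ is an infinite series in $\eta^{-1}$ rather than a polynomial --- which is precisely why the paper says the theorem only ``suggests'' the Borel summability of $\tilde{\lambda}^{(1)}_{\infty}$.
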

%%%%%%%%%%%%%%%%%%%%%%%%%%%%%%%%%%%%%%%%%%%%%%%%%%%%%%%%%%%%%%%%%%%%%%%%%%%%%%%%%%%%%%%%%%

This result suggests that 
no parametric Stokes phenomenon occurs 
(when arg \hspace{-.4em} $c$ varies near $\frac{\pi}{2}$) 
for $\tilde{\lambda}_{\infty}^{(1)}$ given by \eqref{eq:normalized at infinity}, 
which is a WKB solution of \eqref{eq:lambda(1)} 
normalized at infinity along the path in Figure \ref{fig:normalization path of lambda(1)_infinity}. 
That is, the following holds:
\[
{\cal S} \bigl[ \tilde{\lambda}_{\infty}^{(1)}(t,c,\eta;\alpha) 
\big|_{{\rm arg}c = \frac{\pi}{2} - \varepsilon} \bigr]
 = 
{\cal S} \bigl[ \tilde{\lambda}_{\infty}^{(1)}(t,c,\eta;\tilde{\alpha}) 
\big|_{{\rm arg}c = \frac{\pi}{2} + \varepsilon} \bigr]
\]
\begin{equation}
\Rightarrow \tilde{\alpha} = {\alpha} .
\label{eq:connection formula for lambda(1)_infinity}
\end{equation}
On the other hand, combining \eqref{eq:relation for two normalizations}, 
\eqref{eq:connection formula for lambda(1)_infinity} 
and Corollary \ref{connection formula for PII Voros},
we obtain the following connection formula  
for $\tilde{\lambda}^{(1)}_{\tau_1}$ given by \eqref{eq:normalized at tau1}: 
\[
{\cal S} \bigl[ \tilde{\lambda}_{\tau_1}^{(1)}(t,c,\eta;\alpha) 
\big|_{{\rm arg}c = \frac{\pi}{2} - \varepsilon} \bigr]
 = 
{\cal S} \bigl[ \tilde{\lambda}_{\tau_1}^{(1)}(t,c,\eta;\tilde{\alpha}) 
\big|_{{\rm arg}c = \frac{\pi}{2} + \varepsilon} \bigr]
\]
\[
\Rightarrow 
{\alpha} \hspace{+.2em} {\cal S} \bigl[ e^W \big|_{{\rm arg}c = \frac{\pi}{2} - \varepsilon} \bigr]
 = 
\tilde{\alpha} \hspace{+.2em} {\cal S} \bigl[ e^W \big|_{{\rm arg}c = \frac{\pi}{2} + \varepsilon} \bigr]
\]
\begin{equation}
\Rightarrow \tilde{\alpha} = (1 + e^{2 \pi i c \eta}) \hspace{+.2em} {\alpha} .
\end{equation}
As noted in Section 2, 1-parameter solutions are determined uniquely 
once the normalizations of $\tilde{\lambda}^{(1)}$ are fixed. 
These observations suggest that the parametric Stokes phenomena 
for the 1-parameter solutions of ($P_{\rm II}$) can be described as follows: \\[-.3em]

%%%%%%%%%%%%%%%%%%%%%%%%%%%%%%%%%%%%%%%%%%%%%%%%%%%%%%%%%%%%%%%%%%%%%%%%%%%%%%%%%%%%%%%%%%
\noindent
\textbf{Connection formula for the 1-parameter solutions of ($P_{\rm II}$).} \hspace{+.1em}
\textit{Let $\varepsilon$ be a sufficiently small positive number. }

\noindent
\textit{ (i)  If the true solutions represented by 
$\lambda_{\infty}(t,c,\eta;\alpha)$ for arg c $ = \frac{\pi}{2} - \varepsilon$ and those by 
$\lambda_{\infty}(t,c,\eta;\tilde{\alpha})$ for arg c $ = \frac{\pi}{2} + \varepsilon$ coincide, 
then the followig holds: }
\begin{equation}
\tilde{\alpha} = \alpha . 
\label{connection formula for 1-parameter solution normalized at infinity}
\end{equation}

\noindent
\textit{ (ii) If the true solutions represented by 
$\lambda_{\tau_1}(t,c,\eta;\alpha)$ for arg c $ = \frac{\pi}{2} - \varepsilon$ and those by 
$\lambda_{\tau_1}(t,c,\eta;\tilde{\alpha})$ for arg c $ = \frac{\pi}{2} + \varepsilon$ coincide,
then the following holds: }
\begin{equation}
\tilde{\alpha} = (1 + e^{2 \pi i c \eta}) \hspace{+.2em} \alpha . 
\label{connection formula for 1-parameter solution normalized at tau_1}
\end{equation}
%%%%%%%%%%%%%%%%%%%%%%%%%%%%%%%%%%%%%%%%%%%%%%%%%%%%%%%%%%%%%%%%%%%%%%%%%%%%%%%%%%%%%%%%%%

In the subsequent sections, 
employing the exact WKB analysis for the Sch$\ddot{\rm o}$dinger equation 
($SL_{\rm II}$) associated with ($P_{\rm II}$) through the isomonodromic deformation,  
we will rederive this connection formula in a completely different manner.

%%%%%%%%%%%%%%%%%%%%%%%%%%%%%%%%%%%%%%%%%%%%%%%%%%%%%%%%%%%%%%%%%%%%%%%%%%%%%%%%%%%%%%%%%%
%%%%%%%%%%%%%%%%%%%%%%%%%%%%%%%%%%%%%%%%%%%%%%%%%%%%%%%%%%%%%%%%%%%%%%%%%%%%%%%%%%%%%%%%%%
%%%%%%%%%%%%%%%%%%%%%%%%%%%%%%%%%%%%%%%%%%%%%%%%%%%%%%%%%%%%%%%%%%%%%%%%%%%%%%%%%%%%%%%%%%
%%%%%%%%%%%%%%%%%%%%%%%%%%%%%%%%%%%%%%%%%%%%%%%%%%%%%%%%%%%%%%%%%%%%%%%%%%%%%%%%%%%%%%%%%%
%%%%%%%%%%%%%%%%%%%%%%%%%%%%%%%%%%%%%%%%%%%%%%%%%%%%%%%%%%%%%%%%%%%%%%%%%%%%%%%%%%%%%%%%%%
%%%%%%%%%%%%%%%%%%%%%%%%%%%%%%%%%%%%%%%%%%%%%%%%%%%%%%%%%%%%%%%%%%%%%%%%%%%%%%%%%%%%%%%%%%

\section{WKB solutions and the Stokes geometry of ($SL_{\rm II}$) and ($D_{\rm II}$)}

($P_{\rm II}$) represents the condition for isomonodromic deformation of the following 
associated Schr$\ddot{\rm o}$dinger equation ($SL_{\rm II}$): 
\[
(SL_{\rm{II}}) : \hspace{+.2em}
\Bigl( \frac{\partial^2}{\partial x^2} - \eta^2 Q_{\rm{II}} \Bigr) \hspace{+.2em} \psi = 0 ,
\]
where
\[
Q_{\rm{II}} = x^4 + t x^2 +2 c x + 2 K_{\rm{II}} 
 - \eta^{-1} \frac{\nu}{x-\lambda} + \eta^{-2} \frac{3}{4(x-\lambda)^2} ,
\]
\[
K_{\rm{II}} = \frac{1}{2} 
 [ \nu^2 - (\lambda^4 + t \lambda^2 + 2 c \lambda) ] . 
\]
Here the regular singular point $x = \lambda$ of ($SL_{\rm II}$) 
is an apparent singular point because $K_{\rm II}$ has the above form.
We will construct WKB solutions of ($SL_{\rm II}$) 
satisfying its deformation equation ($D_{\rm II}$) in this section.
Here the deformation equation ($D_{\rm II}$) of ($SL_{\rm II}$) is given by the following:
\[
(D_{\rm{II}}) : \hspace{+.2em} \frac{\partial \psi}{\partial t}  = 
A_{\rm{II}} \frac{\partial \psi}{\partial x} 
- \frac{1}{2} \frac{\partial A_{\rm{II}}}{\partial x} \psi , 
\]
\[
A_{\rm{II}} = \frac{1}{2(x - \lambda)} .
\]
Note that the system ($SL_{\rm II}$) and ($D_{\rm II}$) is obtained from  
the isomonodromic deformation equation of 
Jimbo-Miwa's form \cite[Appendix C]{Jimbo-Miwa}.

Before discussing the construction of WKB solution satisfying 
both ($SL_{\rm II}$) and ($D_{\rm II}$), 
we first establish the relation between degeneration of 
the $P$-Stokes curves of ($P_{\rm II}$) and that of the Stokes curves of ($SL_{\rm II}$).

%%%%%%%%%%%%%%%%%%%%%%%%%%%%%%%%%%%%%%%%%%%%%%%%%%%%%%%%%%%%%%%%%%%%%%%%%%%%%%%%%%%%%%%%%%
%%%%%%%%%%%%%%%%%%%%%%%%%%%%%%%%%%%%%%%%%%%%%%%%%%%%%%%%%%%%%%%%%%%%%%%%%%%%%%%%%%%%%%%%%%

\subsection{Stokes geometry of ($SL_{\rm II}$)}

We investigate the Stokes geometry of ($SL_{\rm II}$) in this subsection. 
By substituting a 1-parameter solution ($\lambda(t,c,\eta;\alpha), \nu(t,c,\eta;\alpha)$) 
of ($H_{\rm II}$) into the coefficients of ($SL_{\rm II}$) and ($D_{\rm II}$), 
we find that $Q_{\rm II}$ and $A_{\rm II}$ are expanded as follows:
\begin{equation}
Q_{\rm{II}}(x,t,c,\eta;\alpha) = 
 Q_{\rm{II}}^{(0)}(x,t,c,\eta) 
 + \alpha \eta^{-\frac{1}{2}} Q_{\rm{II}}^{(1)}(x,t,c,\eta) 
 {{e}}^{\eta \phi_{\rm{II}}}
 + (\alpha \eta^{-\frac{1}{2}})^2 Q_{\rm{II}}^{(2)}(x,t,c,\eta) 
 {{e}}^{2 \eta \phi_{\rm{II}}}
 + \cdots ,  
    \label{eq:Q_II}
\end{equation}
\begin{equation}
A_{\rm{II}}(x,t,c,\eta;\alpha) = 
 A_{\rm{II}}^{(0)}(x,t,c,\eta) 
 + \alpha \eta^{-\frac{1}{2}} A_{\rm{II}}^{(1)}(x,t,c,\eta) 
 {{e}}^{\eta \phi_{\rm{II}}}
 + (\alpha \eta^{-\frac{1}{2}})^2 A_{\rm{II}}^{(2)}(x,t,c,\eta) 
 {{e}}^{2 \eta \phi_{\rm{II}}}
 + \cdots  ,  
      \label{eq:A_II}
\end{equation}
\[
Q_{\rm{II}}^{(k)}(x,t,c,\eta) = 
Q_0^{(k)}(x,t,c) + \eta^{-1} Q_1^{(k)}(x,t,c) + \eta^{-2} Q_2^{(k)}(x,t,c) + \cdots , 
\]
\[
A_{\rm{II}}^{(k)}(x,t,c,\eta) = 
A_0^{(k)}(x,t,c) + \eta^{-1} A_1^{(k)}(x,t,c) + \eta^{-2} A_2^{(k)}(x,t,c) + \cdots .
\] 
Especially, $Q_{\rm II}^{(0)}$ and $A_{\rm II}^{(0)}$ 
and their leading terms are given by the following:
\begin{eqnarray}
Q_{\rm{II}}^{(0)}(x,t,c,\eta) & = & 
 x^4 + t x^2 +2 c x + 2 K_{\rm{II}} 
 - \eta^{-1} \frac{\nu^{(0)}}{x-\lambda^{(0)}} + \eta^{-2} \frac{3}{4(x-\lambda^{(0)})^2} , 
      \label{eq:Q(0)} \\
A_{\rm{II}}^{(0)}(x,t,c,\eta) & = & \frac{1}{2(x - \lambda^{(0)})} ,  
      \label{eq:A(0)}
\end{eqnarray}
\begin{eqnarray} 
Q_0^{(0)}(x,t,c) & = & x^4 + t x^2 +2 c x 
 - (\lambda_0^4 + t \lambda_0^2 + 2 c \lambda_0) \nonumber \\
& = & (x -\lambda_0)^2 (x^2 + 2 \lambda_0 x + 3 \lambda_0^2 + t)  ,  
    \label{eq:Q_0} \\
A^{(0)}_0(x,t,c) & = & \frac{1}{2(x - \lambda_0)} .  
      \label{eq:A_0}
\end{eqnarray}
Here ($\lambda^{(0)}, \nu^{(0)}$) is the 0-parameter solution 
which is the principal part of the 1-parameter solution 
substituted. 
%into the coefficients of ($SL_{\rm II}$) and ($D_{\rm II}$). 
In what follows we abbreviate $Q_0^{(0)}(x,t,c)$ and $A_0^{(0)}(x,t,c)$ 
to  $Q_0(x,t,c)$ and $A_0(x,t,c)$, respectively.
%%%%%%%%%%%%%%%%%%%%%%%%%%%%%%%%%%%%%%%%%%%%%%%%%%%%%%%%%%%%%%%%%%%%%%%%%%%%%%%%%%%%%%%%%%
\begin{defi} [{\cite[Definition 2.4, Definition 2.6]{KT iwanami}}] \normalfont
(i) A point $x = a$ is called a
\textit{turning point of ($SL_{\rm II}$)} 
if $x$ satisfies $Q_0(a,t,c) = 0$. 

\noindent
(ii) For a turning point $x = a$,  a real one-dimensional curve defined by 
\[
{\rm Im} \int_{a}^x \sqrt{Q_{0}(x,t,c)} \hspace{+.2em} dx = 0
\]
is called a \textit{Stokes curve of $(SL_{\rm II})$} .
\end{defi}
%%%%%%%%%%%%%%%%%%%%%%%%%%%%%%%%%%%%%%%%%%%%%%%%%%%%%%%%%%%%%%%%%%%%%%%%%%%%%%%%%%%%%%%%%%
In view of \eqref{eq:Q_0} we know that ($SL_{\rm II}$) has a double turning point 
at $x = \lambda_0$ and two simple turning points which denoted by $x = a_1, a_2$ 
in what follows, where $a_1$ and $a_2$ are two roots of $x^2 + 2 \lambda_0 x + 3 \lambda_0^2 + t = 0$. 
Figure \ref{fig:SL_II,argc=0.5Pi-epsilon} $\sim$ \ref{fig:SL_II,argc=0.5Pi+epsilon}  
describe the Stokes curves of ($SL_{\rm II}$) 
with $t$ being a fixed point in the region in Figure \ref{fig:Riemann surface of sqrt Delta} 
and arg \hspace{-.5em} $c$ varying near $\frac{\pi}{2}$. 
(Here $\varepsilon$ is a sufficiently small positive number.)
%%%%%%%%%%%%%%%%%%%%%%%%%%%%%%%%%%%%%%%%%%%%%%%%%%%%%%%%%%%%%%%%%%%%%%%%%%%%%%%%%%%
%\begin{picture} (0,0)
%\put(-160,-80) {\small{$\lambda_0$}}
%\put(-122,-77) {\small{$a_1$}}
%\put(-138,-50) {\small{$a_2$}}
%\end{picture}  
  \begin{figure}[h]
  \begin{minipage}{0.32\hsize}
  \begin{center}
  \includegraphics[width=50mm]{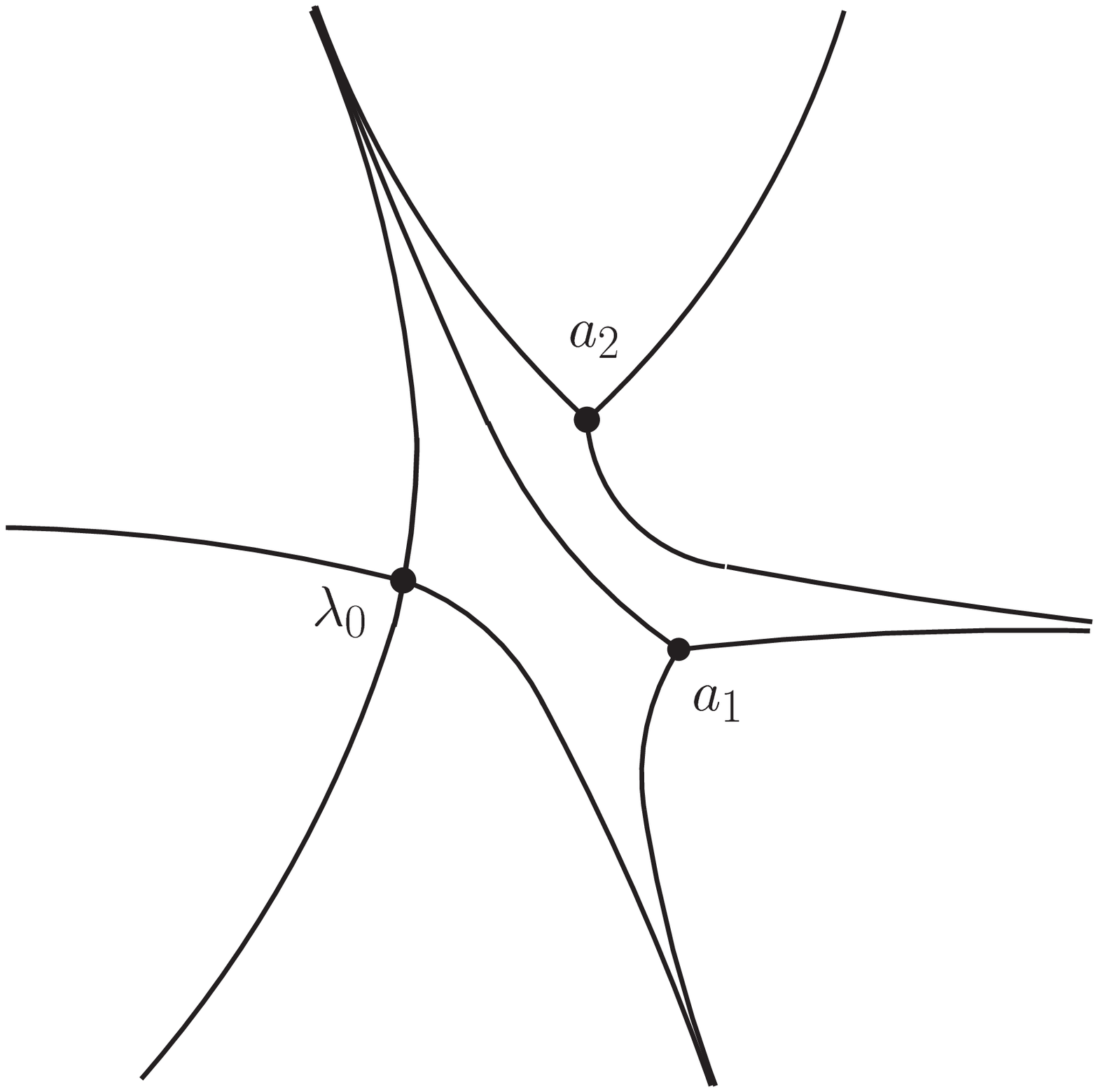}
  \end{center}
  \caption{\small{Stokes curve of ($SL_{\rm II}$) when arg \hspace{-.5em} $c = \frac{\pi}{2}-\varepsilon$.}}
  \label{fig:SL_II,argc=0.5Pi-epsilon}
  \end{minipage} \hspace{+.31em}
  \begin{minipage}{0.32\hsize}
  \begin{center}
  \includegraphics[width=50mm]{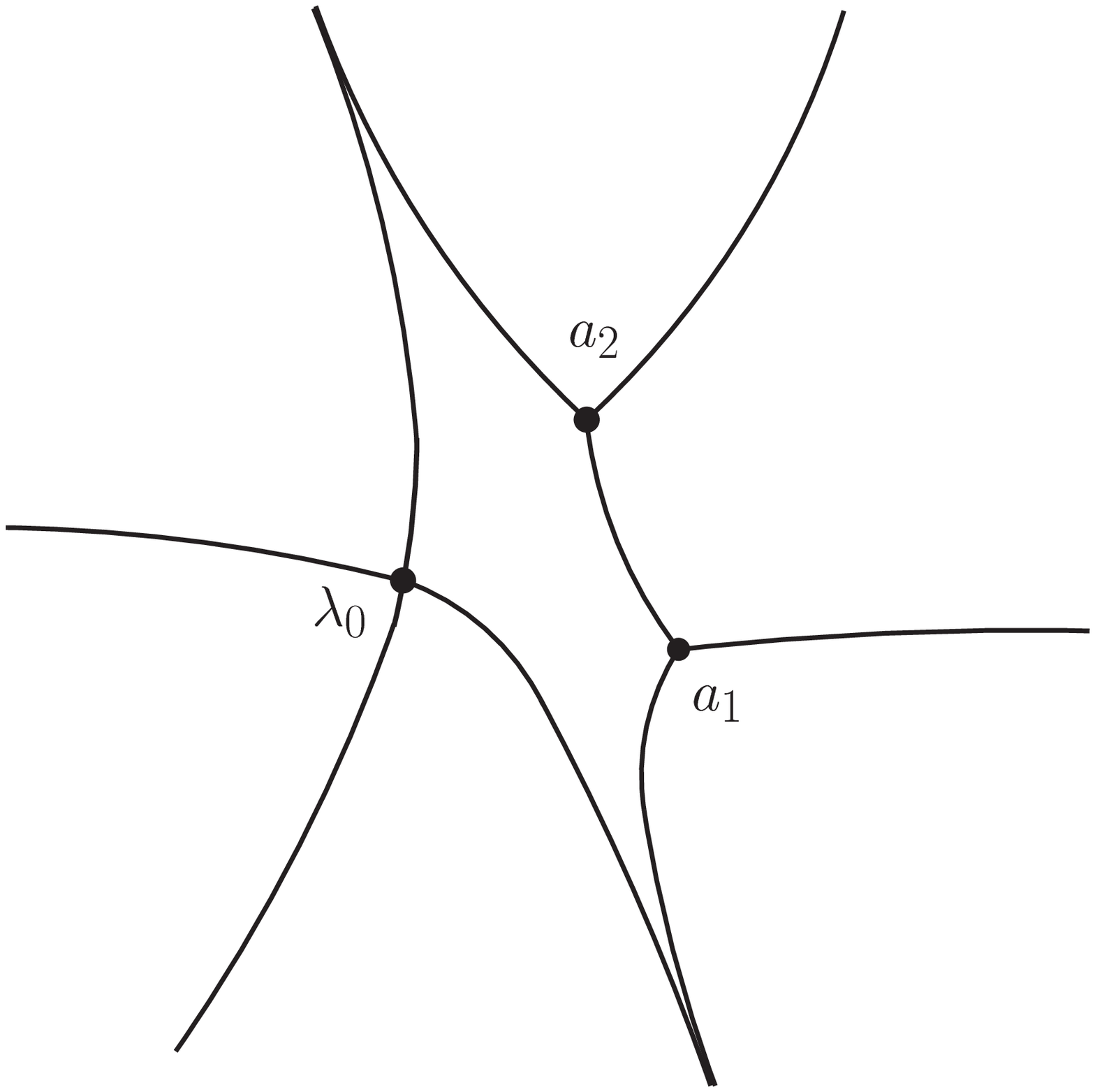}
  \end{center}
  \caption{\small{Stokes curve of ($SL_{\rm II}$) when arg \hspace{-.5em} $c = \frac{\pi}{2}$.}}
  \label{fig:SL_II,argc=0.5Pi}
  \end{minipage} \hspace{+.31em}
  \begin{minipage}{0.32\hsize}
  \begin{center}
  \includegraphics[width=50mm]{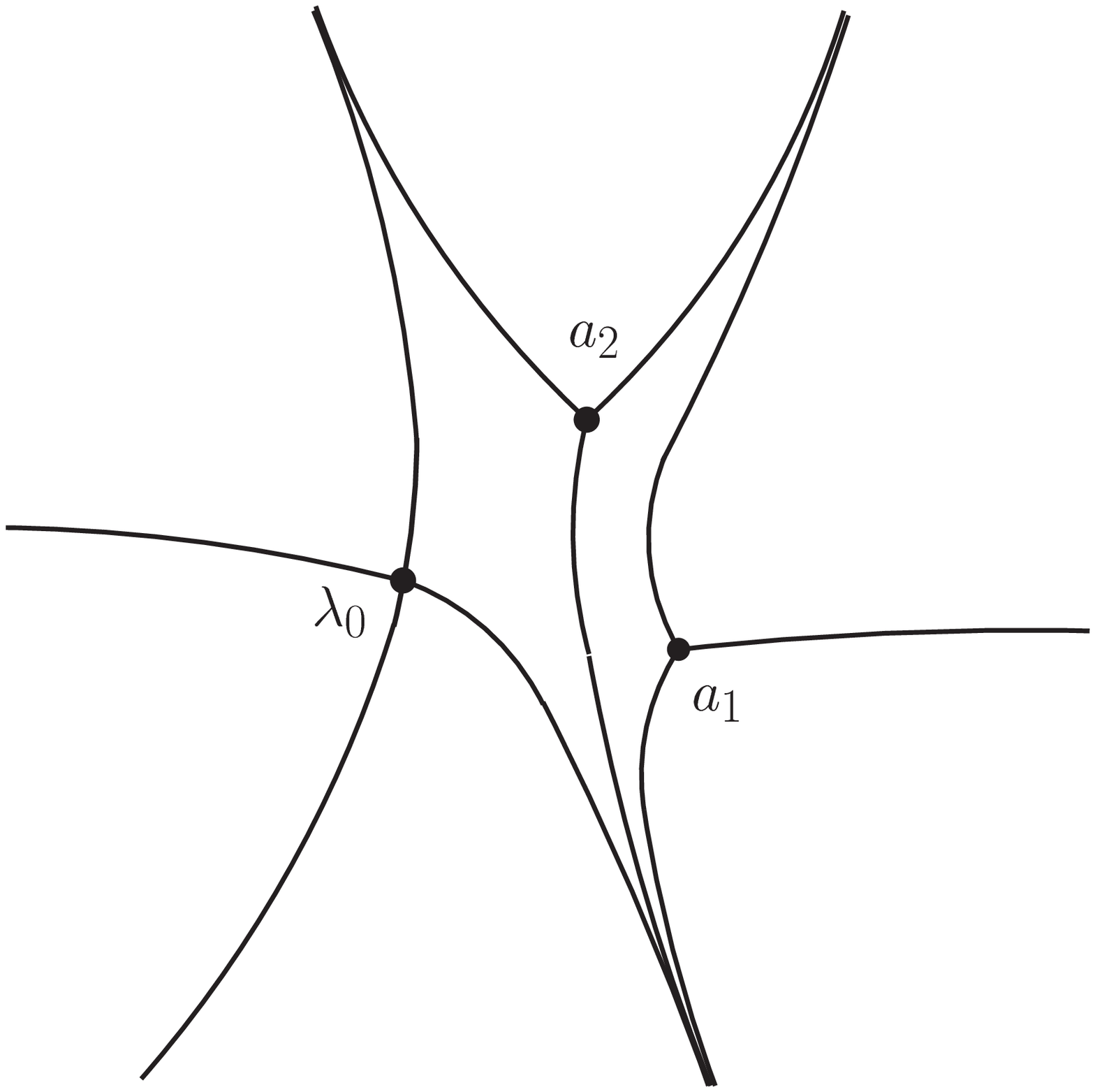}
  \end{center}
  \caption{\small{Stokes curve of ($SL_{\rm II}$) when arg \hspace{-.5em} $c = \frac{\pi}{2}+\varepsilon$.}}
  \label{fig:SL_II,argc=0.5Pi+epsilon}
  \end{minipage}
  \end{figure}
%%%%%%%%%%%%%%%%%%%%%%%%%%%%%%%%%%%%%%%%%%%%%%%%%%%%%%%%%%%%%%%%%%%%%%%%%%%%%%%%%%%
It is observed that a degeneration of the $P$-Stokes geometry  
of ($P_{\rm II}$) and that of the Stokes geometry of ($SL_{\rm II}$) occurs at
arg \hspace{-.3em} $c = \frac{\pi}{2}$ simultaneously.
This intriguing observation can be confirmed analytically by the following proposition. 
%%%%%%%%%%%%%%%%%%%%%%%%%%%%%%%%%%%%%%%%%%%%%%%%%%%%%%%%%%%%%%%%%%%%%%%%%%%%%%%%%%%%%%%%%%
\begin{prop} \label{integral of top terms}
\begin{equation}
\int_{a_1}^{a_2} \sqrt{Q_0(x,t,c)} \hspace{+.2em} dx 
 = - \frac{1}{2} \int_{\tau_1}^{\tau_2} \sqrt{\Delta(t,c)} \hspace{+.2em} dt 
 = \pi i c .  \label{eq:integral of Q_0 and Delta}
\end{equation}
Here the integral $\int_{a_1}^{a_2} \sqrt{Q_0(x,t,c)} \hspace{+.2em} dx $ 
is defined by $\frac{1}{2} \int_{\gamma} \sqrt{Q_0(x,t,c)} \hspace{+.2em} dx$,  
where $\gamma$ designates the closed curve in the cut plane shown in 
Figure \ref{fig:integral path gamma},
and the path of the integral $\int_{\tau_1}^{\tau_2} \sqrt{\Delta(t,c)} \hspace{+.2em} dt $  
is taken to be along the $P$-Stokes curve which connects two 
$P$-turning points $t = \tau_1$ and $\tau_2$ in Figure \ref{fig:Riemann surface of sqrt Delta}. 
(The Wiggly line in Figure \ref{fig:integral path gamma} is a cut to define the Riemann surface   
of $\sqrt{Q_0}$. We adopt the branch of $\sqrt{Q_0}$ such that
$\sqrt{Q_0} \sim x^2$ as $x \rightarrow \infty$ in this cut plane.)
%%%%%%%%%%%%%%%%%%%%%%%%%%%%%%%%%%%%%%%%%%%%%%%%%%%%%%%%%%%%%%%%%%%%%%%%%%%%%%%%%%%%%%%%%%
 \begin{figure}[h]
 \begin{center}
 \includegraphics[width=50mm]{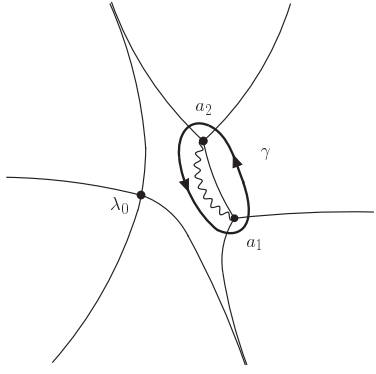}
 \end{center}
 \caption{Integration path $\gamma$.}
 \label{fig:integral path gamma}
 \end{figure}
\end{prop}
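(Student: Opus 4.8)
The plan is to evaluate the two integrals separately and show they agree with $\pi i c$. Since both integrals are period integrals of algebraic differentials over the Riemann surface of a square root whose branch points depend on $c$, the natural strategy is residue calculus at infinity combined with the explicit parametrization of $\lambda_0$ by the cubic \eqref{eq:lambda_0}.

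First I would handle the left-hand integral $\frac{1}{2}\int_\gamma \sqrt{Q_0(x,t,c)}\,dx$. By \eqref{eq:Q_0}, $Q_0 = (x-\lambda_0)^2(x^2 + 2\lambda_0 x + 3\lambda_0^2 + t)$, so on the cut plane $\sqrt{Q_0} = (x-\lambda_0)\sqrt{x^2+2\lambda_0 x + 3\lambda_0^2+t}$ with the branch fixed by $\sqrt{Q_0}\sim x^2$ at infinity. The contour $\gamma$ encircles the two simple turning points $a_1,a_2$ (the roots of the quadratic factor), so I would deform $\gamma$ outward to a large circle and pick up (minus) the residue at $x=\infty$. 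Concretely, expand $\sqrt{Q_0} = x^2 + \lambda_0 x + (\tfrac{t}{2}+\lambda_0^2) + (\text{coefficient})\,x^{-1} + O(x^{-2})$ as $x\to\infty$; the $x^{-1}$ coefficient is what survives the contour integral, giving $\frac{1}{2}\int_\gamma \sqrt{Q_0}\,dx = -\pi i \cdot(\text{that coefficient})$. A short computation of that Laurent coefficient, using $2\lambda_0^3 + t\lambda_0 + c = 0$ to simplify, should produce exactly $\pi i c$. I would also need to check that the branch of $\sqrt{Q_0}$ picked out by the cut in Figure \ref{fig:integral path gamma} is the one consistent with $\sqrt{Q_0}\sim +x^2$ so that the sign comes out right.

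Next, the middle integral $-\tfrac12\int_{\tau_1}^{\tau_2}\sqrt{\Delta}\,dt$ along the $P$-Stokes curve: here I would use the fact that $\Delta = 6\lambda_0^2 + t$ and that $\lambda_0$ itself satisfies \eqref{eq:lambda_0}. Differentiating $2\lambda_0^3 + t\lambda_0 + c = 0$ with respect to $t$ gives $(6\lambda_0^2 + t)\,d\lambda_0/dt + \lambda_0 = 0$, i.e. $\Delta\,d\lambda_0 = -\lambda_0\,dt$, hence $\Delta\,dt = -\Delta^2\,d\lambda_0/\lambda_0$ and more usefully $\sqrt{\Delta}\,dt$ can be rewritten as a differential in the variable $\lambda_0$. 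The turning points $\tau_1,\tau_2$ correspond to the branch points $\Delta = 0$, i.e. $6\lambda_0^2 + t = 0$; I would change variables to $\lambda_0$, turning $\int_{\tau_1}^{\tau_2}\sqrt{\Delta}\,dt$ into a period integral on the Riemann surface of $\lambda_0$ that again reduces, by pushing the contour to $\lambda_0 = 0$ (where $t\to\infty$) or to $\lambda_0 = \infty$, to a residue. Tracking the relation $t = -(2\lambda_0^3+c/\lambda_0)$... more precisely $t = -(2\lambda_0^3 + c)/\lambda_0$ from \eqref{eq:lambda_0}, one finds $dt$ and $\Delta$ as explicit rational functions of $\lambda_0$, and the residue of $\sqrt{\Delta}\,dt$ at the appropriate point yields a multiple of $c$; matching normalizations of the path (the sign conventions from Figure \ref{fig:Riemann surface of sqrt Delta} and the branch of $\sqrt\Delta$ with sign $\ominus$ as in Lemma \ref{behavior of coeff of 1-parameter solution}) fixes the overall constant to give $-\tfrac12\int_{\tau_1}^{\tau_2}\sqrt{\Delta}\,dt = \pi i c$.

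The main obstacle I anticipate is bookkeeping of branches and orientations rather than any deep computation: one must pin down which sheet of the Riemann surface of $\sqrt{Q_0}$ (resp. $\sqrt{\Delta}$) the paths $\gamma$ (resp. the $P$-Stokes segment from $\tau_1$ to $\tau_2$) live on, and which determination of the square roots is in force, so that the residues carry the correct sign and the factor $-\tfrac12$ relating the two integrals comes out consistently. The homogeneity $\Delta(r^{-2}t, r^{-3}c)$-type scaling (analogous to \eqref{eq:homogenity of W}) and the asymptotics in Lemma \ref{behavior of coeff of 1-parameter solution} are the tools I would use to verify the sign choices are the ones consistent with the figures. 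Once the branches are fixed, both sides collapse to a single residue computation each, and the common value $\pi i c$ drops out.
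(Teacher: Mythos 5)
Your computation of $\int_{a_1}^{a_2}\sqrt{Q_0}\,dx$ is exactly the paper's argument: deform $\gamma$ to infinity and use ${\rm Res}_{x=\infty}(\sqrt{Q_0}\,dx)=-c$, which follows from the expansion $\sqrt{Q_0}=x^2+\tfrac{t}{2}+c\,x^{-1}+O(x^{-2})$ already recorded as \eqref{eq:behavior of S(0)_-1}. (Your intermediate expansion ``$x^2+\lambda_0 x+(\tfrac{t}{2}+\lambda_0^2)+\cdots$'' is wrong in the $x^1$ and $x^0$ terms --- the $\lambda_0 x$ term cancels against the factor $(x-\lambda_0)$ --- but this does not affect the $x^{-1}$ coefficient, so only the final sign/orientation bookkeeping needs care.)

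The gap is in your treatment of $\int_{\tau_1}^{\tau_2}\sqrt{\Delta}\,dt$. After the substitution $t=-(2\lambda_0^3+c)/\lambda_0$ one gets $\sqrt{\Delta}\,dt=-(4\lambda_0^3-c)^{3/2}\lambda_0^{-5/2}\,d\lambda_0$, which lives on the double cover $y^2=\lambda_0(4\lambda_0^3-c)$ of the $\lambda_0$-plane. This curve has the four branch points $\lambda_0=0$ and the three cube roots of $c/4$ (the images of $\tau_1,\tau_2,\tau_3$), hence genus one, and the loop encircling the two branch points corresponding to $\tau_1,\tau_2$ is a non-separating cycle. So, unlike the $x$-integral (where $Q_0$ has the double root at $\lambda_0$ and the curve is rational), you cannot simply ``push the contour to $\lambda_0=0$ or $\lambda_0=\infty$ and take a residue'': the period of a meromorphic differential over a homologically nontrivial cycle is not a residue sum. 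The value $\pm 2\pi i c$ can indeed be obtained directly, but only with extra input --- e.g.\ exploiting the order-three symmetry $\lambda_0\mapsto\omega\lambda_0$ of the curve together with the homology relations among the three cycles through $\lambda_0=0$ and the residues at the poles over $\lambda_0=0,\infty$, or an explicit evaluation of the resulting elliptic integral --- none of which is indicated in your sketch. The paper avoids this entirely: it proves the first equality of \eqref{eq:integral of Q_0 and Delta} by citing the turning-point-merging theorem (\cite[Theorem 4.9]{KT iwanami}), which gives $\int_{a_j}^{\lambda_0}\sqrt{Q_0}\,dx=\pm\tfrac12\int_{\tau_j}^{t}\sqrt{\Delta}\,dt$ because $a_j$ merges with the double turning point $\lambda_0$ as $t\to\tau_j$, and then fixes the sign from the branch conventions of Figure \ref{fig:Riemann surface of sqrt Delta}; the $t$-integral is never computed on its own. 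To complete your route you must either supply the period computation sketched above or prove (or cite) an identity of the type $\int_{a_j}^{\lambda_0}\sqrt{Q_0}\,dx=\pm\tfrac12\int_{\tau_j}^{t}\sqrt{\Delta}\,dt$, for instance by differentiating both sides in $t$ and checking they vanish at $t=\tau_j$.
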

%%%%%%%%%%%%%%%%%%%%%%%%%%%%%%%%%%%%%%%%%%%%%%%%%%%%%%%%%%%%%%%%%%%%%%%%%%%%%%%%%%%%%%%%%%
\begin{proof}
Firstly, we compute the integral $\int_{a_1}^{a_2} \sqrt{Q_0(x,t,c)} \hspace{+.2em} dx $.
Because $\sqrt{Q_0}$ has no singularities besides at $x = a_1, a_2$ in ${\mathbb C}$, 
we have 
\begin{equation}
\int_{\gamma} \sqrt{Q_0(x,t,c)} dx = 
- 2 \pi i \hspace{+.3em} {\rm{Res}}_{x = \infty} 
\bigl( \sqrt{Q_0( x,t,c)} \hspace{+.2em} dx \bigr) .
\end{equation}
We also obtain 
\[
{\rm{Res}}_{x = \infty} \bigl( \sqrt{Q_0(x,t,c)} \hspace{+.2em} dx \bigr) = - c
\]
by \eqref{eq:behavior of S(0)_-1} below. Hence we find
\[
\int_{a_1}^{a_2} \sqrt{Q_0(x,t,c)} \hspace{+.2em} dx  =  \pi i c .
\]

Next, we prove the first equality of \eqref{eq:integral of Q_0 and Delta}. 
It follows from \cite[Theorem 4.9]{KT iwanami} that, 
when $t$ tends to a simple $P$-turning point, 
a simple turning point of ($SL_{\rm II}$) merges with the double turning point $x = \lambda_0$.  
Under the current situation we can check that 
$x = a_j$ merges with the double turning point $x = \lambda_0$ 
at $t = \tau_j$ for $j = 1, 2$ respectively by numerical computation. 
Thus, by the same theorem in \cite{KT iwanami}, we also have
\begin{equation}
\int_{a_j}^{\lambda_0} \sqrt{Q_0(x,t,c)} \hspace{+.2em} dx = 
\pm \frac{1}{2} \int_{\tau_j}^{t} \sqrt{\Delta(t,c)}  \hspace{+.2em} dt \hspace{+1.em} (j=1, 2) . 
\label{eq:integral relation of top terms}
\end{equation}
The sign of the right-hand side of \eqref{eq:integral relation of top terms} 
depends on the choice of the branch of $\sqrt{\Delta}$. 
Thus we obtain
\begin{equation}
\pi i c = 
\int_{a_1}^{a_2} \sqrt{Q_0(x,t,c)} \hspace{+.2em} dx  = 
\mp \frac{1}{2} \int_{\tau_1}^{\tau_2} \sqrt{\Delta(t,c)} \hspace{+.2em} dt .
\label{eq:integral relation2}
\end{equation}
%(Be careful for the branch of $\sqrt{Q_0}$ !)
Since the branch of $\sqrt{\Delta}$ was determined such that 
the real part of $\int_{\tau_1}^{\tau_2} \sqrt{\Delta} dt$ is positive 
when arg$c = \frac{\pi}{2}$ 
(see Figure \ref{fig:Riemann surface of sqrt Delta}),  
the sign of the right-hand side of \eqref{eq:integral relation2} is $-$ 
(hence the sign of \eqref{eq:integral relation of top terms} is $+$), 
which completes the proof of Proposition \ref{integral of top terms}.
\end{proof}
%The equality of \eqref{eq:integral of Q_0 and Delta} is an evidence that 
%degenerations of $P$-Stokes geometry  
%of ($P_{\rm II}$) and of Stokes geometry of ($SL_{\rm II}$) occur simultaneously
%when $c$ is pure imaginary. 

%%%%%%%%%%%%%%%%%%%%%%%%%%%%%%%%%%%%%%%%%%%%%%%%%%%%%%%%%%%%%%%%%%%%%%%%%%%%%%%%%%%%%%%%%%
%%%%%%%%%%%%%%%%%%%%%%%%%%%%%%%%%%%%%%%%%%%%%%%%%%%%%%%%%%%%%%%%%%%%%%%%%%%%%%%%%%%%%%%%%%

\subsection{WKB solutions of ($SL_{\rm II}$) and ($D_{\rm II}$)}

We construct WKB solutions of ($SL_{\rm II}$) satisfying ($D_{\rm II}$) simultaneously 
in this subsection. Although $Q_{\rm II}$ is expanded as \eqref{eq:Q_II}, 
we can construct WKB solutions in the following form, 
similarly to \cite[$\S2$]{KT iwanami}
(the well-definedness of the integral will be confirmed 
in Lemma \ref{lemma for behaviors of S} below):
\begin{equation}
\psi_{\pm,\infty} = \frac{1}{\sqrt{S_{\rm{odd}}}} {\rm{exp}} \pm
\biggl\{
\eta \int_{a_1}^{x} S_{-1} dx  + \int_{\infty}^{x} \bigl(S_{\rm{odd}} - \eta S_{-1} \bigr) dx
\biggr\} ,  \label{eq:psi_pm infinity}
\end{equation}
where 
\begin{equation}
S_{\rm{odd}}(x,t,c,\eta;\alpha) = S_{\rm{odd}}^{(0)}(x,t,c,\eta) + 
\alpha \eta^{-\frac{1}{2}} S_{\rm{odd}}^{(1)}(x,t,c,\eta) 
{{e}}^{\eta \phi_{\rm{II}}} +
(\alpha \eta^{-\frac{1}{2}})^2 S_{\rm{odd}}^{(2)}(x,t,c,\eta) 
{{e}}^{2 \eta \phi_{\rm{II}}} + \cdots  \hspace{+.2em} 
\label{eq:expansion of S_odd}
\end{equation}
is the odd part (in the sense of Remark \ref{odd part}) of a formal solution
\begin{equation}
S(x,t,c,\eta;\alpha) =
S^{(0)}(x,t,c,\eta) + 
\alpha \eta^{-\frac{1}{2}} S^{(1)}(x,t,c,\eta) {{e}}^{\eta \phi_{\rm{II}}} + 
(\alpha \eta^{-\frac{1}{2}})^2 S^{(2)}(x,t,c,\eta) {{e}}^{2 \eta \phi_{\rm{II}}} + \cdots
\label{eq:expansion of S}
\end{equation}
of the associated Riccati equation of ($SL_{\rm II}$)
\begin{equation}
S^2 + \frac{\partial S}{\partial x} = \eta^2 Q_{\rm{II}}(x,t,c,\eta;\alpha) ,
\label{eq:SL_II-Riccati}
\end{equation}
$S_{\rm{odd}}^{(k)}$ and $S^{(k)}$ are formal power series of $\eta^{-1}$ of the form 
\[
S_{\rm{odd}}^{(k)}(x,t,c,\eta) = \eta S_{{\rm odd},-1}^{(k)}(x,t,c) +
 S_{\rm{odd},0}^{(k)}(x,t,c) + \eta^{-1}S_{\rm{odd},1}^{(k)}(x,t,c) + 
 \cdots \hspace{+.2em} (k \ge 0)  \hspace{+.2em} ,
\]
\[
S^{(k)}(x,t,c,\eta) = \eta S_{-1}^{(k)}(x,t,c) +
 S_{0}^{(k)}(x,t,c) + \eta^{-1}S_{1}^{(k)}(x,t,c) + 
 \cdots \hspace{+.2em} (k \ge 0)  \hspace{+.2em} ,
\]
and
\begin{eqnarray}
S_{-1}(x,t,c)  & = & S^{(0)}_{-1}(x,t,c) = \sqrt{Q_0(x,t,c)} \nonumber \\
& = & (x - \lambda_0) \sqrt{x^2 + 2 \lambda_0 x + 3 \lambda_0^2 + t} \hspace{+.2em} . \label{eq:S_-1}
\end{eqnarray} 
The formal series $S^{(k)}$ ($k \ge 0$) satisfy the following differential equations: 
\begin{equation}
{S^{(0)}}^2 + \frac{\partial S^{(0)}}{\partial x} = \eta^{2} Q^{(0)}. 
\label{eq:SL_II-Riccati(0)}
\end{equation}
\begin{equation}
2 S^{(0)} S^{(k)} + \sum_{k_1 + k_2 = k \hspace{+.3em} ,k_j < k} S^{(k_1)} S^{(k_2)} 
+ \frac{\partial S^{(k)}}{\partial x} = \eta^2 Q^{(k)}_{\rm{II}} \hspace{+1.em} (k \ge 1) . 
\label{eq:S(k)}
\end{equation}
Once we fix the branch of $S_{-1}$, then $S^{(k)}_{\ell}$ is determined uniquely 
by the following recursive relations: 
\begin{equation}
2 S_{-1} S_{\ell + 1}^{(k)} 
+ \sum_{
\tiny
\begin{array}{ll}
k_1+k_2 = k  \\
\ell_1 + \ell_2 =\ell \\
0 \le k_j < k ,
\ell_j \ge 0
\end{array} 
\normalsize
}
S_{\ell_1}^{(k_1)} S_{\ell_2}^{(k_2)} 
+ \frac{\partial S_{\ell}^{(k)}}{\partial x} = Q_{\ell + 2}^{(k)}  
\hspace{+.8em} \hspace{+.2em} (k \ge 0 , \ell \ge -2)
\label{eq:S^(k)_ell}
\end{equation} 
In what follows we adopt the branch of $S_{-1}$ such that 
$S_{-1} \sim x^2$ as $x \rightarrow \infty$ on the cut plane 
shown in Figure \ref{fig:integral path gamma}.
%%%%%%%%%%%%%%%%%%%%%%%%%%%%%%%%%%%%%%%%%%%%%%%%%%%%%%%%%%%%%%%%%%%%%%%%%%%%%%%%%%%%
\begin{prop} \label{vanishing of S(k)_-1 = 0}
\begin{equation}
S_{{\rm odd},-1}^{(k)} = S_{-1}^{(k)} = 0 \hspace{+1.em} (k \ge 1) . \label{eq:S(k)_-1 = 0} 
\end{equation}
\end{prop}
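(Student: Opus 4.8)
The plan is to prove the two claimed vanishings by induction on $k\ge 1$, exploiting the fact that the exponential factors $e^{\eta\phi_{\rm II}}$ carry no $\eta$ but only $x$- and $t$-dependence, so the ``$\eta$-degree $-1$'' part $S_{-1}^{(k)}$ of each $S^{(k)}$ can be isolated purely algebraically from the Riccati hierarchy \eqref{eq:SL_II-Riccati(0)}--\eqref{eq:S(k)}. First I would extract, for each $k\ge 1$, the coefficient of $\eta^{2}$ (equivalently the top $\eta$-order) in equation \eqref{eq:S(k)}. On the left-hand side the term $\partial S^{(k)}/\partial x$ contributes only at $\eta$-order $\le 1$, hence is irrelevant at order $\eta^{2}$; the products $S^{(k_1)}S^{(k_2)}$ with $k_1,k_2<k$ contribute their top parts $\eta^{2}S_{-1}^{(k_1)}S_{-1}^{(k_2)}$, and $2S^{(0)}S^{(k)}$ contributes $2\eta^{2}S_{-1}^{(0)}S_{-1}^{(k)} = 2\eta^{2}\sqrt{Q_0}\,S_{-1}^{(k)}$. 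On the right-hand side I need the top ($\eta$-order $0$) part of $Q^{(k)}_{\rm II}$, i.e. $Q_0^{(k)}$.

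The key input is therefore the structure of $Q_0^{(k)}$, the leading coefficient of $Q_{\rm II}^{(k)}$ in the expansion \eqref{eq:Q_II}. Since $Q_{\rm II}$ depends on the 1-parameter solution only through $\lambda$ and $\nu$ (entering as $2K_{\rm II}$, $-\eta^{-1}\nu/(x-\lambda)$, and $\eta^{-2}\cdot\tfrac34(x-\lambda)^{-2}$), and since the parameter-$\alpha$ corrections $\lambda^{(k)}$, $\nu^{(k)}$ enter at order $\eta^{-1/2}$ with $\nu^{(k)}=k\,\phi_{\rm II}'\lambda^{(k)}+\eta^{-1}(\lambda^{(k)})'$ by \eqref{eq:relation between lambda(k) and nu(k) in full order}, I would check that the $x^4+tx^2+2cx$ part of $Q_{\rm II}$ is $\alpha$-independent, so the only $\eta^{0}$ contribution to $Q^{(k)}_{\rm II}$ for $k\ge 1$ comes from expanding $2K_{\rm II}(\lambda,\nu)$ in powers of $\alpha\eta^{-1/2}e^{\eta\phi_{\rm II}}$ — and crucially that contribution involves $\nu^{(0)}$ whose leading term $\nu_0=0$ by \eqref{eq:nu_0}. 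Tracking this, one finds $Q_0^{(k)}$ is a product of $(x-\lambda_0)$-type factors times lower data, and in particular is divisible by $\sqrt{Q_0}=(x-\lambda_0)\sqrt{x^2+2\lambda_0 x+3\lambda_0^2+t}$ in a way compatible with the inductive hypothesis $S_{-1}^{(k_1)}=0$ for $k_1<k$. Once the inductive hypothesis kills the $S^{(k_1)}S^{(k_2)}$ terms, the order-$\eta^2$ part of \eqref{eq:S(k)} collapses to $2\sqrt{Q_0}\,S_{-1}^{(k)}=Q_0^{(k)}$; so the claim $S_{-1}^{(k)}=0$ is equivalent to $Q_0^{(k)}=0$, i.e. to the statement that the leading ($\eta^{0}$) term of $Q_{\rm II}^{(k)}$ vanishes for every $k\ge1$.

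Hence the heart of the argument is the identity $Q_0^{(k)}=0$ for $k\ge1$, which I would prove directly from the definition of $Q_{\rm II}$: write $Q_{\rm II}$ as a function of $(\lambda,\nu)$, substitute the 1-parameter solution, and observe that $2K_{\rm II}=\nu^2-(\lambda^4+t\lambda^2+2c\lambda)$ has $\eta^{0}$-part, when expanded in the transseries variable, governed by $\nu^{(0)}_0=\nu_0=0$ and by the relation $\phi_{\rm II}'^{\,2}=\Delta=6\lambda_0^2+t$; a short computation then shows every coefficient of $(\alpha\eta^{-1/2})^k e^{k\eta\phi_{\rm II}}$ in the $\eta^{0}$-part of $Q_{\rm II}$ is zero for $k\ge1$. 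Equivalently, and perhaps more cleanly, $S_{-1}=\sqrt{Q_0}$ is a common solution of \emph{all} the leading Riccati relations because $Q_0(x,t,c)$ defined in \eqref{eq:Q_0} is genuinely $\alpha$-independent: it only involves $\lambda_0$, not any $\lambda^{(k)}$. I expect the main obstacle to be bookkeeping the $\eta^{-1/2}$-grading against the $\eta^{-1}$-grading simultaneously — i.e. making sure that ``top $\eta$-order'' in \eqref{eq:S(k)} really is $\eta^2$ and that no hidden $\eta$-positive contribution sneaks in through the expansion of $\nu$ and $K_{\rm II}$ — and then verifying the divisibility of $Q_0^{(k)}$ by $2\sqrt{Q_0}$ is automatic once $Q_0^{(k)}=0$. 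Finally, $S_{\rm odd,-1}^{(k)}=0$ follows immediately from $S_{-1}^{(k)}=0$ together with the analogous vanishing for the companion solution $S^{\dagger}$ (built from $S_{-1}^{\dagger}=-\sqrt{Q_0}$), since $S_{\rm odd}^{(k)}=\tfrac12(S^{(k)}-S^{\dagger (k)})$ and the same induction applies verbatim to $S^{\dagger}$.
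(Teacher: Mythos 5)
Your structural reduction is sound: taking the coefficient of $\eta^{2}$ in \eqref{eq:S(k)} gives
\begin{equation}
2\sqrt{Q_0}\,S^{(k)}_{-1}+\sum_{k_1+k_2=k,\ 1\le k_j\le k-1}S^{(k_1)}_{-1}S^{(k_2)}_{-1}=Q^{(k)}_{0},
\end{equation}
so, by induction, the proposition is indeed equivalent to the vanishing $Q^{(k)}_{0}=0$ for all $k\ge 1$. The genuine gap is that you never prove this vanishing. The ingredients you cite ($x^4+tx^2+2cx$ being $\alpha$-independent, $\nu_0=0$ from \eqref{eq:nu_0}, and $\phi_{\rm II}'^{\,2}=\Delta$) do not suffice: within each sector $e^{k\eta\phi_{\rm II}}$ the $\eta^{0}$-part of $2K_{\rm II}$ contains the products $\nu^{(k_1)}_{0}\nu^{(k_2)}_{0}=k_1k_2\,\Delta\,\lambda^{(k_1)}_{0}\lambda^{(k_2)}_{0}$ together with the expansion of $-(\lambda^4+t\lambda^2+2c\lambda)$, and their cancellation uses the leading-order recursion \eqref{eq:lambda_(k)l} for the $\lambda^{(k)}_{0}$ in an essential way. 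Concretely, at $k=3$ one finds $Q^{(3)}_{0}=2\Delta\lambda^{(1)}_{0}\lambda^{(2)}_{0}-4\lambda_0(\lambda^{(1)}_{0})^{3}$ (after using \eqref{eq:lambda_0} to kill the $\lambda^{(3)}_{0}$-terms), and this vanishes only upon substituting $3\Delta\lambda^{(2)}_{0}=6\lambda_0(\lambda^{(1)}_{0})^{2}$, which is \eqref{eq:lambda_(k)l} for $k=2,\ \ell=0$ --- information not contained in $\nu_0=0$ and $\phi_{\rm II}'^{\,2}=\Delta$. Your ``more cleanly'' fallback is moreover circular: the $\alpha$-independence of $Q_0=Q^{(0)}_{0}$ is just the definition of the $k=0$ sector and says nothing about the higher sectors $Q^{(k)}_{0}$, which is exactly what must be shown to vanish.

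For comparison, the paper's proof sidesteps all of this: it uses the compatibility relation \eqref{eq:conpatibility of SL_II and D_II}, $\partial_t S=\partial_x\bigl(A_{\rm II}S-\tfrac12\partial_xA_{\rm II}\bigr)$, compares the coefficients of $e^{k\eta\phi_{\rm II}}$, and then the coefficients of $\eta^{2}$: since each $A^{(j)}_{\rm II}$ is $O(1)$ while each $S^{(j)}$ is $O(\eta)$, the right-hand side is $O(\eta)$, so the only $\eta^{2}$-term is the top part of $k\eta\frac{d\phi_{\rm II}}{dt}S^{(k)}$, giving $k\frac{d\phi_{\rm II}}{dt}S^{(k)}_{-1}=0$ and hence $S^{(k)}_{-1}=0$ in one line (with $S^{(k)}_{{\rm odd},-1}=S^{(k)}_{-1}$ by definition). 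If you insist on your Riccati route, you must actually establish $Q^{(k)}_{0}=0$; one correct way is to note that along solutions of ($H_{\rm II}$) one has $\frac{d}{dt}(2K_{\rm II})=-\lambda^{2}$, whose $e^{k\eta\phi_{\rm II}}$-coefficient at order $\eta$ forces the leading term of $(2K_{\rm II})^{(k)}$ to vanish for $k\ge1$ --- but note that this is again the $t$-differentiation mechanism of the paper, not a ``short computation'' from the algebraic ingredients you listed.
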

%%%%%%%%%%%%%%%%%%%%%%%%%%%%%%%%%%%%%%%%%%%%%%%%%%%%%%%%%%%%%%%%%%%%%%%%%%%%%%%%%%%%
\begin{proof}
The following relation is shown in 
\cite[Proposition 2.1]{AKT Painleve WKB}:  
\begin{equation}
\frac{\partial}{\partial t} S = 
\frac{\partial}{\partial x} 
\Bigl(
A_{\rm{II}} S - \frac{1}{2} \frac{\partial A_{\rm{II}}}{\partial x}
\Bigr). \label{eq:conpatibility of SL_II and D_II}
\end{equation}
Comparing the coefficients of $e^{k \eta \phi_{\rm II}}$ of both sides of the equation 
\eqref{eq:conpatibility of SL_II and D_II}, we obtain 
\[
k \eta \frac{d \phi_{\rm{II}}}{d t} S^{(k)} + \frac{\partial}{\partial t} S^{(k)} =  
\frac{\partial}{\partial x} 
\Bigl(
A^{(k)}_{\rm{II}} S^{(0)} + \sum_{j = 1}^{k} A^{(k - j)} S^{(j)}
 - \frac{1}{2} \frac{\partial A^{(k)}_{\rm{II}}}{\partial x}
\Bigr) .
\]
By taking the coefficients of $\eta^2$ of both sides of this equation,
we have $k \frac{d \phi_{\rm{II}}}{d t} S_{-1}^{(k)} = 0$.  
Thus we obtain \eqref{eq:S(k)_-1 = 0}.  
It is obvious that $S_{{\rm odd},-1}^{(k)} = S_{-1}^{(k)}$ for all $k \ge 0$ by the definition.
\end{proof}

\noindent
We can check the following facts easily by straightforward computations.
%%%%%%%%%%%%%%%%%%%%%%%%%%%%%%%%%%%%%%%%%%%%%%%%%%%%%%%%%%%%%%%%%%%%%%%%%%%%%%%%%%%%%%%%%%
\begin{lemm} \label{lemma for behaviors of S}
The following asymptotic behaviors hold as $x \rightarrow \infty$:
\begin{eqnarray}
S_{-1}(x,t,c) & = &  
x^2 + \frac{t}{2} + \frac{c}{x} + {O}(x^{-2}) ,
\label{eq:behavior of S(0)_-1}  \\
S^{(0)}_0(x,t,c) & = & 
- x^{-1} +  {O}(x^{-2}) , \\
S^{(0)}_{\ell}(x,t,c) & = &  {O}(x^{-2}) \hspace{+.3em} (\ell \ge 1) , 
\label{eq:behavior of S(0)_ell}  \\
S^{(k)}_{\ell}(x,t,c) & = &  {O}(x^{-2}) \hspace{+.3em}
 (k \ge 1 , \ell \ge 0) .
\label{eq:behavior of S(k)_ell} 
\end{eqnarray}
\end{lemm}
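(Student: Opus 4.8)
The plan is to read off all the claimed asymptotics directly from the recursive relations \eqref{eq:S^(k)_ell}, using the known behaviour of the coefficients $Q_\ell^{(k)}$ of the potential and the explicit leading term \eqref{eq:S_-1} for $S_{-1}$. The organizing principle is that $S_{-1} = (x-\lambda_0)\sqrt{x^2+2\lambda_0 x + 3\lambda_0^2 + t}$ grows like $x^2$, so dividing by $2S_{-1}$ in \eqref{eq:S^(k)_ell} gains two powers of $x^{-1}$ at each step; hence once we establish the orders of the relevant $Q_\ell^{(k)}$, the orders of the $S_\ell^{(k)}$ follow by a single induction. I would first record the expansion of $S_{-1}$ at $x=\infty$: expanding the square root gives $\sqrt{x^2+2\lambda_0 x + 3\lambda_0^2+t} = x + \lambda_0 + \frac{\lambda_0^2+t/2}{x} + O(x^{-2})$, and multiplying by $(x-\lambda_0)$ yields $S_{-1} = x^2 + (\lambda_0^2 + t/2 - \lambda_0^2) x^{?}$ — more carefully, the $x^1$ term cancels and one gets $S_{-1} = x^2 + \tfrac{t}{2} + \tfrac{c}{x} + O(x^{-2})$, where the coefficient of $x^{-1}$ is computed using the defining relation $2\lambda_0^3 + t\lambda_0 + c = 0$ from \eqref{eq:lambda_0}. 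This establishes \eqref{eq:behavior of S(0)_-1}.

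Next I would handle $S_0^{(0)}$. From \eqref{eq:R_odd and R_even}-type reasoning (or directly from \eqref{eq:S^(k)_ell} with $k=0,\ell=-1$), $S_0^{(0)} = -\tfrac12 (d/dx)\log S_{-1} = -\tfrac12 S_{-1}'/S_{-1}$; since $S_{-1} = x^2(1 + O(x^{-2}))$, its logarithmic derivative is $2x^{-1} + O(x^{-3})$, giving $S_0^{(0)} = -x^{-1} + O(x^{-2})$, which is \eqref{eq:behavior of S(0)_0}. For the remaining $S_\ell^{(0)}$ with $\ell \ge 1$, I would use the recursion \eqref{eq:S^(k)_ell} with $k=0$: $S_{\ell+1}^{(0)} = \frac{1}{2S_{-1}}\bigl(Q_{\ell+2}^{(0)} - \sum_{\ell_1+\ell_2=\ell} S_{\ell_1}^{(0)}S_{\ell_2}^{(0)} - \partial_x S_\ell^{(0)}\bigr)$. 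One checks from \eqref{eq:Q(0)} that $Q_0^{(0)}$ is a polynomial plus $O(\eta^{-1})$ corrections supported near $x=\lambda_0$, so for $\ell+2 \ge 1$ the coefficients $Q_{\ell+2}^{(0)}$ decay at least like $x^{-1}$ (in fact the $\eta^{-1}$ term $-\nu^{(0)}/(x-\lambda^{(0)})$ contributes $O(x^{-1})$ and the $\eta^{-2}$ term $O(x^{-2})$); combined with the division by $2S_{-1} \sim x^2$ and the inductive hypothesis that lower $S_\ell^{(0)}$ are $O(x^{-1})$ or better, every quadratic term $S_{\ell_1}^{(0)}S_{\ell_2}^{(0)}$ and the derivative term are $O(x^{-2})$ or smaller, so $S_{\ell+1}^{(0)} = O(x^{-2})$. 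This gives \eqref{eq:behavior of S(0)_ell}.

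Finally, for \eqref{eq:behavior of S(k)_ell} with $k\ge 1$, I would use Proposition \ref{vanishing of S(k)_-1 = 0} (so $S_{-1}^{(k)}=0$ and the recursion simplifies) together with the fact that, because the 1-parameter solution's higher terms $\lambda^{(k)}$, $\nu^{(k)}$ enter $Q_{\rm II}^{(k)}$ only through the apparent-singularity part $\sim 1/(x-\lambda^{(0)})$ and its derivatives, the coefficients $Q_\ell^{(k)}$ for $k\ge 1$ are all $O(x^{-1})$ as $x\to\infty$ (indeed $O(x^{-2})$ beyond the single $1/(x-\lambda_0)$ contribution, which itself is $O(x^{-1})$). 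Then the same division-by-$2S_{-1}$ induction as above — now a double induction on $k$ and $\ell$, with the base case $S_0^{(k)}$ obtained from \eqref{eq:S^(k)_ell} at $\ell=-2$, i.e. $2S_{-1}S_0^{(k)} = Q_0^{(k)} + (\text{lower})$ — yields $S_\ell^{(k)} = O(x^{-2})$ for all $k\ge 1$, $\ell\ge 0$. The main obstacle I anticipate is purely bookkeeping: one must verify carefully that the $1/(x-\lambda^{(0)})$ and $1/(x-\lambda^{(0)})^2$ pieces of $Q_{\rm II}$, after being expanded in powers of $\eta^{-1}$ and in powers of $x^{-1}$, never conspire to produce a term of order $x^{-1}$ (rather than $x^{-2}$) in $S_\ell^{(k)}$ for $\ell\ge 1$ — equivalently, that the would-be $x^{-1}$ contributions cancel against the derivative term $\partial_x S_{\ell-1}^{(k)}$ — but this is a finite check at each order and follows from the residue structure of $Q_{\rm II}$ at its apparent singularity. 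Since the statement asks only for leading orders, the verification is routine once the recursions are set up, and that is why the lemma is asserted to follow "easily by straightforward computations."
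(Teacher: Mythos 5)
The paper offers no proof of this lemma (it is flagged as following ``easily by straightforward computations''), and your route is exactly the intended one: expand $S_{-1}=\sqrt{Q_0}$ at $x=\infty$ and use $2\lambda_0^3+t\lambda_0+c=0$ to get \eqref{eq:behavior of S(0)_-1}, then induct on the Riccati recursion \eqref{eq:S^(k)_ell}, gaining two powers of $x^{-1}$ at each step from the division by $2S_{-1}\sim 2x^2$. Your treatment of $S_{-1}$ and of $S_0^{(0)}$ is correct; note only that the identity $S_0^{(0)}=-\tfrac12\,\partial_x\log S_{-1}$ rests on $Q_1^{(0)}=0$, which holds because $\nu_0^{(0)}=0$ and $4\lambda_0^3+2t\lambda_0+2c=0$ kill the order-$\eta^{-1}$ contributions of both $-\eta^{-1}\nu/(x-\lambda)$ and $2K_{\rm II}$.

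Your intermediate assertions about the potential, however, are false as stated: $Q^{(0)}_{m}$ for $m\ge 1$ is in general \emph{not} $O(x^{-1})$, and for $k\ge1$ the higher terms $\lambda^{(k)},\nu^{(k)}$ do \emph{not} enter $Q^{(k)}_{\rm II}$ only through the apparent-singularity part. You have overlooked $2K_{\rm II}=\nu^2-(\lambda^4+t\lambda^2+2c\lambda)$, which is independent of $x$ but not of $\eta$ nor of $e^{\eta\phi_{\rm II}}$: for instance $Q^{(0)}_2$ contains the nonzero constant $(\nu_1^{(0)})^2$, and analogous $x$-independent constants occur in $Q^{(k)}_{\ell}$ for $k\ge1$. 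Consequently the numerators in the recursion are only $O(1)$, not $O(x^{-1})$. This does not damage the lemma: an $O(1)$ numerator divided by $2S_{-1}\sim 2x^2$ is $O(x^{-2})$, which is precisely what is claimed (and is why the statement says $O(x^{-2})$ rather than $O(x^{-3})$); for the same reason, the cancellation against $\partial_x S^{(k)}_{\ell-1}$ that you worry about at the end is not needed at all. Replace your decay claims by the (immediate) boundedness of the $Q^{(k)}_{\ell}$ at $x=\infty$ and your induction closes, completing the proof.
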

%%%%%%%%%%%%%%%%%%%%%%%%%%%%%%%%%%%%%%%%%%%%%%%%%%%%%%%%%%%%%%%%%%%%%%%%%%%%%%%%%%%%%%%%%%

\noindent
Therefore, the integral in \eqref{eq:psi_pm infinity} is well-defined. 
We also note that $S_{\rm even}$, the even part of $S$ 
(in the sense of Remark \ref{odd part}), satisfies that 
\begin{equation}
S_{\rm{even}} = - \frac{1}{2} \frac{\partial}{\partial x} {\rm{log}} S_{\rm{odd}} .
\label{eq:S_odd and S_even}
\end{equation}

The WKB solutions $\psi_{\pm, \infty}$ do not satisfy ($D_{\rm II}$). 
As a matter of fact, we can verify the following proposition.
%%%%%%%%%%%%%%%%%%%%%%%%%%%%%%%%%%%%%%%%%%%%%%%%%%%%%%%%%%%%%%%%%%%%%%%%%%%%%%%%%%%%
\begin{prop} \label{t-derivation of psi_infinity}
\begin{equation}
\frac{\partial}{\partial t} \psi_{\pm,\infty} = 
A_{\rm{II}} \frac{\partial \psi_{\pm,\infty}}{\partial x}  
- \frac{1}{2} \frac{\partial A_{\rm{}II}}{\partial x} \psi_{\pm,\infty}
\mp \frac{1}{2} \eta (\lambda - \lambda_0) \psi_{\pm,\infty}   .
\label{eq:t-derivation of psi_infinity}
\end{equation}
\end{prop}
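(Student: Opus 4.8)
The plan is to differentiate the WKB solution $\psi_{\pm,\infty}$ with respect to $t$ directly and compare the result with the operator $A_{\rm II}\,\partial_x - \frac{1}{2}(\partial_x A_{\rm II})$ applied to it. Since $\psi_{\pm,\infty}$ has the form $S_{\rm odd}^{-1/2}\exp(\pm\Phi)$ with $\Phi = \eta\int_{a_1}^{x}S_{-1}\,dx + \int_{\infty}^{x}(S_{\rm odd}-\eta S_{-1})\,dx$, the key is to track the $t$-dependence coming from three sources: the integrand $S_{\rm odd}$, the $t$-dependent lower endpoints $a_1 = a_1(t,c)$ and the branch data, and the prefactor. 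The compatibility identity $\partial_t S = \partial_x\!\bigl(A_{\rm II}S - \frac{1}{2}\partial_x A_{\rm II}\bigr)$ from \eqref{eq:conpatibility of SL_II and D_II} (valid for the full $S$, hence also for its odd part after symmetrizing under $S \mapsto S^{\dagger}$) is the engine: it converts $\partial_t$ of the integrand into an $x$-derivative of something explicit, so that $\int^{x}\partial_t S_{\rm odd}\,dx$ telescopes.

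First I would compute $\partial_t\log\psi_{\pm,\infty} = -\frac{1}{2}\partial_t\log S_{\rm odd} \pm \partial_t\Phi$. For $\partial_t\Phi$, differentiating under the integral sign and handling the moving endpoint $a_1$ gives $\partial_t\Phi = \pm\bigl[\int^{x}\partial_t S_{\rm odd}\,dx - (\text{endpoint contribution at }a_1) - \eta\,(\text{correction from }\int_{a_1}^x S_{-1})\bigr]$; one must be careful that the $a_1$-endpoint terms in the two pieces of $\Phi$ combine cleanly since $S_{\rm odd}-\eta S_{-1}$ is what is integrated from $\infty$. Using the compatibility relation, $\int^{x}\partial_t S_{\rm odd}\,dx = A_{\rm II}S_{\rm odd} - \frac{1}{2}\partial_x A_{\rm II} + (\text{constant in }x)$, where the integration constant is fixed by the behavior as $x\to\infty$ together with the value at $x=a_1$. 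Similarly $-\frac{1}{2}\partial_t\log S_{\rm odd}$ should reorganize, via \eqref{eq:S_odd and S_even}, into a term proportional to $A_{\rm II}\,\partial_x\log\psi_{\pm,\infty}$ plus lower-order pieces. Assembling everything, the $A_{\rm II}S_{\rm odd}$ term reproduces $A_{\rm II}\,\partial_x\psi_{\pm,\infty}$ (using $\partial_x\psi_{\pm,\infty} = (\pm S_{\rm odd} + S_{\rm even})\psi_{\pm,\infty}$ and \eqref{eq:S_odd and S_even}), the $-\frac{1}{2}\partial_x A_{\rm II}$ term reproduces the second term on the right of \eqref{eq:t-derivation of psi_infinity}, and what remains is the anomalous term $\mp\frac{1}{2}\eta(\lambda-\lambda_0)\psi_{\pm,\infty}$.

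The main obstacle — and the heart of the calculation — is pinning down exactly that leftover constant $\mp\frac{1}{2}\eta(\lambda-\lambda_0)$. It arises as an $x$-independent discrepancy between $\partial_t$ of the normalized WKB solution and the naive transport operator, and its value is forced by two boundary conditions: the normalization at $x=\infty$ (where $A_{\rm II}S_{\rm odd}-\frac{1}{2}\partial_x A_{\rm II}\to 0$, so the integration constant is read off from the subleading asymptotics in Lemma \ref{lemma for behaviors of S}) and the behavior at the lower endpoint $x=a_1$, whose $t$-motion is governed by $Q_0(a_1,t,c)=0$. Tracking the $a_1$-endpoint term requires differentiating $\eta\int_{a_1}^x S_{-1}\,dx$ in $t$, which produces $-\eta\,\dot a_1\,S_{-1}(a_1)=0$ since $S_{-1}(a_1)=0$ by \eqref{eq:S_-1}, plus $\eta\int_{a_1}^x\partial_t S_{-1}\,dx$; the latter must be matched against the corresponding piece of $\int^x\partial_t S_{\rm odd}\,dx$ at the $\eta^1$ order. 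Computing $\partial_t S_{-1} = \partial_t\sqrt{Q_0}$ explicitly and using $A_0 S_{-1}-\frac{1}{2}\partial_x A_0 = \frac{S_{-1}}{2(x-\lambda_0)} - \frac{1}{4(x-\lambda_0)^2}$, one checks that the residual $\eta$-order constant is precisely $\mp\frac{1}{2}\eta(\lambda-\lambda_0)$, where the appearance of the full $\lambda$ (not $\lambda_0$) comes from the $t$-derivative acting on $S_{-1}$ together with the first-order correction; this bookkeeping at orders $\eta^1$ and $\eta^0$ is the one genuinely delicate point, and I would organize it by expanding everything in powers of $\eta$ and of $\alpha\eta^{-1/2}e^{\eta\phi_{\rm II}}$ and verifying the claimed identity order by order, invoking Proposition \ref{vanishing of S(k)_-1 = 0} to kill the would-be $\eta^1$ contributions from the $k\ge 1$ sectors.
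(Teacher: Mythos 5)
Your overall strategy coincides with the paper's: differentiate $\psi_{\pm,\infty}$ in $t$, use the compatibility relation \eqref{eq:conpatibility of SL_II and D_II} to turn $\partial_t S_{\rm odd}$ and $\partial_t S_{-1}$ into exact $x$-derivatives, telescope the integrals, and identify the leftover $x$-independent term. However, your plan for pinning down that leftover term --- which is the entire content of the proposition --- contains concrete inaccuracies. First, after symmetrizing $S\mapsto S^{\dagger}$ the odd part satisfies $\partial_t S_{\rm odd}=\partial_x\bigl(A_{\rm II}S_{\rm odd}\bigr)$ with no $-\tfrac{1}{2}\partial_x A_{\rm II}$ (that piece cancels between $S$ and $S^{\dagger}$); the $-\tfrac{1}{2}\partial_x A_{\rm II}$ in the final formula comes instead from the prefactor, $-\tfrac{1}{2}\partial_t\log S_{\rm odd}=-\tfrac{1}{2}S_{\rm odd}^{-1}\partial_x\bigl(A_{\rm II}S_{\rm odd}\bigr)$. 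Second, your claim that $A_{\rm II}S_{\rm odd}-\tfrac{1}{2}\partial_x A_{\rm II}\to 0$ as $x\to\infty$ is false: $A_{\rm II}S_{\rm odd}\sim\tfrac{1}{2}\eta x$ diverges. What the telescoping actually produces is the boundary value $\bigl[A_{\rm II}S_{\rm odd}-\eta A_0 S_{-1}\bigr]\big|_{x=\infty}$, which is finite but \emph{not} zero; it equals $\tfrac{1}{2}\eta(\lambda-\lambda_0)$, and this single evaluation at $x=\infty$ is the whole anomalous term (the $a_1$ endpoint contributes nothing because $S_{-1}(a_1)=0$, as you correctly note).

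Third, the appearance of the full $\lambda$ rather than $\lambda_0$ does not come from $\partial_t S_{-1}$, which involves only $\lambda_0$; it enters because $A_{\rm II}=1/\bigl(2(x-\lambda)\bigr)$ carries the full 1-parameter solution, so that $A_{\rm II}=\frac{1}{2(x-\lambda_0)}+\frac{\lambda-\lambda_0}{2(x-\lambda_0)^2}+O(x^{-3})$, and the second term multiplied by the leading behavior $S_{\rm odd}=\eta x^2+\cdots$ from Lemma \ref{lemma for behaviors of S} (together with Proposition \ref{vanishing of S(k)_-1 = 0}) is precisely what survives in the limit $x\to\infty$. With these corrections your order-by-order bookkeeping becomes unnecessary, and your outline reduces to the paper's proof.
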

%%%%%%%%%%%%%%%%%%%%%%%%%%%%%%%%%%%%%%%%%%%%%%%%%%%%%%%%%%%%%%%%%%%%%%%%%%%%%%%%%%%%
\begin{proof}
Using \eqref{eq:conpatibility of SL_II and D_II}, we have
\begin{eqnarray}
\frac{\partial}{\partial t} S_{\rm{odd}} & = & 
\frac{\partial}{\partial x} 
\bigl(
A_{\rm{II}} S_{\rm{odd}} 
\bigr) ,
\label{eq:conpatibility of SL_II and D_II 2} \\
\frac{\partial}{\partial t} S_{-1} & = & 
\frac{\partial}{\partial x} 
\bigl(
A_0 S_{-1} 
\bigr) .
\label{eq:conpatibility of SL_II and D_II 3}
\end{eqnarray}
Taking these relations into account and differentiating $\psi_{\pm,\infty}$ with respect to $t$, 
we obtain the following:
\begin{eqnarray}
\frac{\partial}{\partial t} \psi_{\pm,\infty} 
& = &  
- \frac{1}{2} \frac{1}{S_{\rm{odd}}} \frac{\partial S_{\rm{odd}}}{\partial t} \psi_{\pm,\infty} 
\pm \biggl\{
\eta \int_{a_1}^{x} \frac{\partial S_{-1}}{\partial t} dx  
+ \int_{\infty}^{x} \frac{\partial}{\partial t} \bigl(S_{\rm{odd}} - \eta S_{-1}  \bigr)
\biggl\}  \psi_{\pm,\infty}  \nonumber  \\[+.4em]
& = &
- \frac{1}{2} \frac{1}{S_{\rm{odd}}} \frac{\partial}{\partial x} 
\bigl(
A_{\rm{II}} S_{\rm{odd}} 
\bigr)   \psi_{\pm,\infty}  \nonumber \\
&   &    
\pm \biggl\{
\eta \int_{a_1}^{x} \frac{\partial}{\partial x}
\bigl(
A_0 S_{-1} 
\bigr)  dx  
+ \int_{\infty}^{x} \frac{\partial}{\partial x} 
\bigl(A_{\rm{II}}S_{\rm{odd}} - \eta A_0 S_{-1}  \bigr)
\biggl\}  \psi_{\pm,\infty}   \nonumber  \\[+.5em]
& = &
- \frac{1}{2} 
\Bigl(
\frac{\partial A_{\rm{II}}}{\partial x}  +
A_{\rm{II}} \frac{1}{S_{\rm{odd}}} \frac{\partial S_{\rm{odd}}}{\partial x} 
\Bigr)  \psi_{\pm,\infty}  \nonumber \\
&   &  
\pm A_{\rm{II}} S_{\rm{odd}} \psi_{\pm,\infty}  
\mp  \bigl[A_{\rm{II}}S_{\rm{odd}} - \eta A_0 S_{-1} \bigr] \hspace{-.1em} \Bigl|_{x = \infty} 
 \hspace{-.7em} \psi_{\pm,\infty} .
\label{eq:t-derivation of psi_infinity0}
\end{eqnarray}
Since $A_{\rm II}$ behaves as 
\[
A_{\rm{II}} = 
\frac{1}{2(x-\lambda_0)} + \frac{\lambda - \lambda_0}{2 (x - \lambda_0)^2} + {O}(x^{-3}) 
\]
when $x$ tends to $\infty$, we have 
\begin{equation}
\bigl[A_{\rm{II}}S_{\rm{odd}} - \eta A_0 S_{-1} \bigr] \Bigl|_{x = \infty} = 
\frac{1}{2} \eta (\lambda - \lambda_0) 
\label{eq:value of A_II S_odd - eta A_0 S_-1 at x = infinity}
\end{equation}
in view of \eqref{eq:behavior of S(0)_-1} $\sim$ \eqref{eq:behavior of S(k)_ell}.
Furthermore, it follows from the definition \eqref{eq:psi_pm infinity} of $\psi_{\pm,\infty}$ that 
\begin{equation}
\frac{\partial \psi_{\pm,\infty}}{\partial x} = 
\Bigl(
\pm S_{\rm odd} 
- \frac{1}{2}\frac{1}{S_{\rm{odd}}} \frac{\partial S_{\rm{odd}}}{\partial x} 
\Bigr)  \psi_{\pm,\infty} .
\label{eq:x-derivation of psi_infinity}
\end{equation}
Making use of \eqref{eq:t-derivation of psi_infinity0}, 
\eqref{eq:value of A_II S_odd - eta A_0 S_-1 at x = infinity} 
and \eqref{eq:x-derivation of psi_infinity}, we have \eqref{eq:t-derivation of psi_infinity}.
\end{proof}

Proposition \ref{t-derivation of psi_infinity} implies that the WKB solutions
\[
\psi_{\pm,\rm IM} = e^{\pm \frac{1}{2} U} \psi_{\pm,\infty}
\]
satisfy both ($SL_{\rm II}$) and ($D_{\rm II}$), 
where 
\begin{equation}
U = U(t,c,\eta;\alpha) = \eta \int_{\infty}^{t}
\bigl( \lambda(t,c,\eta;\alpha) - \lambda_0(t,c) \bigr) dt . 
\label{eq:U}
\end{equation}
(IM stands for Iso-Monodromic.)
The Stokes multipliers of ($SL_{\rm II}$) around $x = \infty$ will be computed 
in Section 6.3 by using $\psi_{\pm,\rm IM}$. 

%%%%%%%%%%%%%%%%%%%%%%%%%%%%%%%%%%%%%%%%%%%%%%%%%%%%%%%%%%%%%%%%%%%%%%%%%%%%%%%%%%%%%%%%%%
\begin{rem} \label{remark for definition of U} \normalfont
The integral \eqref{eq:U} is defined in the following sense:  
\begin{equation}
U(t,c,\eta;\alpha) = U^{(0)}(t,c,\eta) + 
 \alpha \eta^{-\frac{1}{2}} U^{(1)}(t,c,\eta) {{e}}^{\eta \phi_{\rm{II}}} + 
 (\alpha \eta^{-\frac{1}{2}})^2 U^{(2)}(t,c,\eta) {{e}}^{2 \eta \phi_{\rm{II}}} + \cdots ,
\label{eq:expansion of U}
\end{equation}
\[
U^{(k)}(t,c,\eta) = U_0^{(k)}(t,c) + \eta^{-1} U_1^{(k)}(t,c) + \eta^{-2} U_2^{(k)}(t,c) + \cdots 
\hspace{+1.em} (k \ge 0) , 
\]
where $U^{(0)}$ is defined by the following integral  
whose integration path is the same as 
the normalization path of $\tilde{\lambda}_{\infty}^{(1)}$ 
(see Figure \ref{fig:normalization path of lambda(1)_infinity}): 
\begin{equation}
U^{(0)}(t,c,\eta) = \eta\int_{\infty}^{t} (\lambda^{(0)}(t,c,\eta) - \lambda_0(t,c)) dt ,  
\label{eq:U(0)}
\end{equation}
and $U^{(k)}$ ($k \ge 1$) is the unique formal series satisfying 
\[ 
k \eta \hspace{+.1em} \frac{d \phi_{\rm{II}}}{d t} \hspace{+.1em} U^{(k)} 
+ \frac{\partial}{\partial t} \hspace{+.1em} U^{(k)} 
 = \frac{1}{2} \hspace{+.1em} \eta \hspace{+.1em} \lambda^{(k)} .
\]
This is not an integral in the usual sense, but $U$ satisfies
\begin{equation}
\frac{\partial}{\partial t} U = \eta \hspace{+.2em} (\lambda - \lambda_0) . 
\label{eq:t-derivation of U}
\end{equation}
The end point $t = \infty$ of the integral in \eqref{eq:U} is chosen so as to 
obtain an explicit representation of the ``Voros coefficient" of ($SL_{\rm II}$), 
which will be calculated in the next section. 
We also note that, making use of Lemma \ref{lemma for behaviors of S} and 
\eqref{eq:behavior of integrated S_-1} below, we have the following asymptotic behavior  
of $\psi_{\pm, {\rm IM}}$ as $x$ tends to $\infty$: 
\begin{equation}
\psi_{\pm, {\rm IM}} = \eta^{-\frac{1}{2}} x^{-1 \pm c \hspace{+.1em} \eta} \hspace{+.1em} 
{\rm exp}  \hspace{+.1em} \pm \Big\{ \eta \hspace{+.1em} \Bigl( \frac{1}{3} \hspace{+.1em} x^3 
+ \frac{1}{2} \hspace{+.1em} t \hspace{+.1em} x \Bigr) 
- \frac{1}{2} \hspace{+.1em} \Big(\frac{4}{3} \hspace{+.1em} \eta \hspace{+.1em} \lambda_0^3  
+ \hspace{+.1em} c \hspace{+.1em} \eta \hspace{+.1em} {\rm log}  
\Bigl( - \frac{2 \lambda_0^2 + t}{4}  \Bigl) - U
\Bigr) \Bigr\} \bigl( 1 + O(x^{-1}) \bigr).
\end{equation}
If we define $u$ by 
\begin{equation}
{\rm log} \hspace{+.1em} u = \frac{4}{3} \hspace{+.1em} \eta \hspace{+.1em} \lambda_0^3  
+ \hspace{+.1em} c \hspace{+.1em} \eta \hspace{+.1em} {\rm log}  
\Bigl( - \frac{2 \lambda_0^2 + t}{4}  \Bigl) - U, 
\end{equation}
we can verify that 
\[
\frac{d}{dt} \hspace{+.1em} {\rm log} \hspace{+.1em} u = - \eta \hspace{+.1em} \lambda.
\]
Hence, the quantity $u$ defined above is nothing but $u$ which appeared in 
\cite[(C.10)-(C.13) in Appendix]{Jimbo-Miwa}.
\end{rem}
%%%%%%%%%%%%%%%%%%%%%%%%%%%%%%%%%%%%%%%%%%%%%%%%%%%%%%%%%%%%%%%%%%%%%%%%%%%%%%%%%%%%%%%%%%

%%%%%%%%%%%%%%%%%%%%%%%%%%%%%%%%%%%%%%%%%%%%%%%%%%%%%%%%%%%%%%%%%%%%%%%%%%%%%%%%%%%%%%%%%%
\begin{rem} \label{remark for expansion of psi_IM} \normalfont
Since $S_{\rm odd}$ and $U$ are expanded as 
\eqref{eq:expansion of S_odd} and \eqref{eq:expansion of U} respectively, 
$\psi_{\pm,\rm IM}$ is expanded as follows 
(see also Proposition \ref{vanishing of S(k)_-1 = 0}): 
\begin{equation}
\hspace{-.5em}
\psi_{\pm,\rm IM}(x,t,c,\eta;\alpha) = 
\psi_{\pm}^{(0)}(x,t,c,\eta) + 
\alpha \eta^{-\frac{1}{2}} \psi_{\pm}^{(1)}(x,t,c,\eta) {e}^{\eta \phi_{\rm II}} +
(\alpha \eta^{-\frac{1}{2}})^2 \psi_{\pm}^{(2)}(x,t,c,\eta) {e}^{2 \eta \phi_{\rm II}}
 + \cdots  \hspace{+.2em} ,
\label{eq:expansion of psi_pm,IM}
\end{equation}
\[
\hspace{-.5em}
\psi_{\pm}^{(k)}(x,t,c,\eta) = \eta^{-\frac{1}{2}} \bigl\{ 
\psi^{(k)}_{\pm,0}(x,t,c) + \eta^{-1} \psi^{(k)}_{\pm,1}(x,t,c) + \eta^{-2} \psi^{(k)}_{\pm,2}(x,t,c) + \cdots
\bigr\} {\rm exp} \pm \eta \Bigl( \int_{a_1}^{x} S_{-1}(x,t,c)dx  \Bigr) .
\]
Especially, $\psi_{\pm}^{(0)}$ is given by the following:
\begin{equation}
\hspace{-.5em}
\psi_{\pm}^{(0)}(x,t,c,\eta) =  \frac{1}{\sqrt{S^{(0)}_{\rm{odd}}}} {\rm{exp}} \pm
\biggl\{ 
\frac{1}{2} U^{(0)} + 
\int_{\infty}^{x} \bigl(S^{(0)}_{\rm{odd}} - \eta S_{-1} \bigr) dx
\biggr\} \hspace{+.2em} {\rm exp}\pm \eta \Bigl( \int_{a_1}^{x} S_{-1} dx  \Bigr) .
\label{eq:psi(0)_pm,IM}
\end{equation}
\end{rem}
%%%%%%%%%%%%%%%%%%%%%%%%%%%%%%%%%%%%%%%%%%%%%%%%%%%%%%%%%%%%%%%%%%%%%%%%%%%%%%%%%%%%%%%%%%

%%%%%%%%%%%%%%%%%%%%%%%%%%%%%%%%%%%%%%%%%%%%%%%%%%%%%%%%%%%%%%%%%%%%%%%%%%%%%%%%%%%%%%%%%%
\begin{rem} \normalfont
Because ${\rm{Res}}_{x = \infty} \bigl\{ \bigl( S_{\rm odd} - \eta S_{-1} \bigr) dx \bigr\} = 0$ 
by \eqref{eq:behavior of S(0)_ell} and \eqref{eq:behavior of S(k)_ell}, we have 
\begin{equation}
\int_{a_1}^{a_2} S_{\rm odd} \hspace{+.2em} dx 
% = \frac{1}{2} \int_{\gamma} S_{\rm odd} \hspace{+.2em} dx
= \pi i c \eta . \label{eq:integral of S_odd}
\end{equation}
(The left-hand side is defined by the contour integral 
$\frac{1}{2} \int_{\gamma} S_{\rm odd} \hspace{+.2em} dx$, 
where $\gamma$ is a closed path in Figure \ref{fig:integral path gamma}.)
This relation will be used in the computations of the Stokes multipliers of ($SL_{\rm II}$).
\end{rem}
%%%%%%%%%%%%%%%%%%%%%%%%%%%%%%%%%%%%%%%%%%%%%%%%%%%%%%%%%%%%%%%%%%%%%%%%%%%%%%%%%%%%%%%%%%

%%%%%%%%%%%%%%%%%%%%%%%%%%%%%%%%%%%%%%%%%%%%%%%%%%%%%%%%%%%%%%%%%%%%%%%%%%%%%%%%%%%%%%%%%%
%%%%%%%%%%%%%%%%%%%%%%%%%%%%%%%%%%%%%%%%%%%%%%%%%%%%%%%%%%%%%%%%%%%%%%%%%%%%%%%%%%%%%%%%%%
%%%%%%%%%%%%%%%%%%%%%%%%%%%%%%%%%%%%%%%%%%%%%%%%%%%%%%%%%%%%%%%%%%%%%%%%%%%%%%%%%%%%%%%%%%
%%%%%%%%%%%%%%%%%%%%%%%%%%%%%%%%%%%%%%%%%%%%%%%%%%%%%%%%%%%%%%%%%%%%%%%%%%%%%%%%%%%%%%%%%%
%%%%%%%%%%%%%%%%%%%%%%%%%%%%%%%%%%%%%%%%%%%%%%%%%%%%%%%%%%%%%%%%%%%%%%%%%%%%%%%%%%%%%%%%%%
%%%%%%%%%%%%%%%%%%%%%%%%%%%%%%%%%%%%%%%%%%%%%%%%%%%%%%%%%%%%%%%%%%%%%%%%%%%%%%%%%%%%%%%%%%

\section{The Voros coefficient of ($SL_{\rm II}$)}

In this section, as a preparation of the computation of the Stokes multipliers of ($SL_{\rm II}$),
we investigate the Voros coefficient of ($SL_{\rm II}$). 

%%%%%%%%%%%%%%%%%%%%%%%%%%%%%%%%%%%%%%%%%%%%%%%%%%%%%%%%%%%%%%%%%%%%%%%%%%%%%%%%%%%%%%%%%%
%%%%%%%%%%%%%%%%%%%%%%%%%%%%%%%%%%%%%%%%%%%%%%%%%%%%%%%%%%%%%%%%%%%%%%%%%%%%%%%%%%%%%%%%%%

\subsection{The Voros coefficient of ($SL_{\rm II}$)}

First we define the Voros coefficient of ($SL_{\rm II}$).
%%%%%%%%%%%%%%%%%%%%%%%%%%%%%%%%%%%%%%%%%%%%%%%%%%%%%%%%%%%%%%%%%%%%%%%%%%%%%%%%%%%%%%%%%%
\begin{defi} \normalfont
\textit{The Voros coefficient of $(SL_{\rm II})$} is given by
\begin{eqnarray}
V(t,c,\eta;\alpha) =  \int_{a_1}^{\infty} \bigl( S_{\rm odd}(x,t,c,\eta;\alpha) - \eta S_{-1}(x,t,c) \bigr) dx . 
 \label{eq:SL_II Voros coeff}
\end{eqnarray}
Here the right-hand side of \eqref{eq:SL_II Voros coeff} is defined by
\[
\frac{1}{2} \int_{\gamma_{\infty}} \bigl( S_{\rm odd}(x,t,c,\eta;\alpha) - \eta S_{-1}(x,t,c) \bigr) dx ,
\]
where $\gamma_{\infty}$ is a path on the 
Riemann surface of $\sqrt{Q_0}$ shown in Figure \ref{fig:integral path gamma_infinity}.
(The dotted part of $\gamma_{\infty}$ represents a path on the other sheet 
of the Riemann surface of $\sqrt{Q_0}$.)
The right-hand side of \eqref{eq:SL_II Voros coeff} 
is well-defined by Lemma \ref{lemma for behaviors of S}.
%%%%%%%%%%%%%%%%%%%%%%%%%%%%%%%%%%%%%%%%%%%%%%%%%%%%%%%%%%%%%%%%%%%%%%%%%%%%%%%%%%%%%%%%%%
 \begin{figure}[h]
 \begin{center}
 \includegraphics[width=55mm]{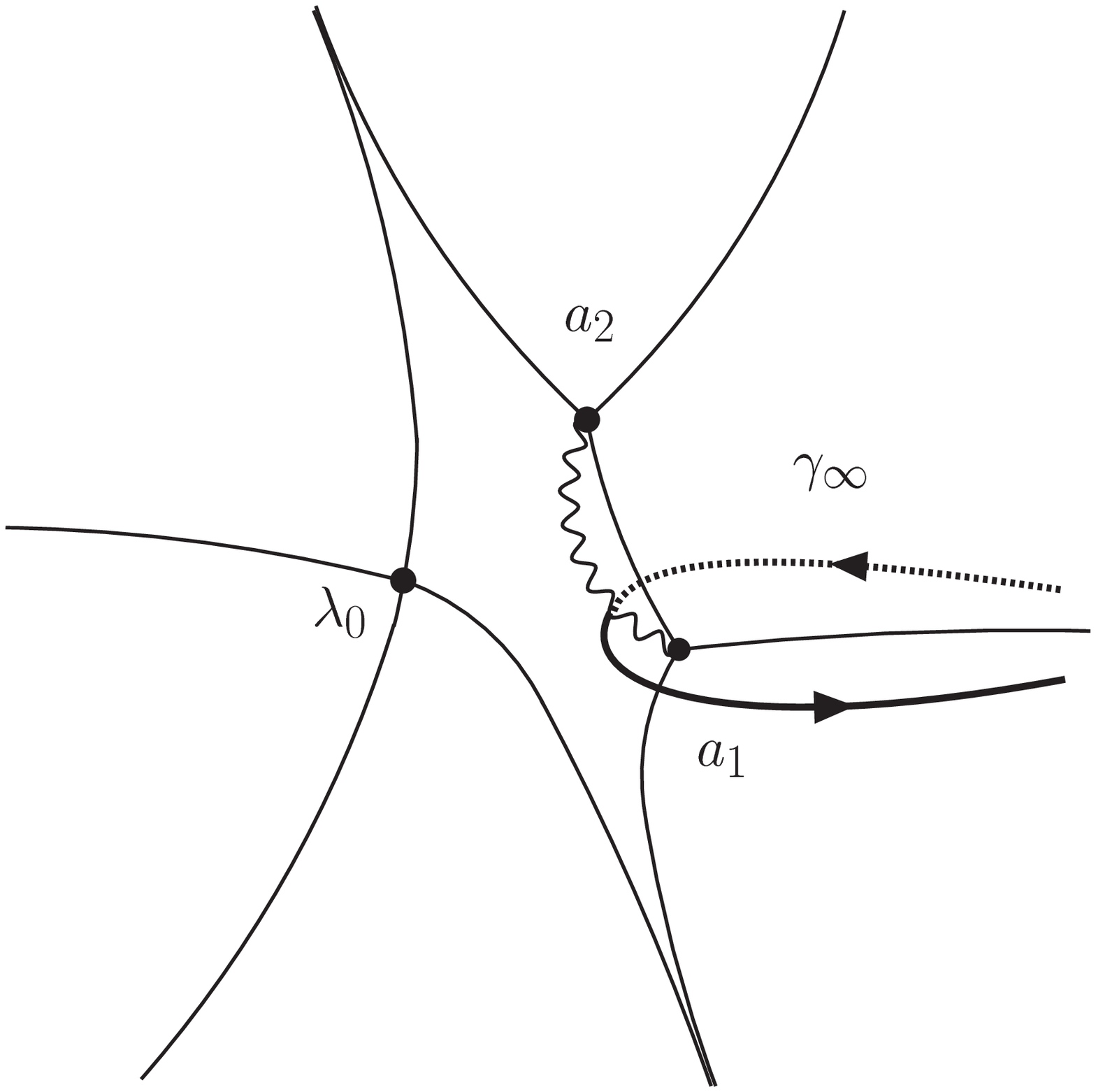}
 \end{center}
 \caption{Integration path $\gamma_{\infty}$.}
 \label{fig:integral path gamma_infinity}
 \end{figure}
\end{defi}
%%%%%%%%%%%%%%%%%%%%%%%%%%%%%%%%%%%%%%%%%%%%%%%%%%%%%%%%%%%%%%%%%%%%%%%%%%%%%%%%%%%%%%%%%%
Because $S_{\rm odd}$ is expanded as \eqref{eq:expansion of S}, 
$V$ is also expanded as follows:
\begin{equation}
V(t,c,\eta;\alpha) = V^{(0)}(t,c,\eta) + 
\alpha \eta^{-\frac{1}{2}} V^{(1)}(t,c,\eta) {{e}}^{\eta \phi_{\rm{II}}} +
(\alpha \eta^{-\frac{1}{2}})^2 V^{(2)}(t,c,\eta) {{e}}^{2 \eta \phi_{\rm{II}}}  
  + \cdots \hspace{+.2em},  
\end{equation}
\begin{equation}
V^{(0)}(t,c,\eta)  =  \int_{a_1}^{\infty} \bigl(S^{(0)}_{\rm odd}(x,t,c,\eta) - \eta S_{-1}(x,t,c) \bigr) dx 
= V_0^{(0)}(t,c) + \eta^{-1}V_1^{(0)}(t,c) + \cdots  \hspace{+.2em}, \\
\label{eq:V(0)}
\end{equation}
\[
V^{(k)}(t,c,\eta) = \int_{a_1}^{\infty} S^{(k)}_{\rm odd}(x,t,c,\eta) dx = 
 V_0^{(k)}(t,c) + \eta^{-1}V_1^{(k)}(t,c) + \cdots \hspace{+1.em} (k \ge 1) . \hspace{+3.4em}
\]
%%%%%%%%%%%%%%%%%%%%%%%%%%%%%%%%%%%%%%%%%%%%%%%%%%%%%%%%%%%%%%%%%%%%%%%%%%%%%%%%%%%%%%%%%%
\begin{prop} \label{prop for t-derivation of V}
\begin{equation}
\frac{\partial}{\partial t} V = 
 \frac{1}{2} \eta \hspace{+.2em} \bigl(\lambda - \lambda_0 \bigr)  .
\label{eq:t-derivation of V}
\end{equation}
\end{prop}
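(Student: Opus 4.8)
The plan is to differentiate the contour–integral definition of $V$ with respect to $t$, use the compatibility of $(SL_{\rm II})$ and $(D_{\rm II})$ to rewrite the integrand as a total $x$-derivative, and then read off the boundary value at $x=\infty$. All identities are understood order by order in the expansion of $S_{\rm odd}$ in $\eta^{-1}$ and $e^{\eta\phi_{\rm II}}$, so the manipulations below are legitimate termwise. Writing $V=\frac12\int_{\gamma_\infty}\bigl(S_{\rm odd}-\eta S_{-1}\bigr)\,dx$ with $\gamma_\infty$ as in Figure~\ref{fig:integral path gamma_infinity}, I would note that $S_{\rm odd}-\eta S_{-1}$ is holomorphic off the simple turning points $a_1,a_2$ and, being $O(x^{-2})$ near $x=\infty$ by Lemma~\ref{lemma for behaviors of S}, has a $t$-derivative that is likewise integrable at $x=\infty$; thus $\gamma_\infty$ may be taken locally independent of $t$ and $\partial_t$ carried under the integral. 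Then, invoking $\partial_t S_{\rm odd}=\partial_x(A_{\rm II}S_{\rm odd})$ and $\partial_t S_{-1}=\partial_x(A_0 S_{-1})$, i.e.\ \eqref{eq:conpatibility of SL_II and D_II 2} and \eqref{eq:conpatibility of SL_II and D_II 3} (which follow from the compatibility relation \eqref{eq:conpatibility of SL_II and D_II} by taking its odd part and its $\eta^2$-coefficient, respectively), the integrand becomes the exact differential $\partial_x\bigl(A_{\rm II}S_{\rm odd}-\eta A_0 S_{-1}\bigr)$.

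It then remains to evaluate $\partial_t V=\frac12\int_{\gamma_\infty}\partial_x\bigl(A_{\rm II}S_{\rm odd}-\eta A_0 S_{-1}\bigr)\,dx$. The function $A_{\rm II}S_{\rm odd}-\eta A_0 S_{-1}$ is single-valued on the Riemann surface of $\sqrt{Q_0}$, and $\gamma_\infty$ runs from one point over $x=\infty$ to the other while avoiding $a_1$, so the integral equals the difference of the two boundary values of $A_{\rm II}S_{\rm odd}-\eta A_0 S_{-1}$ over $x=\infty$. On the sheet with $S_{-1}\sim x^2$, using $A_{\rm II}=\tfrac{1}{2x}+\tfrac{\lambda}{2x^2}+O(x^{-3})$, $A_0=\tfrac{1}{2x}+\tfrac{\lambda_0}{2x^2}+O(x^{-3})$ and $S_{\rm odd}=\eta S_{-1}+O(x^{-2})$ from Lemma~\ref{lemma for behaviors of S}, one finds $A_{\rm II}S_{\rm odd}-\eta A_0 S_{-1}\to\tfrac12\eta(\lambda-\lambda_0)$ as $x\to\infty$; this is exactly \eqref{eq:value of A_II S_odd - eta A_0 S_-1 at x = infinity}. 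Since the expression is odd under the sheet exchange, its value at the other point over $x=\infty$ is $-\tfrac12\eta(\lambda-\lambda_0)$, and with the orientation of $\gamma_\infty$ that makes $\frac12\int_{\gamma_\infty}=\int_{a_1}^{\infty}$ on the sheet $S_{-1}\sim x^2$, the difference times the overall factor $\frac12$ gives $\partial_t V=\frac12\eta(\lambda-\lambda_0)$, which is the assertion. As a consistency check, combined with \eqref{eq:t-derivation of U} this yields $\partial_t(2V-U)=0$, in agreement with the later identification of $2V-U$ with the $P$-Voros coefficient.

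The step I expect to be the main obstacle is the bookkeeping near the turning point $a_1$: because the $\eta^{-1}$-coefficient of $S_{\rm odd}$ already carries a non-integrable $(x-a_1)^{-5/2}$-type singularity, $V$ must be treated genuinely as the closed contour integral over $\gamma_\infty$ rather than a naive improper integral from $a_1$, so one has to justify carrying $\partial_t$ under this noncompact Riemann-surface integral and, in reducing to the exact differential, keep the orientation of $\gamma_\infty$ straight so that the final sign comes out right. The expansions at $x=\infty$ are then routine given Lemma~\ref{lemma for behaviors of S}.
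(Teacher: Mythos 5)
Your argument is correct and is essentially the paper's own proof: differentiate under the contour integral over $\gamma_{\infty}$, use \eqref{eq:conpatibility of SL_II and D_II 2} and \eqref{eq:conpatibility of SL_II and D_II 3} to turn the integrand into $\partial_x\bigl(A_{\rm II}S_{\rm odd}-\eta A_0 S_{-1}\bigr)$, and evaluate via \eqref{eq:value of A_II S_odd - eta A_0 S_-1 at x = infinity}. The extra bookkeeping you supply (oddness under the sheet exchange, orientation of $\gamma_{\infty}$, termwise justification of differentiating under the integral) merely makes explicit what the paper leaves implicit.
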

%%%%%%%%%%%%%%%%%%%%%%%%%%%%%%%%%%%%%%%%%%%%%%%%%%%%%%%%%%%%%%%%%%%%%%%%%%%%%%%%%%%%%%%%%%
\begin{proof}
Using \eqref{eq:conpatibility of SL_II and D_II 2},
\eqref{eq:conpatibility of SL_II and D_II 3} and 
\eqref{eq:value of A_II S_odd - eta A_0 S_-1 at x = infinity}, 
we can compute $\frac{\partial}{\partial t} V$ as follows:
we have
\begin{eqnarray*}
\frac{\partial}{\partial t} V & = &  
 \frac{1}{2} \int_{\gamma_{\infty}} 
   \frac{\partial}{\partial t} (S_{\rm odd} - \eta S_{-1}) dx \nonumber \\
& = &
\frac{1}{2} \int_{\gamma_{\infty}} 
   \frac{\partial}{\partial x} (A_{\rm{II}} S_{\rm odd} - \eta A_0 S_{-1}) dx \nonumber \\
& = &
\frac{1}{2} \eta (\lambda - \lambda_0  ). \hspace{+.6em} 
\end{eqnarray*}
\end{proof}

Proposition \ref{prop for t-derivation of V} together with \eqref{eq:t-derivation of U} 
shows that the quantity $2V - U$ is independent of $t$.
Since $2V - U$ is expanded as
\[
2V - U = (2V^{(0)} - U^{(0)}) + 
\alpha \eta^{-\frac{1}{2}} (2V^{(1)} - U^{(1)}) {{e}}^{\eta \phi_{\rm{II}}} +
(\alpha \eta^{-\frac{1}{2}})^2 (2V^{(2)} - U^{(2)} ) {{e}}^{2 \eta \phi_{\rm{II}}} + \cdots ,
\]
this independence of $t$ implies that 
all the coefficients of $e^{k \eta \phi_{\rm II}}$ ($k \ge 1$) must vanish. 
So we obtain the following:
%%%%%%%%%%%%%%%%%%%%%%%%%%%%%%%%%%%%%%%%%%%%%%%%%%%%%%%%%%%%%%%%%%%%%%%%%%%%%%%%%%%%%%%%%%
\begin{prop} \label{proposition for 2V - U}
\begin{equation}
2V(t,c,\alpha,\eta) - U(t,c,\alpha,\eta)  = 
2V^{(0)}(t,c,\eta) - U^{(0)}(t,c,\eta) 
\label{eq:2V - U}
\end{equation}
and this is independent of $t$.
\end{prop}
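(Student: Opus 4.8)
The plan is to read the statement off from the two $t$-derivative identities already established. Proposition \ref{prop for t-derivation of V} gives $\partial_t V = \tfrac12\,\eta\,(\lambda-\lambda_0)$, and \eqref{eq:t-derivation of U} gives $\partial_t U = \eta\,(\lambda-\lambda_0)$; subtracting, $\partial_t(2V-U)=0$, so $2V-U$ is $t$-independent as a formal object. The remaining (and only nontrivial) point is the sharper assertion \eqref{eq:2V - U}, i.e.\ that the transseries coefficients $2V^{(k)}-U^{(k)}$ vanish for every $k\ge1$.

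To see this I would use the transseries structure. Both $V$ and $U$ are expanded in powers of $e^{\eta\phi_{\rm II}}$ with coefficients that are formal power series of $\eta^{-1}$ with $t$-analytic coefficients (cf.\ \eqref{eq:expansion of S}, \eqref{eq:expansion of U}), so
\[
2V-U=\sum_{k\ge0}(\alpha\eta^{-\frac12})^{k}\bigl(2V^{(k)}-U^{(k)}\bigr)\,e^{k\eta\phi_{\rm II}} .
\]
Since $\phi_{\rm II}=\int\sqrt{\Delta}\,dt$ with $\sqrt{\Delta}\not\equiv0$, the functions $\{e^{k\eta\phi_{\rm II}}\}_{k\ge0}$ are independent over the ring of such formal power series, and $\partial_t e^{k\eta\phi_{\rm II}}=k\eta\sqrt{\Delta}\,e^{k\eta\phi_{\rm II}}$. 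Applying $\partial_t$, using $\partial_t(2V-U)=0$, and matching the coefficient of $e^{k\eta\phi_{\rm II}}$ yields, for each $k$,
\[
\partial_t\bigl(2V^{(k)}-U^{(k)}\bigr)+k\eta\sqrt{\Delta}\,\bigl(2V^{(k)}-U^{(k)}\bigr)=0 .
\]
For $k=0$ this is precisely the last assertion of the Proposition. For $k\ge1$ it is a homogeneous first-order linear equation for a formal power series of $\eta^{-1}$: if $2V^{(k)}-U^{(k)}$ were nonzero with leading term of order $\eta^{m}$, the term $k\eta\sqrt{\Delta}\,(2V^{(k)}-U^{(k)})$ would produce a nonvanishing contribution of order $\eta^{m+1}$ that nothing else can cancel, a contradiction; hence $2V^{(k)}-U^{(k)}=0$ for all $k\ge1$, which is \eqref{eq:2V - U}.

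I do not anticipate a genuine obstacle; the two points deserving a line of justification are that one may match coefficients of $e^{k\eta\phi_{\rm II}}$ (uniqueness of the transseries expansion) and that a formal-power-series solution of the homogeneous equation must vanish (descending induction on the power of $\eta$), both routine in the exact WKB formalism. An equivalent route avoiding transseries bookkeeping is to note that for $k\ge1$ both $2V^{(k)}$ and $U^{(k)}$ satisfy the same recursion $k\eta\,\phi_{\rm II}'\,f+\partial_t f=\eta\,\lambda^{(k)}$ — for $2V^{(k)}$ by expanding \eqref{eq:t-derivation of V}, for $U^{(k)}$ by expanding \eqref{eq:t-derivation of U} (see also Remark \ref{remark for definition of U}) — which has a unique solution among formal power series of $\eta^{-1}$, so $2V^{(k)}=U^{(k)}$; then the $k=0$ identity $\partial_t(2V^{(0)}-U^{(0)})=0$ finishes the proof.
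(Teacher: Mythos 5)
Your proof is correct and takes essentially the same route as the paper: it subtracts \eqref{eq:t-derivation of U} from twice the identity of Proposition \ref{prop for t-derivation of V} to get $\partial_t(2V-U)=0$, then uses the transseries expansion in $e^{k\eta\phi_{\rm II}}$ to conclude that all coefficients with $k\ge 1$ must vanish. Your extra justification of that vanishing (the homogeneous equation $\partial_t f + k\eta\sqrt{\Delta}\,f=0$ having no nonzero formal power series solution, or equivalently the uniqueness of the solution of the recursion shared by $2V^{(k)}$ and $U^{(k)}$) simply spells out the step the paper asserts in one line.
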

%%%%%%%%%%%%%%%%%%%%%%%%%%%%%%%%%%%%%%%%%%%%%%%%%%%%%%%%%%%%%%%%%%%%%%%%%%%%%%%%%%%%%%%%%%

In view of \eqref{eq:Q(0)}, \eqref{eq:SL_II-Riccati(0)}, and \eqref{eq:V(0)} 
$V^{(0)}$ coincides with the Voros coefficient of ($SL_{\rm II}$) 
with a 0-parameter solution of ($H_{\rm II}$) substituted into the coefficients of $Q_{\rm II}$. 
Hence, to compute $2V - U$, it suffices to consider ($SL_{\rm II}$) 
with a 0-parameter solution substituted . 

%%%%%%%%%%%%%%%%%%%%%%%%%%%%%%%%%%%%%%%%%%%%%%%%%%%%%%%%%%%%%%%%%%%%%%%%%%%%%%%%%%%%%%%%%%
%%%%%%%%%%%%%%%%%%%%%%%%%%%%%%%%%%%%%%%%%%%%%%%%%%%%%%%%%%%%%%%%%%%%%%%%%%%%%%%%%%%%%%%%%%

\subsection{Determination of the Voros coefficient of ($SL_{\rm II}$)}

We have an explicit description of the quantity $2V - U$.
%%%%%%%%%%%%%%%%%%%%%%%%%%%%%%%%%%%%%%%%%%%%%%%%%%%%%%%%%%%%%%%%%%%%%%%%%%%%%%%%%%%%%%%%%%
\begin{theo} \label{main theorem 2}
$2V - U$ is represented explicitly as follows: 
\begin{equation}
2V(t,c,\eta) - U(t,c,\eta) = 
- \sum_{n = 1}^{\infty} \frac{2^{1-2n} - 1}{2n(2n - 1)} B_{2n} (c \hspace{+.1em} \eta)^{1-2n} , 
\label{eq:2V(0) - U(0)}
\end{equation}
where $B_{2n}$ is the 2n-th Bernoulli number defined by \eqref{eq:Bernoulli number}.
\end{theo}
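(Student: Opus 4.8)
The plan is to reproduce, on the Schr\"odinger side, the logical skeleton of the proof of Theorem \ref{main theorem 1}. By Proposition \ref{proposition for 2V - U} we have $2V - U = 2V^{(0)} - U^{(0)}$, a formal series independent of $t$, so it suffices to work with $(SL_{\rm II})$ in which the $0$-parameter solution has been substituted (cf.\ \eqref{eq:Q(0)}, \eqref{eq:SL_II-Riccati(0)}). First I would record two structural facts. (a) \emph{No constant term}: since $S^{(0)}_{\rm odd} - \eta S_{-1}$ begins at order $\eta^{-1}$ and $\lambda^{(0)} - \lambda_0$ begins at order $\eta^{-1}$ (indeed $\lambda^{(0)}_1 = 0$ by the recursion \eqref{eq:recurrence relation of lambda(0)}), the series $2V^{(0)} - U^{(0)}$ begins at order $\eta^{-1}$ and its coefficients are constants. (b) \emph{Homogeneity}: under the scaling $(t,c,x,\lambda,\eta) \mapsto (\rho^{2}t,\rho^{3}c,\rho x,\rho\lambda,\rho^{-3}\eta)$, which leaves $(H_{\rm II})$ and $(SL_{\rm II})$ form-invariant and fixes both $c\eta$ and $\eta\phi_{\rm II}$, the one-form $(S_{\rm odd} - \eta S_{-1})\,dx$ and the quantities $V^{(0)}$, $U^{(0)}$ are invariant (cf.\ \eqref{eq:homogenity of W}); combined with (a) and the $t$-independence this forces $2V - U = \sum_{n \ge 1} F_n\,(c\eta)^{-n}$ with $F_n \in \mathbb{C}$. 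By Lemma \ref{lemma for Voros coeff}(ii) it then suffices to verify that $2V - U$ satisfies the difference equation \eqref{eq:difference equation} for $c \mapsto c - \eta^{-1}$.

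The core step is therefore the derivation of that difference equation, and here I would use the B\"acklund transformation. By Lemma \ref{application of Backlund transformation}(i) the transformation of Lemma \ref{Backlund} carries the $0$-parameter solution with parameter $c$ to the one with parameter $c - \eta^{-1}$; on the linear side it lifts to a gauge (Schlesinger) transformation $\psi \mapsto \bigl(a(x)\,\partial_x + b(x)\bigr)\psi$ with $a,b$ rational in $x$, intertwining $(SL_{\rm II})$ for $c$ with $(SL_{\rm II})$ for $c - \eta^{-1}$ (the corresponding $0$-parameter solutions being substituted in each). Applied to a WKB solution this yields, on the level of Riccati solutions, the $x$-analogue of \eqref{eq:difference equation for R}:
\[
S^{(0)}(x,t,c,\eta) - S^{(0)}(x,t,c-\eta^{-1},\eta) = - \frac{\partial}{\partial x}\,\log\bigl(1 + g(x)\bigr),
\]
where $g(x)$ is an explicit rational function of $x$ built from $S^{(0)}$, $\lambda^{(0)}$ and $\nu^{(0)}$, its form following from \eqref{eq:Backlund}, \eqref{eq:Backlund lambda(0)} and \eqref{eq:Backlund nu(0)}. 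Integrating this identity along the path $\gamma_{\infty}$ of Figure \ref{fig:integral path gamma_infinity}, with due care for the displacement of the simple turning point $a_1$ and of the branch cuts as the parameter moves from $c$ to $c - \eta^{-1}$, and evaluating the resulting boundary values at $x = \infty$ and the residues at $x = \lambda_0$ and at the turning points by means of Lemma \ref{lemma for behaviors of S}, gives $V^{(0)}(t,c,\eta) - V^{(0)}(t,c-\eta^{-1},\eta)$ in closed form. Independently, \eqref{eq:Backlund lambda(0)} expresses $\lambda^{(0)}(t,c-\eta^{-1},\eta)$ through the data at $c$, so that $U^{(0)}(t,c,\eta) - U^{(0)}(t,c-\eta^{-1},\eta)$ is computed from the definition \eqref{eq:U} together with Lemma \ref{behavior of coeff of 1-parameter solution}. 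Combining the two differences and simplifying should reproduce precisely the right-hand side of \eqref{eq:difference equation}, namely $-1 + c\eta\,\log\bigl(1 + (c\eta - 1)^{-1}\bigr) - \log\bigl(1 + (2(c\eta - 1))^{-1}\bigr)$; Lemma \ref{lemma for Voros coeff}(ii) then identifies $2V - U$ with the Bernoulli series \eqref{eq:expression of PIIVoros}, which is the claimed identity \eqref{eq:2V(0) - U(0)}, and in particular shows $2V - U = W$.

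The main obstacle I expect is the linear-side bookkeeping: producing the Schlesinger transformation lifting the B\"acklund transformation explicitly enough to obtain the clean form displayed above, and then correctly tracking (i) how the contour $\gamma_{\infty}$ and the turning point $a_1$ move under $c \mapsto c - \eta^{-1}$, and (ii) the logarithmic contributions at $x = \infty$ and at the apparent singularity $x = \lambda_0$ — these contributions are exactly the source of the three summands on the right of \eqref{eq:difference equation}, in analogy with the $t$-side computation \eqref{eq:behavior of I}--\eqref{eq:behavior of I_0}. The structural facts (a) and (b) are routine once the scaling is written down. A conceivable shortcut, which I would also examine, is to bypass the difference equation entirely by relating $V$ and $U$ to $W$ directly through the behaviour of $\psi_{\pm,\rm IM}$ near the apparent singularity $x = \lambda$, since the variational solution $\tilde{\lambda}^{(1)}$ defining $W$ is governed by the WKB solutions of $(SL_{\rm II})$ there; but the B\"acklund route above parallels the established derivation of $W$ and relies only on machinery already developed in the paper.
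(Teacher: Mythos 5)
Your route is essentially the paper's: after reducing to $2V^{(0)}-U^{(0)}$ via Proposition \ref{proposition for 2V - U}, the paper likewise lifts the B\"acklund transformation to the Schlesinger transformation of Lemma \ref{Schlesinger}, obtains exactly your Riccati difference identity (Lemma \ref{lemma for SL_II Voros coeff 2}), integrates it along $\gamma_{\infty}$ with the residue/boundary computations at $x=a_1$ and $x=\infty$, and concludes by the homogeneity argument together with Lemma \ref{lemma for Voros coeff}(ii). The only bookkeeping point where your sketch is looser is the treatment of $U$: the paper never evaluates $U^{(0)}(t,c,\eta)-U^{(0)}(t,c-\eta^{-1},\eta)$ at finite $t$ (no closed form is available for that integral); instead it exploits the $t$-independence of $2V-U$ and lets $t\rightarrow\infty$ along $\Gamma$, where $U^{(0)}\rightarrow 0$ and the $t$-dependent difference relation \eqref{eq:difference equation2} collapses, via the asymptotics of Lemma \ref{behavior of coeff of 1-parameter solution}, to the canonical equation \eqref{eq:difference equation} --- which is what your appeal to that lemma implicitly amounts to, so you should state the limit step explicitly.
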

%%%%%%%%%%%%%%%%%%%%%%%%%%%%%%%%%%%%%%%%%%%%%%%%%%%%%%%%%%%%%%%%%%%%%%%%%%%%%%%%%%%%%%%%%%

\noindent
We will prove Theorem \ref{main theorem 2} in the remainder of this section. 
The idea of proof is similar to that of Theorem \ref{main theorem 1},   
that is, we derive the difference equation for $V^{(0)}$, 
which is the Voros coefficient of ($SL_{\rm II}$) with a 0-parameter solution substituted. 
% We also note that we consider only the case that the 0-parameter solution 
% s substituted into the coefficient of ($SL_{\rm II}$) as mentioned above.
%%%%%%%%%%%%%%%%%%%%%%%%%%%%%%%%%%%%%%%%%%%%%%%%%%%%%%%%%%%%%%%%%%%%%%%%%%%%%%%%%%%%%%%%%%
\begin{lemm} \label{lemma for SL_II Voros coeff}
\begin{equation}
V^{(0)}(t,c,\eta) = 
\frac{1}{2} \int_{\gamma_{\infty}}
\bigl( S^{(0)}(x,t,c,\eta) - \eta S_{-1}(x,t,c) - S^{(0)}_0(x,t,c) \bigr) dx  . 
\end{equation}
\end{lemm}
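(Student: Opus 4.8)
The plan is to mimic the proof of Lemma \ref{expression of W}. By the definition of the odd and even parts (Remark \ref{odd part}) we have $S^{(0)} = S^{(0)}_{\rm odd} + S^{(0)}_{\rm even}$, and by \eqref{eq:V(0)} together with the convention adopted in the definition of the Voros coefficient of $(SL_{\rm II})$ we have $V^{(0)}(t,c,\eta) = \frac{1}{2}\int_{\gamma_{\infty}}\bigl(S^{(0)}_{\rm odd}(x,t,c,\eta) - \eta S_{-1}(x,t,c)\bigr)\,dx$. Hence the assertion of the lemma is equivalent to the relation
\[
\int_{\gamma_{\infty}} S^{(0)}_{\rm even}(x,t,c,\eta)\,dx = \int_{\gamma_{\infty}} S^{(0)}_0(x,t,c)\,dx ,
\]
which is what I would establish.

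First I would take the part of \eqref{eq:S_odd and S_even} that is independent of $\alpha$, which reads $S^{(0)}_{\rm even} = -\frac{1}{2}\,\partial_x\log S^{(0)}_{\rm odd}$. Since $S^{(0)}_{\rm odd} = \eta S_{-1} + O(\eta^{-1})$ with $S_{-1} = \sqrt{Q_0}$, I would split the logarithm as
\[
S^{(0)}_{\rm even} = -\frac{1}{2}\,\frac{1}{S_{-1}}\,\frac{\partial S_{-1}}{\partial x} - \frac{1}{2}\,\frac{\partial}{\partial x}\log\Bigl(1 + \frac{S^{(0)}_{\rm odd} - \eta S_{-1}}{\eta S_{-1}}\Bigr) .
\]
The first term on the right-hand side is exactly $S^{(0)}_0$: it is the $\eta^{0}$-coefficient of $-\frac{1}{2}\partial_x\log S^{(0)}_{\rm odd} = S^{(0)}_{\rm even}$, and the $\eta^{0}$-coefficient of the even part is by definition $S^{(0)}_0$. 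Thus it remains only to show that the integral of the second term along $\gamma_{\infty}$ vanishes.

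For this I would argue order by order in powers of $\eta^{-1}$. Both $S^{(0)}_{\rm odd} - \eta S_{-1}$ and $\eta S_{-1}$ are odd in the sense of Remark \ref{odd part}, i.e.\ anti-invariant under the deck transformation of the double cover of the $x$-plane branched at the simple turning points $a_1, a_2$; hence the ratio $(S^{(0)}_{\rm odd} - \eta S_{-1})/(\eta S_{-1}) = \eta^{-2}(\cdots) + \eta^{-4}(\cdots) + \cdots$ is, coefficient by coefficient in $\eta^{-1}$, a single-valued rational function of $x$, and by Lemma \ref{lemma for behaviors of S} (the estimates $S^{(0)}_{\ell} = O(x^{-2})$ for $\ell \ge 1$ together with $S_{-1}\sim x^2$ as $x\to\infty$) each of these coefficients tends to $0$ as $x\to\infty$. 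Consequently $\log\bigl(1 + (S^{(0)}_{\rm odd} - \eta S_{-1})/(\eta S_{-1})\bigr)$ is, order by order, a single-valued function on the Riemann surface of $\sqrt{Q_0}$ whose values at the two points lying over $x = \infty$ — the endpoints of $\gamma_{\infty}$ — are both $\log 1 = 0$. Its $x$-derivative being a total derivative, the integral of the second term along $\gamma_{\infty}$ equals the difference of these boundary values, namely $0$. This proves the displayed relation, and the lemma follows.

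The step that I expect to require the most care is the claim that the ratio $(S^{(0)}_{\rm odd} - \eta S_{-1})/(\eta S_{-1})$ descends to a single-valued function decaying at $x = \infty$ order by order; this rests on the parity structure of the WKB series recalled in Remark \ref{odd part} together with the asymptotics collected in Lemma \ref{lemma for behaviors of S}. Everything else is the routine bookkeeping already carried out for the $P$-Voros coefficient in the proof of Lemma \ref{expression of W}.
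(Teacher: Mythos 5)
Your argument is correct and is exactly the route the paper intends: the paper omits the proof, stating it is "similar to Lemma \ref{expression of W}", and your proof is precisely that analogue — reduce to $\int_{\gamma_{\infty}} S^{(0)}_{\rm even}\,dx = \int_{\gamma_{\infty}} S^{(0)}_0\,dx$ via \eqref{eq:S_odd and S_even}, identify $-\tfrac{1}{2}S_{-1}^{-1}\partial_x S_{-1}$ with $S^{(0)}_0$, and kill the remaining exact term by single-valuedness and decay of $(S^{(0)}_{\rm odd}-\eta S_{-1})/(\eta S_{-1})$ at the endpoints of $\gamma_{\infty}$ over $x=\infty$. No gaps worth noting.
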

%%%%%%%%%%%%%%%%%%%%%%%%%%%%%%%%%%%%%%%%%%%%%%%%%%%%%%%%%%%%%%%%%%%%%%%%%%%%%%%%%%%%%%%%%%

\noindent
We omit the proof of Lemma \ref{lemma for SL_II Voros coeff} 
because we can show it similarly to Lemma \ref{expression of W}. 
By Lemma \ref{lemma for SL_II Voros coeff}, if we define
\begin{eqnarray*}
{J}(x,t,c,\eta) & = & 
\int_{\gamma_{x}} S^{(0)}(x,t,c,\eta)dx - 
\int_{\gamma_{x}} S^{(0)}(x,t,c-\eta^{-1},\eta) \hspace{+.2em} dx , \\
{J}_{j}(x,t,c,\eta) & = & 
\int_{\gamma_{x}} S_j^{(0)}(x,t,c)dx - 
\int_{\gamma_{x}} S_j^{(0)}(x,t,c-\eta^{-1}) \hspace{+.2em} dx \hspace{+1.em} (j \ge -1), 
\end{eqnarray*}
then we obtain 
\begin{equation}
2V^{(0)}(t,c,\eta) - 2V^{(0)}(t,c-\eta^{-1},\eta) = 
\lim_{x \rightarrow \infty} \bigl(
{J}(x,t,c,\eta) - \eta \hspace{+.2em} {J}_{-1}(x,t,c,\eta) - {J}_{0}(x,t,c,\eta)
\bigr) . 
\label{eq:difference of V(0) in limit form}
\end{equation}
Here $\gamma_x$ is a path on the Riemann surface of $\sqrt{Q_0}$ 
shown in Figure \ref{fig:integration path gamma_x}, 
where $\check{x}$ represents a point on the Riemann surface of $\sqrt{Q_0}$ 
satisfying $Q_0(x,t,c) = Q_0(\check{x},t,c)$ and 
$\sqrt{Q_0(x,t,c)} = - \sqrt{Q_0(\check{x},t,c)}$. 
(The dotted part of $\gamma_x$ represents a path on the other sheet 
of the Riemann surface of $\sqrt{Q_0}$.)
%%%%%%%%%%%%%%%%%%%%%%%%%%%%%%%%%%%%%%%%%%%%%%%%%%%%%%%%%%%%%%%%%%%%%%%%%%%%%%%%%%%%%%%%%%
 \begin{figure}[h]
 \begin{center}
 \includegraphics[width=50mm]{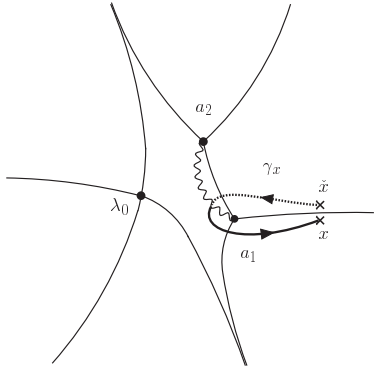}
 \end{center}
 \caption{Integration path $\gamma_{x}$.}
 \label{fig:integration path gamma_x}
 \end{figure}
%%%%%%%%%%%%%%%%%%%%%%%%%%%%%%%%%%%%%%%%%%%%%%%%%%%%%%%%%%%%%%%%%%%%%%%%%%%%%%%%%%%%%%%%%%
To calculate the right-hand side of \eqref{eq:difference of V(0) in limit form}, 
we employ the so-called Schlesinger transformation of ($SL_{\rm II}$),  
which induces the translation of the parameter $c \mapsto c - \eta^{-1}$,
as the counterpart of B$\ddot{\rm a}$cklund transformation of ($P_{\rm II}$) 
in Lemma \ref{Backlund}.
%%%%%%%%%%%%%%%%%%%%%%%%%%%%%%%%%%%%%%%%%%%%%%%%%%%%%%%%%%%%%%%%%%%%%%%%%%%%%%%%%%%%%%%%%%
\begin{lemm}  [{\cite[pp.423-424]{Jimbo-Miwa}}] \label{Schlesinger}
Let $\psi = \psi(x,t,c,\eta)$ be a solution of ($SL_{\rm II}$) with a solution 
($\lambda, \nu$) of ($H_{\rm II}$) substituted into $Q_{\rm II}$. 
If we define $\varphi = \varphi(x,t,c,\eta)$ by 
\begin{equation}
\varphi(x,t,c,\eta) = f(x,t,c,\eta) \psi(x,t,c,\eta) + g(x,t,c,\eta) \frac{\partial \psi}{\partial x}(x,t,c,\eta) 
\end{equation} 
with
\begin{eqnarray*}
f & = & \bigl( x - \Lambda(\lambda,\nu) \bigr)^{-\frac{1}{2}} (x - \lambda)^{-\frac{1}{2}}
\Bigl\{
x^2 - {\lambda}^2 + \nu 
- \eta^{-1} \frac{1}{2 (x-\lambda)}
\Bigr\} ,  \\
g & = & -\eta^{-1} \bigl( x - \Lambda(\lambda,\nu) \bigr)^{-\frac{1}{2}} (x - \lambda)^{-\frac{1}{2}} , 
\end{eqnarray*}
then $\varphi$ satisfies the following: 
\begin{equation}
\Bigl( 
\frac{\partial^2}{\partial x^2} - \eta^2 \hat{Q}_{\rm II} 
\Bigr) \varphi = 0, 
\label{eq:translated SLII}
\end{equation}
where
\[
\hat{Q}_{\rm II} = x^4 + t x^2+2 (c - \eta^{-1}) x + 2 \hat{K}_{\rm{II}} 
 - \eta^{-1} \frac{{\cal N}(\lambda, \nu)}{x-\Lambda(\lambda, \nu)}  
 + \eta^{-2} \frac{3}{4(x - \Lambda(\lambda, \nu))^2} ,
\]
\[
\hat{K}_{\rm{II}} = \frac{1}{2} 
 \Bigl[ {\cal N}(\lambda, \nu)^2 - \bigl( \Lambda(\lambda, \nu)^4 
 + t \hspace{+.2em} \Lambda(\lambda, \nu)^2 + 2 (c - \eta^{-1}) \Lambda(\lambda, \nu) \bigr) \Bigr] .
\]
(See Lemma \ref{Backlund} for the definition of ($\Lambda(\lambda,\nu), {\cal N}(\lambda, \nu)$).)
\end{lemm}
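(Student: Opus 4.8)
The plan is to read Lemma \ref{Schlesinger} as the scalar shadow of the classical Schlesinger transformation for the rank-two isomonodromic system attached to $(P_{\rm II})$ in \cite[pp.423--424]{Jimbo-Miwa}: multiplying a fundamental matrix solution by a suitable rational gauge matrix $R(x)$ shifts the exponent of the solutions at $x=\infty$ from $c\eta$ to $c\eta-1$ — i.e.\ performs $c\mapsto c-\eta^{-1}$ — and moves the apparent singularity from $x=\lambda$ to $x=\Lambda(\lambda,\nu)$; writing out the relevant component of $R(x)\Psi$ and eliminating the other component by means of $(SL_{\rm II})$ produces precisely a combination of the form $f\psi+g\,\partial_x\psi$, which is the conceptual origin of the explicit $f,g$ in the statement. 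Granting this, consistency of the transformation is automatic, but I would still carry out a direct ODE-level verification, since that is what makes the lemma self-contained, and it is short.

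For the direct check, write $\;{}'=\partial/\partial x$ and note that the pair $(\psi,\psi')$ spans the solution space of $(SL_{\rm II})$. Differentiating $\varphi=f\psi+g\psi'$ twice and using $\psi''=\eta^2 Q_{\rm II}\psi$ each time to remove second derivatives of $\psi$ (here $t,c,\eta,\lambda,\nu$ are inert parameters, so only $x$-derivatives appear), one gets $\varphi''=P\,\psi+R\,\psi'$ with $P=f''+\eta^2(gQ_{\rm II})'+\eta^2 Q_{\rm II}(f+g')$ and $R=2f'+\eta^2 gQ_{\rm II}+g''$. Imposing \eqref{eq:translated SLII}, i.e.\ $\varphi''=\eta^2\hat{Q}_{\rm II}\varphi=\eta^2\hat{Q}_{\rm II}(f\psi+g\psi')$, and using that this must hold for every solution $\psi$ (whose initial data can be arbitrary), we may compare coefficients of $\psi$ and of $\psi'$: the $\psi'$-identity gives $\hat{Q}_{\rm II}=Q_{\rm II}+\eta^{-2}(2f'+g'')/g$, and the $\psi$-identity is the remaining compatibility condition. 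Since $f$ and $g$ carry the common algebraic prefactor $\rho(x)=(x-\Lambda(\lambda,\nu))^{-1/2}(x-\lambda)^{-1/2}$, I would factor it out first — its logarithmic derivative $\rho'/\rho=-\tfrac12\big(\tfrac{1}{x-\Lambda(\lambda,\nu)}+\tfrac{1}{x-\lambda}\big)$ enters cleanly — reducing both identities to rational-function identities in $x$ with coefficients polynomial in $\lambda,\nu,t,c,\eta^{-1}$, which can be cleared of denominators and checked.

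The substance of the computation, and where the effort concentrates, lies in three points. First, at $x=\lambda$ the original equation has an apparent singularity with indicial exponents $-\tfrac12,\tfrac32$ (this is exactly the role of the $\eta^{-2}\tfrac34(x-\lambda)^{-2}$ term together with the prescribed form of $K_{\rm II}$); one must verify that $f\psi+g\psi'$ is holomorphic there, i.e.\ that the leading singular parts cancel, which is the mechanism turning $x=\lambda$ into a regular point of \eqref{eq:translated SLII} and uses the displayed form of $K_{\rm II}$ essentially. Second, one must check that the $\hat{Q}_{\rm II}$ delivered by the $\psi'$-identity has a pole of order at most two at the new point $x=\Lambda(\lambda,\nu)$ with indicial roots $-\tfrac12,\tfrac32$ — equivalently, that its constant term reorganizes into $2\hat{K}_{\rm II}$ with $\hat{K}_{\rm II}$ exactly the claimed expression in ${\cal N}(\lambda,\nu)$ and $\Lambda(\lambda,\nu)$ — and this is the step that genuinely needs the defining relations \eqref{eq:Backlund}, being the algebraic identity underlying the fact that $(\Lambda(\lambda,\nu),{\cal N}(\lambda,\nu))$ satisfies the translated Hamiltonian system. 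Third, the coefficient of the linear-in-$x$ term of the polynomial part of $\hat{Q}_{\rm II}$ must pass from $2c$ to $2(c-\eta^{-1})$, which is a brief residue-at-infinity computation. The main obstacle is therefore not any single idea but the combined bookkeeping of the half-integer powers at $x=\lambda$ and $x=\Lambda(\lambda,\nu)$ and the verification that \eqref{eq:Backlund} is precisely strong enough to force $\hat{K}_{\rm II}$ into the stated form; I would organize the whole computation around partial-fraction decompositions of the relevant rational functions of $x$ at the poles $\infty$, $\lambda$, and $\Lambda(\lambda,\nu)$ to keep it tractable.
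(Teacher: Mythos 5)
Your proposal is correct and matches the paper's approach: the paper disposes of Lemma \ref{Schlesinger} with the single remark that it ``can be shown by straightforward computations'' (citing the matrix Schlesinger transformation in Jimbo--Miwa, pp.~423--424), and your plan — eliminate $\psi''$ via $(SL_{\rm II})$, compare the $\psi$- and $\psi'$-coefficients to reduce everything to two rational-function identities in $x$ involving \eqref{eq:Backlund}, organized by partial fractions at $x=\lambda$, $x=\Lambda(\lambda,\nu)$ and $x=\infty$ — is exactly that computation carried out, with the gauge-transformation interpretation matching the cited source. No gap.
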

%%%%%%%%%%%%%%%%%%%%%%%%%%%%%%%%%%%%%%%%%%%%%%%%%%%%%%%%%%%%%%%%%%%%%%%%%%%%%%%%%%%%%%%%%%

\noindent
Lemma \ref{Schlesinger} can be shown by straightforward computations. 
With the aid of this Schlesinger transformation, we can derive the difference equation 
for $S^{(0)}$.
%%%%%%%%%%%%%%%%%%%%%%%%%%%%%%%%%%%%%%%%%%%%%%%%%%%%%%%%%%%%%%%%%%%%%%%%%%%%%%%%%%%%%%%%%%
\begin{lemm} \label{lemma for SL_II Voros coeff 2}
Let $S^{(0)}(x,t,c,\eta)$ be a formal power series solution of the Riccati equation \eqref{eq:SL_II-Riccati} 
associated with ($SL_{\rm II}$) with a 0-parameter solution ($\lambda^{(0)}(t,c,\eta)$, $\nu^{(0)}(t,c,\eta)$) 
of $(H_{\rm II})$ substituted into the coefficients of $Q_{\rm II}$.  
Then $S^{(0)}(x,t,c,\eta)$ satisfies the following: 
\begin{equation}
S^{(0)}(x,t,c,\eta) - S^{(0)}(x,t,c-\eta^{-1},\eta) = 
 - \frac{\partial}{\partial x} {\rm{log}} \Bigl(f^{(0)}(x,t,c,\eta) + g^{(0)}(x,t,c,\eta) S^{(0)}(x,t,c,\eta) \Bigr) ,
       \label{eq:difference equation for S(0)}
\end{equation}
where $f^{(0)}(x,t,c,\eta)$ and $g^{(0)}(x,t,c,\eta)$ are the formal power series of $\eta^{-1}$ 
obtained by substituting ($\lambda^{(0)}$, $\nu^{(0)}$) 
into the expressions of $f$ and $g$ in Lemma \ref{Schlesinger}, respectively.
\end{lemm}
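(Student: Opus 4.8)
The plan is to imitate the proof of Lemma \ref{application of Backlund transformation}(ii), with the Schlesinger transformation of Lemma \ref{Schlesinger} playing the role of the B\"acklund transformation. First I would take $\psi_+$ to be the ``$+$'' WKB solution of $(SL_{\rm II})$ with the $0$-parameter solution $(\lambda^{(0)},\nu^{(0)})$ substituted into $Q_{\rm II}$; writing it as $\psi_+ = \exp\bigl(\int^x S^{(0)}(x,t,c,\eta)\,dx\bigr)$, which equals $\frac{1}{\sqrt{S^{(0)}_{\rm odd}}}\exp\bigl(\int^x S^{(0)}_{\rm odd}\,dx\bigr)$ by \eqref{eq:S_odd and S_even}, we have $\partial_x\psi_+ = S^{(0)}(x,t,c,\eta)\,\psi_+$. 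Applying Lemma \ref{Schlesinger} to $\psi_+$ produces a solution of \eqref{eq:translated SLII} of the form
\[
\varphi = f^{(0)}\psi_+ + g^{(0)}\,\partial_x\psi_+ = \bigl(f^{(0)}(x,t,c,\eta) + g^{(0)}(x,t,c,\eta)\,S^{(0)}(x,t,c,\eta)\bigr)\,\psi_+ .
\]
By Lemma \ref{application of Backlund transformation}(i), the pair $(\Lambda(\lambda^{(0)},\nu^{(0)}),{\cal N}(\lambda^{(0)},\nu^{(0)}))$ equals $(\lambda^{(0)}(t,c-\eta^{-1},\eta),\nu^{(0)}(t,c-\eta^{-1},\eta))$, so \eqref{eq:translated SLII} is precisely $(SL_{\rm II})$ with parameter $c-\eta^{-1}$ and its $0$-parameter solution substituted; in particular its leading symbol is still $Q_0(x,t,c)$, because $\lambda_0(t,c-\eta^{-1}) = \lambda_0(t,c)+O(\eta^{-1})$.

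Next I would identify $\varphi$ with $C(\eta)$ times the ``$+$'' WKB solution $\exp\bigl(\int^x S^{(0)}(x,t,c-\eta^{-1},\eta)\,dx\bigr)$ of the translated equation, for some formal power series $C(\eta)$ of $\eta^{-1}$ with $x$-independent coefficients. The key point is that $f^{(0)}+g^{(0)}S^{(0)}$ is a formal power series in $\eta^{-1}$ with no positive powers of $\eta$ and no exponential factor: pulling out the common prefactor $(x-\Lambda(\lambda^{(0)},\nu^{(0)}))^{-1/2}(x-\lambda^{(0)})^{-1/2}$ of $f^{(0)}$ and $g^{(0)}$ and using $\eta^{-1}S^{(0)} = S_{-1}+O(\eta^{-1})$, one sees that the bracketed factor $x^2-(\lambda^{(0)})^2+\nu^{(0)}-\tfrac{\eta^{-1}}{2(x-\lambda^{(0)})}-\eta^{-1}S^{(0)}$ is an $\eta^{-1}$-series starting at order $\eta^0$ (its leading term being the generically nonzero $x^2-\lambda_0^2-\sqrt{Q_0}$). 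Hence $\varphi$ has leading exponential behaviour $\exp\bigl(+\eta\int^x\sqrt{Q_0}\,dx\bigr)$; since $\varphi$ is a formal solution of the translated equation and any component along the ``$-$'' WKB solution would contribute a term proportional to $\exp\bigl(-2\eta\int^x\sqrt{Q_0}\,dx\bigr)$, which cannot occur, the ``$-$'' component vanishes and the claimed identity follows. This is exactly the argument used for $\varphi^{(1)}$ in the proof of Lemma \ref{application of Backlund transformation}(ii).

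Finally I would take the logarithmic $x$-derivative of the identity $\bigl(f^{(0)}+g^{(0)}S^{(0)}(x,t,c,\eta)\bigr)\psi_+ = C(\eta)\exp\bigl(\int^x S^{(0)}(x,t,c-\eta^{-1},\eta)\,dx\bigr)$: the left-hand side yields $\partial_x\log\bigl(f^{(0)}+g^{(0)}S^{(0)}(x,t,c,\eta)\bigr)+S^{(0)}(x,t,c,\eta)$ and the right-hand side yields $S^{(0)}(x,t,c-\eta^{-1},\eta)$, and rearranging gives \eqref{eq:difference equation for S(0)}. I expect the main obstacle to be the identification of $\varphi$ with the ``$+$'' WKB solution of the translated equation: it relies on the formal WKB solution space being spanned by the two WKB solutions with $x$-independent coefficients and on ruling out the exponentially small component. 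However, since the translated equation shares the turning-point data of $(SL_{\rm II})$, this runs verbatim as in Lemma \ref{application of Backlund transformation}(ii), and the remaining steps (the $\eta$-order bookkeeping for $f^{(0)}$ and $g^{(0)}$, and the logarithmic derivative) are routine.
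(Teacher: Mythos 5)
Your proposal is correct and follows essentially the same route as the paper: apply the Schlesinger transformation to the WKB solution $\exp\bigl(\int^x S^{(0)}(x,t,c,\eta)\,dx\bigr)$, use Lemma \ref{application of Backlund transformation}(i) to identify the transformed potential with $Q^{(0)}_{\rm II}(x,t,c-\eta^{-1},\eta)$, write $\varphi$ as $C(\eta)$ times the corresponding WKB solution of the translated equation, and take logarithmic $x$-derivatives. Your extra bookkeeping on the $\eta$-order of $f^{(0)}+g^{(0)}S^{(0)}$ just makes explicit the identification step that the paper leaves implicit (as in its proof of Lemma \ref{application of Backlund transformation}(ii)).
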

%%%%%%%%%%%%%%%%%%%%%%%%%%%%%%%%%%%%%%%%%%%%%%%%%%%%%%%%%%%%%%%%%%%%%%%%%%%%%%%%%%%%%%%%%%
\begin{proof}
We apply the Schlesinger transformation of Lemma \ref{Schlesinger} to a WKB solution
\[
\psi(x,t,c,\eta) = {\rm{exp}} \Bigl(\int^x S^{(0)}(x,t,c,\eta) \hspace{+.2em} dx \Bigr)
\]
of ($SL_{\rm II}$) with ($\lambda^{(0)}, \nu^{(0)}$) substituted into its coefficients.
Then, by Lemma \ref{Schlesinger}, 
\begin{eqnarray}
\varphi(x,t,c,\eta)
 & = & 
f^{(0)}(x,t,c,\eta) \psi(x,t,c,\eta) + g^{(0)}(x,t,c,\eta) \frac{\partial \psi}{\partial x}(x,t,c,\eta) \nonumber \\
 & = & 
\Bigl(f^{(0)}(x,t,c,\eta) + g^{(0)}(x,t,c,\eta) S^{(0)}(x,t,c,\eta) \Bigr) \psi(x,t,c,\eta) 
\label{eq:Schlesinger transformed WKB solution}
\end{eqnarray}
satisfies \eqref{eq:translated SLII} with 
%($\Lambda(\lambda^{(0)}, \nu^{(0)})$, ${\cal N}(\lambda^{(0)}, \nu^{(0)})$) 
($\lambda^{(0)}, \nu^{(0)}$) substituted into the coefficients of $\hat{Q}_{\rm II}$. 
On the other hand, since
\[
(\Lambda(\lambda^{(0)}, \nu^{(0)}), {\cal N}(\lambda^{(0)}, \nu^{(0)})) =
(\lambda^{(0)}(t,c-\eta^{-1},\eta), \nu^{(0)}(t,c-\eta^{-1},\eta)) 
\] 
holds as noted in Lemma \ref{application of Backlund transformation} (i), 
the potential of the differential equation satisfied by 
$\varphi$ of \eqref{eq:Schlesinger transformed WKB solution} is 
given by $Q^{(0)}_{\rm II}(x,t,c-\eta^{-1},\eta)$.
Therefore, $\varphi$ is also represented as 
\begin{equation}
\varphi = C(\eta) \hspace{+.2em} {\rm{exp}} 
\Bigl( \int^x S^{(0)}(x,t,c-\eta^{-1},\eta) \hspace{+.2em} dx \Bigr) 
\label{eq:another expression of varphi}
\end{equation}
for some formal power series $C(\eta)$ of $\eta^{-1}$ 
whose coefficients are independent of $x$. 
Thus, taking the logarithmic derivatives with respect to $x$ 
of \eqref{eq:Schlesinger transformed WKB solution} 
and \eqref{eq:another expression of varphi}, 
we have the required relation \eqref{eq:difference equation for S(0)}.
\end{proof}

By Lemma \ref{lemma for SL_II Voros coeff 2}, $J$ is represented as 
\begin{equation}
{J}(x,t,c,\eta) = - {\rm{log}} \hspace{+.1em}
\biggl\{
\frac{f^{(0)}(x,t,c,\eta) + g^{(0)}(x,t,c,\eta) S^{(0)}(x,t,c,\eta)}
 {f^{(0)}(x,t,c,\eta) + g^{(0)}(x,t,c,\eta) S^{(0)}(\check{x},t,c,\eta)} 
\biggr\} . \label{eq:cal I}
\end{equation}
In order to know the asymptotic behavior of $J$ when $x$ tends to $\infty$, 
we have to investigate the asymptotic behaviors of $f^{(0)}$, $g^{(0)}$ and $S^{(0)}$.   
The asymptotic behaviors of $f^{(0)}$ and $g^{(0)}$ are given by 
\begin{eqnarray}
f^{(0)}(x,t,c,\eta) & = &
 x + \frac{1}{2} \bigl( \lambda^{(0)} + \Lambda(\lambda^{(0)},\nu^{(0)}) \bigr) 
\nonumber \\
&  & \hspace{+.8em}
+ \hspace{+.2em} \bigl( \nu^{(0)} - \frac{5}{8}{\lambda^{(0)}}^2 
+ \frac{1}{4} \lambda^{(0)} \Lambda(\lambda^{(0)},\nu^{(0)})
+ \frac{3}{8}{\Lambda(\lambda^{(0)},\nu^{(0)})}^2 \bigr) x^{-1} \nonumber \\
&  & \hspace{+.8em}
+ \hspace{+.2em} {O}(x^{-2}) ,  \\
g^{(0)}(x,t,c,\eta) & = & - \eta^{-1} 
\Bigl\{
x^{-1} + \frac{1}{2} \bigl( \lambda^{(0)} + \Lambda(\lambda^{(0)},\nu^{(0)}) \bigr) x^{-2}  
\nonumber \\
&  & \hspace{+.8em}
 + \hspace{+.2em} \bigl( \frac{3}{8} {\lambda^{(0)}}^2 
 + \frac{1}{4}\lambda^{(0)} \Lambda(\lambda^{(0)},\nu^{(0)})
 + \frac{3}{8}\Lambda(\lambda^{(0)},\nu^{(0)})^2 \bigr)  x^{-3} \Bigr\} \nonumber \\
&  & \hspace{+.8em}
 + \hspace{+.2em} {O}(x^{-4}) .  
\end{eqnarray}
(Here we take the branch $(x - \Lambda(\lambda, \nu))^{-\frac{1}{2}}
(x - \lambda)^{-\frac{1}{2}}$ $\sim$ $x^{-1}$ as $x$ tends to $\infty$. 
We note that the choice of the 
branch of  $(x - \Lambda(\lambda, \nu))^{-\frac{1}{2}} (x - \lambda)^{-\frac{1}{2}}$ 
does not affect to \eqref{eq:cal I}.)
The behavior of $S^{(0)}$ follows from Lemma \ref{lemma for behaviors of S}: 
\begin{eqnarray}
S^{(0)}(x,t,c,\eta) & = & 
 + \Bigl( \eta x^2 + \frac{1}{2} t \eta + (c \eta - 1) x^{-1} + {\cal O}(x^{-2}) \Bigr)  , \\
S^{(0)}(\check{x},t,c,\eta) & = & 
 - \Bigl( \eta x^2 + \frac{1}{2} t \eta + (c \eta + 1) x^{-1} + {\cal O}(x^{-2}) \Bigr) .
\end{eqnarray}
%We note that $S^{(0)}_k (\check{x},t,c) = (-1)^{k}S^{(0)}_k (x,t,c)$ hold for all $k \ge -1$.
Hence, we have the following asymptotic behavior of $J$:
\begin{equation}
{J} = - {\rm{log}} \Bigl\{ \frac{1}{2}(\nu^{(0)} - {\lambda^{(0)}}^2 - \frac{1}{2}t) x^{-2} \Bigr\}
 + {\cal O}(x^{-1}) . \label{eq:behavior of J}
\end{equation}
Because we can compute the integral of $S_{-1}$ explicitly as
\begin{eqnarray}
\int_{\gamma_x} S_{-1}(x,t,c) dx & = &
\frac{2}{3}(x^2 + 2 \lambda_0 x + 3 \lambda_0^2 + t)^{\frac{3}{2}} - 
2 \lambda_0 (x + \lambda_0)(x^2 + 2 \lambda_0 x + 3 \lambda_0^2 + t)^{\frac{1}{2}}  
\nonumber \\
&  &
+ \hspace{+.2em} c \hspace{+.2em} 
{\rm{log}}
\biggl\{
\frac
{2(x + \lambda_0) + 2 (x^2 + 2 \lambda_0 x + 3 \lambda_0^2 + t)^{\frac{1}{2}}}
{2(x + \lambda_0) - 2 (x^2 + 2 \lambda_0 x + 3 \lambda_0^2 + t)^{\frac{1}{2}}}
\biggr\}  \nonumber \\
& = & \frac{2}{3} x^3 + t x + 2 \hspace{+.1em} c \hspace{+.2em} {\rm log} \hspace{+.1em} x \hspace{+.1em}
- \frac{4}{3} \hspace{+.1em} \lambda_0^3 - c \hspace{+.1em} {\rm log} \hspace{+.1em} 
\Bigl(- \frac{2\lambda_0^2 + t}{4} \Bigr) + O(x^{-1}), 
\label{eq:behavior of integrated S_-1}
\end{eqnarray}
we also have the following asymptotic behavior of $J_{-1}$: 
 \begin{eqnarray}
\eta \hspace{+.1em} {J}_{-1} & = &  
- \frac{4}{3} \eta \hspace{+.1em} (\lambda_0(t,c)^3 - \lambda_0(t,c-\eta^{-1})^3) \nonumber \\
& &
+ \hspace{+.2em} c \hspace{+.2em} 
{\rm{log}}
\biggl\{
\frac
{2 \lambda_0(t,c-\eta^{-1})^2 + t}
{2 \lambda_0(t,c)^2 + t}
\biggr\} \nonumber \\
& &
+ \hspace{+.2em}  
{\rm{log}}
\biggl\{ - 
\frac
{4 x^2}
{2 \lambda_0(t,c-\eta^{-1})^2 + t}
\biggr\} + {O}(x^{-1}) . \label{eq:behavior of J_-1}
\end{eqnarray}
Furthermore, since
$S^{(0)}_0 = - \frac{1}{2} \frac{1}{S_{-1}} \frac{\partial S_{-1}}{\partial x}$ is single valued at $x = a_1$
and has a singularity of the form 
$S^{(0)}_0 \sim - \frac{1}{4} (x - a_1)^{-1}$ as $x$ tends to $a_1$, we have 
\[
\int_{\gamma_{x}} S^{(0)}_0 dx = 
2 \pi i \hspace{+.2em} {\rm Res}_{x = a_1} S^{(0)}_0  
 = - \frac{\pi i}{2} .  
\]
This implies that 
\begin{equation}
{J}_0 = 0. \label{eq:behavior of J_0}
\end{equation}
Making use of \eqref{eq:behavior of J}, \eqref{eq:behavior of J_-1} 
and \eqref{eq:behavior of J_0}, we have the following asymptotic behavior:
\begin{eqnarray}
{J} - \eta \hspace{+.1em}  {J}_{-1} - {J}_{0} & = &
\frac{4}{3} \eta \hspace{+.1em} (\lambda_0(t,c)^3 - \lambda_0(t,c-\eta^{-1})^3) \nonumber \\
& &
- \hspace{+.1em} c \hspace{+.1em} \eta \hspace{+.2em} 
{\rm{log}}
\biggl\{
\frac
{2 \lambda_0(t,c-\eta^{-1})^2 + t}
{2 \lambda_0(t,c)^2 + t}
\biggr\} \nonumber \\
& &
- \hspace{+.2em} {\rm{log}} \Bigl\{ 
\frac
{2{\lambda^{(0)}}(t,c,\eta)^2 + t - 2\nu^{(0)}(t,c,\eta)}{2 \lambda_0(t,c-\eta^{-1})^2 + t}  \Bigr\}
 + {O}(x^{-1}) . 
\label{eq:behavior of J - eta J_-1 - J_0}
\end{eqnarray}
Therefore, by taking the limit $x \rightarrow \infty$ 
in both sides of \eqref{eq:difference of V(0) in limit form},  
we obtain the difference equation for $V^{(0)}$ as follows:  
%%%%%%%%%%%%%%%%%%%%%%%%%%%%%%%%%%%%%%%%%%%%%%%%%%%%%%%%%%%%%%%%%%%%%%%%%%%%%%%%%%%%%%%%%%
\begin{prop}
The Voros coefficient $V^{(0)}(t,c,\eta)$ satisfies the following difference equation formally:
\begin{eqnarray}
\hspace{-1.5em}
2V^{(0)}(t,c,\eta) - 2V^{(0)}(t,c - \eta^{-1},\eta) & = &
\frac{4}{3} \eta \hspace{+.1em} 
\bigl( \lambda_0(t,c)^3 - \lambda_0(t,c-\eta^{-1})^3 \bigr) \nonumber \\
& &
- \hspace{+.1em} c \hspace{+.1em} \eta \hspace{+.1em} 
{\rm{log}}
\biggl\{
\frac
{2 \lambda_0(t,c-\eta^{-1})^2 + t}
{2 \lambda_0(t,c)^2 + t}
\biggr\} \nonumber \\
& &
- {\rm{log}} \Bigl\{ 
\frac
{2{\lambda^{(0)}}(t,c,\eta)^2 + t - 2\nu^{(0)}(t,c,\eta)}{2 \lambda_0(t,c-\eta^{-1})^2 + t}  \Bigr\} .
\label{eq:difference equation2}
\end{eqnarray}
\end{prop}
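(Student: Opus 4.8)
The plan is to obtain \eqref{eq:difference equation2} simply by letting $x\to\infty$ in the identity \eqref{eq:difference of V(0) in limit form}, which already expresses $2V^{(0)}(t,c,\eta)-2V^{(0)}(t,c-\eta^{-1},\eta)$ --- order by order in $\eta^{-1}$ --- as $\lim_{x\to\infty}\bigl(J-\eta\,J_{-1}-J_{0}\bigr)$, together with the asymptotic expansion \eqref{eq:behavior of J - eta J_-1 - J_0} of that combination. So the only points needing care are that the limit exists and that the $O(x^{-1})$ remainder in \eqref{eq:behavior of J - eta J_-1 - J_0} genuinely dies in it. First I would note that $J$ and $\eta\,J_{-1}$ are individually unbounded as $x\to\infty$: by \eqref{eq:behavior of J} the former carries a term $2\log x$ (from the factor $x^{-2}$ inside the logarithm), and by \eqref{eq:behavior of J_-1}, which rests on the explicit primitive \eqref{eq:behavior of integrated S_-1} of $S_{-1}$, the latter carries the matching $2\log x$; these cancel in $J-\eta\,J_{-1}$. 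Second, $J_{0}\equiv 0$ by \eqref{eq:behavior of J_0}, because $S^{(0)}_{0}=-\tfrac12(\log S_{-1})'$ is meromorphic near $x=a_{1}$ with residue $-\tfrac14$, so the integral over $\gamma_x$ picks up the constant $2\pi i\,\mathrm{Res}_{x=a_{1}}S^{(0)}_{0}=-\pi i/2$, independent of $c$, and this drops out of $J_{0}$. Hence $J-\eta\,J_{-1}-J_{0}$ is bounded, equals the right-hand side of \eqref{eq:difference equation2} up to $O(x^{-1})$, and sending $x\to\infty$ yields \eqref{eq:difference equation2}.

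I would emphasize that the Proposition itself is the short step; the weight of the argument sits in the material set up before it. What really has to be done is: (a) Lemma \ref{lemma for SL_II Voros coeff} (the even part $S^{(0)}_{\rm even}$ integrates over $\gamma_\infty$ to $\int S^{(0)}_0$), (b) the Schlesinger transformation, which turns the shift $c\mapsto c-\eta^{-1}$ into the explicit difference formula \eqref{eq:difference equation for S(0)} for $S^{(0)}$, and (c) the bookkeeping of the $x\to\infty$ behaviors of $f^{(0)},g^{(0)},S^{(0)}$ --- and of $\lambda^{(0)},\nu^{(0)}$ at $t=\infty$ via Lemma \ref{behavior of coeff of 1-parameter solution} --- that produces \eqref{eq:behavior of J}, \eqref{eq:behavior of J_-1}. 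Throughout one must keep in mind that every identity is formal in $\eta^{-1}$ (the difference equation is ``satisfied formally''), and that the contour $\gamma_x$ on the Riemann surface of $\sqrt{Q_0}$ is the one for which $\int_{\gamma_x}S^{(0)}_0\,dx$ is a full residue at $a_1$.

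Finally I would spell out how \eqref{eq:difference equation2} feeds into Theorem \ref{main theorem 2}. By Proposition \ref{proposition for 2V - U}, $2V-U=2V^{(0)}-U^{(0)}$ is independent of $t$; and $U^{(0)}(t,c,\eta)=\eta\int_{\infty}^{t}(\lambda^{(0)}-\lambda_0)\,dt$ tends to $0$ as $t\to\infty$, so passing to the limit $t\to\infty$ along $\Gamma$ in \eqref{eq:difference equation2} gives a difference equation for $2V-U$ in the variable $c$. Using the asymptotics of Lemma \ref{behavior of coeff of 1-parameter solution} --- in particular $2\lambda_0^2+t\sim-\sqrt{2}\,ic\,t^{-1/2}$, $\lambda_0^3=\tfrac{i}{2\sqrt{2}}t^{3/2}-\tfrac34 c+o(1)$, $\nu^{(0)}\sim-\tfrac{i}{2\sqrt{2}}\eta^{-1}t^{-1/2}$ --- the three terms on the right of \eqref{eq:difference equation2} collapse to $-1+c\eta\log\!\bigl(1+\tfrac{1}{c\eta-1}\bigr)-\log\!\bigl(1+\tfrac{1}{2(c\eta-1)}\bigr)$, which is exactly the difference equation of Lemma \ref{lemma for Voros coeff}. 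Combined with a homogeneity/scaling argument (analogous to the one giving \eqref{eq:homogenity of W}) showing $2V-U$ is a power series in $(c\eta)^{-1}$ with $c$-independent coefficients, the uniqueness clause of Lemma \ref{lemma for Voros coeff}(ii) forces $2V-U=F(c,\eta)$, i.e.\ \eqref{eq:2V(0) - U(0)}. The real obstacle in this last part --- and the only genuinely non-routine computation in the chain --- is the verification that the $t\to\infty$ limit of the right-hand side of \eqref{eq:difference equation2} is precisely that clean expression: the $t^{3/2}$ growth in the cubic term must cancel between the arguments $c$ and $c-\eta^{-1}$, and the shifted denominator $c\eta-1$ must emerge from the ratios of the logarithmic terms, which is delicate but purely computational given Lemma \ref{behavior of coeff of 1-parameter solution}.
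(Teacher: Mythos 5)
Your proposal is correct and follows exactly the paper's route: the proposition is obtained by letting $x\to\infty$ in \eqref{eq:difference of V(0) in limit form}, with the real work residing in the previously established ingredients — Lemma \ref{lemma for SL_II Voros coeff}, the Schlesinger transformation giving \eqref{eq:difference equation for S(0)}, and the asymptotics \eqref{eq:behavior of J}, \eqref{eq:behavior of J_-1}, \eqref{eq:behavior of J_0} — which you identify correctly, including the cancellation of the $2\log x$ terms between $J$ and $\eta J_{-1}$ and the residue argument showing $J_0=0$. Your additional remarks on how \eqref{eq:difference equation2} feeds into Theorem \ref{main theorem 2} also match the paper's subsequent Proposition \ref{computation of V(0)(infinity)}, but they are not needed for the statement itself.
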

%%%%%%%%%%%%%%%%%%%%%%%%%%%%%%%%%%%%%%%%%%%%%%%%%%%%%%%%%%%%%%%%%%%%%%%%%%%%%%%%%%%%%%%%%%

\noindent
Although it is difficult to solve \eqref{eq:difference equation2} explicitly, 
we can obtain the following fact from this relation.
%%%%%%%%%%%%%%%%%%%%%%%%%%%%%%%%%%%%%%%%%%%%%%%%%%%%%%%%%%%%%%%%%%%%%%%%%%%%%%%%%%%%%%%%%%
\begin{prop} \label{computation of V(0)(infinity)}
The limit $V^{(0)}(\infty,c,\eta)$ of $V^{(0)}(t,c,\eta)$  
as $t$ tends to $\infty$ along the $P$-Stokes curve 
$\Gamma$ in Figure \ref{fig:Riemann surface of sqrt Delta} 
is represented explicitly as follows:
\begin{equation}
V^{(0)}(\infty,c,\eta) = - \frac{1}{2} \sum_{n = 1}^{\infty} 
 \frac{2^{1-2n}-1}{2n(2n-1)}B_{2n}(c \hspace{+.1em} \eta)^{1-2n} .
\label{eq:V(0)(infinity)}
\end{equation}
\end{prop}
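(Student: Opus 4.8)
The plan is to deduce \eqref{eq:V(0)(infinity)} by taking the limit $t\to\infty$ along the $P$-Stokes curve $\Gamma$ in the difference equation \eqref{eq:difference equation2} for $V^{(0)}$ and recognizing the limiting difference equation as \eqref{eq:difference equation}, after which Lemma \ref{lemma for Voros coeff}~(ii) identifies $2V^{(0)}(\infty,c,\eta)$ with the Bernoulli series. First I would verify that the limit $V^{(0)}(\infty,c,\eta):=\lim_{t\to\infty}V^{(0)}(t,c,\eta)$ exists (coefficientwise in $\eta^{-1}$). By Proposition \ref{proposition for 2V - U} the series $2V^{(0)}(t,c,\eta)-U^{(0)}(t,c,\eta)$ is independent of $t$, and by \eqref{eq:U(0)} together with Lemma \ref{behavior of coeff of 1-parameter solution} (which gives, coefficientwise, $\lambda^{(0)}(t,c,\eta)-\lambda_0(t,c)=O(t^{-5/2})$, hence integrable at infinity) the integral $U^{(0)}(t,c,\eta)=\eta\int_\infty^t(\lambda^{(0)}-\lambda_0)\,dt$ tends to $0$ as $t\to\infty$ along $\Gamma$. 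Therefore $V^{(0)}(t,c,\eta)$ converges, with $V^{(0)}(\infty,c,\eta)=\tfrac12\bigl(2V^{(0)}-U^{(0)}\bigr)$.

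Next I would compute the $t\to\infty$ limit of the right-hand side of \eqref{eq:difference equation2} term by term, using Lemma \ref{behavior of coeff of 1-parameter solution}, the identity \eqref{eq:nu_0} ($\nu^{(0)}_0=0$), the vanishing $\lambda^{(0)}_1=0$, and the algebraic relation $2\lambda_0^2+t=-c/\lambda_0$, which follows from \eqref{eq:lambda_0}. For the first term, \eqref{eq:lambda_0} gives $\lambda_0^3=-\tfrac12(t\lambda_0+c)$, so it equals $-\tfrac23\eta\,t\bigl(\lambda_0(t,c)-\lambda_0(t,c-\eta^{-1})\bigr)-\tfrac23$; since $\lambda_0(t,c)-\lambda_0(t,c-\eta^{-1})=\tfrac12\eta^{-1}t^{-1}+O(t^{-5/2})$ by \eqref{eq:behavior of lambda_0}, it converges to $-\tfrac13-\tfrac23=-1$. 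For the second term, $2\lambda_0(t,a)^2+t=-a/\lambda_0(t,a)$ and $\lambda_0(t,c)/\lambda_0(t,c-\eta^{-1})\to 1$ yield $\log\bigl((2\lambda_0(t,c-\eta^{-1})^2+t)/(2\lambda_0(t,c)^2+t)\bigr)\to\log\bigl((c\eta-1)/(c\eta)\bigr)=-\log(1+\tfrac1{c\eta-1})$, so the term converges to $c\eta\log(1+\tfrac1{c\eta-1})$. For the third term, the leading behaviors $2\lambda_0^2+t\sim-\sqrt2\,i\,c\,t^{-1/2}$ and $\nu^{(0)}(t,c,\eta)\sim\eta^{-1}\tfrac{d\lambda_0}{dt}\sim-\tfrac{i}{2\sqrt2}\eta^{-1}t^{-1/2}$ (from Lemma \ref{behavior of coeff of 1-parameter solution}, using $\lambda^{(0)}_1=0$) give numerator $2\lambda^{(0)}(t,c,\eta)^2+t-2\nu^{(0)}(t,c,\eta)\sim-\sqrt2\,i\,(c-\tfrac12\eta^{-1})t^{-1/2}$ and denominator $2\lambda_0(t,c-\eta^{-1})^2+t\sim-\sqrt2\,i\,(c-\eta^{-1})t^{-1/2}$, so the ratio tends to $1+\tfrac1{2(c\eta-1)}$ and the term converges to $-\log(1+\tfrac1{2(c\eta-1)})$. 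Adding the three limits shows that $F(c,\eta):=2V^{(0)}(\infty,c,\eta)$ satisfies exactly the difference equation \eqref{eq:difference equation}.

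Finally I would observe that $F(c,\eta)$ has the shape required by Lemma \ref{lemma for Voros coeff}~(ii), namely $F(c,\eta)=\sum_{n\ge1}F_n(c\eta)^{-n}$ with $F_n$ constant. It has no non-negative part, because $S^{(0)}_{\rm odd}-\eta S_{-1}$ begins at order $\eta^{-1}$ and, being an odd part (in the sense of Remark \ref{odd part}), involves only odd powers of $\eta$; hence $V^{(0)}$ and $F$ contain only the powers $\eta^{-1},\eta^{-3},\dots$. The coefficient of $(c\eta)^{-n}$ is independent of $c$ by the scaling invariance $\widetilde x=\rho^{-1}x$, $\widetilde t=\rho^{-2}t$, $\widetilde c=\rho^{-3}c$, $\widetilde\eta=\rho^{3}\eta$ of ($SL_{\rm II}$) with a $0$-parameter solution substituted, which preserves $c\eta$ and carries $\Gamma$, $a_1$ and $x=\infty$ to themselves, exactly as in \eqref{eq:homogenity of W} for $W$. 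Then Lemma \ref{lemma for Voros coeff}~(ii) forces $F(c,\eta)=-\sum_{n=1}^{\infty}\frac{2^{1-2n}-1}{2n(2n-1)}B_{2n}(c\eta)^{1-2n}$, which is precisely \eqref{eq:V(0)(infinity)}.

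The step I expect to be the main obstacle is the term-by-term limit in the second paragraph: one must pin down the leading $t^{-1/2}$ coefficients of $2\lambda^{(0)}(t,c,\eta)^2+t-2\nu^{(0)}(t,c,\eta)$ and of $2\lambda_0(t,c-\eta^{-1})^2+t$ precisely enough — with the branch of $t^{1/2}$ on $\Gamma$ fixed as in Lemma \ref{behavior of coeff of 1-parameter solution} and with the branches of the logarithms chosen consistently along $\Gamma$ in Figure \ref{fig:Riemann surface of sqrt Delta} — so that the three constants $-1$, $c\eta\log(1+\tfrac1{c\eta-1})$ and $-\log(1+\tfrac1{2(c\eta-1)})$ emerge. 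Convergence of each term is otherwise routine given Lemma \ref{behavior of coeff of 1-parameter solution}, Proposition \ref{proposition for 2V - U} and the explicit primitive of $S_{-1}$ used in deriving \eqref{eq:difference equation2}.
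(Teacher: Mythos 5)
Your proposal is correct and follows essentially the same route as the paper: existence of the limit from the $t$-independence of $2V^{(0)}-U^{(0)}$ together with $U^{(0)}\to 0$, the homogeneity argument reducing $V^{(0)}(\infty,c,\eta)$ to a series in $(c\eta)^{-1}$ with constant coefficients, passage to the limit $t\to\infty$ in \eqref{eq:difference equation2} (your term-by-term limits, obtained via $\lambda_0^3=-\tfrac12(t\lambda_0+c)$ and $2\lambda_0^2+t=-c/\lambda_0$, reproduce exactly the three asymptotics the paper quotes from Lemma \ref{behavior of coeff of 1-parameter solution}), and then Lemma \ref{lemma for Voros coeff}~(ii). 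The only nuance is your justification that $V^{(0)}$ has no $\eta^{0}$ term: this follows not from the definition of the odd part alone but from the fact that $Q^{(0)}_{\rm II}$ contains only even powers of $\eta^{-1}$ (since $\lambda^{(0)}$ has only even and $\nu^{(0)}$ only odd coefficients), which makes $S^{(0)}_{\rm odd}$ odd in $\eta$; the conclusion you need is nevertheless true.
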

%%%%%%%%%%%%%%%%%%%%%%%%%%%%%%%%%%%%%%%%%%%%%%%%%%%%%%%%%%%%%%%%%%%%%%%%%%%%%%%%%%%%%%%%%%
\begin{proof}
Since $2V^{(0)} - U^{(0)}$ is independent of $t$ and $U^{(0)} \rightarrow 0$ as $t \rightarrow \infty$, 
the limit 
\[
V^{(0)}(\infty,c,\eta) = V_0^{(0)}(\infty,c) + \eta^{-1} V_1^{(1)}(\infty,c)  
+ \eta^{-2} V_2^{(2)}(\infty,c) + \cdots
\]
is well-defined. 
Moreover, because it follows from \eqref{eq:homogenity of V} in Appendix that 
$V^{(0)}$ is invariant under the scaling $(c,\eta) \mapsto (r^{-1}c,r\eta)$, 
$V^{(0)}(\infty,c,\eta)$ is written in the form 
$\sum_{n=1}^{\infty} v_n (c \hspace{+.1em} \eta)^{-n}$ where $v_n \in {\mathbb C}$ is independent of $c$ . 
Hence it suffices to show that $2V^{(0)}(\infty,c,\eta)$ satisfies the 
difference equation \eqref{eq:difference equation} by Lemma \ref{lemma for Voros coeff}. 
Because the following asymptotic behaviors for $t \rightarrow \infty$
\begin{eqnarray}
\lambda_0(t,c)^3 - \lambda_0(t,c-\eta^{-1})^3 & = &  
- \frac{3}{4} \eta^{-1} + {O}(t^{-\frac{3}{2}}) ,  \\
2 \lambda_0(t,c)^2 + t & = &  
- \sqrt{2} \hspace{+.1em} i c t^{-\frac{1}{2}} + {O}(t^{-2})  ,  \\
2 \lambda^{(0)}(t,c,\eta)^2 + t - 2\nu^{(0)}(t,c,\eta) & = &  
\frac{i}{\sqrt{2}} (1 - 2 c \eta) \eta^{-1} t^{-\frac{1}{2}} + {O}(t^{-2})  , 
\end{eqnarray}
hold by Lemma \ref{behavior of coeff of 1-parameter solution}, 
by taking the limit $t \rightarrow \infty$ in both sides of \eqref{eq:difference equation2}, 
we have
\[
2 V^{(0)}(\infty,c,\eta) - 2 V^{(0)}(\infty,c - \eta^{-1},\eta) = 
-1 + \hspace{+.1em} c \hspace{+.1em} \eta \hspace{+.1em} {\rm{log}} 
\Bigl(1+ \frac{1}{c \hspace{+.1em} \eta - 1} \Bigr) 
-  {\rm{log}} 
\Bigl( 1 + \frac{1}{2(c \hspace{+.1em} \eta - 1)} \Bigr) .
\]
Thus the proof of Proposition \ref{computation of V(0)(infinity)} is completed.
\end{proof}

Since $2V - U = 2V^{(0)} - U^{(0)}$ is independent on $t$, we obtain 
\[
2V^{(0)}(t,c,\eta) - U^{(0)}(t,c,\eta) = 2 V^{(0)}(\infty,c,\eta)
\]
by taking the limit $t \rightarrow \infty$. 
(Note that $U^{(0)} \rightarrow 0$ as $t \rightarrow \infty$.)
Hence Theorem \ref{main theorem 2} is derived from Proposition \ref{proposition for 2V - U}
and Proposition \ref{computation of V(0)(infinity)} immediately. 
%$\Box$ \\[-.7em]
% Thus we obtain the following connection formula for $2V - U$ for the parametric Stokes 
% phenomenon when arg \hspace{-.4em} $c$ varies near $\frac{\pi}{2}$: 
%%%%%%%%%%%%%%%%%%%%%%%%%%%%%%%%%%%%%%%%%%%%%%%%%%%%%%%%%%%%%%%%%%%%%%%%%%%%%%%%%%%%%%%%%%
\begin{cor} \label{connection formula for 2V - U} 
\begin{equation}
{\cal S}\bigl[ {{e}}^{2V - U}\bigl|_{\rm{arg}c = \frac{\pi}{2} - \varepsilon} \bigr] = 
(1 + {{e}}^{2 \pi i c \eta}) \hspace{+.2em} 
{\cal S}\bigl[ {{e}}^{2V - U}\bigl|_{{\rm{arg}}c = \frac{\pi}{2} + \varepsilon} \bigr] ,
\end{equation}
where $\varepsilon$ is a sufficiently small positive number.
\end{cor}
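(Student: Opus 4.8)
The plan is to observe that this corollary is a direct consequence of the two explicit evaluations already in hand, so no new analysis is needed. By Theorem \ref{main theorem 2} we have
\[
2V(t,c,\eta) - U(t,c,\eta) = - \sum_{n=1}^{\infty} \frac{2^{1-2n}-1}{2n(2n-1)} B_{2n} (c \hspace{+.1em} \eta)^{1-2n},
\]
while by Theorem \ref{main theorem 1} the $P$-Voros coefficient $W(c,\eta)$ is given by the very same series. Hence $2V - U$ and $W$ coincide as formal power series in $\eta^{-1}$, and therefore so do $e^{2V-U}$ and $e^{W}$ together with their Borel transforms. The first step of the proof is simply to record this identification of formal series.

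Next I would invoke Corollary \ref{connection formula for PII Voros}, which already states
\[
{\cal S}\bigl[ {{e}}^{W(c,\eta)}\bigl|_{{\rm{arg}}c = \frac{\pi}{2} - \varepsilon} \bigr] = (1 + {{e}}^{2 \pi i c \eta}) \hspace{+.2em} {\cal S}\bigl[ {{e}}^{W(c,\eta)}\bigl|_{{\rm{arg}}c = \frac{\pi}{2} + \varepsilon} \bigr].
\]
Since $e^{2V-U}$ and $e^{W}$ are literally the same formal series, their Borel sums on either side of ${\rm arg}\,c = \frac{\pi}{2}$ agree, and the displayed relation transfers verbatim with $W$ replaced by $2V - U$. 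That is exactly the assertion of the corollary, so the argument closes in two lines.

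I do not expect any genuine obstacle here. The analytic substance of the statement — Borel summability of $e^{W}$ away from the degeneration of the Stokes geometry, the precise location of the relevant singularity of its Borel transform in the Borel plane, and the resulting multiplier $1 + e^{2\pi i c\eta}$ — was already imported in the proof of Corollary \ref{connection formula for PII Voros} from Takei's analysis of the Weber Voros coefficient through the relation $W(c,\eta) = -2\, V_{\rm Weber}(-ic,\eta)$ and \cite[Theorem 2.1]{Takei Sato conjecture}. The only thing left to verify is the coincidence of the two formal power series, which Theorems \ref{main theorem 1} and \ref{main theorem 2} supply directly; this is what makes the corollary essentially immediate.
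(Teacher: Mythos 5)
Your proof is correct and is essentially identical to the paper's own argument: the paper also notes that $2V-U=W$ (as both Theorem \ref{main theorem 1} and Theorem \ref{main theorem 2} yield the same Bernoulli series) and then cites Corollary \ref{connection formula for PII Voros} to conclude immediately. Nothing further is needed.
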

%%%%%%%%%%%%%%%%%%%%%%%%%%%%%%%%%%%%%%%%%%%%%%%%%%%%%%%%%%%%%%%%%%%%%%%%%%%%%%%%%%%%%%%%%%
\begin{proof}
Because $2V - U = W$ holds, 
Corollary \ref{connection formula for 2V - U} follows from 
Corollary \ref{connection formula for PII Voros} immediately.
\end{proof}

As we will see in Section 6, the $t$-independent quantity $2V - U$ appears 
in the expression of the Stokes multipliers of ($SL_{\rm II}$). 
Corollary \ref{connection formula for 2V - U} will be used 
in the derivation of the connection formulas for 
the parametric Stokes phenomenon of 1-parameter solutions 
in Section 6.4. 

%%%%%%%%%%%%%%%%%%%%%%%%%%%%%%%%%%%%%%%%%%%%%%%%%%%%%%%%%%%%%%%%%%%%%%%%%%%%%%%%%%%%%%%%%%
%%%%%%%%%%%%%%%%%%%%%%%%%%%%%%%%%%%%%%%%%%%%%%%%%%%%%%%%%%%%%%%%%%%%%%%%%%%%%%%%%%%%%%%%%%
%%%%%%%%%%%%%%%%%%%%%%%%%%%%%%%%%%%%%%%%%%%%%%%%%%%%%%%%%%%%%%%%%%%%%%%%%%%%%%%%%%%%%%%%%%
%%%%%%%%%%%%%%%%%%%%%%%%%%%%%%%%%%%%%%%%%%%%%%%%%%%%%%%%%%%%%%%%%%%%%%%%%%%%%%%%%%%%%%%%%%
%%%%%%%%%%%%%%%%%%%%%%%%%%%%%%%%%%%%%%%%%%%%%%%%%%%%%%%%%%%%%%%%%%%%%%%%%%%%%%%%%%%%%%%%%%
%%%%%%%%%%%%%%%%%%%%%%%%%%%%%%%%%%%%%%%%%%%%%%%%%%%%%%%%%%%%%%%%%%%%%%%%%%%%%%%%%%%%%%%%%%

\section{The Stokes multipliers of ($SL_{\rm II}$) around $x = \infty$ and 
the parametric Stokes phenomena of ($P_{\rm II}$)}

To seek the connection formulas for $\psi_{\pm, {\rm IM}}$ on Stokes curves of ($SL_{\rm II}$) 
emanating from the double turning point $x = \lambda_0$, 
we make use of the local transformation theory 
established in \cite{AKT Painleve WKB}
(cf.\ \cite{KT WKB Painleve I}, \cite{KT WKB Painleve III} and \cite{Takei Painleve} also),
which reduces ($SL_{\rm II}$) to the following canonical equation ($Can$) 
in a neighborhood of $x = \lambda_0$:  
\[
({{Can}}) : \hspace{+.2em}
\Bigl( \frac{\partial^2}{\partial {\tilde{x}}^2} - \eta^2 Q_{\rm{can}} \Bigr) \tilde{\psi} = 0  , 
\]
where
\begin{eqnarray*}
Q_{\rm{can}} = Q_{\rm{can}}(\tilde{x},\tilde{E},\tilde{\sigma},\tilde{\rho},\eta) & = & 
4 \tilde{x}^2 + \eta^{-1} \tilde{E}  
 + \frac{\eta^{-\frac{3}{2}} \tilde{\rho}}{\tilde{x} - \eta^{-\frac{1}{2}} \tilde{\sigma}}  
+ \eta^{-2} \frac{3}{4(\tilde{x} - \eta^{-\frac{1}{2}} \tilde{\sigma})^2}  ,  \\
\tilde{E} & = & {\tilde{\rho}}^2 - 4 {\tilde{\sigma}}^2 . 
\end{eqnarray*}
Note that one peculiar feature of ($SL_{\rm II}$) is that 
the location of the double turning point $x = \lambda_0$ 
coincides with the leading term of 
the apparent singular point $x = \lambda$ of ($SL_{\rm II}$). 
The canonical equation ($Can$) shares the same structure.

Later we need to consider the deformation equation ($D_{\rm can}$) of ($Can$) 
which is given by the following: 
\begin{eqnarray*}
(D_{\rm{can}}) : \frac{\partial \tilde{\psi}}{\partial \tilde{t}}  = 
A_{\rm{can}} \frac{\partial \tilde{\psi}}{\partial \tilde{x}}  
- \frac{1}{2} \frac{\partial A_{\rm{can}}}{\partial \tilde{x}} \tilde{\psi} ,
\end{eqnarray*}
\begin{eqnarray*}
A_{\rm{can}} = \frac{1}{2(\tilde{x} - \eta^{-\frac{1}{2}} \tilde{\sigma})} . 
\end{eqnarray*}
The compatibility condition of the system ($Can$) and ($D_{\rm can}$) is 
represented as the following Hamiltonian system ($H_{\rm can}$):
\begin{eqnarray*}
(H_{\rm{can}}) \hspace{+.2em} 
 \left\{
\begin{array}{ll}
\displaystyle \frac{d \tilde{\sigma}}{d \tilde{t}} = - \eta \tilde{\rho} , \\[+1.em]
\displaystyle \frac{d \tilde{\rho}}{d \tilde{t}} = - 4 \eta \tilde{\sigma} .
\end{array} \right. 
\end{eqnarray*}
A general solution of ($H_{\rm can}$) is given by 
\begin{eqnarray}
 \left\{
\begin{array}{ll}
{\tilde{\sigma}} (\tilde{t},A,B,\eta) =  
A \hspace{+.2em} {{e}}^{2 \eta \tilde{t}} + B \hspace{+.2em} {{e}}^{- 2 \eta \tilde{t}} , \\[+.5em]
{\tilde{\rho}} (\tilde{t},A,B,\eta) =  
-2 A \hspace{+.2em} {{e}}^{2 \eta \tilde{t}} + 2 B \hspace{+.2em} {{e}}^{- 2 \eta \tilde{t}}  ,
\end{array} \right.  \label{eq:canonical Painleve solution} 
\end{eqnarray}
where $A$ and $B$ are free parameters. 
Taking appropriate parameters $A$ and $B$, we will obtain a full order correspondence 
between the WKB solutions $\psi_{\pm, {\rm IM}}$ of ($SL_{\rm II}$) and ($D_{\rm II}$) 
and some WKB solutions of ($Can$) and ($D_{\rm can}$) through the transformation theory.

%%%%%%%%%%%%%%%%%%%%%%%%%%%%%%%%%%%%%%%%%%%%%%%%%%%%%%%%%%%%%%%%%%%%%%%%%%%%%%%%%%%%%%%%%%
%%%%%%%%%%%%%%%%%%%%%%%%%%%%%%%%%%%%%%%%%%%%%%%%%%%%%%%%%%%%%%%%%%%%%%%%%%%%%%%%%%%%%%%%%%

\subsection{Transformation of ($SL_{\rm II}$) near the double turning point $x = \lambda_0$}

First we review the transformation theory from ($SL_{\rm II}$) to ($Can$). 
%with substituting some formal series  
%$\tilde{E} \mapsto E(t,c,\eta;\alpha)$, $\tilde{\sigma} \mapsto \sigma(t,c,\eta;\alpha)$ 
%and $\tilde{\rho} \mapsto \rho(t,c,\eta;\alpha)$ defined below. 
%%%%%%%%%%%%%%%%%%%%%%%%%%%%%%%%%%%%%%%%%%%%%%%%%%%%%%%%%%%%%%%%%%%%%%%%%%%%%%%%%%%%%%%%%%
\begin{theo} [{\cite[Theorem3.1]{AKT Painleve WKB}, \cite[Theorem2.1]{KT WKB Painleve III}}]
\label{transformation theory wrt x} 
There exist a neighborhood ${\cal U}$ of $x = \lambda_0$ and a formal series 
\begin{eqnarray}
x_{\rm{II}} & = & x_{\rm{II}}(x,t,c,\eta;\alpha) \nonumber \\ 
& = &  
x_{\rm{II}}^{(0)}(x,t,c,\eta)   
+ \alpha \eta^{-\frac{1}{2}} x_{\rm{II}}^{(1)}(x,t,c,\eta)  
{e}^{\eta \phi_{\rm II}}  
+ (\alpha \eta^{-\frac{1}{2}})^2 x_{\rm{II}}^{(2)}(x,t,c,\eta) 
{e}^{2 \eta \phi_{\rm II}} + \cdots  ,  \hspace{+1.7em}
\label{eq:x_II}  
\end{eqnarray}
\[
x_{\rm{II}}^{(k)}(x,t,c,\eta)  = 
x_0^{(k)}(x,t,c) + \eta^{-1}x_1^{(k)}(x,t,c) + \eta^{-2}x_2^{(k)}(x,t,c) + \cdots , 
\]
satisfying (i) $\sim$ (iv) below. \\[-.5em]
 
\noindent   
(i)  $x_\ell^{(k)}(x,t,c)$ ($k \ge 0 , \ell \ge 0$) are holomorphic in $x \in {\cal U}$ and also in $t$.

\noindent 
(ii)  $x_0^{(0)}(x,t,c)$ satisfies that 
\begin{eqnarray}
x^{(0)}_0(\lambda_0,t,c) & = & 0  \label{eq:vanishing of x_0 at lambda_0} ,  \\
\frac{\partial x^{(0)}_0}{\partial x}(\lambda_0,t,c) & \ne & 0 ,
\end{eqnarray}

\noindent  
(iii) \begin{equation}
x_0^{(k)}(x,t,c)  =  0 \hspace{+.5em} (k \ge 1) .
\label{eq:x(k)_0 = 0}
\end{equation}

\noindent  
(iv)  \begin{eqnarray}
\hspace{-.5em}
Q_{\rm II}(x,t,c,\eta;\alpha) & = &  
\Bigl( \frac{\partial x_{\rm II}}{\partial x} \Bigr)^2   
Q_{\rm can}(x_{\rm II}, E, \sigma, \rho, \eta) 
 -\frac{1}{2} \eta^{-2} \bigl\{ x_{\rm II} ; x \bigr\} ,
\label{eq:transformation relation for potential}
\end{eqnarray}
where
\begin{eqnarray}
\displaystyle \sigma = \sigma(t,c,\eta;\alpha) & =  &  
\eta^{\frac{1}{2}} x_{\rm II}\bigl( \lambda(t,c,\eta;\alpha),t,c,\eta;\alpha \bigr), \label{eq:sigma} \\[+.5em]
\rho = \rho(t,c,\eta;\alpha) & = &   
 - \eta^{\frac{1}{2}} \frac{\nu(t,c,\eta;\alpha)}
{\frac{\partial x_{\rm II}}{\partial x} \bigl( \lambda(t,c,\eta;\alpha),t,c,\eta;\alpha \bigr)} 
\hspace{+1.em} \nonumber \\
&  & \hspace{+1.em}
- \frac{3}{4} \eta^{-\frac{1}{2}} 
\frac{\frac{\partial^2 x_{\rm II}}{\partial x^2}(\lambda(t,c,\eta;\alpha),t,c,\eta;\alpha)}
{\Bigl( \frac{\partial x_{\rm II}}{\partial x}(\lambda(t,c,\eta;\alpha),t,c,\eta;\alpha) \Bigr)^2}, 
\label{eq:rho} \\[+.5em]
E = E(t,c,\eta;\alpha) & = &  \rho(t,c,\eta;\alpha)^2 - 4 \sigma(t,c,\eta;\alpha)^2 ,  \label{eq:E}
\end{eqnarray}
and $\{ x_{\rm II} ; x \}$ denotes the Schwarzian derivative, i.e.,
\[ 
\{ x_{\rm II} ; x \} = 
\frac{\partial^3 x_{\rm II}}{\partial x^3} \bigg/
{\frac{\partial x_{\rm II}}{\partial x}}   
- \frac{3}{2}  
\biggl(
{\frac{\partial^2 x_{\rm II}}{\partial x^2}} \bigg/
{\frac{\partial x_{\rm II}}{\partial x}}
\biggr)^2 .
\]
\end{theo}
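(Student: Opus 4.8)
The plan is to construct the formal transformation $x_{\rm II}$ order by order in the two expansion parameters --- the WKB parameter $\eta^{-1}$ and the transseries parameter $\alpha \eta^{-1/2} e^{\eta\phi_{\rm II}}$ --- mimicking the construction in \cite{AKT Painleve WKB} and \cite{KT WKB Painleve III} but keeping track of the extra structure coming from the coincidence of the double turning point $x=\lambda_0$ with the leading apparent singularity. First I would reduce \eqref{eq:transformation relation for potential} to a hierarchy of equations by substituting the expansions \eqref{eq:x_II}, \eqref{eq:Q_II} and the defining relations \eqref{eq:sigma}--\eqref{eq:E} for $\sigma,\rho,E$, and comparing coefficients of $e^{k\eta\phi_{\rm II}}$ and then of $\eta^{-\ell}$. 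At the top order ($k=0$, $\ell$ lowest) the relation becomes the classical Ansatz $4\,(x_0^{(0)})^2\,(\partial_x x_0^{(0)})^2 = Q_0(x,t,c)$; since $Q_0 = (x-\lambda_0)^2(x^2+2\lambda_0 x + 3\lambda_0^2+t)$ by \eqref{eq:Q_0}, one sets $x_0^{(0)}(x,t,c)$ to be a square root of $\tfrac12\int_{\lambda_0}^x\sqrt{Q_0}\,dx$ suitably normalized, which is holomorphic near $x=\lambda_0$ precisely because $Q_0$ has a double zero there, and has nonvanishing $x$-derivative at $\lambda_0$; this establishes (ii).

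Next I would treat the higher WKB orders of the $k=0$ part. Here the key point is that the apparent-singularity terms $-\eta^{-1}\nu^{(0)}/(x-\lambda^{(0)}) + \eta^{-2}\tfrac34(x-\lambda^{(0)})^{-2}$ in $Q_{\rm II}^{(0)}$ match, through the transformation, the corresponding terms $\eta^{-3/2}\rho/(\tilde x - \eta^{-1/2}\sigma) + \eta^{-2}\tfrac34(\tilde x-\eta^{-1/2}\sigma)^{-2}$ in $Q_{\rm can}$ together with the Schwarzian correction; the definitions \eqref{eq:sigma}--\eqref{eq:rho} of $\sigma$ and $\rho$ are exactly engineered so that the residue and the double-pole coefficient are transported correctly, so that the remaining equations for $x_\ell^{(0)}$ ($\ell\ge1$) are inhomogeneous \emph{linear} first-order ODEs in $x$ of the form $2\sqrt{Q_0}\,\partial_x x_\ell^{(0)} + (\dots)\,x_\ell^{(0)} = (\text{known})$, solvable by quadrature with holomorphic solutions on a (possibly smaller) neighborhood $\mathcal U$ of $\lambda_0$ --- this is where I would invoke the corresponding step in \cite{AKT Painleve WKB} rather than redo it. For the transseries part $k\ge1$: comparing the coefficient of $\alpha\eta^{-1/2}e^{\eta\phi_{\rm II}}$ and using $(\partial_x\phi_{\rm II})^2 = \Delta$ together with \eqref{eq:lambda(1)} shows that the leading-order equation forces $x_0^{(k)}=0$ for $k\ge1$, giving (iii), after which the higher orders $x_\ell^{(k)}$ are again determined by linear recursions of the same shape.

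The main obstacle I anticipate is twofold. First, the \emph{consistency} of the hierarchy: one must check that the same $x_{\rm II}$ simultaneously solves all the coefficient equations in $k$ and $\ell$, i.e.\ that the choices of $\sigma,\rho,E$ forced at low order are compatible with every higher-order constraint; this is where the special feature that $\lambda_0$ equals the leading term of $\lambda$ (and the matching remark after ($H_{\rm can}$) about solving ($H_{\rm can}$)) is used in an essential way, and it is the heart of the ``$P_{\rm J}\to P_{\rm I}$ reduction'' philosophy --- so I would cite \cite[Theorem 3.1]{AKT Painleve WKB} and \cite[Theorem 2.1]{KT WKB Painleve III} for the structural bookkeeping and only verify that ($SL_{\rm II}$) fits their framework. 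Second, \emph{convergence of the coefficients in $x$ on a common neighborhood}: each $x_\ell^{(k)}(x,t,c)$ is obtained by integrating against $1/\sqrt{Q_0}$, which has singularities at the simple turning points $a_1,a_2$; one must argue the radius of the neighborhood $\mathcal U$ can be chosen uniformly (shrinking finitely often suffices since only finitely many orders contribute to any fixed total order). Once these are in hand, (i)--(iv) follow, and the proof is complete by assembling the formal series \eqref{eq:x_II}.
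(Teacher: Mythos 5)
Your proposal is correct and follows essentially the same route as the paper, which in fact gives no independent proof of this theorem: it cites \cite[Theorem 3.1]{AKT Painleve WKB} and \cite[Theorem 2.1]{KT WKB Painleve III} and records in Remark \ref{remark for the construction of transformation} that those results, proved there for 2-parameter solutions, carry over to 1-parameter solutions by exactly the order-by-order construction you sketch (leading order $4x_0^2(\partial x_0/\partial x)^2=Q_0$, then linear recursions of the shape \eqref{eq:x(k)_n} solved by quadrature as in \eqref{eq:expression of x(k')_ell'}, with \eqref{eq:sigma}--\eqref{eq:E} fixing $\sigma,\rho,E$ so that each right-hand side vanishes at $x=\lambda_0$ and the solution stays holomorphic). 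The only blemishes are cosmetic: the paper's normalization \eqref{eq:x_0} carries no factor $\tfrac{1}{2}$, $\phi_{\rm II}$ depends only on $t$ so the relation you invoke is $(d\phi_{\rm II}/dt)^2=\Delta$, and the common neighborhood ${\cal U}$ is fixed once and for all by excluding $a_1,a_2$ (cf.\ Lemma \ref{holomorphic dependence of x_II on c}) rather than by repeated shrinking.
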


%%%%%%%%%%%%%%%%%%%%%%%%%%%%%%%%%%%%%%%%%%%%%%%%%%%%%%%%%%%%%%%%%%%%%%%%%%%%%%%%%%%%%%%%%%
\begin{rem} \normalfont \label{remark for the construction of transformation}
We note that {\cite[Theorem3.1]{AKT Painleve WKB} and 
\cite[Theorem2.1]{KT WKB Painleve III}} deal with the case 
where a 2-parameter solution of ($P_{\rm II}$) substituted into the coefficients of ($SL_{\rm II}$). 
Although 1-parameter solutions are not discussed in those papers, the proof 
of Theorem \ref{transformation theory wrt x} can be done in a similar manner.  
\end{rem}
%%%%%%%%%%%%%%%%%%%%%%%%%%%%%%%%%%%%%%%%%%%%%%%%%%%%%%%%%%%%%%%%%%%%%%%%%%%%%%%%%%%%%%%%%%

%%%%%%%%%%%%%%%%%%%%%%%%%%%%%%%%%%%%%%%%%%%%%%%%%%%%%%%%%%%%%%%%%%%%%%%%%%%%%%%%%%%%%%%%%%
\begin{rem} \normalfont
In what follows we abbreviate $x_0^{(0)}$ to $x_0$ for simplicity. 
The coefficients $x^{(k)}_{\ell}(x,t,c)$ are uniquely determined once the branch of 
\begin{equation}
x_0(x,t,c) = \biggl[ \int_{\lambda_0}^{x} \sqrt{Q_0(x,t,c)} dx \biggr]^{\frac{1}{2}}  \label{eq:x_0}
\end{equation}
is fixed. We adopt the branch in such a way that
\begin{equation}
x_0(x,t,c) \sim \frac{1}{\sqrt{2}} \hspace{+.2em} \Delta^{\frac{1}{4}} (x - \lambda_0) 
\label{eq:branch of x_0}
\end{equation}
holds as $x$ tends to $\lambda_0$, where the branch of $\Delta^{\frac{1}{4}}$ is taken to be 
the same as in the expressions \eqref{eq:normalized at tau1-2} and \eqref{eq:normalized at infinity-2} 
of $\lambda^{(1)}$. 
\end{rem}
%%%%%%%%%%%%%%%%%%%%%%%%%%%%%%%%%%%%%%%%%%%%%%%%%%%%%%%%%%%%%%%%%%%%%%%%%%%%%%%%%%%%%%%%%%

We write down some properties of the formal series $E$, $\sigma$ and $\rho$.  
These properties are used in the construction of $t_{\rm II}$ 
(in Proposition \ref{transformation theory wrt t}) and 
the computations of the Stokes multipliers of ($SL_{\rm II}$). 
%%%%%%%%%%%%%%%%%%%%%%%%%%%%%%%%%%%%%%%%%%%%%%%%%%%%%%%%%%%%%%%%%%%%%%%%%%%%%%%%%%%%%%%%%%
\begin{prop} \label{E = 0}
\begin{equation}
E(t,c,\eta;\alpha) = 0. 
\label{eq:E = 0}
\end{equation}
\end{prop}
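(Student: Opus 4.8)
The plan is to prove the identity $E(t,c,\eta;\alpha)=0$ by exploiting the fact that the canonical energy $E$ is, by construction, an invariant attached to the double turning point of $(SL_{\rm II})$, and that the same invariant governs the location of the simple turning points relative to the double turning point. Concretely, I would first recall that $E=\rho^2-4\sigma^2$ and that $(Can)$ has simple turning points $\tilde x_\pm$ which, to leading order, sit at $\tilde x = \pm \tfrac12\sqrt{-\eta^{-1}E}$ (the roots of $4\tilde x^2+\eta^{-1}E=0$ when the apparent singularity contribution is ignored), while the double turning point of $(Can)$ is at $\tilde x=0$. Under the transformation $x_{\rm II}$ of Theorem \ref{transformation theory wrt x}, the double turning point $x=\lambda_0$ of $(SL_{\rm II})$ maps to $\tilde x=0$ and the simple turning points $x=a_1,a_2$ map to $\tilde x=\tilde x_\pm$. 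So the period integral of $\sqrt{Q_{\rm can}}$ between the two simple turning points of $(Can)$ equals (by the invariance built into \eqref{eq:transformation relation for potential}) the period integral of $\sqrt{Q_0}$ between $a_1$ and $a_2$, which by Proposition \ref{integral of top terms} equals $\pi i c\,\eta$ at leading order.

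Next I would compute the left-hand side directly for $(Can)$: the contour integral of $\sqrt{Q_{\rm can}}$ around the cut joining the two simple turning points picks up the residue at $\tilde x=\infty$ plus the residue at the apparent singularity $\tilde x = \eta^{-1/2}\tilde\sigma$; carrying this out, one finds that this period equals (to leading order in $\eta^{-1}$) something proportional to $\sqrt{-E}$, or more precisely that the full WKB period of $(Can)$ between its simple turning points is an explicit elementary function of $E$, $\sigma$, $\rho$. Matching this against $\pi i c\eta$ forces a relation; but the cleaner route is to observe that $E$ is, by \eqref{eq:E}, a formal power series in $\eta^{-1}$ with coefficients built from a $1$-parameter solution, and that its leading term $E_0$ is determined by the requirement that the two simple turning points of $(Can)$ be the images of $a_1,a_2$ while the double turning point is the image of $\lambda_0$. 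Since for $(SL_{\rm II})$ the double turning point and the (leading term of the) apparent singularity coincide at $x=\lambda_0$, and the canonical equation is engineered to have the same coincidence ($\tilde x=0$ is both the double turning point and the leading position of the apparent singularity $\eta^{-1/2}\tilde\sigma$ only when $\tilde\sigma=O(\eta^{-1/2}\cdots)$ — actually $\sigma=\eta^{1/2}x_{\rm II}(\lambda,\ldots)$ and $\lambda=\lambda_0+O(\eta^{-1/2})$ so $\sigma = O(1)$), one reads off that the normalization is consistent precisely when $E\equiv 0$.

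Therefore the concrete steps are: (1) write $\sigma(t,c,\eta;\alpha)=\eta^{1/2}x_{\rm II}(\lambda,t,c,\eta;\alpha)$ and expand using $\lambda = \lambda^{(0)}+\alpha\eta^{-1/2}\lambda^{(1)}e^{\eta\phi_{\rm II}}+\cdots$ together with $x_{\rm II}^{(k)}$ having $x_0^{(k)}=0$ for $k\ge 1$ and $x_0^{(0)}(\lambda_0,t,c)=0$ from \eqref{eq:vanishing of x_0 at lambda_0}; (2) similarly expand $\rho$ using \eqref{eq:rho} and $\nu=\eta\,d\lambda/dt$, i.e. $\nu^{(0)}_0=0$ from \eqref{eq:nu_0}; (3) substitute into $E=\rho^2-4\sigma^2$ and use the defining recursions \eqref{eq:x_0}, \eqref{eq:transformation relation for potential} to check that all coefficients of all powers of $\eta^{-1}$ and all powers of $e^{\eta\phi_{\rm II}}$ cancel. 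A slicker formulation of (3): show that $\eta^{-1}E$ is exactly the quantity that must vanish for the period integral of $\sqrt{Q_{\rm can}}$ between its simple turning points to match $\int_{a_1}^{a_2}\sqrt{Q_0}\,dx$ order by order; since Proposition \ref{integral of top terms} already pins down the leading term and the transformation \eqref{eq:transformation relation for potential} propagates the match to all orders, $E=0$.

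The main obstacle I anticipate is keeping track of the apparent-singularity contributions and the Schwarzian term $-\tfrac12\eta^{-2}\{x_{\rm II};x\}$ in \eqref{eq:transformation relation for potential}: these are what make $(SL_{\rm II})$ and $(Can)$ genuinely match beyond leading order, and one has to verify that the coincidence of the double turning point with the apparent singularity is preserved by $x_{\rm II}$ (so that $\sigma$ really is the image of $\lambda$ and $\tilde\sigma$ plays the role of the apparent singularity of $(Can)$). Once that structural fact is in hand — and it is essentially the content of conditions (ii) and (iii) of Theorem \ref{transformation theory wrt x} combined with the definition \eqref{eq:sigma} of $\sigma$ — the vanishing $E=0$ should follow because $E=\rho^2-4\sigma^2$ is precisely $\tilde E$ evaluated at the canonical solution, and the canonical solution corresponding to $(SL_{\rm II})$ (with its degenerate configuration) must have $\tilde E=0$; equivalently, $E$ is a $t$-independent formal series whose every coefficient is forced to be zero by comparison of the WKB periods. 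I would present the argument via the period-integral comparison since it is the shortest and uses Proposition \ref{integral of top terms} directly.
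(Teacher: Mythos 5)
There is a genuine gap: neither of your two routes delivers the statement, and the key identity that the paper's proof rests on is missing. The paper proves $E=0$ by quoting the formula $E/4={\rm Res}_{x=\lambda_0}S_{\rm odd}$ from \cite[(3.33)]{AKT Painleve WKB}, then observing that this residue is independent of $t$ because of the compatibility relation $\partial_t S_{\rm odd}=\partial_x\bigl(A_{\rm II}S_{\rm odd}\bigr)$ (so the coefficients of $e^{k\eta\phi_{\rm II}}$, $k\ge 1$, in the residue must vanish), and finally that ${\rm Res}_{x=\lambda_0}S^{(0)}_{\rm odd}=0$ because $S^{(0)}_{\rm odd}$ is holomorphic at $x=\lambda_0$ (Lemma \ref{lemma for S(0)_odd}(ii)). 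Your proposal never produces any identity of this kind; in particular the $t$-independence of $E$, which you assert in passing, is exactly the nontrivial point and is what the residue formula plus the deformation equation supply.

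Your period-matching route fails on its premises. The transformation $x_{\rm II}$ of Theorem \ref{transformation theory wrt x} is defined only on a neighborhood ${\cal U}$ of the double turning point $x=\lambda_0$; the simple turning points $a_1,a_2$ lie outside ${\cal U}$ and are not mapped to anything in $(Can)$. Moreover the leading potential of $(Can)$ is $4\tilde{x}^2$, with the term $\eta^{-1}\tilde{E}$ a subleading correction, so $(Can)$ has no pair of simple turning points corresponding to $a_1,a_2$ in the relevant geometry; the quantity of $(Can)$ that controls $E$ is the residue of $\tilde{S}_{\rm odd}$ at the double turning point, not a period between widely separated turning points. And since $\int_{a_1}^{a_2}\sqrt{Q_0}\,dx=\pi i c\neq 0$ (Proposition \ref{integral of top terms}), ``matching'' this number against anything cannot force $E=0$. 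Your direct-expansion route (steps (1)--(3)) is also not a proof as stated: $E=0$ amounts to $\rho=\pm 2\sigma$ to all orders in $\eta^{-1}$ and in $e^{\eta\phi_{\rm II}}$, whereas the facts you invoke ($x^{(0)}_0(\lambda_0,t,c)=0$, $\nu^{(0)}_0=0$, $x^{(k)}_0=0$) only control the lowest coefficients; the statements that would make the cancellation transparent, namely $\sigma^{(0)}=\rho^{(0)}=0$ (Proposition \ref{sigma(0) = rho(0) = 0}) or the explicit forms \eqref{eq:transformation of sigma}--\eqref{eq:transformation of rho}, are established in the paper \emph{after} Proposition \ref{E = 0} and partly by using \eqref{eq:E(0)=0}, so appealing to them here would be circular. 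To repair the argument you need the residue identity \eqref{eq:E = 4 Res S_odd} (or an equivalent local computation at $x=\lambda_0$), the $t$-independence coming from \eqref{eq:conpatibility of SL_II and D_II 2}, and the holomorphy of $S^{(0)}_{\rm odd}$ at the double turning point.
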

%%%%%%%%%%%%%%%%%%%%%%%%%%%%%%%%%%%%%%%%%%%%%%%%%%%%%%%%%%%%%%%%%%%%%%%%%%%%%%%%%%%%%%%%%%
\begin{proof}
It is shown in \cite[(3.33)]{AKT Painleve WKB} that the following holds: 
\begin{eqnarray}
\frac{E}{4} = {\rm{Res}}_{x = \lambda_0} S_{\rm odd} . 
\label{eq:E = 4 Res S_odd}
\end{eqnarray}
Because ${\rm{Res}}_{x = \lambda_0} S_{\rm odd}$ is independent of $t$ 
by \eqref{eq:conpatibility of SL_II and D_II 2}, 
the coefficients of $e^{k \eta \phi_{\rm II}}$ in 
\[
{\rm{Res}}_{x = \lambda_0} S_{\rm odd} = 
{\rm{Res}}_{x = \lambda_0} S^{(0)}_{\rm odd} + 
\alpha \eta^{-\frac{1}{2}} {\rm{Res}}_{x = \lambda_0} S^{(1)}_{\rm odd}   
{e}^{\eta \phi_{\rm II}} + 
(\alpha \eta^{-\frac{1}{2}})^2 {\rm{Res}}_{x = \lambda_0} S^{(2)}_{\rm odd}   
{e}^{2 \eta \phi_{\rm II}} + \cdots
\]
must vanish for all $k \ge 1$. 
Furthermore, ${\rm{Res}}_{x = \lambda_0} S^{(0)}_{\rm odd} = 0$ follows from 
(ii) of Lemma \ref{lemma for S(0)_odd} below. Thus we have \eqref{eq:E = 0}.
\end{proof}

The formal series $\sigma$ and $\rho$ of \eqref{eq:sigma} and \eqref{eq:rho}
are expanded as follows: 
\begin{eqnarray}
\sigma(t,c,\eta;\alpha)  =  \eta^{\frac{1}{2}} \bigl\{
\sigma^{(0)}(t,c,\eta) + \alpha \eta^{-\frac{1}{2}} \sigma^{(1)}(t,c,\eta)  
{e}^{\eta \phi_{\rm II}}  
+ (\alpha \eta^{-\frac{1}{2}})^2 \sigma^{(2)}(t,c,\eta) 
{e}^{2 \eta \phi_{\rm II}} + \cdots  \bigr\}, \hspace{+1.em} 
\label{eq:expansion of sigma}   
\end{eqnarray}
\\[-2.8em]
\begin{eqnarray}
\rho(t,c,\eta;\alpha)  =  \eta^{\frac{1}{2}} \bigl\{
\hspace{+.1em} \rho^{(0)}(t,c,\eta)   
+ \alpha \eta^{-\frac{1}{2}} \hspace{+.1em} \rho^{(1)}(t,c,\eta)  
{e}^{\eta \phi_{\rm II}}  
+ (\alpha \eta^{-\frac{1}{2}})^2 \hspace{+.1em} \rho^{(2)}(t,c,\eta) 
{e}^{2 \eta \phi_{\rm II}} + \cdots   \bigr\},  \hspace{+1.em}
\label{eq:expansion of rho} 
\end{eqnarray}
\begin{eqnarray*}
\sigma^{(k)}(t,c,\eta) & = & \sigma^{(k)}_0(t,c) + \eta^{-1} \sigma^{(k)}_1(t,c) 
+ \eta^{-2} \sigma^{(k)}_2(t,c) + \cdots ,  \\
\rho^{(k)}(t,c,\eta) & = & \rho^{(k)}_0(t,c) + \eta^{-1} \rho^{(k)}_1(t,c) 
+ \eta^{-2} \rho^{(k)}_2(t,c) + \cdots .
\end{eqnarray*}
We know that 
\begin{equation}
\rho^{(0)}(t,c,\eta)^2 - 4 \hspace{+.2em} \sigma^{(0)}(t,c,\eta)^2 = 0 
\label{eq:E(0)=0}
\end{equation}
as a consequence of Proposition \ref{E = 0}.
%%%%%%%%%%%%%%%%%%%%%%%%%%%%%%%%%%%%%%%%%%%%%%%%%%%%%%%%%%%%%%%%%%%%%%%%%%%%%%%%%%%%%%%%%%
\begin{prop}\label{sigma(0) = rho(0) = 0}
\begin{equation}
\sigma^{(0)}(t,c,\eta) = \rho^{(0)}(t,c,\eta) = 0  \label{eq:sigma(0) = rho(0) = 0} .
\end{equation}
\end{prop}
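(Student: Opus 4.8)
The plan is to deduce the vanishing from Proposition \ref{E = 0} together with a first-order differential relation of Hamiltonian type. By Proposition \ref{E = 0} and \eqref{eq:E(0)=0} we already have $\rho^{(0)}(t,c,\eta)^2 = 4\,\sigma^{(0)}(t,c,\eta)^2$. Now $\sigma^{(0)}$ and $\rho^{(0)}$ are honest formal power series in $\eta^{-1}$ (they are what remains of $\eta^{-1/2}\sigma$ and $\eta^{-1/2}\rho$ after setting the parameter $\alpha$ to $0$, i.e. after substituting the $0$-parameter solution; explicitly $\sigma^{(0)}=x_{\rm II}^{(0)}(\lambda^{(0)},t,c,\eta)$), and such series form an integral domain, so the factorisation $(\rho^{(0)}+2\sigma^{(0)})(\rho^{(0)}-2\sigma^{(0)})=0$ forces $\rho^{(0)}=2\sigma^{(0)}$ or $\rho^{(0)}=-2\sigma^{(0)}$. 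Moreover the leading coefficients already vanish: $\sigma^{(0)}_0=x_0(\lambda_0,t,c)=0$ by \eqref{eq:vanishing of x_0 at lambda_0}, while $\rho^{(0)}$ is of order $\eta^{-1}$ because in \eqref{eq:rho} the leading contribution is $-\nu^{(0)}/\partial_x x_{\rm II}^{(0)}(\lambda^{(0)},\dots)$ and $\nu^{(0)}_0=0$ by \eqref{eq:nu_0}.

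Next I would establish that $(\sigma^{(0)},\rho^{(0)})$ satisfies a linear system of the form $\tfrac{d}{dt}\sigma^{(0)}=-\kappa(t,c)\,\eta\,\rho^{(0)}$, $\tfrac{d}{dt}\rho^{(0)}=-4\kappa(t,c)\,\eta\,\sigma^{(0)}$ with $\kappa$ a nonvanishing holomorphic factor (the Jacobian relating $t$ to the canonical time), which is the counterpart of $(H_{\rm can})$ under the transformation theory. Granting this, the combinations $\rho^{(0)}\pm2\sigma^{(0)}$ decouple, $\tfrac{d}{dt}(\rho^{(0)}\pm2\sigma^{(0)})=\mp2\kappa\,\eta\,(\rho^{(0)}\pm2\sigma^{(0)})$, and a formal power series $f=\sum_{k\ge k_0}f_k\eta^{-k}$ satisfying $\tfrac{df}{dt}=(\text{nonzero})\cdot\eta\,f$ must vanish identically, since comparing the lowest power of $\eta$ on the two sides gives $0=(\text{nonzero})\cdot f_{k_0}$. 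Hence $\rho^{(0)}+2\sigma^{(0)}=\rho^{(0)}-2\sigma^{(0)}=0$, i.e. $\sigma^{(0)}=\rho^{(0)}=0$; in fact feeding only one of the two cases $\rho^{(0)}=\pm2\sigma^{(0)}$ back into the evolution equation for $\sigma^{(0)}$ already suffices.

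The main obstacle is the differential relation used in the second step. Its natural source is the transformation of the deformation equation $(D_{\rm II})$ into $(D_{\rm can})$, but the corresponding change of time variable (Proposition \ref{transformation theory wrt t}) is constructed only afterwards, and that construction itself uses the present proposition, so a direct argument is needed to avoid circularity. I would derive the relation by differentiating \eqref{eq:sigma} in $t$, writing $\tfrac{d}{dt}\lambda^{(0)}=\eta\nu^{(0)}$ from $(H_{\rm II})$, and controlling $\partial_t x_{\rm II}^{(0)}$ via the compatibility identity \eqref{eq:conpatibility of SL_II and D_II 2} for $S_{\rm odd}$ together with \eqref{eq:rho}; this should reproduce exactly the required system. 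Alternatively — and this may be the cleaner route in the end — one proves $\sigma^{(0)}_k=\rho^{(0)}_k=0$ by induction on $k$, the recursions for $\lambda^{(0)}$, $\nu^{(0)}$ and for the coefficients $x_\ell^{(0)}$ of the transformation in Theorem \ref{transformation theory wrt x} yielding at each step a cancellation between the contribution of $\nu^{(0)}$ and that of the Schwarzian derivative term in \eqref{eq:rho} (one checks this already at $k=1$). I expect the bookkeeping of that induction — or of making the $t$-flow intertwining explicit before $t_{\rm II}$ is available — to be the one genuinely delicate point.
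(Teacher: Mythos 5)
Your reductions are fine as far as they go: using Proposition \ref{E = 0} and \eqref{eq:E(0)=0} to get $(\rho^{(0)}+2\sigma^{(0)})(\rho^{(0)}-2\sigma^{(0)})=0$, and checking $\sigma^{(0)}_0=\rho^{(0)}_0=0$, is correct but only restates the easy part. The genuine gap is the differential relation on which your whole argument rests. The system $\tfrac{d}{dt}\sigma^{(0)}=-\kappa\eta\rho^{(0)}$, $\tfrac{d}{dt}\rho^{(0)}=-4\kappa\eta\sigma^{(0)}$ is, within the logic of the paper, available only through Proposition \ref{transformation theory wrt t} and the compatibility identity \eqref{eq:relation for transformation series}; but statement (ii) of that proposition, $\sigma=\tfrac{1}{\sqrt 2}\alpha\,e^{2\eta t_{\rm II}}$, already \emph{encodes} $\sigma^{(0)}=0$ (every term on the right carries a factor $\alpha$), and its construction explicitly invokes the present proposition --- the circularity you yourself flag. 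Your proposed direct derivation does not close this circle: differentiating \eqref{eq:sigma} with $\tfrac{d}{dt}\lambda^{(0)}=\eta\nu^{(0)}$ and eliminating $\nu^{(0)}$ via \eqref{eq:rho} leaves, besides the term $-\eta\rho^{(0)}\bigl(\partial_x x^{(0)}_{\rm II}(\lambda^{(0)})\bigr)^2$, the inhomogeneous terms $\partial_t x^{(0)}_{\rm II}(\lambda^{(0)})-\tfrac{3}{4}\partial_x^2 x^{(0)}_{\rm II}(\lambda^{(0)})$, and there is no way to control $\partial_t x^{(0)}_{\rm II}$ before a deformation relation for the transformation series (i.e.\ before $t_{\rm II}$) is available; the compatibility \eqref{eq:conpatibility of SL_II and D_II 2} for $S_{\rm odd}$ alone does not pin down the $t$-dependence of $x^{(0)}_{\rm II}$. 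Your alternative "induction with cancellations" is a hope rather than an argument: the coefficients $\sigma^{(0)}_\ell,\rho^{(0)}_\ell$ are determined simultaneously with $x^{(0)}_\ell$ by the matching conditions \eqref{eq:sigma}--\eqref{eq:E}, and no mechanism forcing the cancellation is exhibited.

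What actually makes the statement true --- and what is missing from your proposal --- is the input of Lemma \ref{lemma for S(0)_odd}, quoted from the 0-parameter theory: there is a reduction series $z$ with $S^{(0)}_{\rm odd}=2\eta\,z\,\partial_x z$ and, crucially, $S^{(0)}_{\rm odd}/(x-\lambda^{(0)})$ holomorphic at $x=\lambda_0$, whence $z(\lambda^{(0)},t,c,\eta)=0$ (\eqref{eq:z(lambda(0)) = 0}). The paper then shows by induction (Lemma \ref{lemma for x(0) and z}) that $x^{(0)}_{\rm II}=z$ --- comparing the two recursions order by order and using holomorphy at $x=\lambda_0$ to kill the integration constant --- so that $\sigma^{(0)}=x^{(0)}_{\rm II}(\lambda^{(0)})=z(\lambda^{(0)})=0$, and $\rho^{(0)}=0$ then follows from \eqref{eq:E(0)=0}. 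In other words, the all-orders vanishing is a statement about the coincidence of the 1-parameter transformation's $\alpha^0$-part with the 0-parameter reduction and about the behaviour of $S^{(0)}_{\rm odd}$ at the double turning point; neither of your two routes supplies a substitute for this ingredient, so the proof is incomplete at its decisive step.
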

%%%%%%%%%%%%%%%%%%%%%%%%%%%%%%%%%%%%%%%%%%%%%%%%%%%%%%%%%%%%%%%%%%%%%%%%%%%%%%%%%%%%%%%%%%

To show Proposition \ref{sigma(0) = rho(0) = 0}, we recall the following lemma: 
%%%%%%%%%%%%%%%%%%%%%%%%%%%%%%%%%%%%%%%%%%%%%%%%%%%%%%%%%%%%%%%%%%%%%%%
\begin{lemm} [{\cite[Theorem 4.4]{KT iwanami}, 
\cite[Theorem 1.1, Theorem 1.2, Proposition 1.4]{KT WKB Painleve I}}] 
\label{lemma for S(0)_odd} 
(i) There exists a formal power series 
$z(x,t,c,\eta) = z_0(x,t,c) + \eta^{-1} z_1(x,t,c) + \eta^{-2} z_2(x,t,c) + \cdots$ 
whose coefficients are holomorphic at $x = \lambda_0$ satisfying 
\begin{equation}
Q^{(0)}_{\rm II}(x,t,c,\eta) =  
\Bigl( \frac{\partial z}{\partial x}(x,t,c,\eta) \Bigr)^2 
\Bigl\{ 4 z(x,t,c,\eta)^2 + \eta^{-2} \frac{3}{4 z(x,t,c,\eta)^2} \Bigr\}
-\frac{1}{2} \eta^{-2} \bigl\{ z(x,t,c,\eta) ; x \bigr\} .
\label{eq:transformation relation for z}
\end{equation}
The coefficients $z_{\ell}(x,t,c)$ are uniquely determined once we fix the branch of 
\begin{equation}
z_0(x,t,c) = \biggl[ \int_{\lambda_0}^{x} \sqrt{Q_0(x,t,c)} dx \biggr]^{\frac{1}{2}}.  \label{eq:z_0}
\end{equation}
Moreover, 
\begin{equation}
z_1(x,t,c) = 0.   \label{eq:z_1}
\end{equation}

\noindent
(ii) All the coefficients of the formal power series 
$S_{\rm odd}^{(0)}$ and $S^{(0)}_{\rm odd} \big/ (x - \lambda^{(0)})$ 
are holomorphic at $x = \lambda_0$. 

\noindent
(iii) The formal series $z(x,t,c,\eta)$ in (i) satisfies the following:
\begin{equation}
S^{(0)}_{\rm odd}(x,t,c,\eta) = 2 \eta \hspace{+.2em} z(x,t,c,\eta) \frac{\partial z}{\partial x}(x,t,c,\eta) . 
\label{eq:transformation relation of Riccati top series} 
\end{equation}

\end{lemm}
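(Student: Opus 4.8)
The plan is to follow the reduction-to-canonical-form strategy of the cited works, establishing the three assertions in the order (i), (iii), (ii); the guiding principle is that the double turning point $x=\lambda_0$ coincides, to leading order in $\eta^{-1}$, with the apparent singular point $x=\lambda^{(0)}$, and that the canonical potential $4z^2+\eta^{-2}\tfrac{3}{4z^2}$ is tailored so that its own odd Riccati solution carries no subleading correction. For (i), I would substitute $z=z_0+\eta^{-1}z_1+\eta^{-2}z_2+\cdots$ into \eqref{eq:transformation relation for z} and compare coefficients of $\eta^{-\ell}$. At order $\eta^0$ the relation reduces to $\bigl(\partial_x(z_0^2)\bigr)^2=Q_0$, so $\partial_x(z_0^2)=\sqrt{Q_0}$; since $Q_0$ has a double zero at $\lambda_0$ by \eqref{eq:Q_0}, the primitive $z_0^2=\int_{\lambda_0}^{x}\sqrt{Q_0}\,dx$ vanishes to second order there, so $z_0$ is holomorphic and vanishes simply at $\lambda_0$, which fixes \eqref{eq:z_0} up to the stated branch. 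For $\ell\ge1$ the relation is linear in $z_\ell$, of the form $\sqrt{Q_0}\,\partial_x(z_0z_\ell)=(\text{data built from }z_0,\dots,z_{\ell-1})$ up to a harmless constant. At $\ell=1$ that data is proportional to the $\eta^{-1}$-coefficient $Q^{(0)}_1$ of $Q^{(0)}_{\rm II}$, which I expect to vanish identically: the residue term of \eqref{eq:Q(0)} contributes only from order $\eta^{-2}$ on because $\nu^{(0)}_0=0$ by \eqref{eq:nu_0}, while the $\eta^{-1}$-part of the $x$-independent quantity $2K_{\rm II}$ equals $-2(2\lambda_0^3+t\lambda_0+c)\lambda^{(0)}_1=0$ by \eqref{eq:lambda_0}. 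Hence $\partial_x(z_0z_1)=0$, and since $z_0$ vanishes at $\lambda_0$ this forces $z_1=0$, giving \eqref{eq:z_1}; uniqueness is automatic once the branch \eqref{eq:z_0} is chosen.

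The hard part will be the holomorphy of $z_\ell$ at $\lambda_0$ for $\ell\ge2$. There both the term $\eta^{-2}\tfrac{3}{4z^2}(\partial_x z)^2$ on the canonical side and the apparent-singularity term $\eta^{-2}\tfrac{3}{4(x-\lambda^{(0)})^2}$ in $Q^{(0)}_{\rm II}$ (see \eqref{eq:Q(0)}) develop a double pole at $\lambda_0$. Writing $z_0\sim a\,(x-\lambda_0)$ near $\lambda_0$, I would check that $\tfrac{3}{4z_0^2}(\partial_x z_0)^2\sim\tfrac{3}{4(x-\lambda_0)^2}$ matches the principal part of the apparent-singularity term, so that the double poles cancel between the two sides of the recursion; the residual right-hand side then vanishes at $\lambda_0$, and dividing by $\sqrt{Q_0}$ leaves $\partial_x z_\ell$, hence $z_\ell$, holomorphic there. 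This cancellation is precisely the reason the coincidence of the double turning point with the apparent singularity is harmless, and I would carry it out by matching principal parts order by order in $\eta^{-1}$.

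For (iii), I would use the covariance of the odd Riccati solution under the change of variable $x\mapsto z$. Since \eqref{eq:transformation relation for z} asserts $Q^{(0)}_{\rm II}=(\partial_x z)^2\hat Q(z)-\tfrac12\eta^{-2}\{z;x\}$ with $\hat Q(z)=4z^2+\eta^{-2}\tfrac{3}{4z^2}$, the WKB solutions correspond via $\psi(x)=(\partial_x z)^{-1/2}\hat\psi(z)$; the common even contribution $-\tfrac12\partial_x\log\partial_x z$ cancels in the odd part, giving $S^{(0)}_{\rm odd}(x)=(\partial_x z)\,\hat S_{\rm odd}(z)$. It then remains to show $\hat S_{\rm odd}(z)=2\eta z$ exactly, which I would verify directly: both $\hat S=2\eta z-\tfrac{1}{2z}$ and $\hat S^{\dagger}=-2\eta z-\tfrac{1}{2z}$ solve $\hat S^2+\partial_z\hat S=4\eta^2z^2+\tfrac{3}{4z^2}=\eta^2\hat Q$, so by the definition in Remark \ref{odd part} one gets $\hat S_{\rm odd}=\tfrac12(\hat S-\hat S^{\dagger})=2\eta z$, consistently with \eqref{eq:S_odd and S_even}. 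Therefore $S^{(0)}_{\rm odd}=2\eta z\,\partial_x z$, which is exactly \eqref{eq:transformation relation of Riccati top series}.

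Finally, (ii) will follow from (i) and (iii): since $z$ is holomorphic at $\lambda_0$, so is $S^{(0)}_{\rm odd}=\eta\,\partial_x(z^2)$; and because $z$ vanishes simply at $\lambda_0$ while $x-\lambda^{(0)}$ vanishes simply there to leading order, the quotient $S^{(0)}_{\rm odd}\big/(x-\lambda^{(0)})$ retains no pole in any coefficient of $\eta^{-1}$, the apparent (non-branching) nature of the singularity at $x=\lambda^{(0)}$ ensuring holomorphy of every coefficient.
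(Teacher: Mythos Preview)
The paper does not give its own proof of this lemma: it is stated with attribution to \cite[Theorem 4.4]{KT iwanami} and \cite[Theorem 1.1, Theorem 1.2, Proposition 1.4]{KT WKB Painleve I}, and is used as a black box in the subsequent argument (notably to derive \eqref{eq:z(lambda(0)) = 0} and Lemma \ref{lemma for x(0) and z}). So there is no in-paper proof to compare against; your sketch is a reconstruction of the cited results.

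As a reconstruction your outline for (i) and (iii) is correct and is exactly the mechanism of the cited works. The check that $Q^{(0)}_1=0$ is right (indeed $\nu^{(0)}_0=0$ kills the residue term and $2\lambda_0^3+t\lambda_0+c=0$ kills the $K_{\rm II}$-contribution; in fact $\lambda^{(0)}_1=0$ as well from \eqref{eq:recurrence relation of lambda(0)}), and your covariance argument for (iii) with the explicit $\hat S_{\rm odd}=2\eta z$ is the standard one. The pole-matching heuristic you describe for $\ell\ge2$ in (i) is also the crux of the cited proofs.

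The one place where your sketch is thin is the second half of (ii), the holomorphy of $S^{(0)}_{\rm odd}\big/(x-\lambda^{(0)})$. What is really needed is that $z(\lambda^{(0)}(t,c,\eta),t,c,\eta)=0$ as a formal power series in $\eta^{-1}$, not merely $z_0(\lambda_0)=0$; only then does (iii) give $S^{(0)}_{\rm odd}(\lambda^{(0)})=0$ coefficientwise, which is what ``divisible by $x-\lambda^{(0)}$'' means here. Note that the paper itself derives $z(\lambda^{(0)})=0$ \emph{from} (ii) (see the sentence leading to \eqref{eq:z(lambda(0)) = 0}), so in the cited references (ii) is established first, directly from the structure of the apparent singularity (the exponents $-\tfrac12,\tfrac32$ force trivial local monodromy, so $S^{(0)}_{\rm odd}$ is single-valued and in fact vanishes at $x=\lambda^{(0)}$). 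Your alternative route---inferring $z(\lambda^{(0)})=0$ from the fact that the double pole of $Q^{(0)}_{\rm II}$ at $x=\lambda^{(0)}$ must be produced by the $\tfrac{3}{4z^2}$ term of the canonical potential---is valid, but you should make that step explicit rather than appealing loosely to ``the apparent (non-branching) nature of the singularity.'' Once that is nailed down, your derivation of (ii) from (i) and (iii) is a legitimate (and slightly different) packaging of the same argument.
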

%%%%%%%%%%%%%%%%%%%%%%%%%%%%%%%%%%%%%%%%%%%%%%%%%%%%%%%%%%%%%%%%%%%%%%%
\noindent
Since  
\[
S^{(0)}_{\rm odd}(\lambda^{(0)}(t,c,\eta),t,c,\eta) = 0 
\]
follows from (ii) of Lemma \ref{lemma for S(0)_odd}, we obtain 
\begin{equation}
z(\lambda^{(0)}(t,c,\eta),t,c,\eta) = 0    
\label{eq:z(lambda(0)) = 0}
\end{equation}
from \eqref{eq:transformation relation of Riccati top series}
and the invertibility of the formal power series $\frac{\partial z}{\partial x}(\lambda^{(0)})$.
Now we show that the following lemma 
which is a generalization of Proposition \ref{sigma(0) = rho(0) = 0}:
%%%%%%%%%%%%%%%%%%%%%%%%%%%%%%%%%%%%%%%%%%%%%%%%%%%%%%%%%%%%%%%%%%%%%%
\begin{lemm} \label{lemma for x(0) and z}
If the branch of $z_0$ is taken as $z_0 = x_0$, then we have 
\begin{equation}
x^{(0)}_{\ell}(x,t,c) = z_{\ell}(x,t,c) ,  \label{eq:x(0)_n = z_n} 
\end{equation}
\begin{equation}
\sigma^{(0)}_{\ell}(t,c) = \rho^{(0)}_{\ell}(t,c) = 0 ,   \label{eq:sigma(0)_n = rho(0)_n = 0}
\end{equation}
for all $\ell \ge 0$.
\end{lemm}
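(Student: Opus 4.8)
The plan is to identify the zeroth (that is, $e^{0\cdot\eta\phi_{\rm II}}$) part $x^{(0)}_{\rm II}$ of the transformation $x_{\rm II}$ of Theorem \ref{transformation theory wrt x} with the series $z$ produced by Lemma \ref{lemma for S(0)_odd}, and to check that the parameters it induces are all zero. Once $x^{(0)}_{\rm II}=z$ is known, this is exactly \eqref{eq:x(0)_n = z_n} ($x^{(0)}_\ell=z_\ell$ for all $\ell$), and extracting the coefficient of $e^{0\cdot\eta\phi_{\rm II}}$ in \eqref{eq:sigma}, \eqref{eq:rho}, \eqref{eq:E} then turns \eqref{eq:sigma(0)_n = rho(0)_n = 0} into the vanishing of those expressions evaluated on $z$.

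The first observation I would make is that $4z^{2}+\eta^{-2}\frac{3}{4z^{2}}=Q_{\rm can}(z,0,0,0,\eta)$, so the identity \eqref{eq:transformation relation for z} of Lemma \ref{lemma for S(0)_odd} is precisely the $e^{0\cdot\eta\phi_{\rm II}}$-component of the transformation relation \eqref{eq:transformation relation for potential}, read with $x_{\rm II}$ replaced by $z$ and with the parameters $E,\sigma,\rho$ all set to $0$. Since $z_0=x_0$ by the choice of branch (compare \eqref{eq:x_0} and \eqref{eq:z_0}), it therefore suffices, by the uniqueness asserted in the Remark following Theorem \ref{transformation theory wrt x}, to verify that feeding $z$ into the parameter formulas \eqref{eq:sigma}, \eqref{eq:rho}, \eqref{eq:E} returns $(0,0,0)$, so that $z$ and $x^{(0)}_{\rm II}$ solve the same closed (self-referential) system with the same leading term.

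The value of $\sigma$ is immediate: $\eta^{\frac12}z(\lambda^{(0)},t,c,\eta)=0$ by \eqref{eq:z(lambda(0)) = 0}. For $\rho$ I would compare the principal parts at $x=\lambda^{(0)}$ on the two sides of \eqref{eq:transformation relation for z}: since $z$ has a simple zero there, $\{z;x\}$ and $(\partial_x z)^2\,4z^2$ stay holomorphic, while $(\partial_x z)^2\,\eta^{-2}\frac{3}{4z^{2}}$ contributes the double pole $\frac{3\eta^{-2}}{4(x-\lambda^{(0)})^{2}}$ together with a simple pole of residue $\frac{3\eta^{-2}}{4}\,\partial_x^2 z(\lambda^{(0)})/\partial_x z(\lambda^{(0)})$; matching with the simple-pole term $-\eta^{-1}\nu^{(0)}/(x-\lambda^{(0)})$ of $Q^{(0)}_{\rm II}$ gives $\nu^{(0)}=-\frac{3}{4}\eta^{-1}\,\partial_x^2 z(\lambda^{(0)})/\partial_x z(\lambda^{(0)})$, and substituting this into the $e^{0\cdot\eta\phi_{\rm II}}$-part of \eqref{eq:rho} makes the two terms cancel, so $\rho^{(0)}=0$. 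Then $E=\rho^{2}-4\sigma^{2}=0$ by \eqref{eq:E}. (All denominators appearing are invertible formal series because $\partial_x z_{0}(\lambda_{0})\neq 0$.) Alternatively, $\rho^{(0)}=0$ is also forced by Proposition \ref{E = 0}: once $\sigma^{(0)}=0$, the relation $E^{(0)}=\rho^{(0)2}-4\sigma^{(0)2}=0$ gives $\rho^{(0)2}=0$, hence $\rho^{(0)}=0$ as a formal power series.

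I expect the genuine subtlety to be the self-referential nature of the system defining $x^{(0)}_{\rm II}$: the parameters $\sigma^{(0)},\rho^{(0)}$ entering \eqref{eq:transformation relation for potential} are themselves built out of $x^{(0)}_{\rm II}$ through \eqref{eq:sigma}, \eqref{eq:rho}, so one cannot merely "substitute and compare coefficients". The clean resolution is the uniqueness statement above; if one prefers an explicit argument, it becomes an induction on $\ell$ in which, assuming $x^{(0)}_m=z_m$ and $\sigma^{(0)}_m=\rho^{(0)}_m=0$ for $m<\ell$, one checks the order count showing that the $\rho$- and $E$-dependent terms of $Q_{\rm can}$ affect \eqref{eq:transformation relation for potential} only at orders strictly higher than $\eta^{-\ell}$, so the recursion determining $x^{(0)}_\ell$ coincides with the one determining $z_\ell$ in Lemma \ref{lemma for S(0)_odd}; one then reads off $\sigma^{(0)}_\ell=0$ from \eqref{eq:z(lambda(0)) = 0} and $\rho^{(0)}_\ell=0$ from $E^{(0)}=0$. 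Making that order count precise near the apparent singular point $x=\lambda^{(0)}$, where $1/(x^{(0)}_{\rm II}-\eta^{-1/2}\sigma^{(0)})$ has a pole, is the one step demanding real care.
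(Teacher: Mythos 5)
Your proposal is correct and, in substance, the same as the paper's proof: the induction you sketch at the end --- comparing the recursion determining $x^{(0)}_{\ell}$ with the one determining $z_{\ell}$ once the lower-order parameters vanish, then reading off $\sigma^{(0)}_{\ell}=0$ from $z(\lambda^{(0)})=0$ and $\rho^{(0)}_{\ell}=0$ from $E^{(0)}=0$ --- is exactly how the paper argues, the only extra step there being that the two first-order equations force $x_0\,(x^{(0)}_{\ell}-z_{\ell})$ to be a constant in $x$, which holomorphy at $x=\lambda_0$ kills. Your opening appeal to the uniqueness remark (which strictly concerns the full self-referential system, so it needs precisely the closed-$e^{0}$-component induction you then supply) and your residue computation showing $\rho$ evaluated on $z$ vanishes are harmless additions; the paper instead invokes Proposition \ref{E = 0} at that point.
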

%%%%%%%%%%%%%%%%%%%%%%%%%%%%%%%%%%%%%%%%%%%%%%%%%%%%%%%%%%%%%%%%%%%%%%
\begin{proof}[Proof of Lemma \ref{lemma for x(0) and z}]
We prove Lemma \ref{lemma for x(0) and z} by induction.
Because $\sigma^{(0)}_0 = x_0(\lambda_0) = 0$ and 
${\rho^{(0)}_0}^2 - 4 \hspace{+.2em} {\sigma^{(0)}_0}^2 = 0$ 
follow from \eqref{eq:x_0} and \eqref{eq:E(0)=0}, the claim for $\ell = 0$ holds. 
Next we assume that the claims are true for $\ell = 0, \cdots , L-1$ 
for a positive integer $L \ge 1$.
Since the formal power series $x_{\rm II}^{(0)}(x,t,c,\eta)$ and $z(x,t,c,\eta)$ satisfy 
\begin{eqnarray}
Q^{(0)}_{\rm II}(x,t,c,\eta) & = &  
\Bigl( \frac{\partial x^{(0)}_{\rm II}}{\partial x}(x,t,c,\eta) \Bigr)^2 
Q_{\rm can}
\bigl(x^{(0)}_{\rm II}(x,t,c,\eta),
E^{(0)}(t,c,\eta),\sigma^{(0)}(t,c,\eta),\rho^{(0)}(t,c,\eta),\eta \bigr) 
 \nonumber  \\
&   &  
 - \frac{1}{2} \eta^{-2} \bigl\{ x^{(0)}_{\rm II}(x,t,c,\eta) ; x \bigr\} ,    
\label{eq:transformation relation for x(0)}
\end{eqnarray}
and \eqref{eq:transformation relation for z} respectively, 
$x_L^{(0)}$ and $z_L$ satisfy the following differential equations respectively:  
\begin{eqnarray}
8 x_0 \frac{\partial x_0}{\partial x} 
\bigl( x_0 \frac{\partial x^{(0)}_L}{\partial x} + x^{(0)}_L \frac{\partial x_0}{\partial x} \bigr)
& = &  r_L(x,t,c),  \label{eq:construction of transformation function1}
\\
8 z_0 \frac{\partial z_0}{\partial x} 
\bigl( z_0 \frac{\partial z_L}{\partial x} + z_L \frac{\partial z_0}{\partial x} \bigr)
& = & \hat{r}_L(x,t,c).  \label{eq:construction of transformation function2}
\end{eqnarray}
Here $r_L$ (resp. $\hat{r}_L$) is written by 
$x_0, \cdots ,x_{L-1}$, 
$\sigma^{(0)}_0, \cdots , \sigma^{(0)}_{L-1}$ and 
$\rho^{(0)}_0. \cdots , \rho^{(0)}_{L-1}$
(resp. $z_0, \cdots ,z_{L-1}$, 
$\sigma^{(0)}_0, \cdots , \sigma^{(0)}_{L-1}$ and 
$\rho^{(0)}_0. \cdots , \rho^{(0)}_{L-1}$). 
Therefore $r_L = \hat{r}_L$ holds under the assumption of the induction.
Hence it follows from \eqref{eq:construction of transformation function1} 
and  \eqref{eq:construction of transformation function2} that 
$x_0 (x^{(0)}_L - z_L)$ equals some constant which is independent of $x$.  
The holomorphy of $x^{(0)}_L$ and $z_L$ at $x = \lambda_0$ implies 
that the constant must be 0. Thus we obtain $x^{(0)}_L = z_L$, and 
$\sigma_L^{(0)} = \rho^{(0)}_L = 0$ follows from 
\eqref{eq:z(lambda(0)) = 0} and \eqref{eq:E(0)=0}. 
\end{proof}

We note that  
\begin{equation}
x^{(0)}_1(x,t,c) = 0 ,   
\label{eq:x(0)_1} 
\end{equation}
\begin{equation}
S^{(0)}_{\rm odd}(x,t,c,\eta) = 
2 \eta \hspace{+.2em} x^{(0)}_{\rm II}(x,t,c,\eta) 
\frac{\partial x^{(0)}_{\rm II}}{\partial x}(x,t,c,\eta) ,
\label{eq:transformation relation for Riccati(0)} 
\end{equation}
follow from Lemma \ref{lemma for S(0)_odd} and Lemma \ref{lemma for x(0) and z}. 

Unfortunately, Theorem \ref{transformation theory wrt x} is not sufficient to 
determine the connection formula for $\psi_{\pm, {\rm IM}}$ on Stokes curves 
emanating from $x = \lambda_0$ in all orders. 
For that purpose we consider an extended transformation of the 
simultaneous equations ($SL_{\rm II}$) and ($D_{\rm II}$), 
which is constructed in \cite{KT WKB Painleve III}. 
(See Proposition \ref{transformation between holonomic systems} below.) 
We introduce the formal series $t_{\rm II}$ 
which plays a role of the transformation of the independent variable of ($H_{\rm can}$).
%%%%%%%%%%%%%%%%%%%%%%%%%%%%%%%%%%%%%%%%%%%%%%%%%%%%%%%%%%%%%%%%%%%%%%%%%%%%%%%%%%%%%%%%%%
\begin{prop} [{\cite[Lemma 3.3]{KT WKB Painleve III}}] \label{transformation theory wrt t}
There exists a formal series 
\begin{eqnarray}
t_{\rm II} & = & t_{\rm II}(t,c,\eta;\alpha) \nonumber \\
& = & 
t_{\rm{II}}^{(0)}(t,c,\eta)   
+ \alpha \eta^{-\frac{1}{2}} t_{\rm{II}}^{(1)}(t,c,\eta)  
{e}^{\eta \phi_{\rm II}}  
+ (\alpha \eta^{-\frac{1}{2}})^2 t_{\rm{II}}^{(2)}(t,c,\eta) 
{e}^{2 \eta \phi_{\rm II}} + \cdots ,   
\label{eq:t_II}  
\end{eqnarray}
\begin{equation}
t^{(k)}_{\rm II}(t,c,\eta) = t^{(k)}_0(t,c) + \eta^{-1} t^{(k)}_1(t,c) + 
\eta^{-2} t^{(k)}_2(t,c) + \cdots ,  
\end{equation}
satisfying (i) and (ii) below. 

\noindent
(i) 
\begin{eqnarray}
t^{(0)}_0(t,c) & = & \frac{1}{2} \phi_{\rm II}(t,c) . \label{eq:t_0}  \\
t^{(0)}_1(t,c) & = & 0 \label{eq:t(0)_1} . \label{eq:t(0)_1} \\
t_0^{(k)}(t,c) & = & 0 \hspace{+.5em} (k \ge 1) .
\label{eq:t(k)_0 = 0}
\end{eqnarray}

\noindent
(ii) 
\begin{eqnarray}
\sigma(t,c,\eta;\alpha) & = & \frac{1}{\sqrt{2}}\alpha \hspace{+.2em} 
{e}^{2 \eta \hspace{+.2em} t_{\rm II}(t,c,\eta;\alpha)} .
\label{eq:transformation of sigma}   \\
\rho(t,c,\eta;\alpha) & = & - \sqrt{2} \alpha \hspace{+.2em} 
{e}^{2 \eta \hspace{+.1em} t_{\rm II}(t,c,\eta;\alpha) } .
\label{eq:transformation of rho}   
\end{eqnarray}
\end{prop}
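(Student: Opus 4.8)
The plan is to construct $t_{\rm II}$ explicitly as a formal logarithm of $\sigma$. The key point is that Proposition \ref{E = 0} forces $\rho=-2\sigma$, so that once $t_{\rm II}$ is defined by $\sigma=\tfrac{1}{\sqrt2}\alpha\,e^{2\eta t_{\rm II}}$ the second relation in (ii) follows immediately, and the normalization (i) is then read off from the leading coefficients of $\sigma$. Concretely, I would set
\[
t_{\rm II}(t,c,\eta;\alpha) \;:=\; \frac{1}{2\eta}\left( \eta\,\phi_{\rm II}(t,c) + \log\frac{\sqrt{2}\,\sigma(t,c,\eta;\alpha)}{\alpha\,e^{\eta\phi_{\rm II}(t,c)}} \right),
\]
the logarithm being taken as a formal power series, and then verify that this $t_{\rm II}$ has the form \eqref{eq:t_II} and satisfies (i) and (ii).

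First I would compute the leading coefficients of the transseries $\sigma$ and $\rho$. By Proposition \ref{sigma(0) = rho(0) = 0} we have $\sigma^{(0)}=\rho^{(0)}=0$, so both series begin at order $w:=\alpha\,e^{\eta\phi_{\rm II}}$. Expanding \eqref{eq:sigma} in powers of $w$, and using $x_{\rm II}^{(0)}(\lambda^{(0)},t,c,\eta)=0$ (which follows from \eqref{eq:z(lambda(0)) = 0} and Lemma \ref{lemma for x(0) and z}) together with $x_0^{(k)}=0$ for $k\ge 1$, the coefficient of $w$ in $\sigma$ equals $\partial_x x_{\rm II}^{(0)}(\lambda^{(0)})\,\lambda^{(1)} + x_{\rm II}^{(1)}(\lambda^{(0)})$, whose $\eta^{0}$-part is $\bigl(\tfrac{1}{\sqrt{2}}\Delta^{1/4}\bigr)\cdot\Delta^{-1/4}=\tfrac{1}{\sqrt{2}}$ by \eqref{eq:branch of x_0} and $\lambda_0^{(1)}=\Delta^{-1/4}$. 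Likewise, expanding \eqref{eq:rho} and using $\nu^{(0)}_0=0$ together with $\nu^{(1)}_0 = R_{-1}\lambda_0^{(1)} = \Delta^{1/4}$ (from \eqref{eq:nu(1)}), the coefficient of $w$ in $\rho$ has $\eta^{0}$-part $-\Delta^{1/4}\big/\bigl(\tfrac{1}{\sqrt{2}}\Delta^{1/4}\bigr)=-\sqrt{2}$ — the second, $\eta^{-1/2}$-order term in \eqref{eq:rho} contributes only at higher order. Hence $\sigma^{(1)}_0=\tfrac{1}{\sqrt{2}}$ and $\rho^{(1)}_0=-\sqrt{2}$.

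Next I would deduce $\rho=-2\sigma$ and then check (i) and (ii). Proposition \ref{E = 0} gives $\rho^2-4\sigma^2=0$, i.e.\ $(\rho-2\sigma)(\rho+2\sigma)=0$ in the ring of formal series $\sum_{k\ge0}f_k(\eta)\,w^k$ with $f_k\in\mathbb{C}((\eta^{-1/2}))$, which is an integral domain. The factor $\rho-2\sigma$ has vanishing $w^{0}$-coefficient, but its $w^{1}$-coefficient has $\eta^0$-part $\rho^{(1)}_0-2\sigma^{(1)}_0=-2\sqrt{2}\ne0$; so $\rho-2\sigma\ne0$ is not a zero divisor, and therefore $\rho+2\sigma=0$. (This reflects the fact that $(\sigma,\rho)$ must correspond, under $(H_{\rm can})$, to the exponentially small branch $\tilde\sigma=A\,e^{2\eta\tilde t}$, $\tilde\rho=-2\tilde\sigma$ of \eqref{eq:canonical Painleve solution}, since $e^{\eta\phi_{\rm II}}$ is exponentially small in the domain under consideration.) Now with $t_{\rm II}$ as above: $\sigma/(\alpha e^{\eta\phi_{\rm II}})=\sigma^{(1)}+\alpha\eta^{-1/2}\sigma^{(2)}e^{\eta\phi_{\rm II}}+\cdots$ has $w^{0}$-part $\sigma^{(1)}$ with invertible leading coefficient $\sigma^{(1)}_0=\tfrac{1}{\sqrt2}$, so $\log\bigl(\sqrt{2}\sigma/(\alpha e^{\eta\phi_{\rm II}})\bigr)=\log(1+\sqrt{2}\sigma^{(1)}_1\eta^{-1}+\cdots)$ is a well-defined formal series of the type \eqref{eq:t_II} with zero $\eta^{0}w^{0}$-term; by construction $\sqrt{2}\sigma/\alpha=e^{2\eta t_{\rm II}}$, which is \eqref{eq:transformation of sigma}, and combining with $\rho=-2\sigma$ gives \eqref{eq:transformation of rho}. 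For (i): the $w^{0}$-part of $\log\bigl(\sqrt{2}\sigma/(\alpha e^{\eta\phi_{\rm II}})\bigr)$ is $\sqrt{2}\sigma^{(1)}_1\eta^{-1}+O(\eta^{-2})$, so $t_{\rm II}^{(0)}=\tfrac12\phi_{\rm II}+0\cdot\eta^{-1}+O(\eta^{-2})$, giving \eqref{eq:t_0} and \eqref{eq:t(0)_1}; and for $k\ge1$ the $w^{k}$-part of that logarithm carries a factor $(\alpha\eta^{-1/2})^{k}$, so after division by $2\eta$ each $t_{\rm II}^{(k)}$ is $O(\eta^{-1})$, whence \eqref{eq:t(k)_0 = 0}.

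The main obstacle is the leading-order computation of the second paragraph: one must expand $x_{\rm II}(\lambda)$, $\partial_x x_{\rm II}(\lambda)$ and $\partial_x^2 x_{\rm II}(\lambda)$ as transseries in $w$ and keep track precisely of which coefficients vanish — $x_{\rm II}^{(0)}(\lambda^{(0)})=0$, $x_0^{(k)}=0$ ($k\ge1$), $\nu^{(0)}_0=0$ — so as to obtain $\sigma^{(1)}_0=\tfrac{1}{\sqrt2}$, $\rho^{(1)}_0=-\sqrt2$, and in particular $\rho^{(1)}_0-2\sigma^{(1)}_0\ne0$; this last inequality is exactly what makes $\rho-2\sigma$ a nonzero (non-zero-divisor) element and hence lets $E=0$ pin down the sign in $\rho=-2\sigma$. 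Everything after that is routine formal manipulation of transseries and of the formal logarithm.
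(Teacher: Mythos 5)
Your proposal is correct, and its core coincides with how the paper (together with the cited [KT3, Lemma 3.3]) actually pins down $t_{\rm II}$: the defining relations \eqref{eq:definition of t(0)} and \eqref{eq:definition of t_II(k)} in the text following the proposition are exactly your formal logarithm $\sigma=\tfrac{1}{\sqrt{2}}\,\alpha\,e^{2\eta t_{\rm II}}$, and the leading coefficients $\sigma^{(1)}_0=\tfrac{1}{\sqrt{2}}$, $\rho^{(1)}_0=-\sqrt{2}$ that you compute (correctly using $x^{(k)}_0=0$ for $k\ge 1$, $\nu^{(0)}_0=0$, $\lambda^{(1)}_0=\Delta^{-1/4}$ and \eqref{eq:nu(1)}) are precisely the ones recorded in the Remark right after the statement. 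Where you genuinely diverge is in the justification of \eqref{eq:transformation of rho}: the paper leaves this to the citation, where $\rho=-2\sigma$ arises from the fact that $(\sigma,\rho)$ satisfies the canonical Hamiltonian system $(H_{\rm can})$ with respect to the variable $t_{\rm II}$ (a deformation-theoretic compatibility statement), whereas you obtain it purely algebraically from $E=\rho^2-4\sigma^2=0$ (Proposition \ref{E = 0}) by factoring $(\rho-2\sigma)(\rho+2\sigma)=0$ in the integral domain of transseries with coefficients in formal Laurent series in $\eta^{-1/2}$, and using $\rho^{(1)}_0-2\sigma^{(1)}_0=-2\sqrt{2}\ne 0$ to rule out vanishing of the factor $\rho-2\sigma$. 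Your route is more elementary and self-contained, relying only on results already established earlier in the paper (Theorem \ref{transformation theory wrt x}, Propositions \ref{E = 0} and \ref{sigma(0) = rho(0) = 0}); the deformation-theoretic route of the cited reference yields more than the statement at hand (for instance the relation \eqref{eq:relation for transformation series} underlying Proposition \ref{transformation between holonomic systems}), but since that extra output is not part of this proposition, your argument is a complete proof of exactly what is claimed, with the checks of (i) and the well-definedness of the formal logarithm carried out correctly.
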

%%%%%%%%%%%%%%%%%%%%%%%%%%%%%%%%%%%%%%%%%%%%%%%%%%%%%%%%%%%%%%%%%%%%%%%%%%%%%%%%%%%%%%%%%%
\begin{rem} \normalfont 
The right-hand sides of \eqref{eq:transformation of sigma} and \eqref{eq:transformation of rho}
are the formal series obtained from the solution \eqref{eq:canonical Painleve solution} of ($H_{\rm can}$) 
through the substitution $\tilde{t} \mapsto t_{\rm II}$ and $(A,B) \mapsto (\frac{1}{\sqrt{2}} \alpha, 0)$.
The choice of the parameters originate from Proposition \ref{sigma(0) = rho(0) = 0}
and the following relations: 
\begin{eqnarray*}
\sigma^{(1)}_0(t,c) & = & 
x^{(1)}_0(\lambda_0,t,c) +
\frac{\partial x^{(0)}_0}{\partial x}(\lambda_0,t,c) \hspace{+.2em} \lambda^{(1)}_0(t,c) 
\hspace{+.2em} = \hspace{+.2em} \frac{1}{\sqrt{2}} , \\
\rho^{(1)}_0(t,c) & = & 
\frac{\nu_0(t,c) \Bigl(
\frac{\partial x^{(1)}_0}{\partial x} (\lambda_0,t,c) + 
\frac{\partial^2 x_0}{\partial x^2} (\lambda_0,t)  
 \hspace{+.2em} \lambda^{(1)}_0(t,c)
 \Bigr) }
{\Bigl( \frac{\partial x_0}{\partial x}
 (\lambda_0,t,c) \Bigr)^2} 
- \hspace{+.2em} \frac{\nu^{(1)}_0(t,c)}
{\frac{\partial x_0}{\partial x} (\lambda_0,t,c)}
 \hspace{+.2em} = \hspace{+.2em}  - \sqrt{2} .
\end{eqnarray*}
\end{rem}

In what follows we abbreviate $t^{(0)}_0(t,c)$ to $t_0(t,c)$. 
The formal power series $t^{(0)}_{\rm II}$ is determined by the following relation:
\begin{equation}
\hspace{+.2em} \sigma^{(1)}(t,c,\eta)  = 
\frac{1}{\sqrt{2}} \hspace{+.2em}
{\rm exp} \Bigl( 2 \eta \bigl( t_{\rm II}^{(0)}(t,c,\eta) - t_0(t,c)  \bigr)  \Bigr). 
\label{eq:definition of t(0)} 
\end{equation}
(We note that $\sigma^{(0)}(t,c,\eta) = 0$ by Proposition \ref{sigma(0) = rho(0) = 0}.)
The relation 
\begin{equation}
1 + \alpha \eta^{-\frac{1}{2}} \frac{\sigma^{(2)}(t,c,\eta)}{\sigma^{(1)}(t,c,\eta)}  
{e}^{\eta \phi_{\rm II}} + 
(\alpha \eta^{-\frac{1}{2}})^2 \frac{\sigma^{(3)}(t,c,\eta)}{\sigma^{(1)}(t,c,\eta)}  
{e}^{2 \eta \phi_{\rm II}} + \cdots
 = 
{\rm exp} \Bigl( 2 \eta \bigl( t_{\rm II}(t,c,\eta) - 
 t_{\rm II}^{(0)}(t,c,\eta)  \bigr)  \Bigr)  
\label{eq:definition of t_II(k)}
\end{equation}
also determines $t^{(k)}_{\rm II}$ ($k \ge 1$) uniquely.

Making use of the above transformation theory, 
we obtain a correspondence between WKB solutions  
of ($SL_{\rm II}$) and ($Can$) satisfying their deformation equations. 
%%%%%%%%%%%%%%%%%%%%%%%%%%%%%%%%%%%%%%%%%%%%%%%%%%%%%%%%%%%%%%%%%%%%%%%%%%%%%%%%%%%%%%%%%%
\begin{prop} [{\cite[Proposition3.1]{KT WKB Painleve III}}] 
\label{transformation between holonomic systems}
Let $\tilde{\psi}(\tilde{x},\tilde{t},\eta;\alpha)$ be a WKB solution of ($Can$) and ($D_{\rm can}$) 
with the solution 
\begin{eqnarray}
 \left\{
\begin{array}{ll}
{\tilde{\sigma}} (\tilde{t},\eta;\alpha) =  
\frac{1}{\sqrt{2}} \alpha \hspace{+.2em} {{e}}^{2 \eta \tilde{t}}  ,  \\
{\tilde{\rho}} (\tilde{t},\eta;\alpha) =  
-\sqrt{2} \alpha \hspace{+.2em} {{e}}^{2 \eta \tilde{t}}  , 
\end{array} \right.  \label{eq:canonical Painleve 1-parameter solution} 
\end{eqnarray}
of ($H_{\rm can}$) substituted into their coefficients. 
If we define 
\begin{equation}
\psi(x,t,c,\eta;\alpha) = 
\Bigl(\frac{\partial x_{\rm II}}{\partial x}(x,t,c,\eta;\alpha) \Bigr)^{-\frac{1}{2}}
\tilde{\psi} \bigl(
x_{\rm II}(x,t,c,\eta;\alpha),t_{\rm II}(t,c,\eta;\alpha),\eta;\alpha \bigr),
\end{equation}
then $\psi$ satisfies both ($SL_{\rm II}$) and ($D_{\rm II}$) 
in a neighborhood of $x = \lambda_0$.
\end{prop}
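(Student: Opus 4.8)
The plan is to verify that $\psi$ satisfies ($SL_{\rm II}$) and ($D_{\rm II}$) separately, every identity below being read as an equality of formal series in $\eta^{-1}$ and $e^{\eta\phi_{\rm II}}$ on the neighbourhood ${\cal U}$ of $x=\lambda_{0}$ supplied by Theorem \ref{transformation theory wrt x}, on which $\partial x_{\rm II}/\partial x$ is an invertible formal series. The equation ($SL_{\rm II}$) is the easy half: by \eqref{eq:transformation of sigma}, \eqref{eq:transformation of rho} and Proposition \ref{E = 0}, the data $(E,\sigma,\rho)$ plugged into $Q_{\rm can}$ in \eqref{eq:transformation relation for potential} are exactly the canonical $1$-parameter solution \eqref{eq:canonical Painleve 1-parameter solution} of ($H_{\rm can}$) evaluated at $\tilde t=t_{\rm II}(t,c,\eta;\alpha)$, so \eqref{eq:transformation relation for potential} says that $Q_{\rm II}$ is $Q_{\rm can}$, evaluated at $(x_{\rm II},t_{\rm II})$, transformed under $x\mapsto x_{\rm II}$ with the usual Schwarzian correction $-\tfrac12\eta^{-2}\{x_{\rm II};x\}$. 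Combining this with the standard covariance of the one-dimensional Schr\"odinger operator under a change of the independent variable $x\mapsto x_{\rm II}$ accompanied by the gauge factor $(\partial x_{\rm II}/\partial x)^{-1/2}$ — which is precisely what produces that Schwarzian term — one gets $(\partial_{x}^{2}-\eta^{2}Q_{\rm II})\psi=(\partial x_{\rm II}/\partial x)^{-1/2}\,(\partial_{\tilde x}^{2}-\eta^{2}Q_{\rm can})\tilde\psi\big|_{(x_{\rm II},t_{\rm II})}=0$.

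For ($D_{\rm II}$) I would differentiate $\psi=(\partial x_{\rm II}/\partial x)^{-1/2}\,\tilde\psi(x_{\rm II},t_{\rm II})$ with respect to $t$, apply the chain rule, and use ($D_{\rm can}$) to eliminate $\partial\tilde\psi/\partial\tilde t$. Writing $g=\partial x_{\rm II}/\partial x$, I expect the result to take the shape
\[
\frac{\partial\psi}{\partial t}-A_{\rm II}\frac{\partial\psi}{\partial x}+\frac12\frac{\partial A_{\rm II}}{\partial x}\psi
=g^{-1/2}\Bigl(P\,\frac{\partial\tilde\psi}{\partial\tilde x}+R\,\tilde\psi\Bigr)\Big|_{(x_{\rm II},t_{\rm II})},
\]
where $P=\partial_{t}x_{\rm II}+(\partial_{t}t_{\rm II})\,A_{\rm can}(x_{\rm II},t_{\rm II})-A_{\rm II}\,g$ and $R$ is an explicit scalar not involving $\tilde\psi$. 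The key structural remark is that $R$ is determined by $P$: differentiating $P$ in $x$ and using the equality of mixed partials $\partial_{x}\partial_{t}x_{\rm II}=\partial_{t}g$ together with $\partial_{x}\bigl(A_{\rm can}(x_{\rm II},t_{\rm II})\bigr)=(\partial_{\tilde x}A_{\rm can})\,g$, one finds $R=-\tfrac12\,g^{-1}\,\partial_{x}P$. Hence the whole of ($D_{\rm II}$) reduces to the single scalar identity $P=0$.

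Proving $P=0$ is the core of the argument, and the step I expect to be the main obstacle. I would obtain it from the transformation rule for the odd part of the Riccati solution: from \eqref{eq:transformation relation for potential} — together with the all-order refinement of \eqref{eq:transformation relation for Riccati(0)}, equivalently the statement that $S_{\rm odd}\,dx$ is invariant under $x\mapsto x_{\rm II}$ — one has $S_{\rm odd}(x,t,c,\eta;\alpha)=g\,\tilde S_{\rm odd}(x_{\rm II},t_{\rm II})$, where $\tilde S_{\rm odd}$ is the odd part of the Riccati solution of ($Can$) with \eqref{eq:canonical Painleve 1-parameter solution} substituted. Differentiating this relation in $t$, substituting \eqref{eq:conpatibility of SL_II and D_II 2} on the left-hand side and its ($Can$)-counterpart $\partial_{\tilde t}\tilde S_{\rm odd}=\partial_{\tilde x}(A_{\rm can}\tilde S_{\rm odd})$ on the right-hand side, and then comparing the coefficients of $\partial_{\tilde x}\tilde S_{\rm odd}$, I expect to arrive exactly at $g\bigl[\partial_{t}x_{\rm II}+(\partial_{t}t_{\rm II})A_{\rm can}(x_{\rm II},t_{\rm II})\bigr]=A_{\rm II}\,g^{2}$, that is $P=0$. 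The delicate part is establishing this Riccati transformation rule in all orders of the double expansion: the $k=0$ sector is \eqref{eq:transformation relation for Riccati(0)}--\eqref{eq:transformation relation of Riccati top series}, but for $k\ge1$ it has to be propagated through the recursions, and the whole difficulty is really concentrated in the existence of the series $t_{\rm II}$ with the properties of Proposition \ref{transformation theory wrt t} — precisely what the construction of \cite[Lemma 3.3 and Proposition 3.1]{KT WKB Painleve III} provides, so in the end one may equally well invoke that construction directly to conclude $P=0$.
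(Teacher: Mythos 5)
Your proposal is correct and essentially follows the paper's own route: the ($SL_{\rm II}$) half is the standard Schwarzian covariance applied to \eqref{eq:transformation relation for potential} with $(\sigma,\rho,E)$ identified as the canonical solution at $\tilde t=t_{\rm II}$, and the ($D_{\rm II}$) half reduces, exactly as you compute (your identity $R=-\tfrac{1}{2}g^{-1}\partial_x P$ checks out), to the single relation $P=0$, which is precisely \eqref{eq:relation for transformation series}, imported by the paper from \cite[(3.52)]{KT WKB Painleve III} — the same source you ultimately invoke. (Your sketched internal derivation of $P=0$ from an all-order relation $S_{\rm odd}=(\partial x_{\rm II}/\partial x)\,\tilde S_{\rm odd}(x_{\rm II},t_{\rm II})$ would need further justification — that relation beyond the $k=0$ sector and the ``comparison of coefficients'' of $\tilde S_{\rm odd}$ and $\partial_{\tilde x}\tilde S_{\rm odd}$ are not established in the paper — but since you fall back on the cited construction, this does not affect correctness.)
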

%%%%%%%%%%%%%%%%%%%%%%%%%%%%%%%%%%%%%%%%%%%%%%%%%%%%%%%%%%%%%%%%%%%%%%%%%%%%%%%%%%%%%%%%%%

Proposition \ref{transformation between holonomic systems} can be verified easily 
by using the relation 
\begin{equation}
\frac{1}{2(x - \lambda)}
 \frac{\partial x_{\rm II}}{\partial x} 
- \frac{\partial x_{\rm II}}{\partial t} 
-  \frac{1}{2(x_{\rm II} - \eta^{- \frac{1}{2}}\sigma)}
 \frac{\partial t_{\rm II}}{\partial t}  = 0 ,
\label{eq:relation for transformation series}
\end{equation} 
which is proved in \cite[(3.52)]{KT WKB Painleve III}. 

%%%%%%%%%%%%%%%%%%%%%%%%%%%%%%%%%%%%%%%%%%%%%%%%%%%%%%%%%%%%%%%%%%%%%%%%%%%%%%%%%%%%%%%%%%
%%%%%%%%%%%%%%%%%%%%%%%%%%%%%%%%%%%%%%%%%%%%%%%%%%%%%%%%%%%%%%%%%%%%%%%%%%%%%%%%%%%%%%%%%%

\subsection{WKB solutions of ($Can$) and ($D_{\rm can}$)}

In order to use Proposition \ref{transformation between holonomic systems}, 
we construct WKB solutions which satisfy both ($Can$) and ($D_{\rm can}$) 
with \eqref{eq:canonical Painleve 1-parameter solution} are substituted into the coefficients. 
In what follows we designate 
\begin{eqnarray*}
Q_{\rm can}(\tilde{x},\tilde{t},\eta;\alpha) & = &
Q_{\rm can}(\tilde{x},\tilde{E}(\tilde{t},\eta;\alpha),
\tilde{\sigma}(\tilde{t},\eta;\alpha),\tilde{\rho}(\tilde{t},\eta;\alpha),\eta)  \nonumber \\
& = &
4 {\tilde{x}}^2 
- \eta^{-1} \frac{\sqrt{2} \alpha \eta^{-\frac{1}{2}} {e}^{2 \eta \tilde{t}}}
{\tilde{x} - \frac{1}{\sqrt{2}} \alpha \eta^{-\frac{1}{2}} {e}^{2 \eta \tilde{t}}} 
+ \eta^{-2} 
\frac{3}{4 \bigl(\tilde{x} - \frac{1}{\sqrt{2}} \alpha \eta^{-\frac{1}{2}} 
{e}^{2 \eta \tilde{t}} \bigr)^2} , 
\\
A_{\rm can}(\tilde{x},\tilde{t},\eta;\alpha) & = & 
\frac{1}{2(\tilde{x} - \frac{1}{\sqrt{2}} \alpha \eta^{-\frac{1}{2}} 
{e}^{2 \eta \tilde{t}})} .
\end{eqnarray*}
(Note that $\tilde{E}(\tilde{t},\eta;\alpha) = \tilde{\rho}(\tilde{t},\eta;\alpha)^2 -  
4 \tilde{\sigma}(\tilde{t},\eta;\alpha)^2 = 0$.)
$Q_{\rm can}$ is expanded as follows:
\begin{equation}
Q_{\rm can}(\tilde{x},\tilde{t},\eta;\alpha) = Q^{(0)}_{\rm can}(\tilde{x},\eta)   
+ \alpha \eta^{-\frac{1}{2}} Q^{(1)}_{\rm can}(\tilde{x},\eta) {e}^{2 \eta \tilde{t}}  
+(\alpha \eta^{-\frac{1}{2}})^2 Q^{(2)}_{\rm can}(\tilde{x},\eta) {e}^{4 \eta \tilde{t}} 
+ \cdots  ,  \label{eq:expansion of Q_can} \\[-1.em]
\end{equation}
\begin{eqnarray}
Q^{(0)}_{\rm can}(\tilde{x},\eta) & = & 4 {\tilde{x}}^2 + \eta^{-2} \frac{3}{4 {\tilde{x}}^2} \hspace{+.2em} ,  
\label{eq:Q(0)_can}  \\
Q^{(k)}_{\rm can}(\tilde{x},\eta) & = & - \eta^{-1} 
\Bigl( \frac{1}{\sqrt{2}} \Bigr)^{k-2} \frac{1}{{\tilde{x}}^k}   
+  
\eta^{-2} \Bigl( \frac{1}{\sqrt{2}} \Bigr)^k \hspace{+.2em}
\frac{3 \hspace{+.1em} (k + 1)}{4 {\tilde{x}}^{k+2}}  \hspace{+1.em}  (k \ge 1) \hspace{+.2em} .
\label{eq:Q(k)_can}
\end{eqnarray}
As in Section 4, we can construct WKB solutions of ($Can$) 
in the following form: 
\begin{equation}
\tilde{\psi}_{\pm} (\tilde{x},\tilde{t},\eta;\alpha) = 
\frac{1}{\sqrt{\tilde{S}_{\rm odd}}} \hspace{+.2em} {\rm exp} \pm
\Bigl\{
\eta \int_0^{\tilde{x}} \tilde{S}_{-1} d \tilde{x} + \int_{\infty}^{\tilde{x}} \bigl( 
\tilde{S}_{\rm odd} - \eta \tilde{S}_{-1}
\bigr) d \tilde{x}
\Bigr\} ,
\label{eq:WKB solution of Can and D_can}
\end{equation}
where 
\begin{equation}
\tilde{S}_{\rm odd} = \tilde{S}_{\rm odd}(\tilde{x},\tilde{t},\eta;\alpha) = 
\tilde{S}_{\rm odd}^{(0)}(\tilde{x},\eta)   
+ \alpha \eta^{-\frac{1}{2}} \tilde{S}_{\rm odd}^{(1)}(\tilde{x},\eta) {e}^{2 \eta \tilde{t}}  
+(\alpha \eta^{-\frac{1}{2}})^2 \tilde{S}_{\rm odd}^{(2)}(\tilde{x},\eta) {e}^{4 \eta \tilde{t}}
+ \cdots 
\label{eq:expansion of tilde S_odd}
\end{equation}
is the odd part (in the sense of Remark \ref{odd part}) of a formal solution 
\begin{equation}
\tilde{S} = \tilde{S}(\tilde{x},\tilde{t},\eta;\alpha) = 
\tilde{S}^{(0)}(\tilde{x},\eta)   
+ \alpha \eta^{-\frac{1}{2}} \tilde{S}^{(1)}(\tilde{x},\eta) {e}^{2 \eta \tilde{t}}  
+(\alpha \eta^{-\frac{1}{2}})^2 \tilde{S}^{(2)}(\tilde{x},\eta) {e}^{4 \eta \tilde{t}}
+ \cdots 
\label{eq:expansion of tildeS}
\end{equation}
of the associated Riccati equation of ($Can$)  
\[
{\tilde{S}}^2 + \frac{\partial \tilde{S}}{\partial \tilde{x}} = 
\eta^2 Q_{\rm can}(\tilde{x},\tilde{t},\eta;\alpha) ,
\]
$\tilde{S}_{\rm odd}^{(k)}$ and $\tilde{S}^{(k)}$ are formal power series of $\eta^{-1}$ 
of the form 
\[
\tilde{S}_{\rm odd}^{(k)}(\tilde{x},\eta) = 
\eta \tilde{S}_{\rm odd,-1}^{(k)}(\tilde{x}) + \tilde{S}_{\rm odd,0}^{(k)}(\tilde{x}) 
+ \eta^{-1} \tilde{S}_{\rm odd,1}^{(k)}(\tilde{x})
+  \cdots   \hspace{+.2em}  (k \ge 0) , 
\]
\[
\tilde{S}^{(k)}(\tilde{x},\eta) = 
\eta \tilde{S}^{(k)}_{-1}(\tilde{x}) + \tilde{S}^{(k)}_{0}(\tilde{x}) + \eta^{-1} \tilde{S}^{(k)}_{1}(\tilde{x})
+  \cdots  \hspace{+.2em}  (k \ge 0) ,
\]
and 
\begin{equation}
\tilde{S}_{-1}(\tilde{x}) = \tilde{S}^{(0)}_{-1}(\tilde{x}) = 2 \tilde{x} .
  \label{eq:tildeS_-1 = 2 tildex}
\end{equation}
It is easy to check the integral in \eqref{eq:WKB solution of Can and D_can} is well-defined.
Since $\tilde{S}^{(k)}$ ($k \ge 0$) satisfy the differential equations 
\begin{equation}
{\tilde{S}}^{(0)2} + \frac{\partial \tilde{S}^{(0)}}{\partial \tilde{x}} =    
\eta^2 Q^{(0)}_{\rm can}(\tilde{x},\eta) ,  \label{eq:tildeS(0)}
\end{equation}
\begin{equation}
2 {\tilde{S}}^{(0)} {\tilde{S}}^{(k)} + \sum_{k_1+k_2=k, \hspace{+.1em} k_j < k} {\tilde{S}}^{(k_1)} {\tilde{S}}^{(k_2)}  
+ \frac{\partial \tilde{S}^{(k)}}{\partial \tilde{x}} = 
\eta^{2} Q^{(k)}_{\rm can}(\tilde{x},\eta)  \hspace{+1.em} (k \ge 1)  ,  \label{eq:tildeS(k)}
\end{equation}
we obtain 
\begin{eqnarray}
{\tilde{S}}^{(0)} (\tilde{x},\eta) & = & 2 \eta \tilde{x} - \frac{1}{2 \tilde{x}} ,  \label{eq:tildeS(0) full} \\
{\tilde{S}}_{\rm odd}^{(0)} (\tilde{x},\eta) & = & 2 \eta \tilde{x} ,  \label{eq:tildeS(0)_odd full} \\[+.2em]
{\tilde{S}}^{(k)}_{-1}(\tilde{x}) & = & 0 \hspace{+1.em} (k \ge 1) ,  \label{eq:tildeS(k)_-1 = 0}
\end{eqnarray}
from \eqref{eq:Q(0)_can} and \eqref{eq:Q(k)_can}.
%%%%%%%%%%%%%%%%%%%%%%%%%%%%%%%%%%%%%%%%%%%%%%%%%%%%%%%%%%%%%%%%%%%%%%%%%%%%%%%%%%%%%%%%%%
\begin{lemm} [{\cite[Lemma2]{Takei Painleve}}] \label{WKB solution of Can and D_can}
The formal series ${e}^{\pm \eta \tilde{t}} \tilde{\psi}_{\pm}$
satisfy both ($Can$) and ($D_{\rm can}$).
\end{lemm}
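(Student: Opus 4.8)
The plan is to argue exactly as in the proof of Proposition~\ref{t-derivation of psi_infinity}: first observe that each $\tilde{\psi}_{\pm}$ already solves ($Can$), then compute $\partial_{\tilde t}\tilde{\psi}_{\pm}$ and show that it differs from the right-hand side of ($D_{\rm can}$) only by a term of the form $\mp\,\eta\,\tilde{\psi}_{\pm}$, which is precisely compensated by the prefactor $e^{\pm\eta\tilde t}$. That $\tilde{\psi}_{\pm}$ satisfies ($Can$) is immediate from the construction \eqref{eq:WKB solution of Can and D_can}: it is a WKB solution built from $\tilde{S}_{\rm odd}$, the odd part of a formal solution of the Riccati equation associated with ($Can$), and the operator $\partial_{\tilde x}^{2}-\eta^{2}Q_{\rm can}$ acts only in $\tilde x$ for fixed $\tilde t$; multiplying by the $\tilde x$-independent factor $e^{\pm\eta\tilde t}$ does not affect this, so $e^{\pm\eta\tilde t}\tilde{\psi}_{\pm}$ again solves ($Can$).

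For ($D_{\rm can}$), the point is that ($H_{\rm can}$) is the compatibility condition of ($Can$) and ($D_{\rm can}$), so, exactly as \eqref{eq:conpatibility of SL_II and D_II} was used in Section~4, one obtains $\partial_{\tilde t}\tilde{S}_{\rm odd}=\partial_{\tilde x}\bigl(A_{\rm can}\tilde{S}_{\rm odd}\bigr)$, while $\partial_{\tilde t}\tilde{S}_{-1}=0$ since $\tilde{S}_{-1}=2\tilde x$ is independent of $\tilde t$ by \eqref{eq:tildeS_-1 = 2 tildex}. Differentiating \eqref{eq:WKB solution of Can and D_can} with respect to $\tilde t$ and inserting these relations, the term $\eta\int_{0}^{\tilde x}\tilde{S}_{-1}\,d\tilde x$ contributes nothing, and $\int_{\infty}^{\tilde x}\partial_{\tilde t}\bigl(\tilde{S}_{\rm odd}-\eta\tilde{S}_{-1}\bigr)\,d\tilde x$ equals $A_{\rm can}\tilde{S}_{\rm odd}-\lim_{\tilde x\to\infty}A_{\rm can}\tilde{S}_{\rm odd}$. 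Using \eqref{eq:tildeS(0)_odd full} and \eqref{eq:tildeS(k)_-1 = 0} (so that $\tilde{S}_{\rm odd}=2\eta\tilde x+O(\tilde x^{-1})$, the $\tilde{S}^{(k)}$ for $k\ge1$ being decaying in $\tilde x$ because $Q^{(k)}_{\rm can}=O(\tilde x^{-k})$ by \eqref{eq:Q(k)_can}) together with $A_{\rm can}=\tfrac{1}{2\tilde x}+O(\tilde x^{-2})$, one finds $\lim_{\tilde x\to\infty}A_{\rm can}\tilde{S}_{\rm odd}=\eta$. Feeding this, together with the $\tilde x$-derivative identity $\partial_{\tilde x}\tilde{\psi}_{\pm}=\bigl(\pm\tilde{S}_{\rm odd}-\tfrac{1}{2\tilde{S}_{\rm odd}}\partial_{\tilde x}\tilde{S}_{\rm odd}\bigr)\tilde{\psi}_{\pm}$ (the analog of \eqref{eq:x-derivation of psi_infinity}), into the computation exactly as in Proposition~\ref{t-derivation of psi_infinity} yields
\[
\frac{\partial}{\partial\tilde t}\tilde{\psi}_{\pm}
= A_{\rm can}\frac{\partial\tilde{\psi}_{\pm}}{\partial\tilde x}
-\frac{1}{2}\frac{\partial A_{\rm can}}{\partial\tilde x}\tilde{\psi}_{\pm}
\mp\eta\,\tilde{\psi}_{\pm}.
\]
Since $\partial_{\tilde t}\bigl(e^{\pm\eta\tilde t}\tilde{\psi}_{\pm}\bigr)=\pm\eta\,e^{\pm\eta\tilde t}\tilde{\psi}_{\pm}+e^{\pm\eta\tilde t}\partial_{\tilde t}\tilde{\psi}_{\pm}$ and $A_{\rm can}$, $Q_{\rm can}$ are even under the branch flip, the extra term $\mp\eta\,\tilde{\psi}_{\pm}$ is cancelled, and $e^{\pm\eta\tilde t}\tilde{\psi}_{\pm}$ satisfies ($D_{\rm can}$).

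I expect the only delicate point to be the boundary term at $\tilde x=\infty$: one must verify that $A_{\rm can}\tilde{S}_{\rm odd}$ has a finite limit and that this limit is exactly $\eta$, with \emph{no} $\alpha$- or $e^{2\eta\tilde t}$-dependent correction. This relies on the precise leading behavior $\tilde{S}^{(0)}_{\rm odd}=2\eta\tilde x$ and on the decay in $\tilde x$ of all higher series $\tilde{S}^{(k)}$ ($k\ge1$); everything else is a routine transcription of the Section~4 computation into the ($Can$)/($D_{\rm can}$) setting.
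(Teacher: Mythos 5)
Your argument is correct. The paper itself gives no proof of this lemma (it simply cites \cite[Lemma 2]{Takei Painleve}), so what you have done is supply the direct verification, and it is precisely the transcription of Proposition \ref{t-derivation of psi_infinity} to the ($Can$)/($D_{\rm can}$) setting: the ($Can$)-part is immediate, and for ($D_{\rm can}$) the computation
$\partial_{\tilde t}\tilde{\psi}_{\pm}=A_{\rm can}\partial_{\tilde x}\tilde{\psi}_{\pm}-\tfrac{1}{2}\partial_{\tilde x}A_{\rm can}\,\tilde{\psi}_{\pm}\mp\eta\,\tilde{\psi}_{\pm}$
follows exactly as you describe, with the two points that genuinely need checking handled correctly: the term $\eta\int_0^{\tilde x}\tilde{S}_{-1}\,d\tilde x=\eta\tilde x^2$ is $\tilde t$-independent, and the boundary value $\lim_{\tilde x\to\infty}A_{\rm can}\tilde{S}_{\rm odd}=\eta$ holds with no $\alpha$- or $e^{2\eta\tilde t}$-dependent correction because $\tilde S^{(0)}_{\rm odd}=2\eta\tilde x$ exactly and $\tilde S^{(k)}_{\rm odd}=O(\tilde x^{-k-1})$ for $k\ge 1$ by \eqref{eq:Q(k)_can} and \eqref{eq:tildeS(k)}. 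Two small remarks: the relation $\partial_{\tilde t}\tilde S_{\rm odd}=\partial_{\tilde x}(A_{\rm can}\tilde S_{\rm odd})$ is not automatic for an arbitrary Riccati solution — it requires the canonical analogue of \cite[Proposition 2.1]{AKT Painleve WKB}, which is available here precisely because the substituted pair \eqref{eq:canonical Painleve 1-parameter solution} solves ($H_{\rm can}$), i.e.\ the compatibility condition of ($Can$) and ($D_{\rm can}$); you should state that hypothesis explicitly rather than only gesture at Section 4. Also, your closing appeal to $A_{\rm can}$ and $Q_{\rm can}$ being ``even under the branch flip'' is irrelevant to the cancellation of $\mp\eta\tilde\psi_{\pm}$, which is just the product rule applied to $e^{\pm\eta\tilde t}\tilde\psi_{\pm}$ together with the $\tilde x$-independence of $e^{\pm\eta\tilde t}$; that sentence can be dropped.
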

%%%%%%%%%%%%%%%%%%%%%%%%%%%%%%%%%%%%%%%%%%%%%%%%%%%%%%%%%%%%%%%%%%%%%%%%%%%%%%%%%%%%%%%%%%

\begin{rem} \normalfont
In view of  \eqref{eq:expansion of tilde S_odd}, 
\eqref{eq:tildeS_-1 = 2 tildex} and \eqref{eq:tildeS(k)_-1 = 0}, 
${e}^{\pm \eta \tilde{t}} \tilde{\psi}_{\pm}$ are expanded as follows:
\begin{equation}
{e}^{\pm \eta \tilde{t}}  \tilde{\psi}_{\pm}(\tilde{x},\tilde{t},\eta) = 
{\tilde{\psi}}_{\pm}^{(0)}(\tilde{x},\tilde{t},\eta) + 
\alpha \eta^{-\frac{1}{2}}{\tilde{\psi}}_{\pm}^{(1)}(\tilde{x},\tilde{t},\eta) {e}^{2 \eta \tilde{t}} + 
(\alpha \eta^{-\frac{1}{2}})^2 {\tilde{\psi}}_{\pm}^{(2)}(\tilde{x},\tilde{t},\eta) 
{e}^{4 \eta \tilde{t}} + \cdots ,  
\label{eq:expansion of tildepsi_pm}  
\end{equation}
\begin{equation}
{\tilde{\psi}}_{\pm}^{(k)}(\tilde{x},\tilde{t},\eta) = 
\eta^{-\frac{1}{2}} \bigl\{
{\tilde{\psi}}_{\pm,0}^{(k)}(\tilde{x}) + \eta^{-1}{\tilde{\psi}}_{\pm,1}^{(k)}(\tilde{x})   
+ \eta^{-2}{\tilde{\psi}}_{\pm,2}^{(k)}(\tilde{x}) + \cdots  \bigr\}
 \hspace{+.2em} {\rm exp}\pm \eta \bigl( \tilde{t} + {\tilde{x}}^2 \bigr) .
\label{eq:tildepsi(k)_pm}
\end{equation}
Especially, 
\begin{equation}
{\tilde{\psi}}_{\pm}^{(0)}(\tilde{x},\tilde{t},\eta) = \frac{1}{\sqrt{\tilde{S}^{(0)}_{\rm odd}}}
\hspace{+.2em} {\rm exp} \pm
\Bigl(
\int_{\infty}^{\tilde{x}} \bigl( 
\tilde{S}^{(0)}_{\rm odd} - \eta \tilde{S}_{-1}
\bigr) d \tilde{x}
\Bigr)
 \hspace{+.2em} {\rm exp}\pm \eta \bigl( \tilde{t} + {\tilde{x}}^2 \bigr) .   
\label{eq:tildepsi(0)}
\end{equation}
\end{rem}

Making use of Proposition \ref{transformation between holonomic systems} and 
Lemma \ref{WKB solution of Can and D_can}, we know that 
\[
 {e}^{\pm \eta \hspace{+.1em} t_{\rm II}(t,c,\eta;\alpha)} 
\Bigl(\frac{\partial x_{\rm II}}{\partial x}(x,t,c,\eta;\alpha) \Bigr)^{-\frac{1}{2}}
\tilde{\psi}_{\pm} \bigl(
x_{\rm II}(x,t,c,\eta;\alpha),t_{\rm II}(t,c,\eta;\alpha),\eta;\alpha \bigr)
\]
satisfy both ($SL_{\rm II}$) and ($D_{\rm II}$) near $x = \lambda_0$.
Therefore, there exist a formal power series $C_{\pm} = C_{\pm}(c,\eta)$ of $\eta^{-1}$ whose 
coefficients are independent of $x$ and $t$ such that 
\begin{eqnarray} 
\psi_{\pm,\rm IM}  = 
C_{\pm} \hspace{+.2em} {e}^{\pm \eta \hspace{+.1em} t_{\rm II}} 
\Bigl(\frac{\partial x_{\rm II}}{\partial x} \Bigr)^{-\frac{1}{2}}
\tilde{\psi}_{\pm} \bigl(
x_{\rm II}, t_{\rm II}, \eta; \alpha \bigr) 
\label{eq:C_pm}
\end{eqnarray}
hold. 
%%%%%%%%%%%%%%%%%%%%%%%%%%%%%%%%%%%%%%%%%%%%%%%%%%%%%%%%%%%%%%%%%%%%%%%%%%%%%%%%%%%%%%%%%%
\begin{lemm} \label{lemma for C_pm} 
\begin{equation}
C_{\pm} = {\rm exp} \pm
\Bigl\{ \frac{1}{2}U^{(0)} + \int_{\infty}^{x}\bigl( S^{(0)}_{\rm odd} - \eta S_{-1} \bigr) dx 
- \eta \bigl( t^{(0)}_{\rm I\hspace{-.1em}I} - t_0 \bigr) 
- \eta \bigl( {x^{(0)}_{\rm I\hspace{-.1em}I}}^2 - {x_0}^2 \bigr)
\Bigr\} .
\label{eq:lemma for C_pm}
\end{equation}
\end{lemm}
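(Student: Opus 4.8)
The plan is to extract the lowest-order part of the identity \eqref{eq:C_pm} with respect to the grading by powers of $\alpha\eta^{-1/2}{e}^{\eta\phi_{\rm II}}$ and then to read off $C_\pm$ by matching normalizations. Since $C_\pm=C_\pm(c,\eta)$ is independent of $x$ and $t$, it suffices to compare the coefficients of $\alpha^0$ on the two sides of \eqref{eq:C_pm}. On the left this coefficient is $\psi^{(0)}_\pm$, given explicitly by \eqref{eq:psi(0)_pm,IM} (recall \eqref{eq:expansion of psi_pm,IM}). On the right, I would observe that in $e^{\pm\eta\tilde t}\tilde\psi_\pm(\tilde x,\tilde t)$, whose expansion is \eqref{eq:expansion of tildepsi_pm}, the $k$-th term carries a factor ${e}^{2k\eta\tilde t}$; after substituting $\tilde t=t_{\rm II}$ and using $t^{(0)}_{\rm II}=\tfrac12\phi_{\rm II}+O(\eta^{-2})$ (by \eqref{eq:t_0} and the vanishing of $t^{(0)}_1$), this factor becomes ${e}^{k\eta\phi_{\rm II}}\bigl(1+O(\eta^{-1})+O(\alpha{e}^{\eta\phi_{\rm II}})\bigr)$, so the $k$-th term only contributes at order $\geq k$. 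Hence the $\alpha^0$-part of the right-hand side of \eqref{eq:C_pm} is $C_\pm\,(\partial x^{(0)}_{\rm II}/\partial x)^{-1/2}\,\tilde\psi^{(0)}_\pm(x^{(0)}_{\rm II},t^{(0)}_{\rm II})$, with $\tilde\psi^{(0)}_\pm$ as in \eqref{eq:tildepsi(0)}.

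Next I would simplify $\tilde\psi^{(0)}_\pm$: by \eqref{eq:tildeS(0)_odd full} and \eqref{eq:tildeS_-1 = 2 tildex} one has $\tilde S^{(0)}_{\rm odd}-\eta\tilde S_{-1}\equiv 0$, so \eqref{eq:tildepsi(0)} collapses to $\tilde\psi^{(0)}_\pm(\tilde x,\tilde t)=(2\eta\tilde x)^{-1/2}\,{\rm exp}\pm\eta(\tilde t+\tilde x^2)$. Invoking the transformation relation \eqref{eq:transformation relation for Riccati(0)}, i.e.\ $S^{(0)}_{\rm odd}=2\eta\,x^{(0)}_{\rm II}\,\frac{\partial x^{(0)}_{\rm II}}{\partial x}$ (which rests on Proposition \ref{sigma(0) = rho(0) = 0}), the square-root prefactors of $\psi^{(0)}_\pm$, of $(\partial x^{(0)}_{\rm II}/\partial x)^{-1/2}$ and of $\tilde\psi^{(0)}_\pm(x^{(0)}_{\rm II},t^{(0)}_{\rm II})$ combine to $1$, the branches being fixed compatibly with \eqref{eq:branch of x_0}. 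Dividing $\psi^{(0)}_\pm$ by $(\partial x^{(0)}_{\rm II}/\partial x)^{-1/2}\,\tilde\psi^{(0)}_\pm(x^{(0)}_{\rm II},t^{(0)}_{\rm II})$ then leaves
\[
C_\pm={\rm exp}\pm\Bigl\{\tfrac{1}{2} U^{(0)}+\int_{\infty}^{x}\bigl(S^{(0)}_{\rm odd}-\eta S_{-1}\bigr)\,dx+\eta\int_{a_1}^{x}S_{-1}\,dx-\eta\,t^{(0)}_{\rm II}-\eta\,{x^{(0)}_{\rm II}}^{2}\Bigr\}.
\]

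Finally I would rewrite the term $\eta\int_{a_1}^{x}S_{-1}\,dx$. By \eqref{eq:S_-1} and the definition \eqref{eq:x_0} of $x_0$, ${x_0}^2=\int_{\lambda_0}^{x}\sqrt{Q_0}\,dx=\int_{\lambda_0}^{x}S_{-1}\,dx$, so $\int_{a_1}^{x}S_{-1}\,dx={x_0}^2+\int_{a_1}^{\lambda_0}S_{-1}\,dx$; and by \eqref{eq:integral relation of top terms} with the sign determined in the proof of Proposition \ref{integral of top terms}, together with $\phi_{\rm II}=\int_{\tau_1}^{t}\sqrt{\Delta}\,dt$ and \eqref{eq:t_0}, the remaining integral is $\int_{a_1}^{\lambda_0}S_{-1}\,dx=\tfrac{1}{2}\phi_{\rm II}=t_0$. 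Substituting $\eta\int_{a_1}^{x}S_{-1}\,dx=\eta\,t_0+\eta\,{x_0}^2$ into the displayed formula produces exactly \eqref{eq:lemma for C_pm}.

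The step I expect to be the main obstacle is the bookkeeping in the first paragraph: one has to verify carefully that, after the $\alpha$-dependent series $x_{\rm II}$ and $t_{\rm II}$ are substituted into $\tilde\psi_\pm$, the result is genuinely a series in $\alpha\eta^{-1/2}{e}^{\eta\phi_{\rm II}}$ whose $\alpha^0$-coefficient is exactly $\tilde\psi^{(0)}_\pm(x^{(0)}_{\rm II},t^{(0)}_{\rm II})$ — this uses the vanishing of $x^{(0)}_1$ and $t^{(0)}_1$ and the precise leading terms of $x^{(0)}_{\rm II}$ and $t^{(0)}_{\rm II}$. A secondary but non-trivial point is to fix the branches of $\sqrt{Q_0}$, of $(\partial x^{(0)}_{\rm II}/\partial x)^{1/2}$ and of the turning-point integral $\int_{a_1}^{\lambda_0}\sqrt{Q_0}\,dx$ so that the prefactor ratio is $+1$ and the sign in $\int_{a_1}^{\lambda_0}\sqrt{Q_0}\,dx=\tfrac{1}{2}\phi_{\rm II}$ is the correct one; both are settled by the conventions already in force in Sections 4 and 5.
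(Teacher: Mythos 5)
Your proposal is correct and follows essentially the same route as the paper: comparing the parts free of $e^{k\eta\phi_{\rm II}}$ (equivalently the $\alpha^{0}$-coefficients) of \eqref{eq:C_pm}, using \eqref{eq:psi(0)_pm,IM}, \eqref{eq:tildepsi(0)}, \eqref{eq:transformation relation for Riccati(0)} and \eqref{eq:tildeS(0)_odd full} to cancel the prefactors, and then invoking the identity $t_{0}+x_{0}^{2}=\int_{a_1}^{x}S_{-1}\,dx$ coming from \eqref{eq:integral relation of top terms} with the $+$ sign. Your write-up merely makes explicit the bookkeeping (vanishing of $x^{(0)}_1$, $t^{(0)}_1$ and the grading by $\alpha e^{\eta\phi_{\rm II}}$) that the paper leaves as a direct verification.
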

%%%%%%%%%%%%%%%%%%%%%%%%%%%%%%%%%%%%%%%%%%%%%%%%%%%%%%%%%%%%%%%%%%%%%%%%%%%%%%%%%%%%%%%%%%
%%%%%%%%%%%%%%%%%%%%%%%%%%%%%%%%%%%%%%%%%%%%%%%%%%%%%%%%%%%%%%%%%%%%%%%%%%%%%%%%%%%%%%%%%%
\begin{rem} \label{remark for normalization} \normalfont
Precisely speaking, we need to specify the normalization of $\psi_{\pm, {\rm IM}}$
(i.e., \hspace{-.2em}the choices of the path of integration of $\int_{a_1}^x S_{-1} \hspace{+.1em} dx$  
and $\int_{\infty}^x ( S_{\rm odd} - \eta S_{-1} ) \hspace{+.1em} dx$ in \eqref{eq:psi_pm infinity}). 
\eqref{eq:lemma for C_pm} is the result when the normalization of $\psi_{\pm, {\rm IM}}$ 
is taken as in Figure \ref{fig:normalization of psi_pm-0}, where 
the red and blue curves designate the path of integration 
of $\int_{a_1}^x S_{-1} dx$ and $\int_{\infty}^x (S_{\rm odd} - \eta S_{-1}) dx$ respectively.
%%%%%%%%%%%%%%%%%%%%%%%%%%%%%%%%%%%%%%%%%%%%%%%%%%%%%%%%%%%%%%%%%%%%%%%%%%%%%%%%%%%%%%%%%%
 \begin{figure}[h]
 \begin{center}
 \includegraphics[width=65mm]{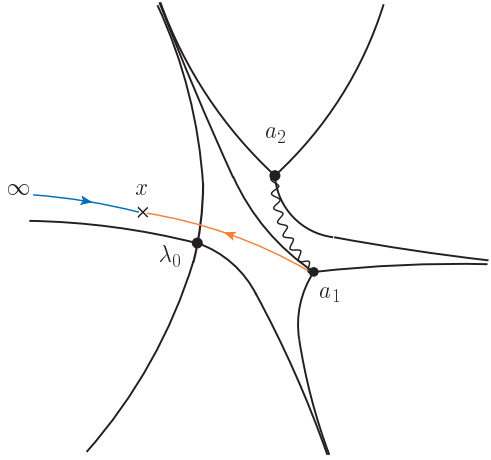}
 \end{center}
 \caption{Normalization of $\psi_{\pm, {\rm IM}}$.}
 \label{fig:normalization of psi_pm-0}
 \end{figure}
%%%%%%%%%%%%%%%%%%%%%%%%%%%%%%%%%%%%%%%%%%%%%%%%%%%%%%%%%%%%%%%%%%%%%%%%%%%%%%%%%%%%%%%%%%
\end{rem}
%%%%%%%%%%%%%%%%%%%%%%%%%%%%%%%%%%%%%%%%%%%%%%%%%%%%%%%%%%%%%%%%%%%%%%%%%%%%%%%%%%%%%%%%%%
\begin{proof}[Proof of Lemma \ref{lemma for C_pm}]
Expanding the both sides of \eqref{eq:C_pm} to the form like \eqref{eq:expansion of psi_pm,IM} 
and comparing the parts which do not contain $e^{k \eta \phi_{\rm II}}$ ($k \ge 1$), we obtain 
\begin{equation}
\psi_{\pm}^{(0)} = C_{\pm} \Bigl(\frac{\partial x^{(0)}_{\rm II}}{\partial x} \Bigr)^{-\frac{1}{2}} 
\tilde{\psi}_{\pm}(x^{(0)}_{\rm II}, t^{(0)}_{\rm II}, \eta).   \label{eq:psi(0) and psi-tilde(0)}
\end{equation}
Note that, since $C_{\pm}$ is independent of $t$, it does not contain 
$e^{k \eta \phi_{\rm II}}$ $(k \ge 1)$, either. 
By \eqref{eq:integral relation of top terms}, we have
\begin{eqnarray}
t_0(t,c) + x_0(x,t,c)^2 
& = & 
\frac{1}{2} \int_{\tau_1}^{t} \sqrt{\Delta(t,c)} dt + \int_{\lambda_0}^{x} \sqrt{Q_0(x,t,c)}dx  \nonumber \\
& = &
\frac{1}{2} \int_{\tau_1}^{t} \sqrt{\Delta(t,c)} dt + \int_{\lambda_0}^{a_1} \sqrt{Q_0(x,t,c)}dx + 
\int_{a_1}^{x} \sqrt{Q_0(x,t,c)}dx \nonumber \\
& = &
\int_{a_1}^{x} \sqrt{Q_0(x,t,c)}dx \hspace{+.2em} = 
\hspace{+.2em} \int_{a_1}^x S_{-1}(x,t,c) dx .
\label{eq:t_0 + x_0^2}
\end{eqnarray}
(As in the proof of Proposition \ref{integral of top terms}, we note that 
the sign of the right-hand side of \eqref{eq:integral relation of top terms} is $+$.)
Making use of \eqref{eq:psi(0)_pm,IM}, \eqref{eq:tildepsi(0)}, 
\eqref{eq:transformation relation for Riccati(0)}, \eqref{eq:tildeS(0)_odd full} 
and \eqref{eq:t_0 + x_0^2},  \eqref{eq:lemma for C_pm} is derived from 
\eqref{eq:psi(0) and psi-tilde(0)} directly. 
\end{proof}

Let $Z = Z(c,\eta)$ be a formal power series defined by 
\[
\frac{1}{2}U^{(0)} + \int_{\infty}^{x}\bigl( S^{(0)}_{\rm odd} - \eta S_{-1} \bigr) dx 
- \eta \bigl( t^{(0)}_{\rm I\hspace{-.1em}I} - t_0 \bigr) 
- \eta \bigl( {x^{(0)}_{\rm I\hspace{-.1em}I}}^2 - {x_0}^2 \bigr). 
\] 
In the case where a 1-parameter solution of ($H_{\rm II}$) 
is normalized at $t = \infty$, we can determine $Z$ explicitly 
with the aid of the results presented in Appendix.   
%%%%%%%%%%%%%%%%%%%%%%%%%%%%%%%%%%%%%%%%%%%%%%%%%%%%%%%%%%%%%%%%%%%%%%%%%%%%%%%%%%%%%%%%%%
\begin{prop} \label{explicit form of C_pm}
Assume that the 1-parameter solution $\lambda_{\infty}$ normalized at $t = \infty$ 
is substituted into the coefficients of ($SL_{\rm II}$) and ($D_{\rm II}$). Then 
\begin{equation}
Z = 0 . \label{eq:Z = 0}
\end{equation}
\end{prop}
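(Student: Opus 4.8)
The plan is to exploit that the left-hand side $C_{\pm}=e^{\pm Z}$ of \eqref{eq:C_pm} is, by construction, a formal power series in $\eta^{-1}$ whose coefficients are independent of $x$ and $t$. Hence the expression in Lemma \ref{lemma for C_pm} defining $Z$ --- although written through the transformation series $x^{(0)}_{\rm II}$, $t^{(0)}_{\rm II}$ and an integral of $S^{(0)}_{\rm odd}$ --- is in fact constant in $x$ and in $t$, so $Z=Z(c,\eta)$; and by the scaling invariance recorded in the Appendix (the analogues of \eqref{eq:homogenity of W}, \eqref{eq:homogenity of V}) it is a formal series in $(c\hspace{+.1em}\eta)^{-1}$ with no constant term. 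It therefore suffices to evaluate $Z$ to all orders in one convenient limit and check that it vanishes.

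The first observation is that the dependence of $Z$ on the chosen normalization of the $1$-parameter solution sits entirely in $t^{(0)}_{\rm II}$: indeed $U^{(0)}$, $x^{(0)}_{\rm II}$ and $S^{(0)}_{\rm odd}$ are all built from the (normalization-independent) $0$-parameter solution $(\lambda^{(0)},\nu^{(0)})$ and from fixed integration paths, whereas $t^{(0)}_{\rm II}$ is determined by $\sigma^{(1)}$ through \eqref{eq:definition of t(0)}, and $\sigma^{(1)}$ depends on $\lambda^{(1)}$, i.e.\ on the normalization. I would therefore write $-\eta\bigl(t^{(0)}_{\rm II}-t_0\bigr)=-\frac{1}{2}\log\bigl(\sqrt{2}\,\sigma^{(1)}(t,c,\eta)\bigr)$ and, using $\tilde{\lambda}^{(1)}_{\infty}$ of \eqref{eq:normalized at infinity} together with the explicit primitive of $R_{\rm odd}$ and the asymptotics of Lemma \ref{behavior of coeff of 1-parameter solution} (this is where the results of the Appendix enter), extract the behaviour of $\sigma^{(1)}$ as $t\to\infty$ along the $P$-Stokes curve $\Gamma$ in Figure \ref{fig:Riemann surface of sqrt Delta}.

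For the remaining, normalization-free block I would take $t\to\infty$ along $\Gamma$: then $U^{(0)}\to0$ because $\lambda_{\infty}$ is normalized at infinity, and, using the identity $S^{(0)}_{\rm odd}=\eta\,\partial_x\bigl({x^{(0)}_{\rm II}}^2\bigr)$ from \eqref{eq:transformation relation for Riccati(0)} together with $x_0^2=\int_{\lambda_0}^{x}S_{-1}\,dx$, the combination $\int_{\infty}^{x}\bigl(S^{(0)}_{\rm odd}-\eta S_{-1}\bigr)dx-\eta\bigl({x^{(0)}_{\rm II}}^2-x_0^2\bigr)$ collapses in $x$ and is rewritten through the Voros coefficient $V^{(0)}$ of ($SL_{\rm II}$) plus a boundary term at $x=\lambda_0$. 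Inserting $V^{(0)}(\infty,c,\eta)$ from Proposition \ref{computation of V(0)(infinity)} and the asymptotics found above, I expect all explicit pieces to cancel, giving $Z=0$. A cleaner variant, which I would actually carry out, parallels Sections 3 and 5: derive a difference equation for $Z(c,\eta)$ under $c\mapsto c-\eta^{-1}$ by applying the B\"acklund and Schlesinger transformations (Lemmas \ref{Backlund}, \ref{Schlesinger}) simultaneously to all the ingredients $U^{(0)}$, $S^{(0)}_{\rm odd}$, $x^{(0)}_{\rm II}$, $t^{(0)}_{\rm II}$, show that its right-hand side vanishes identically, and conclude $Z\equiv0$ by the uniqueness statement of Lemma \ref{lemma for Voros coeff}(ii) with all coefficients equal to zero.

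The main obstacle will be controlling the boundary contribution at the double turning point $x=\lambda_0$: the transformation series $x^{(0)}_{\rm II}$ does not vanish there beyond leading order, since $x^{(0)}_{\ell}(\lambda_0)=z_{\ell}(\lambda_0)$ is nonzero for $\ell\ge2$ and is tied to $\lambda^{(0)}$ via \eqref{eq:z(lambda(0)) = 0}, so $-\eta\,{x^{(0)}_{\rm II}}^2(\lambda_0)$ is a genuine nonzero series that must be matched exactly against the piece of $\int_{a_1}^{\lambda_0}\bigl(S^{(0)}_{\rm odd}-\eta S_{-1}\bigr)dx$ and against the $\log\sigma^{(1)}$ term coming from $t^{(0)}_{\rm II}$. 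Keeping the branches of $\sqrt{Q_0}$, $\Delta^{1/4}$ and the integration paths (Figure \ref{fig:normalization of psi_pm-0} and the loop $\gamma_{\infty}$ defining $V^{(0)}$) mutually consistent with the normalization path of $\tilde{\lambda}^{(1)}_{\infty}$ is where the care lies; once the identities are organised via the difference equation, the remaining algebra is routine.
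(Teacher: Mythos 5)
Your plan does not contain a proof: the decisive step is left as an expectation rather than established, and it is precisely the content of the proposition. In your first variant you reduce everything to an all-order evaluation as $t\to\infty$ along $\Gamma$ of $\sigma^{(1)}$ (hence of the transformation series $x^{(0)}_{\rm II}$, $x^{(1)}_{\rm II}$ through \eqref{eq:definition of t(0)}) and of the boundary contribution of ${x^{(0)}_{\rm II}}^2$ at the double turning point, and then say ``I expect all explicit pieces to cancel''. Those asymptotics are nowhere available (the coefficients $x^{(k)}_{\ell}$ are only defined recursively by \eqref{eq:expression of x(k')_ell'}), and no mechanism forcing the cancellation is given. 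In your second variant, the assertion that the right-hand side of a difference equation for $Z$ under $c\mapsto c-\eta^{-1}$ ``vanishes identically'' is unsubstantiated: the analogous difference equations for $W$ and for $V^{(0)}$ both have nontrivial right-hand sides, and proving that the inhomogeneous contributions of $U^{(0)}$, $\int(S^{(0)}_{\rm odd}-\eta S_{-1})dx$, $t^{(0)}_{\rm II}$ and ${x^{(0)}_{\rm II}}^2$ cancel under the B\"acklund/Schlesinger transformations is essentially equivalent to the statement $Z=0$ itself; you would also first have to establish how the transformation series behave under the Schlesinger map, which is new work not done in the paper. A further slip: homogeneity of degree $0$ only gives $Z=\sum_{n\ge 0}Z_n(c\,\eta)^{-n}$ with constant $Z_n$; it does not exclude a constant term, so your appeal to Lemma \ref{lemma for Voros coeff}(ii) would in any case leave $Z_0$ untreated.

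The paper's proof is much shorter and rests on an ingredient you never invoke: holomorphy in $c$ at $c=0$. By the homogeneity results of Appendix A (including Proposition \ref{homogenity of transformation series}), $Z=Z_0+Z_1(c\,\eta)^{-1}+Z_2(c\,\eta)^{-2}+\cdots$ with $Z_\ell$ independent of $c$; by Proposition \ref{holomorphic dependence of Z on c} in Appendix B, when the solution normalized at $t=\infty$ is substituted, every coefficient of $\eta^{-\ell}$ in $Z$ is holomorphic at $c=0$. Since that coefficient equals $Z_\ell c^{-\ell}$, this forces $Z_\ell=0$ for $\ell\ge 1$, and $Z_0=0$ is a direct check. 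Note that this is exactly where the normalization at $t=\infty$ enters (a solution normalized at $\tau_1$ would destroy the holomorphy, cf.\ Remark \ref{defference of two normalization in the holomorphy}); your correct observation that the normalization-dependence of $Z$ sits in $t^{(0)}_{\rm II}$ via $\sigma^{(1)}$ does not by itself substitute for this argument.
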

%%%%%%%%%%%%%%%%%%%%%%%%%%%%%%%%%%%%%%%%%%%%%%%%%%%%%%%%%%%%%%%%%%%%%%%%%%%%%%%%%%%%%%%%%%
\begin{proof}
$Z$ has the form 
\[
Z(c,\eta) = Z_0 + Z_1 (c \hspace{+.1em} \eta)^{-1} 
+ Z_2 (c \hspace{+.1em} \eta)^{-2} + \cdots 
\]
for some $Z_{\ell} \in {\mathbb C}$ which is independent of $c$ because of 
\eqref{eq:homogenity of S_odd}, \eqref{eq:homogenity of U}  
and Proposition \ref{homogenity of transformation series} in Appendix A. 
If $\lambda_{\infty}(t,c,\eta;\alpha)$ is substituted 
into the coefficients of ($SL_{\rm II}$) and ($D_{\rm II}$), 
then the coefficients of $\eta^{- {\ell}}$ in $Z$ are holomorphic in $c$ for all ${\ell} \ge 0$ 
by Proposition \ref{holomorphic dependence of Z on c} in Appendix B. 
Thus we have $Z_{\ell} = 0$ (${\ell} \ge 1$). 
Furthermore, we can confirm that $Z_0 = 0$ easily.
\end{proof}

Therefore we have the following correspondence between 
$\psi_{\pm, {\rm IM}}$ and $\tilde{\psi}_{\pm}$ 
when the 1-parameter solution $\lambda_{\infty}$ normalized at $t = \infty$ is substituted: 
\begin{equation}
\psi_{\pm,\rm IM}(x,t,c,\eta;\alpha)   =  
 {e}^{\pm \eta t_{\rm I\hspace{-.1em}I}(t,c,\eta;\alpha)} 
\Bigl(\frac{\partial x_{\rm I\hspace{-.1em}I}}{\partial x}(x,t,c,\eta;\alpha) \Bigr)^{-\frac{1}{2}}
\tilde{\psi}_{\pm} \bigl(
x_{\rm I\hspace{-.1em}I}(x,t,c,\eta;\alpha),t_{\rm I\hspace{-.1em}I}(t,c,\eta;\alpha),\eta;\alpha \bigr) .   
\label{eq:full order relation between SL_II and Can}
\end{equation}
We then expect that the connection formulas for $\psi_{\pm, {\rm IM}}$ on Stokes curves 
emanating from the double turning point $x = \lambda_0$ should be derived from this relation 
\eqref{eq:full order relation between SL_II and Can} and the connection formulas for $\tilde{\psi}_{\pm}$. 
The latter is explicitly described in the following Proposition \ref{connection formula for tilde psi_pm} 
which is proved in \cite[$\S$3]{Takei Painleve}.
%%%%%%%%%%%%%%%%%%%%%%%%%%%%%%%%%%%%%%%%%%%%%%%%%%%%%%%%%%%%%%%%%%%%%%%%%%%%%%%%%%%%%%%%%%
\begin{prop}[{[T1,Proposition 4]}] \label{connection formula for tilde psi_pm}
Let ${\tilde{\psi}}^{\rm J}_{\pm}$ be the Borel sum of $\tilde{\psi}_{\pm}$  
in the region $\rm J$ in Figure \ref{fig:Stokes curve of (Can)} 
($\rm  J = 0, I, I\hspace{-.1em}I, I\hspace{-.1em}I\hspace{-.1em}I$). 
Then on each Stokes curve the following connection formula holds: 
%%%%%%%%%%%%%%%%%%%%%%%%%%%%%%%%%%%%%%%%%%%%%%%%%%%%%%%%%%%%%%%%%%%%%%%%%%%%%%%%%%%%%%%%%%
 \begin{figure}[h]
 \begin{center}
 \includegraphics[width=45mm]{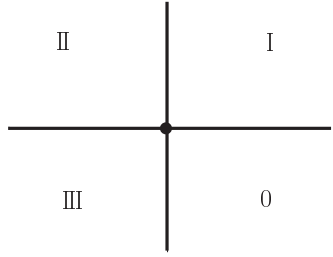}
 \end{center}
 \caption{Stokes curves of ($Can$).}
 \label{fig:Stokes curve of (Can)}
 \end{figure}
%%%%%%%%%%%%%%%%%%%%%%%%%%%%%%%%%%%%%%%%%%%%%%%%%%%%%%%%%%%%%%%%%%%%%%%%%%%%%%%%%%%%%%%%%%
\begin{eqnarray}
\begin{cases}
\displaystyle  {\tilde{\psi}}^{\rm 0}_{+} = {\tilde{\psi}}^{\rm I}_{+} + \tilde{m}_{01} {\tilde{\psi}}^{\rm I}_{-} \\
\displaystyle {\tilde{\psi}}^{\rm 0}_{-} = {\tilde{\psi}}^{\rm I}_{-}
\end{cases}
\\
\begin{cases}
{\tilde{\psi}}^{\rm I}_{+} = {\tilde{\psi}}^{\rm I\hspace{-.1em}I}_{+} \\
{\tilde{\psi}}^{\rm I}_{-} = {\tilde{\psi}}^{\rm I\hspace{-.1em}I}_{-}
 + \tilde{m}_{12} {\tilde{\psi}}^{\rm I\hspace{-.1em}I}_{+}   \hspace{+.1em}
\end{cases}
\label{eq:connection formula on iR_+} \\
\begin{cases}
{\tilde{\psi}}^{\rm I\hspace{-.1em}I}_{+} = 
{\tilde{\psi}}^{\rm I\hspace{-.1em}I\hspace{-.1em}I}_{+}  
 + \tilde{m}_{23} {\tilde{\psi}}^{\rm I\hspace{-.1em}I\hspace{-.1em}I}_{-} \\
{\tilde{\psi}}^{\rm I\hspace{-.1em}I}_{-} = 
{\tilde{\psi}}^{\rm I\hspace{-.1em}I\hspace{-.1em}I}_{-}
\end{cases}
\label{eq:connection formula on R_-}   \\
\begin{cases}
{\tilde{\psi}}^{\rm I\hspace{-.1em}I\hspace{-.1em}I}_{+} =   
{\tilde{\psi}}^{\rm 0}_{+}    \\
{\tilde{\psi}}^{\rm I\hspace{-.1em}I\hspace{-.1em}I}_{-} =  
{\tilde{\psi}}^{\rm 0}_{-} + \tilde{m}_{30} {\tilde{\psi}}^{\rm 0}_{+}
\end{cases}
\end{eqnarray}
where
\begin{eqnarray}
\tilde{m}_{01} & = & - i \bigl( \tilde{\rho} + 2 \tilde{\sigma} \bigr) \sqrt{\frac{\pi}{2}} , \\
\tilde{m}_{12} & = & \bigl( \tilde{\rho} - 2 \tilde{\sigma} \bigr) \sqrt{\frac{\pi}{2}}  ,
\label{eq:connection multiplier on iR_+}   \\
\tilde{m}_{23} & = & i \bigl( \tilde{\rho} + 2 \tilde{\sigma} \bigr) \sqrt{\frac{\pi}{2}} ,
\label{eq:connection multiplier on R_-}      \\
\tilde{m}_{30} & = & - \bigl( \tilde{\rho} - 2 \tilde{\sigma} \bigr) \sqrt{\frac{\pi}{2}} .
\end{eqnarray}
\end{prop}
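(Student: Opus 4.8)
The plan is to exploit that, once the explicit solution \eqref{eq:canonical Painleve 1-parameter solution} of $(H_{\rm can})$ has been substituted, the pair $(Can)$ and $(D_{\rm can})$ is an \emph{exactly integrable} isomonodromic system: its ``Painlev\'e part'' $(H_{\rm can})$ is linear, so $(Can)$ admits closed-form solutions built out of parabolic cylinder (Weber) functions. Since the singularity at $\tilde x = \eta^{-1/2}\tilde\sigma$ is apparent it carries no monodromy and hence no Stokes data at $\tilde x = \infty$; the connection problem for $(Can)$ therefore reduces, after an explicit algebraic transformation of Schlesinger type (the counterpart, for the canonical model, of Lemma~\ref{Schlesinger}) that trivializes it, to that of a Weber equation
\begin{equation*}
\Bigl( \frac{\partial^2}{\partial \tilde x^2} - \eta^2 \bigl( 4 \tilde x^2 + \eta^{-1} \tilde E \bigr) \Bigr) \Phi = 0 , \qquad \tilde E = \tilde\rho^2 - 4 \tilde\sigma^2 .
\end{equation*}
For the solution of $(H_{\rm can})$ relevant here one has $\tilde E = 0$: the Weber equation then sits precisely at the value of the energy for which its two simple turning points collide into the double turning point $\tilde x = 0$ of $(Can)$, which is exactly the degenerate configuration drawn in Figure~\ref{fig:Stokes curve of (Can)}. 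The connection formula for the Borel sums of the WKB solutions of the Weber equation in this configuration is the content of \cite[Theorem~2.1]{Takei Sato conjecture} (equivalently, a restatement of the classical linear Stokes relations for parabolic cylinder functions), and it is transported back to $(Can)$ through the transformation and the rescaling $\tilde x \mapsto 2 \eta^{1/2} \tilde x$.

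Carrying this out, I would: (i) check that the transformation trivializes the apparent double pole and yields the Weber equation above --- a direct computation with the Riccati/Schwarzian identities already used in the proof of Lemma~\ref{lemma for SL_II Voros coeff 2}; (ii) fix an exact basis $\Phi_{\pm}$ of the Weber equation whose Borel sums in the four sectors of Figure~\ref{fig:Stokes curve of (Can)} carry the prescribed WKB asymptotics, and record the four classical Stokes relations with multipliers proportional to $\sqrt{\pi/2}$; (iii) match $e^{\pm\eta\tilde t}\tilde\psi_{\pm}$ of \eqref{eq:WKB solution of Can and D_can} to the transformed $\Phi_{\pm}$. Step (iii) uses the explicit normalization data of $\tilde\psi_{\pm}$, namely $\int_0^{\tilde x}\tilde S_{-1}\,d\tilde x = \tilde x^2$ (as $\tilde S_{-1} = 2\tilde x$), $\tilde S^{(0)}_{\rm odd} = 2\eta\tilde x$, $\tilde S^{(0)} = 2\eta\tilde x - \tfrac{1}{2\tilde x}$ and the higher recursions \eqref{eq:tildeS(k)}, together with the factor $e^{\pm\eta\tilde t}$ furnished by Lemma~\ref{WKB solution of Can and D_can}; these pin down the (sector-independent) proportionality constants between $\tilde\psi_{\pm}$ and $\Phi_{\pm}$, so the connection multipliers of $\tilde\psi_{\pm}$ agree with those of $\Phi_{\pm}$ up to the elementary prefactors $\tilde\rho\pm 2\tilde\sigma$, which are the residual traces of $\tilde\sigma,\tilde\rho$ surviving the transformation near $\tilde x = 0$. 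Consistency with $(D_{\rm can})$ --- that is, $\tilde t$-independence of the $\tilde m_{ij}$ as connection constants --- is automatic, since $e^{\pm\eta\tilde t}\tilde\psi_{\pm}$ solve $(D_{\rm can})$ and the $\tilde t$-dependence of $\tilde\rho\pm 2\tilde\sigma$ matches that of the ratios $\tilde\psi_{+}/\tilde\psi_{-}$.

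The main obstacle, as always for connection formulas in exact WKB, will be the bookkeeping of branches and orientations: the exact signs and the factors of $i$ in $\tilde m_{01},\dots,\tilde m_{30}$ require a careful choice of the branch of $\tilde S_{-1} = 2\tilde x$ on the cut plane of Figure~\ref{fig:Stokes curve of (Can)}, of the orientation of each of the four Stokes curves issuing from $\tilde x = 0$, and of the square roots entering the transformation. A second point requiring care is that the whole reduction must hold at the level of Borel sums, not merely of formal series: one invokes Borel summability of the WKB solutions of $(Can)$ even in the degenerate configuration --- precisely Theorem~\ref{theorem for fixed singularity} applied to the polynomial-plus-apparent-pole potential $Q_{\rm can}$ --- and checks that the Schlesinger-type transformation commutes with Borel resummation, which it does because it is a finite-order differential operator with Borel-summable coefficients. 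Once these are in place, the four relations of Proposition~\ref{connection formula for tilde psi_pm} and the stated values of $\tilde m_{01},\tilde m_{12},\tilde m_{23},\tilde m_{30}$ follow.
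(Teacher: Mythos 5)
Your overall instinct --- that the canonical system $(Can)$, $(D_{\rm can})$ with \eqref{eq:canonical Painleve 1-parameter solution} substituted is exactly solvable in Weber-type functions and that the $\tilde{m}_{ij}$ should be read off from classical asymptotics --- is the right one, and it is in fact how the cited source proceeds: the paper does not prove this proposition at all, but imports it from \cite[Proposition 4, $\S$3]{Takei Painleve}, where exact solutions of $(Can)$ itself are written down in terms of parabolic cylinder functions and the connection formula is extracted from their known lateral asymptotics. However, your concrete reduction --- step (i), ``a Schlesinger-type transformation trivializes the apparent double pole and yields the Weber equation with potential $4\tilde{x}^2+\eta^{-1}\tilde{E}$, $\tilde{E}=\tilde{\rho}^2-4\tilde{\sigma}^2$'' --- is false, and the error sits exactly where the answer is decided. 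Removing an apparent singularity with exponents $-\tfrac{1}{2},\tfrac{3}{2}$ necessarily shifts the Weber parameter; it does not return the ``bare'' equation with the same $\tilde{E}$. Concretely, in the case used in this paper ($\tilde{\rho}=-2\tilde{\sigma}$, $\tilde{E}=0$) one checks directly that $(Can)$ is solved exactly by $(\tilde{x}-\eta^{-1/2}\tilde{\sigma})^{-1/2}e^{\eta\tilde{x}^2}$ and by $(\tilde{x}-\eta^{-1/2}\tilde{\sigma})^{-1/2}e^{\eta\tilde{x}^2}\int(\tilde{x}-\eta^{-1/2}\tilde{\sigma})\,e^{-2\eta\tilde{x}^2}d\tilde{x}$, i.e.\ by Gaussians and error functions (parabolic cylinder functions of integer order), whereas your target equation with $\tilde{E}=0$ has solutions of order $-\tfrac12$ ($D_{-1/2}$), with all four Stokes multipliers equal to nonzero universal constants (lateral coefficient $\sqrt{2\pi}/\Gamma(\tfrac12)=\sqrt{2}$). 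No transformation $\Phi=f\psi+g\,\partial_{\tilde{x}}\psi$ of the kind you allow can intertwine the two problems: such an operator preserves the dominant/recessive dichotomy in each sector, hence conjugates the Stokes matrices by constant diagonal matrices and preserves which multipliers vanish. But for $(Can)$ with $\tilde{\rho}+2\tilde{\sigma}=0$ two of the four multipliers vanish ($\tilde{m}_{01}=\tilde{m}_{23}=0$, which is precisely what later gives ${\mathfrak s}_5=0$), so the $\tilde{\sigma},\tilde{\rho}$-dependence of the $\tilde{m}_{ij}$ --- the entire content of the statement --- cannot be recovered as ``residual traces'' of a reduction to the bare Weber equation. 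A correct version of your route must compute the shifted parameter produced by the removal of the pole and then track explicitly how the normalizations transform; as written, carrying out your step (i) would simply fail.

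Two secondary points. First, Theorem \ref{theorem for fixed singularity} is stated for polynomial potentials and does not apply to $Q_{\rm can}$, which has the apparent double pole and $\eta$-dependent lower-order terms, so Borel summability of $\tilde{\psi}_{\pm}$ cannot be invoked that way. Second, \cite[Theorem 2.1]{Takei Sato conjecture} is the parametric jump formula in $\arg E$ for the Weber equation, not the sectorial connection formula across Stokes curves in the $\tilde{x}$-plane that is needed here; the relevant classical input is the lateral connection formula for parabolic cylinder functions (equivalently, the explicit computations of \cite[$\S$3]{Takei Painleve}), together with the normalization matching you sketch in step (iii). With the reduction target corrected and that bookkeeping done, your plan would reproduce the stated $\tilde{m}_{ij}$, but in its present form the argument has a genuine gap.
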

%%%%%%%%%%%%%%%%%%%%%%%%%%%%%%%%%%%%%%%%%%%%%%%%%%%%%%%%%%%%%%%%%%%%%%%%%%%%%%%%%%%%%%%%%%
\begin{rem} \normalfont
In Proposition \ref{connection formula for tilde psi_pm} $\tilde{\sigma}$ and $\tilde{\rho}$ 
in the coefficients of ($Can$) should be understood as ordinary parameters 
(possibly depending on $t$ and $\eta$), although $\sigma = \sigma(t,c,\eta;\alpha)$ 
and $\rho = \rho(t,c,\eta;\alpha)$ are infinite series in our situation 
(cf. Remark \ref{Remark on the connection formula on the Stokes curves from double t-pt} below). 
\end{rem}

%%%%%%%%%%%%%%%%%%%%%%%%%%%%%%%%%%%%%%%%%%%%%%%%%%%%%%%%%%%%%%%%%%%%%%%%%%%%%%%%%%%%%%%%%%
%%%%%%%%%%%%%%%%%%%%%%%%%%%%%%%%%%%%%%%%%%%%%%%%%%%%%%%%%%%%%%%%%%%%%%%%%%%%%%%%%%%%%%%%%%

\subsection{Computation of the Stokes multipliers of ($SL_{\rm II}$)}

In this subsection we compute the Stokes multipliers of ($SL_{\rm II}$) around $x = \infty$ 
before and after the degeneration of the Stokes geometry 
observed when arg \hspace{-.5em} $c = \frac{\pi}{2}$. 
%%%%%%%%%%%%%%%%%%%%%%%%%%%%%%%%%%%%%%%%%%%%%%%%%%%%%%%%%%%%%%%%%%%%%%%%%%%%%%%%%%%%%%%%%%
\begin{figure}[h]
 \begin{minipage}{0.5\hsize}
  \begin{center}
   \includegraphics[width=70mm]{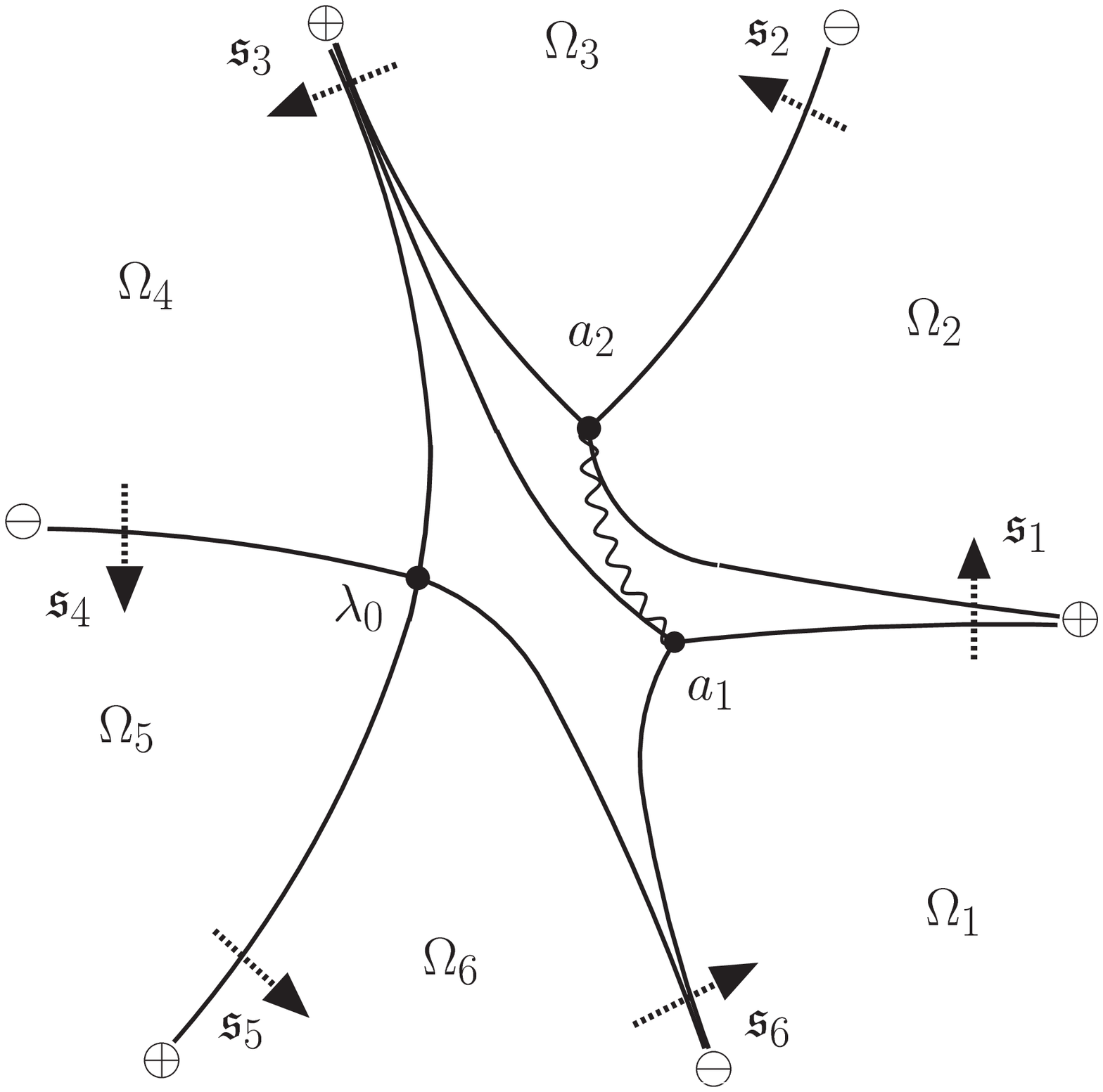}
  \end{center}
  \caption{arg \hspace{-.5em} $c = \frac{\pi}{2} - \varepsilon$}
  \label{fig:Stokes multipliers of SL_II minus}
 \end{minipage}
 \begin{minipage}{0.5\hsize}
  \begin{center}
   \includegraphics[width=70mm]{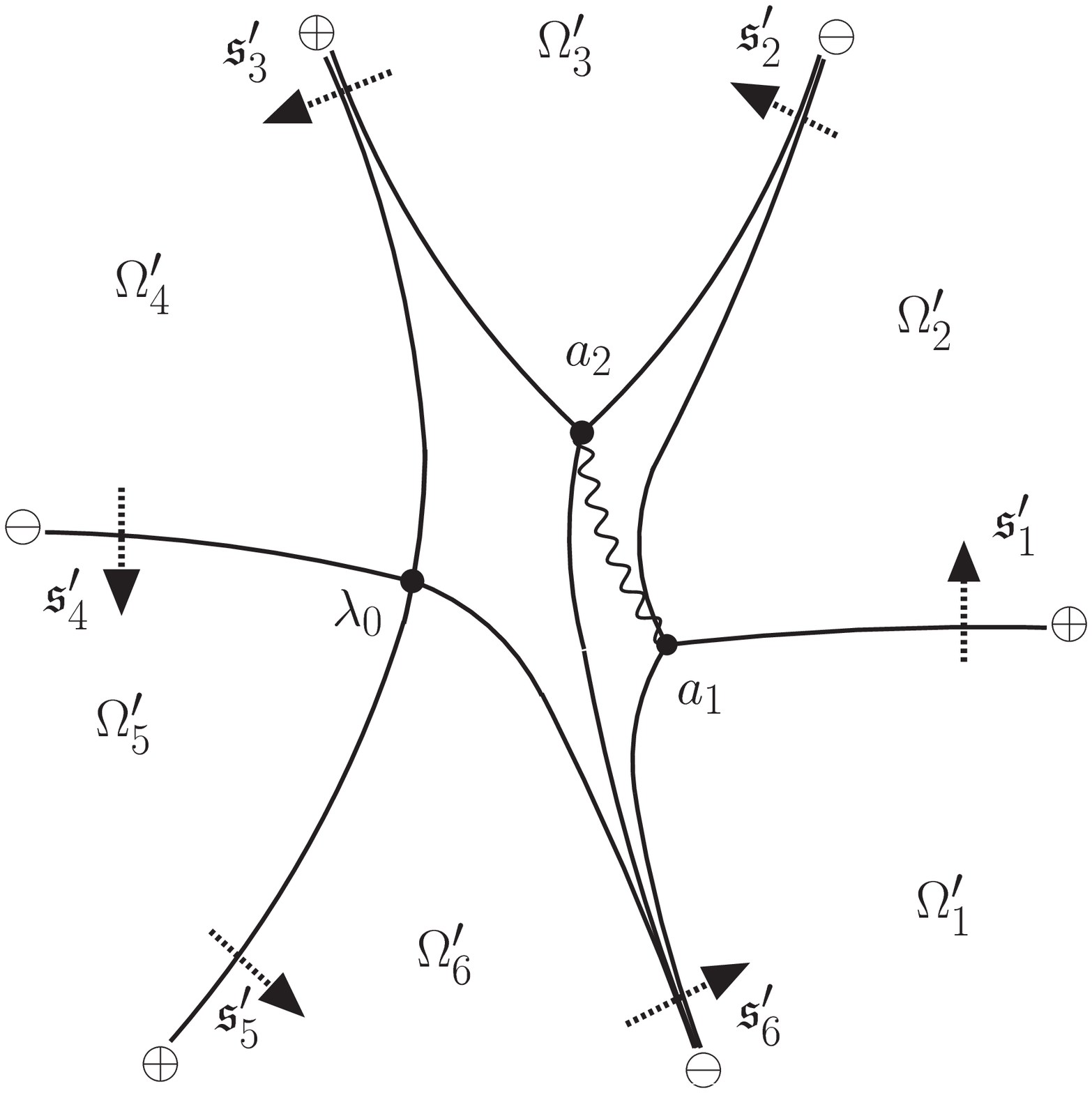}
  \end{center}
  \caption{arg \hspace{-.5em} $c = \frac{\pi}{2} + \varepsilon$}
  \label{fig:Stokes multipliers of SL_II plus}
 \end{minipage}
\end{figure}
%%%%%%%%%%%%%%%%%%%%%%%%%%%%%%%%%%%%%%%%%%%%%%%%%%%%%%%%%%%%%%%%%%%%%%%%%%%%%%%%%%%%%%%%%%
The symbols $\oplus$ and $\ominus$ in Figures \ref{fig:Stokes multipliers of SL_II minus} 
and \ref{fig:Stokes multipliers of SL_II plus} 
designate the ``sign of Stokes curves" which is defined similarly to that for 
$P$-Stokes curves (cf.\ Remark \ref{remark for the branch of sqrt-Delta}). 
Note that it follows from Proposition \ref{integral of top terms} that 
${\rm Re} \int_{a_1}^{a_2} \sqrt{Q_0} \hspace{+.1em} dx < 0$  
holds near arg \hspace{-.5em} $c = \frac{\pi}{2}$. 
Since a neighborhood of $x = \infty$ is divided into the six regions 
$\Omega_j$ and $\Omega'_j$ ($1 \le j \le 6$) by Stokes curves 
as in Figures \ref{fig:Stokes multipliers of SL_II minus} and \ref{fig:Stokes multipliers of SL_II plus},  
we obtain six Stokes multipliers around $x = \infty$ 
for arg \hspace{-.5em} $c = \frac{\pi}{2} - \varepsilon$ and 
arg \hspace{-.5em} $c = \frac{\pi}{2} + \varepsilon$, respectively. 
Let ${\mathfrak s}_{j} = {\mathfrak s}_{j}(c;\alpha)$
(resp. ${{\mathfrak s}_j}' = {{\mathfrak s}_j}'(c;\alpha)$) be the Stokes multipliers corresponding 
to the analytic continuation from $\Omega_j$ to $\Omega_{j+1}$
(resp. from $\Omega'_j$ to $\Omega'_{j+1}$) ($1 \le j \le 6$). 
The results of the computations of the Stokes multipliers of ($SL_{\rm II}$)
by using the WKB solutions $(\psi_{+,\rm IM}, \psi_{-, \rm IM})$ 
are as follows: \\[-.5em]

%%%%%%%%%%%%%%%%%%%%%%%%%%%%%%%%%%%%%%%%%%%%%%%%%%%%%%%%%%%%%%%%%%%%%%%%%%%%%%%%%%%%%%%%%%
\noindent
\textbf{Stokes multipliers of ($SL_{\rm II}$) around $x = \infty$.} 
\\[-1.em]

\noindent
\textit{(i) If the 1-parameter solution substituted into the coefficients of ($SL_{\rm II}$) 
and ($D_{\rm II}$) is normalized at $\infty$, then we obtain the following}:
\begin{align}
\begin{cases}
{\mathfrak s}_{1}  =  i \hspace{+.3em} (1 + {{e}}^{2 \pi i c \eta}) {{e}}^{U - 2V} \\
{\mathfrak s}_{2}  =  i \hspace{+.3em}{{e}}^{- 2 \pi i c \eta}  {{e}}^{2V - U} \\
{\mathfrak s}_{3}  = 
i \hspace{+.3em} (1 + {{e}}^{2 \pi i c \eta}) {{e}}^{- 2 \pi i c \eta} 
{{e}}^{U - 2V} \hspace{+2.3em} \\ 
{\mathfrak s}_{4}  =  - 2 \sqrt{\pi} \alpha \\
{\mathfrak s}_{5}  =  0 \\
{\mathfrak s}_{6}  =  2 \sqrt{\pi}  \alpha + i \hspace{+.3em} {{e}}^{2V - U} .
\end{cases}
 & \hspace{+.5em}
\begin{cases}
{\mathfrak s}_{1}'  =  i \hspace{+.3em} {{e}}^{U - 2V} \\
{\mathfrak s}_{2}'  =  
i \hspace{+.3em} (1 + {{e}}^{2 \pi i c \eta}) {{e}}^{- 2 \pi i c \eta} 
{{e}}^{2V - U} \hspace{+2.3em} \\
{\mathfrak s}_{3}'  = i \hspace{+.3em} {{e}}^{- 2 \pi i c \eta} {{e}}^{U - 2V} \\ 
{\mathfrak s}_{4}'  =  - 2 \sqrt{\pi} \alpha \\
{\mathfrak s}_{5}'  =  0 \\
{\mathfrak s}_{6}'  =  
2 \sqrt{\pi} \alpha + i \hspace{+.3em} (1 + {{e}}^{2 \pi i c \eta}) {{e}}^{2V - U} .
\end{cases}
\label{eq:list1}
\end{align}

\noindent
\textit{(ii) If the 1-parameter solution substituted into the coefficients of ($SL_{\rm II}$) 
and ($D_{\rm II}$) is normalized at $\tau_1$, 
then we obtain the following}:
\begin{align}
\begin{cases}
{\mathfrak s}_{1}  =  i \hspace{+.3em} (1 + {{e}}^{2 \pi i c \eta}) {{e}}^{U - 2V} \\
{\mathfrak s}_{2}  =  i \hspace{+.3em}{{e}}^{- 2 \pi i c \eta}  {{e}}^{2V - U} \\
{\mathfrak s}_{3}  = 
i \hspace{+.3em} (1 + {{e}}^{2 \pi i c \eta}) {{e}}^{- 2 \pi i c \eta} 
{{e}}^{U - 2V} \hspace{+2.3em} \\ 
{\mathfrak s}_{4}  =  - 2 \sqrt{\pi} \alpha {\rm e}^{W} \\
{\mathfrak s}_{5}  =  0 \\
{\mathfrak s}_{6}  =  2 \sqrt{\pi}  \alpha {\rm e}^{W} + i \hspace{+.3em} {{e}}^{2V - U} .
\end{cases}
 & \hspace{+.5em}
\begin{cases}
{\mathfrak s}_{1}'  =  i \hspace{+.3em} {{e}}^{U - 2V} \\
{\mathfrak s}_{2}'  =  
i \hspace{+.3em} (1 + {{e}}^{2 \pi i c \eta}) {{e}}^{- 2 \pi i c \eta} 
{{e}}^{2V - U} \hspace{+2.3em} \\
{\mathfrak s}_{3}'  = i \hspace{+.3em} {{e}}^{- 2 \pi i c \eta} {{e}}^{U - 2V} \\ 
{\mathfrak s}_{4}'  =  - 2 \sqrt{\pi} \alpha {\rm e}^{W} \\
{\mathfrak s}_{5}'  =  0 \\
{\mathfrak s}_{6}'  =  
2 \sqrt{\pi} \alpha {\rm e}^{W} + i \hspace{+.3em} (1 + {{e}}^{2 \pi i c \eta}) {{e}}^{2V - U} .
\end{cases}
\label{eq:list2}
\end{align}
\textit{Here $\alpha$ is the free parameter contained 
in the 1-parameter solution substituted into 
the coefficients of ($SL_{\rm II}$) and ($D_{\rm II}$), 
$U = U(t,c,\eta;\alpha)$ is given by \eqref{eq:U},
$V = V(t,c,\eta;\alpha)$ is the Voros coefficient \eqref{eq:SL_II Voros coeff} of ($SL_{\rm II}$) 
and $W = W(c,\eta)$ is the $P$-Voros coefficient \eqref{eq:PIIVoros}.} \\[-.5em]
%%%%%%%%%%%%%%%%%%%%%%%%%%%%%%%%%%%%%%%%%%%%%%%%%%%%%%%%%%%%%%%%%%%%%%%%%%%%%%%%%%%%%%%%%%

To be precise, the Stokes multipliers are the Borel sum of 
${\mathfrak s}_j$ and ${{\mathfrak s}_j}'$ in \eqref{eq:list1} and \eqref{eq:list2}. 
From now on we demonstrate the computation of the Stokes multipliers of ($SL_{\rm II}$)  
when arg \hspace{-.5em} $c = \frac{\pi}{2} - \varepsilon$ and the 1-parameter solution 
substituted into the coefficients is normalized at $\infty$.  
We demonstrate only the computations of ${\mathfrak s}_1$ and ${{\mathfrak s}_4}$, 
since the other Stokes multipliers can be computed in similar ways.   
We note that, since the two normalizations of 1-parameter solutions introduced in Section 3    
are related as \eqref{eq:relation between two normalizations}, the result \eqref{eq:list2}  
of the computation when the substituted 1-parameter solution is normalized 
at $t = \tau_1$ is obtained from \eqref{eq:list1} by replacement 
$\alpha \mapsto \alpha \hspace{+.1em} e^{W}$. 

%%%%%%%%%%%%%%%%%%%%%%%%%%%%%%%%%%%%%%%%%%%%%%%%%%%%%%%%%%%%%%%%%%%%%%%%%%%%%%%%%%%%%%%%%%
\begin{rem} \normalfont
In the computations of the Stokes multipliers the path of normalization of WKB solutions 
$\psi_{\pm, {\rm IM}}$ are taken as in Figures \ref{fig:normalization of psi_pm IM in case s_1} 
$\sim$ \ref{fig:normalization of psi_pm IM in case s_6}. 
The red and blue curves in Figures \ref{fig:normalization of psi_pm IM in case s_1} 
$\sim$ \ref{fig:normalization of psi_pm IM in case s_6} designate the paths of integration 
of $\int_{a_1}^x S_{-1} dx$ and $\int_{\infty}^x (S_{\rm odd} - \eta S_{-1}) dx$ respectively.
%%%%%%%%%%%%%%%%%%%%%%%%%%%%%%%%%%%%%%%%%%%%%%%%%%%%%%%%%%%%%%%%%%%%%%%%%%%%%%%%%%%%
  \begin{figure}[h]
  \begin{minipage}{0.5\hsize}
  \begin{center}
  \includegraphics[width=60mm]{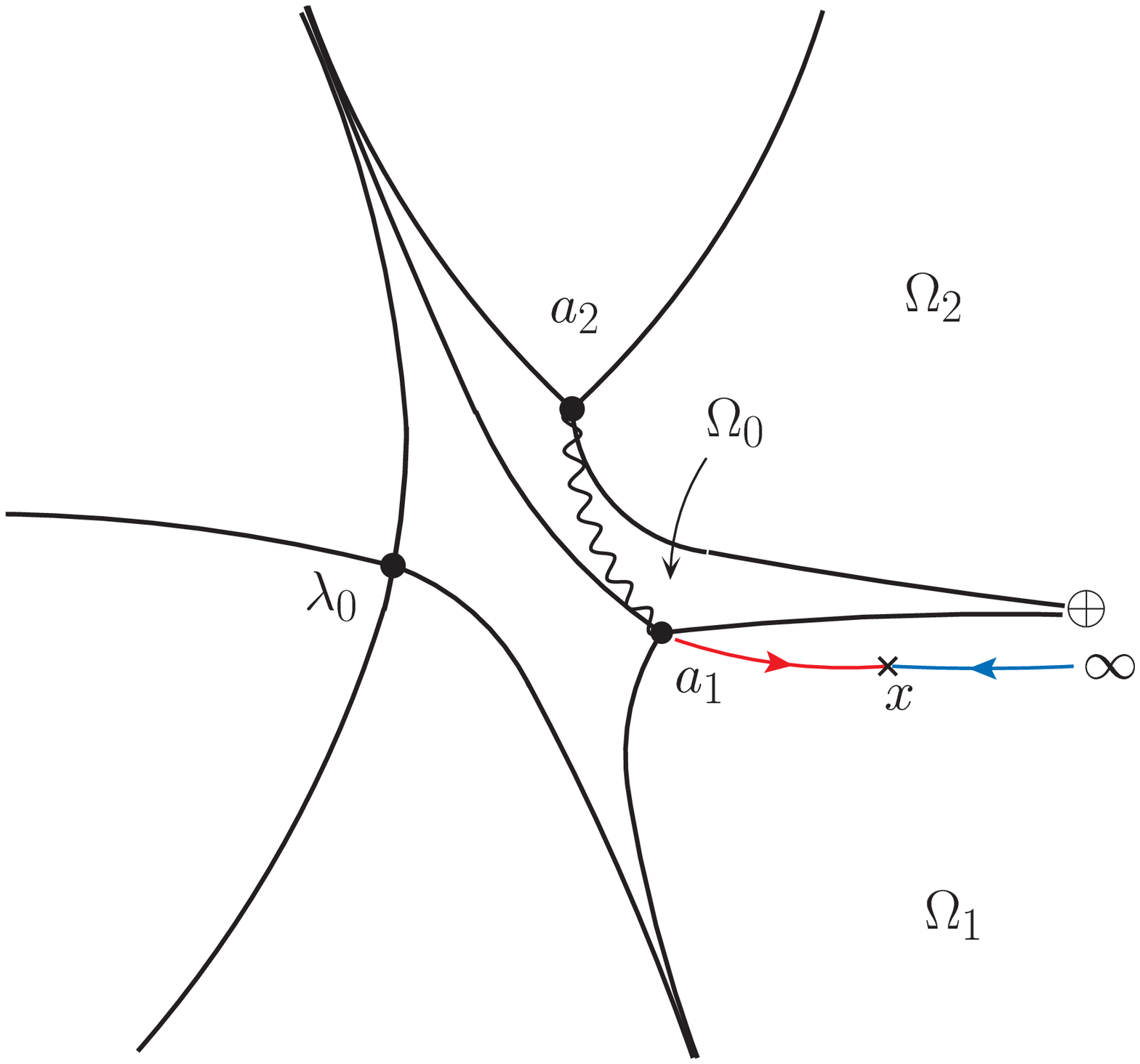}
  \end{center}
  \caption{Normalization path for ${\mathfrak s}_1$.}
  \label{fig:normalization of psi_pm IM in case s_1}
  \end{minipage}
  \begin{minipage}{0.5\hsize}
  \begin{center}
  \includegraphics[width=54mm]{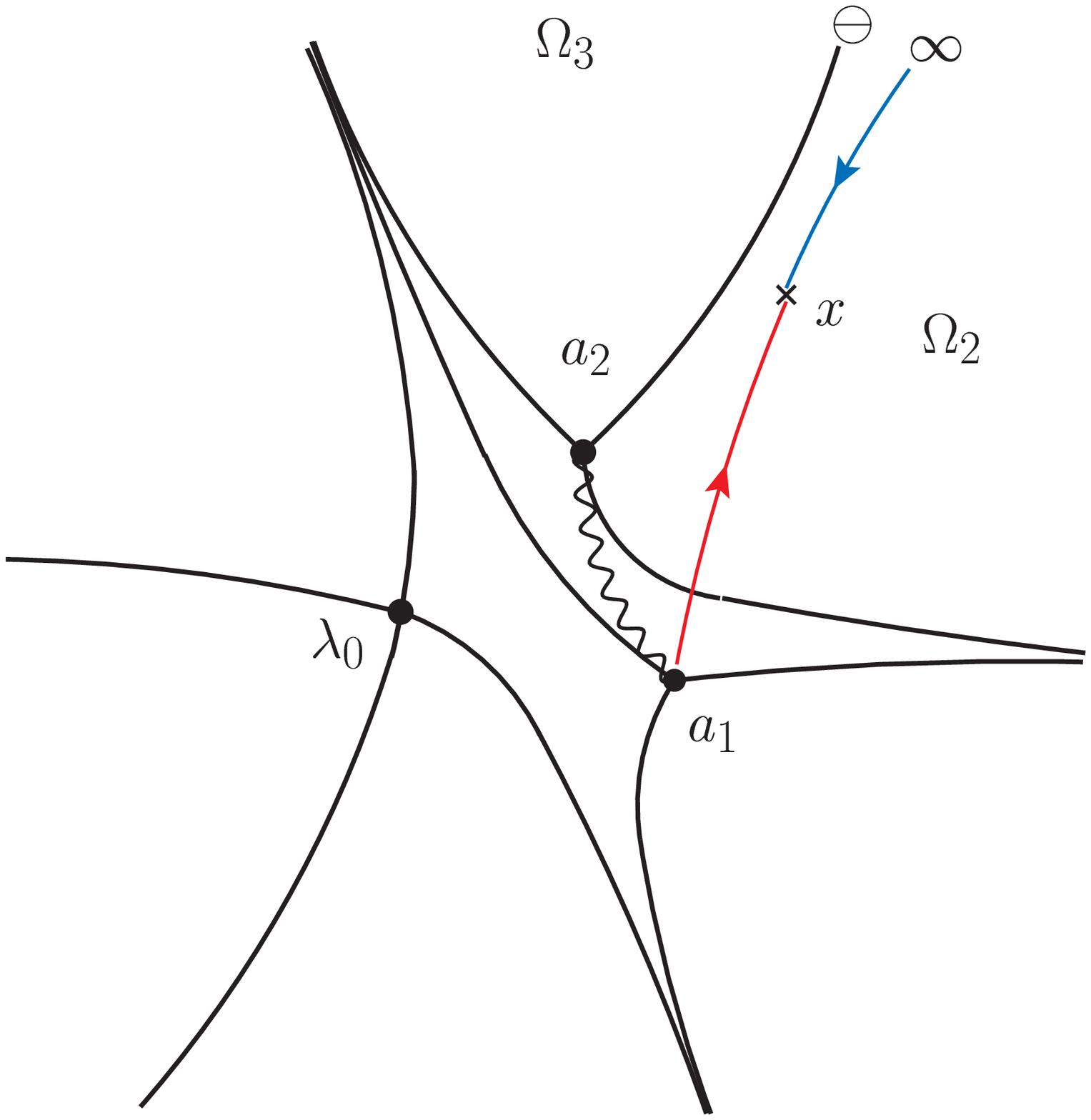}
  \end{center}
  \caption{Normalization path for ${\mathfrak s}_2$.}
  \label{fig:normalization of psi_pm IM in case s_2}
  \end{minipage}
  \end{figure}
%%%%%%%%%%%%%%%%%%%%%%%%%%%%%%%%%%%%%%%%%%%%%%%%%%%%%%%%%%%%%%%%%%%%%%%%%%%%%%%%%%%%
%%%%%%%%%%%%%%%%%%%%%%%%%%%%%%%%%%%%%%%%%%%%%%%%%%%%%%%%%%%%%%%%%%%%%%%%%%%%%%%%%%%%
  \begin{figure}[h]
  \begin{minipage}{0.5\hsize}
  \begin{center}
  \includegraphics[width=54mm]{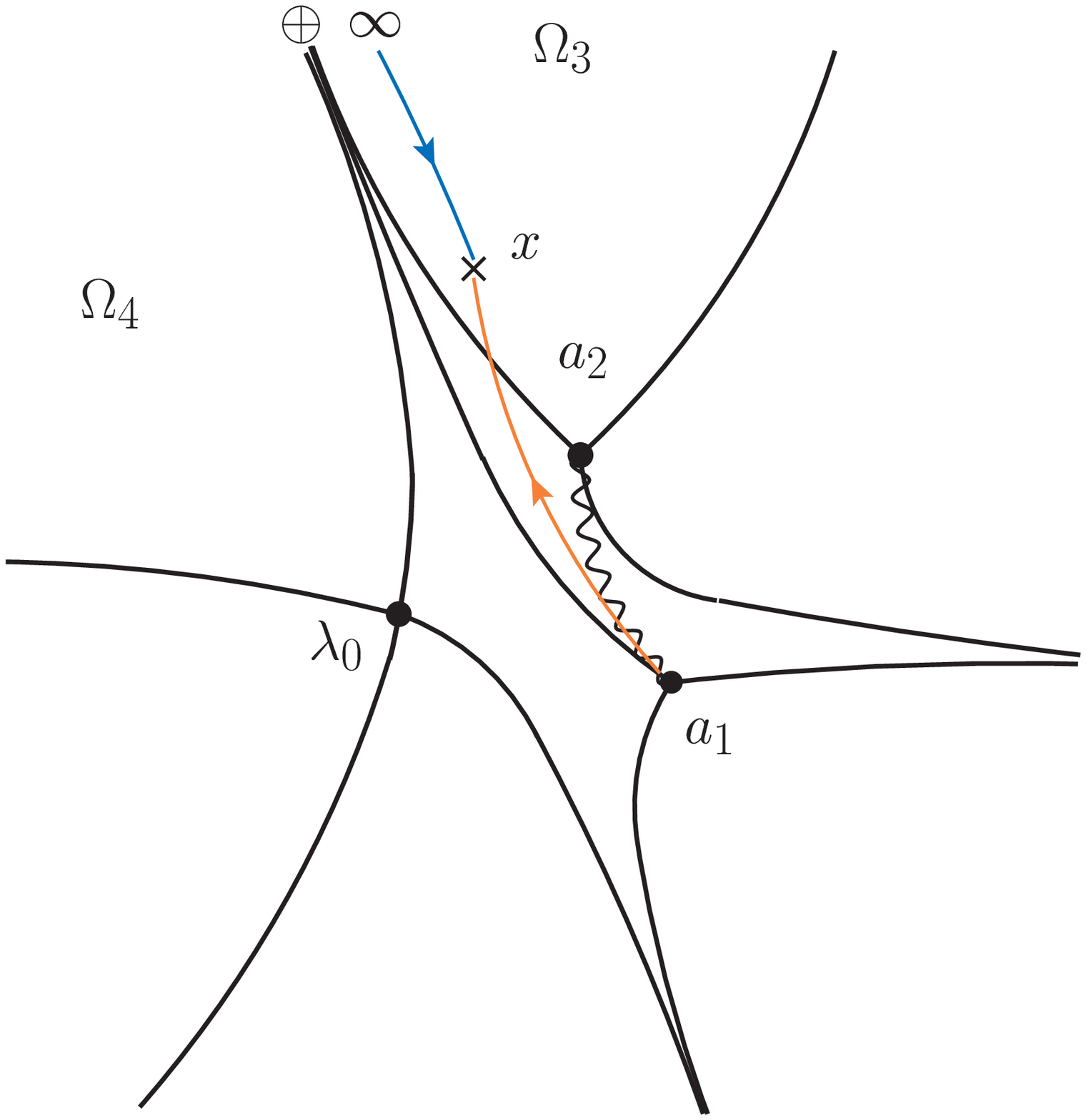}
  \end{center}
  \caption{Normalization path for ${\mathfrak s}_3$.}
  \label{fig:normalization of psi_pm IM in case s_3}
  \end{minipage}
  \begin{minipage}{0.5\hsize}
  \begin{center}
  \includegraphics[width=60mm]{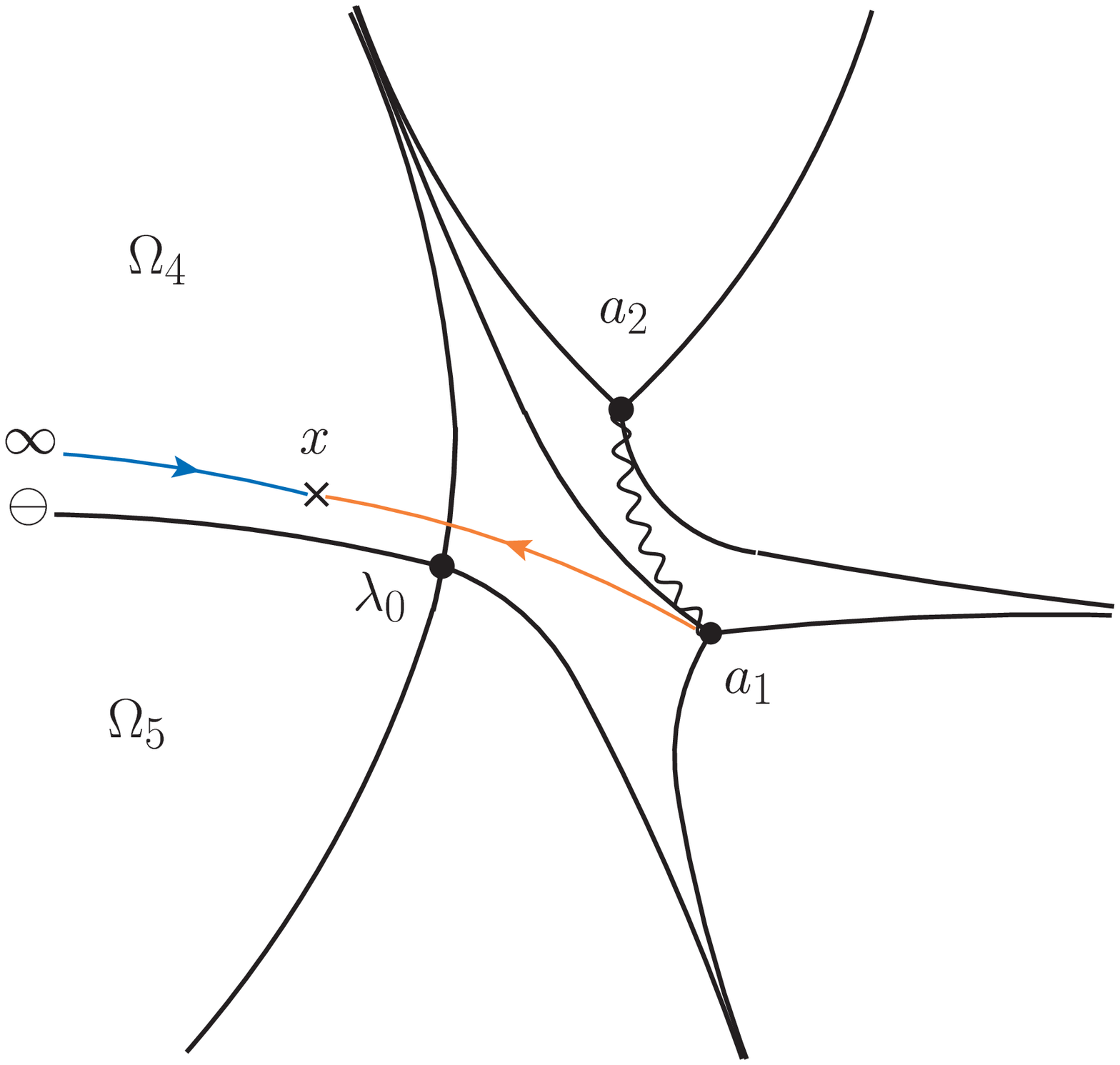}
  \end{center}
  \caption{Normalization path for ${\mathfrak s}_4$.}
  \label{fig:normalization of psi_pm IM in case s_4}
  \end{minipage}
  \end{figure}
%%%%%%%%%%%%%%%%%%%%%%%%%%%%%%%%%%%%%%%%%%%%%%%%%%%%%%%%%%%%%%%%%%%%%%%%%%%%%%%%%%%%
%%%%%%%%%%%%%%%%%%%%%%%%%%%%%%%%%%%%%%%%%%%%%%%%%%%%%%%%%%%%%%%%%%%%%%%%%%%%%%%%%%%%
  \begin{figure}[h]
  \begin{minipage}{0.5\hsize}
  \begin{center}
  \includegraphics[width=60mm]{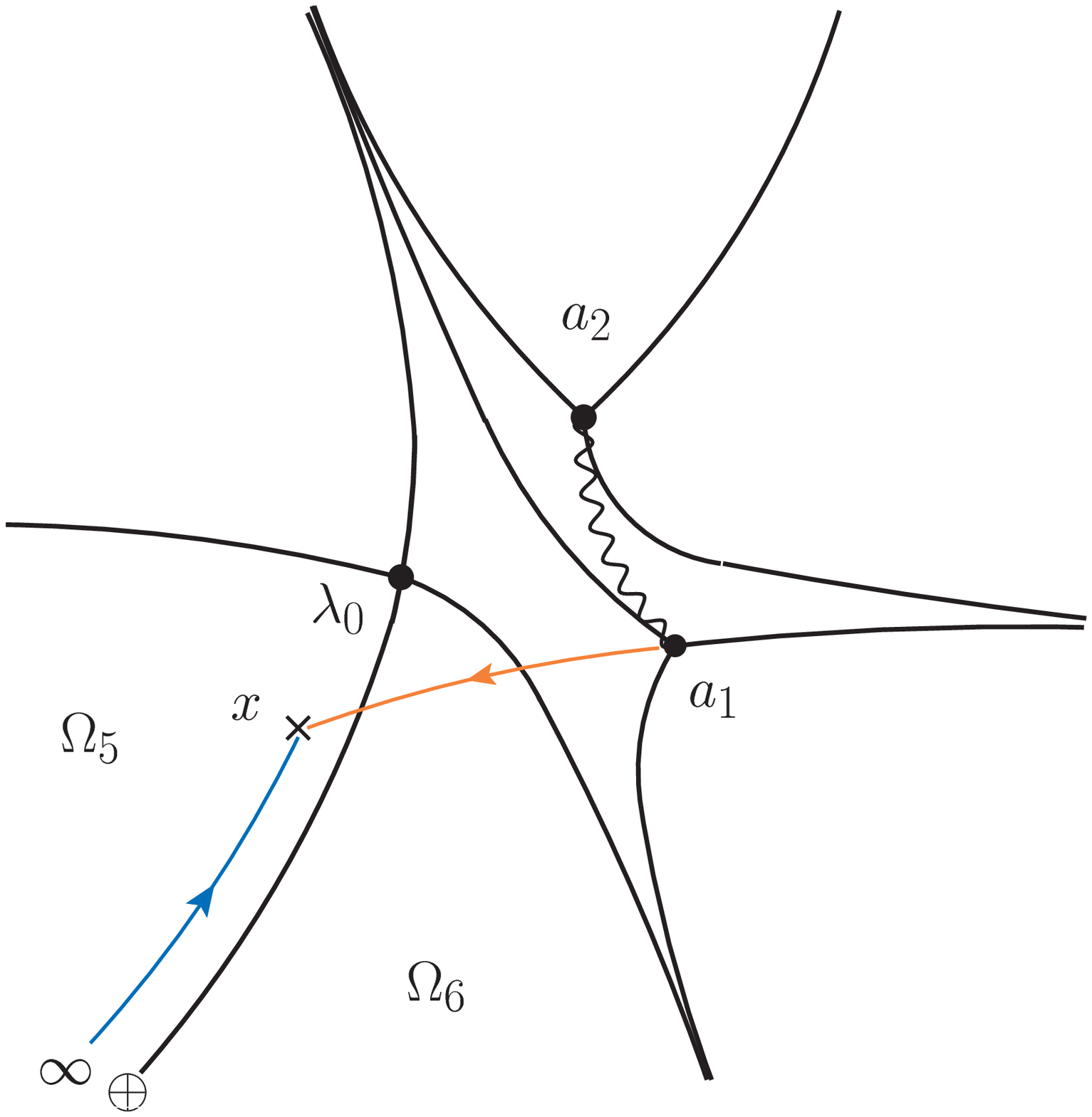}
  \end{center}
  \caption{Normalization path for ${\mathfrak s}_5$.}
  \label{fig:normalization of psi_pm IM in case s_5}
  \end{minipage}
  \begin{minipage}{0.5\hsize}
  \begin{center}
  \includegraphics[width=60mm]{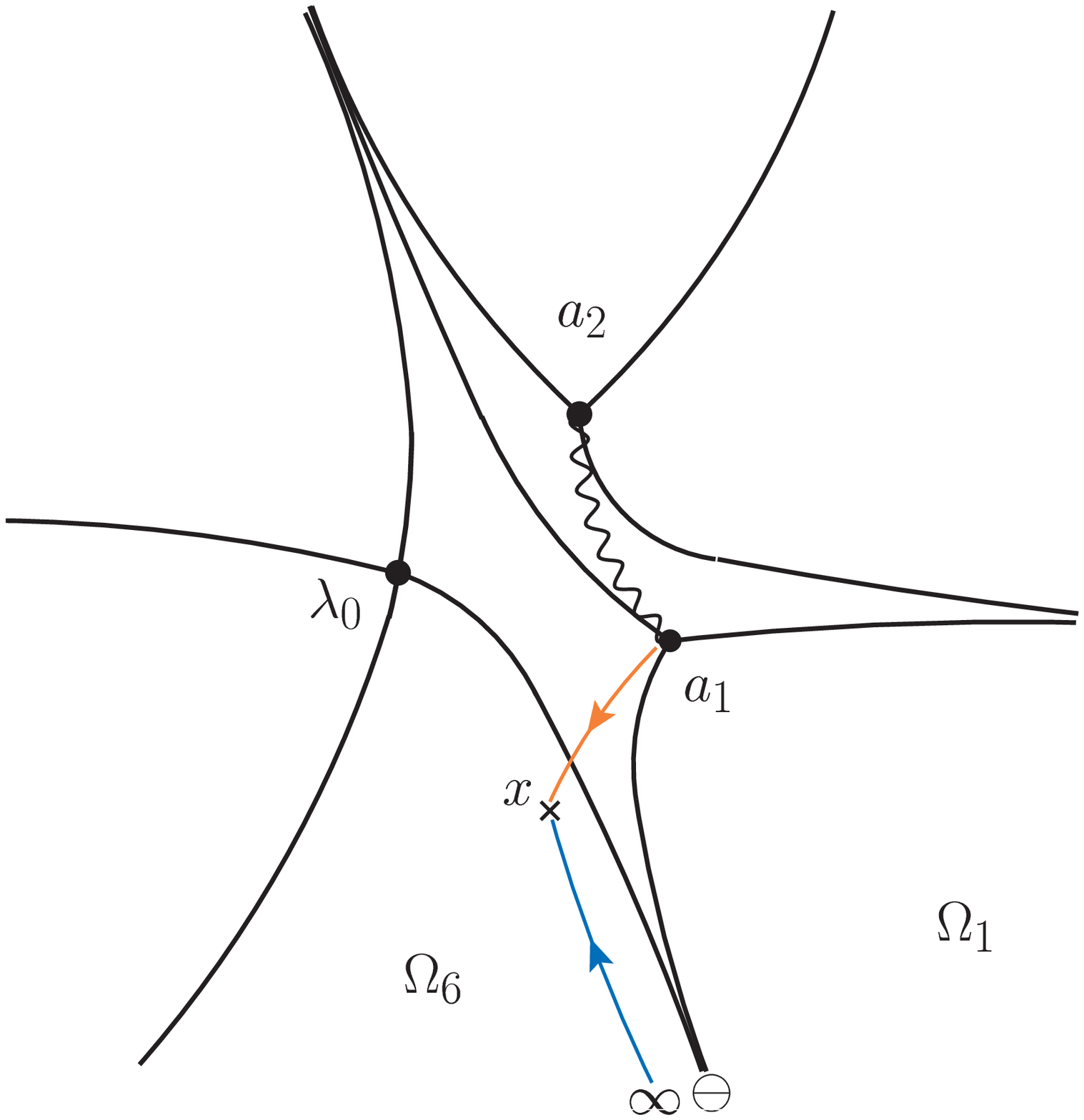}
  \end{center}
  \caption{Normalization path for ${\mathfrak s}_6$.}
  \label{fig:normalization of psi_pm IM in case s_6}
  \end{minipage}
  \end{figure}
%%%%%%%%%%%%%%%%%%%%%%%%%%%%%%%%%%%%%%%%%%%%%%%%%%%%%%%%%%%%%%%%%%%%%%%%%%%%%%%%%%%%
\end{rem}
%%%%%%%%%%%%%%%%%%%%%%%%%%%%%%%%%%%%%%%%%%%%%%%%%%%%%%%%%%%%%%%%%%%%%%%%%%%%%%%%%%%%%%%%%%

We denote the Borel sum of a WKB solution $\psi$ in a region $\Omega$ 
by $\psi^{\Omega}$. 

\noindent
$\bullet$ \underline{Computation of ${\mathfrak s}_1$.}\hspace{+.1em}
Let $\Omega_0$ be the region pinched by $\Omega_1$ and $\Omega_2$, 
and $\psi_{\pm, a_j}$ be a WKB solution of ($SL_{\rm II}$) normalized at $x = a_j$:  
\begin{eqnarray*}
\psi_{\pm, a_j} = \frac{1}{\sqrt{S_{\rm odd}}} \hspace{+.1em} {\rm exp} \Bigl( 
 \pm \int_{a_j}^{x} S_{\rm odd} dx \Bigr) \hspace{+1.em} (j = 1,2).
\end{eqnarray*}
Voros' connection formula (\cite[$\S$2, Theorem 2.23]{KT iwanami}, \cite{Voros}) 
says that the following relations for the Borel sums of $\psi_{\pm, a_j}$ 
hold on the Stokes curve emanating from $x = a_j$ ($j = 1,2$): 
\begin{eqnarray*}
\begin{cases}
\displaystyle \psi^{\Omega_1}_{+, a_1} = \psi^{\Omega_0}_{+, a_1} + i \psi^{\Omega_0}_{-, a_1} ,  \\[+.5em]
\displaystyle \psi^{\Omega_1}_{-, a_1} = \psi^{\Omega_0}_{-, a_1} ,
\end{cases} \hspace{+2.em}
\begin{cases}
\displaystyle \psi^{\Omega_0}_{+, a_2} = \psi^{\Omega_2}_{+, a_2} + i \psi^{\Omega_2}_{-, a_2} , \\[+.5em]
\displaystyle \psi^{\Omega_0}_{-, a_2} = \psi^{\Omega_2}_{-, a_2} ,
\end{cases}
\end{eqnarray*}
because the sign of the Stokes curves are $\oplus$.
It follows from \eqref{eq:integral of S_odd} that  
\begin{eqnarray*}
\psi_{\pm, a_1} & = & {\rm exp} \pm \Bigl( \int_{a_1}^{a_2}S_{\rm odd}dx \Bigr) \psi_{\pm, a_2} \\
& = & e^{\pm \pi i c \eta}  \psi_{\pm, a_2}  
\end{eqnarray*}
hold. 
Furthermore, since $\psi_{\pm, {\rm IM}}$ and $\psi_{\pm, a_1}$ are related as 
\[
\psi_{\pm,\rm IM} = {e}^{\pm(\frac{1}{2}U - V)} \psi_{\pm, a_1} ,
\]
we can derive the connection formula for $\psi_{\pm, {\rm IM}}$ 
by combining the above formulas: 
\begin{eqnarray*}
\begin{cases}
\psi^{\Omega_1}_{+,\rm IM} = \psi^{\Omega_2}_{+,\rm IM} + 
i (1 + e^{2 \pi i c \eta } ) {e}^{U - 2V} 
\psi^{\Omega_2}_{-,\rm IM}  ,  \\[+.5em]
\psi^{\Omega_1}_{-,\rm IM} = \psi^{\Omega_2}_{-,\rm IM} .  
\end{cases}
\end{eqnarray*}
Thus we have 
\begin{equation}
{\mathfrak s}_1 = i  (1 + {e}^{2 \pi i c \eta} ) {e}^{U - 2V} .
\end{equation}

\noindent
$\bullet$ \underline{Computation of ${\mathfrak s}_4$.}\hspace{+.1em} 
Because we have to consider a connection problem on a Stokes curve 
emanating from the double turning point $x = \lambda_0$, 
we use the transformation theory prepared in Subsections 6.1 and 6.2. 
In view of \eqref{eq:x_0} we fine that the leading term of the transformation 
$\tilde{x} = x_0(x,t,c)$ maps the Stokes curve in question 
to either $i {\mathbb R}_{> 0}$ or $i {\mathbb R}_{< 0}$ in Figure \ref{fig:Stokes curve of (Can)}, 
and it depends on the choice of the branch of $\Delta^{1/4}$. 
We now assume that the Stokes curve in question is mapped to 
$i {\mathbb R}_{>0}$. (In view of \eqref{eq:normalized at infinity} and 
\eqref{eq:normalized at infinity-2}, the change of the choice 
of the branch of $\Delta^{1/4}$ is equivalent to the replacement $\alpha \mapsto - \alpha$.)
Since the normalization of $\psi_{\pm, {\rm IM}}$ 
in Figure \ref{fig:normalization of psi_pm IM in case s_4} is the same as 
in Remark \ref{remark for normalization} and the substituted 1-parameter solution 
is normalized at $t = \infty$, we can use the relation 
\eqref{eq:full order relation between SL_II and Can} in this case.
Using the connection formula \eqref{eq:connection formula on iR_+} 
and \eqref{eq:connection multiplier on iR_+} for $\tilde{\psi}_{\pm}$ on $i {\mathbb R}_{>0}$, 
we derive the following connection formula for $\psi_{\pm, {\rm IM}}$: 
\begin{eqnarray}
\begin{cases}
\psi^{\Omega_4}_{+,\rm IM} = \psi^{\Omega_5}_{+,\rm IM}  ,  \\[+.5em]
\psi^{\Omega_4}_{-,\rm IM} = \psi^{\Omega_5}_{-,\rm IM}  
 + m \hspace{+.2em} \psi^{\Omega_5}_{+,\rm IM} , 
\label{eq:connection formula for the double turning point of SL=II}
\end{cases}
\end{eqnarray}
where
\begin{equation}
m = \bigl(\tilde{\rho}(t_{\rm II}) - 2 \tilde{\sigma}(t_{\rm II}) \bigr)
\sqrt{\frac{\pi}{2}} {e}^{- 2 \eta t_{\rm II}} .
\label{eq:connection multiplier for double turning point}
\end{equation}
Moreover, since 
\[
\tilde{\rho}(t_{\rm II}) - 2 \tilde{\sigma}(t_{\rm II}) = 
- 2 \sqrt{2} \alpha \hspace{+.2em} {e}^{2 \eta t_{\rm II}} 
\]
holds by \eqref{eq:canonical Painleve 1-parameter solution}, we have the required 
Stokes multiplier ${\mathfrak s}_4$: 
\begin{equation}
{\mathfrak s}_4 = m = - 2 \sqrt{\pi} \alpha .
\end{equation}

%%%%%%%%%%%%%%%%%%%%%%%%%%%%%%%%%%%%%%%%%%%%%%%%%%%%%%%%%%%%%%%%%%%%%%%%%%%%%%%%%%%%%%%%%%
%\begin{rem} \label{Remark on the connection formula on the Stokes curves from double t-pt} 
%\normalfont 
%Rigorously speaking, as it is not known the Borel summability of 1-parameter solutions of ($H_{\rm II}$) 
%(and of $\sigma(t,c,\eta;\alpha)$, $\rho(t,c,\eta;\alpha)$ and 
%$\psi_{\pm, {\rm IM}}(x,t,c,\eta;\alpha)$ as well) 
%at the moment, the connection formula \eqref{eq:connection formula for the double turning point of SL=II} 
%for $\psi_{\pm, {\rm IM}}$ has not been verified in a strictly mathematical sense yet. 
%But this approach has something potential. (cf.\ \cite{Takei Ablowitz-Segur}.)
%But the work of Takei \cite{Takei Ablowitz-Segur} suggests that the connection formula 
%\eqref{eq:connection formula for the double turning point of SL=II} obtained through 
%the WKB theoretic transformation theory reviewed in subsection 6.1 and 6.2 
%may have something meaningful. 
%\end{rem}
%%%%%%%%%%%%%%%%%%%%%%%%%%%%%%%%%%%%%%%%%%%%%%%%%%%%%%%%%%%%%%%%%%%%%%%%%%%%%%%%%%%%%%%%%%

%%%%%%%%%%%%%%%%%%%%%%%%%%%%%%%%%%%%%%%%%%%%%%%%%%%%%%%%%%%%%%%%%%%%%%%%%%%%%%%%%%%%%%%%%%
\begin{rem} \label{comments on Stokes multipliers1} \normalfont
All the Stokes multipliers ${\mathfrak s}_j$ and ${\mathfrak s}'_j$ ($1 \le j \le 6$) 
in \eqref{eq:list1} and \eqref{eq:list2} are independent of $t$. It is consistent with the 
theory of isomonodromic deformation. 
\end{rem}
%%%%%%%%%%%%%%%%%%%%%%%%%%%%%%%%%%%%%%%%%%%%%%%%%%%%%%%%%%%%%%%%%%%%%%%%%%%%%%%%%%%%%%%%%%

%%%%%%%%%%%%%%%%%%%%%%%%%%%%%%%%%%%%%%%%%%%%%%%%%%%%%%%%%%%%%%%%%%%%%%%%%%%%%%%%%%%%%%%%%%
\begin{rem} \label{comments on Stokes multipliers2} \normalfont
The Stokes multipliers ${\mathfrak s}_5$ and ${\mathfrak s}'_5$ in \eqref{eq:list1} and \eqref{eq:list2} 
are equal to 0. The reason is as follows. 
In the computation of ${\mathfrak s}_5$ we use the connection formula 
\eqref{eq:connection formula on R_-}, \eqref{eq:connection multiplier on R_-} 
for $\tilde{\psi}_{\pm}$, and hence we have 
\[
i \bigl(\tilde{\rho}(t_{\rm II}) + 2 \tilde{\sigma}(t_{\rm II}) \bigr)
\sqrt{\frac{\pi}{2}} \hspace{+.2em}  {e}^{2 \eta t_{\rm II}}  
\]
as the connection coefficient instead of \eqref{eq:connection multiplier for double turning point}. 
This quantity vanishes by \eqref{eq:canonical Painleve 1-parameter solution}. 
\end{rem}
%%%%%%%%%%%%%%%%%%%%%%%%%%%%%%%%%%%%%%%%%%%%%%%%%%%%%%%%%%%%%%%%%%%%%%%%%%%%%%%%%%%%%%%%%%
%%%%%%%%%%%%%%%%%%%%%%%%%%%%%%%%%%%%%%%%%%%%%%%%%%%%%%%%%%%%%%%%%%%%%%%%%%%%%%%%%%%%%%%%%%

\subsection{Derivation of the connection formula for the parametric Stokes phenomena  
through the Stokes multipliers of ($SL_{\rm II}$)}

Now we rederive the connection formulas describing the parametric Stokes phenomena  
for the 1-parameter solutions $\lambda_{\infty}(t,c,\eta;\alpha)$ and $\lambda_{\tau_1}(t,c,\eta;\alpha)$ 
of ($P_{\rm II}$) by using the explicit form of the Stokes multipliers of ($SL_{\rm II}$) 
computed in Subsection 6.3. 

If a true solution represented by a 1-parameter solution $\lambda(t,c,\eta;\alpha)$ for 
arg \hspace{-.3em} $c = \frac{\pi}{2} - \varepsilon$ and that by  $\lambda(t,c,\eta;\tilde{\alpha})$ for 
arg \hspace{-.3em} $c = \frac{\pi}{2} + \varepsilon$ coincide, then 
the corresponding Stokes multipliers ${\mathfrak s}_j(c;\alpha)$ and ${\mathfrak s}'_j(c;\tilde{\alpha})$ 
of ($SL_{\rm II}$) should coincide,  that is, 
\begin{equation}
{\mathcal S}[{\mathfrak s}_j(c;\alpha)] = 
{\mathcal S}[{{\mathfrak s}'_j}(c;\tilde{\alpha})] \hspace{+1.em} (1 \le j \le 6)
\end{equation}
should hold. 
Hence, comparing  ${\mathfrak s}_j(c;\alpha)$ and ${\mathfrak s}'_j(c;\tilde{\alpha})$ 
given by \eqref{eq:list1} and using  Corollary \ref{connection formula for 2V - U}, 
we find that
\begin{equation}
{\cal S}[{\mathfrak s}_j (c;\alpha)] = {\cal S}[{\mathfrak s}'_j (c;\tilde{\alpha})] \hspace{+.7em} 
\Rightarrow \hspace{+.7em} \tilde{\alpha} = \alpha  
\label{eq:conclusion from list1}
\end{equation}
holds in the case where the 1-parameter solution $\lambda_{\infty}(t,c,\eta;\alpha)$ 
is substituted into the coefficients of ($SL_{\rm II}$) and ($D_{\rm II}$).
This result \eqref{eq:conclusion from list1} is consistent with 
\eqref{connection formula for 1-parameter solution normalized at infinity}, 
that is, the parametric Stokes phenomenon does not occur to $\lambda_{\infty}(t,c,\eta;\alpha)$. 
Similarly, in the case of $\lambda_{\tau_1}(t,c,\eta;\alpha)$ being substituted,  
the comparison of ${\mathfrak s}_j(c;\alpha)$ and ${\mathfrak s}'_j(c;\tilde{\alpha})$ 
in \eqref{eq:list2} tells us that  
\begin{eqnarray}
{\cal S}[{\mathfrak s}_j (c;\alpha)] = {\cal S}[{\mathfrak s}'_j (c;\tilde{\alpha})] 
& \Rightarrow & 
\alpha \hspace{+.1em} {\cal S}\bigl[ {{e}}^W \bigl|_{{\rm{arg}}c = \frac{\pi}{2} - \varepsilon} \bigr] = 
\tilde{\alpha} \hspace{+.1em} {\cal S}\bigl[ {{e}}^W \bigl|_{{\rm{arg}}c = 
\frac{\pi}{2} + \varepsilon} \bigr]   \nonumber \\
& \Rightarrow & 
\tilde{\alpha} = (1 + {e}^{2 \pi i c \eta}) \hspace{+.2em} \alpha,  
\label{eq:conclusion from list2}
\end{eqnarray}
and this is consistent with \eqref{connection formula for 1-parameter solution normalized at tau_1}. 
Thus, we have rederived the connection formulas for the parametric Stokes phenomena for 
1-parameter solutions of ($P_{\rm II}$) through the computation of the Stokes multipliers 
of ($SL_{\rm II}$).

%%%%%%%%%%%%%%%%%%%%%%%%%%%%%%%%%%%%%%%%%%%%%%%%%%%%%%%%%%%%%%%%%%%%%%%%%%%%%%%%%%%%%%%%%%
%%%%%%%%%%%%%%%%%%%%%%%%%%%%%%%%%%%%%%%%%%%%%%%%%%%%%%%%%%%%%%%%%%%%%%%%%%%%%%%%%%%%%%%%%%
%%%%%%%%%%%%%%%%%%%%%%%%%%%%%%%%%%%%%%%%%%%%%%%%%%%%%%%%%%%%%%%%%%%%%%%%%%%%%%%%%%%%%%%%%%
%%%%%%%%%%%%%%%%%%%%%%%%%%%%%%%%%%%%%%%%%%%%%%%%%%%%%%%%%%%%%%%%%%%%%%%%%%%%%%%%%%%%%%%%%%
%%%%%%%%%%%%%%%%%%%%%%%%%%%%%%%%%%%%%%%%%%%%%%%%%%%%%%%%%%%%%%%%%%%%%%%%%%%%%%%%%%%%%%%%%%
%%%%%%%%%%%%%%%%%%%%%%%%%%%%%%%%%%%%%%%%%%%%%%%%%%%%%%%%%%%%%%%%%%%%%%%%%%%%%%%%%%%%%%%%%%

\appendix

\section{Homogenity}

The second Painlev\'e equation ($P_{\rm II}$) with a large parameter
is obtained by a change of variables   
\[
(w, z, a) :\Rightarrow (\eta^{\frac{1}{3}} \lambda, \eta^{\frac{2}{3}} t, \eta c)
\]
from the ``original" Painlev$\acute{\rm e}$ equation 
$\frac{d^2 w}{d z^2} = 2 w^3 + z w + a$, 
that is, if $w(z,a)$ is a solution of the original Painlev$\acute{\rm e}$ equation,  
then $\lambda(t,c,\eta)$ given by 
$\eta^{\frac{1}{3}} \lambda(t,c,\eta) = w(\eta^{\frac{2}{3}}t, \eta c)$ is a solution of ($P_{\rm II}$)). 
Hence various quantities which appeared in this paper have a homogenity 
with respect to the following scaling operation: 
\[
(x, t, c, \eta) \mapsto (r^{-\frac{1}{3}}x, r^{-\frac{2}{3}}t, r^{-1}c, r \eta) \hspace{+1.em} (r > 0).
\] 
For example, the homogenious degree of $\lambda_0$ which is an algebraic function defined by 
$2\lambda_0^3 + t \lambda_0 + c = 0$ is $-\frac{1}{3}$, that is, 
\[
\lambda_0(r^{-\frac{2}{3}}t, r^{-1}c, r \eta) = \eta^{-\frac{1}{3}} \lambda_0(t,c).
\]

We list the homogenious degrees of quantities below.
\begin{eqnarray}
\lambda^{(0)}_k(t,c) & : & \Bigl( k - \frac{1}{3} \Bigr) \hspace{+1.em}  (k \ge 0),   
       \label{eq:homogenity of lambda(0)_k}      \\
\nu^{(0)}_k(t,c) & : &  \Bigl( k - \frac{2}{3} \Bigr)  \hspace{+1.em}  (k \ge 0),   \\
\lambda^{(0)}(t,c,\eta) & : & - \frac{1}{3} , \\
\nu^{(0)}(t,c,\eta) & : & - \frac{2}{3} , \\
\Delta(t,c) & : & - \frac{2}{3} , \\
    \label{eq:homogenity of Delta}   
\tau_j(c) & : & - \frac{2}{3} \hspace{+1.em} (j =1,2,3),
\end{eqnarray}
\begin{eqnarray}
R_k(t,c) & : & \Bigl( k + \frac{2}{3} \Bigr)   
       \hspace{+1.em}   (k \ge 0) ,  \hspace{+.5em}  \\
R(t,c,\eta) & : & + \frac{2}{3} , \\
R_{\rm odd}(t,c,\eta) & : & + \frac{2}{3} , \\
\phi_{\rm II}(t,c) & : & -1,  
\end{eqnarray}
\begin{eqnarray}
\lambda^{(k)}_{\ell}(t,c) & : & \Bigl( - \frac{1}{3} + \frac{1}{2} k + \ell \Bigr)    
     \hspace{+1.em}  (k \ge 0 , \ell \ge 0) , \hspace{+2.0em}   \\
\lambda(t,c,\eta;\alpha) & : & - \frac{1}{3} ,  
     \label{eq:homogenity of lambda}   \\
\nu(t,c,\eta;\alpha) := \eta^{-1} \frac{d}{dt} \lambda(t,c,\eta;\alpha)
 & : & - \frac{2}{3} ,  
     \label{eq:homogenity of nu}   \\
W(c,\eta) & : & 0 ,
     \label{eq:homogenity of W}
\end{eqnarray}

\begin{eqnarray}
Q_{\rm II}(x,t,c,\eta;\alpha) & : & - \frac{4}{3} , 
      \label{eq:homogenity of Q_II} \\
A_{\rm II}(x,t,c,\eta;\alpha) & : & + \frac{1}{3} ,  \\
S(x,t,c,\eta;\alpha) & : & + \frac{1}{3} ,  \\
S_{\rm odd}(x,t,c,\eta;\alpha)& : & + \frac{1}{3} ,   
      \label{eq:homogenity of S_odd}  \\
a_j(t,c) & : & - \frac{1}{3}  \hspace{+1.em} (j =1,2) ,  \hspace{+2.8em} 
\end{eqnarray}
\begin{eqnarray}
\psi_{\pm,\infty}(x,t,c,\eta;\alpha) & : & - \frac{1}{6} ,  \hspace{+8.3em} \\
\psi_{\pm,\rm IM}(x,t,c,\eta;\alpha) & : & - \frac{1}{6} ,  \\
U(t,c,\eta;\alpha) & : & 0 ,  
     \label{eq:homogenity of U}  \\
V(t,c,\eta;\alpha) & : & 0 .  
     \label{eq:homogenity of V}
\end{eqnarray}
These facts can be easily verified by straightforward computations.
%%%%%%%%%%%%%%%%%%%%%%%%%%%%%%%%%%%%%%%%%%%%%%%%%%%%%%%%%%%%%%%%%%%%%%%%%%%%%%%%%%%%%%%%%%
\begin{prop}\label{homogenity of transformation series}
The formal series below have the following homogenious degrees:
\begin{eqnarray}
x_{\rm II}(x,t,c,\eta;\alpha) & : & - \frac{1}{2} , \hspace{+6.8em}
     \label{eq:homogenity of x_II}    \\
\sigma(t,c,\eta;\alpha) & : & 0 ,
     \label{eq:homogenity of sigma}  \\
\rho(t,c,\eta;\alpha) & : & 0 ,
     \label{eq:homogenity of rho}  \\
t_{\rm II}(t,c,\eta;\alpha) & : & -1 .    
     \label{eq:homogenity of t_II}
\end{eqnarray}
\end{prop}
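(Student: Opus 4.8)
The plan is to observe that the whole construction underlying Theorem~\ref{transformation theory wrt x} and Proposition~\ref{transformation theory wrt t} is covariant under the scaling
\[
(x,t,c,\eta)\ \longmapsto\ \bigl(r^{-\frac13}x,\ r^{-\frac23}t,\ r^{-1}c,\ r\eta\bigr),\qquad r>0,
\]
together with the accompanying rescaling $\tilde x\mapsto r^{-\frac12}\tilde x$, $\tilde t\mapsto r^{-1}\tilde t$ of the canonical variables (the free parameters $\alpha$, $A$, $B$, hence also $\sigma,\rho,E$ regarded as ordinary parameters, being assigned degree $0$). Throughout, ``$f$ has degree $d$'' means that $f$ evaluated at the scaled arguments equals $r^{d}$ times $f$ at the original ones; thus $\eta$ has degree $+1$, and, by \eqref{eq:homogenity of lambda}, \eqref{eq:homogenity of nu}, \eqref{eq:homogenity of Q_II}, \eqref{eq:homogenity of Delta} and the list of this appendix, $\lambda,\nu,Q_{\rm II},\Delta,\phi_{\rm II}$ have degrees $-\frac13,-\frac23,-\frac43,-\frac23,-1$, and $e^{\eta\phi_{\rm II}}$ has degree $0$. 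Granting that the normalizing conditions that pin down the formal series in Theorem~\ref{transformation theory wrt x} and Proposition~\ref{transformation theory wrt t} are themselves scale invariant, the uniqueness built into those statements forces each series to be homogeneous of the degree obtained by matching leading terms; this is what I would verify.

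First I would treat $x_{\rm II}$. Its principal coefficient $x_0=x_0^{(0)}$ is fixed by \eqref{eq:x_0} and the branch normalization \eqref{eq:branch of x_0}. Since $\sqrt{Q_0}\,dx$ has degree $(-\frac23)+(-\frac13)=-1$ and the lower endpoint $\lambda_0$ has degree $-\frac13=\deg x$, the integral $\int_{\lambda_0}^{x}\sqrt{Q_0}\,dx$ has degree $-1$, so $x_0$ has degree $-\frac12$ — consistent with \eqref{eq:branch of x_0}, as $\Delta^{1/4}(x-\lambda_0)$ has degree $-\frac16-\frac13=-\frac12$. For the remaining coefficients I would check that the defining identity \eqref{eq:transformation relation for potential} is homogeneous of degree $-\frac43$: with $x_{\rm II}$ of degree $-\frac12$ one has $\deg\bigl(\partial x_{\rm II}/\partial x\bigr)^2=2\bigl(-\frac12+\frac13\bigr)=-\frac13$; the quantity $\eta^{-1/2}\sigma$ has degree $-\frac12$ (matching that of $x_{\rm II}$, which reflects that the double turning point coincides with the apparent singularity), so $Q_{\rm can}(x_{\rm II},E,\sigma,\rho,\eta)$ is homogeneous of degree $-1$; and $\eta^{-2}\{x_{\rm II};x\}$ is homogeneous of degree $-2+\frac23=-\frac43$. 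All three terms thus have degree $-\frac43=\deg Q_{\rm II}$, so by the uniqueness part of Theorem~\ref{transformation theory wrt x} (equivalently, by induction on the $\eta^{-1}$-order and on the power of $e^{\eta\phi_{\rm II}}$ using recursions of the shape \eqref{eq:construction of transformation function1}) $x_{\rm II}$ has degree $-\frac12$, which is \eqref{eq:homogenity of x_II}.

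Next, \eqref{eq:homogenity of sigma} and \eqref{eq:homogenity of rho} follow by substituting $x=\lambda$ (of degree $-\frac13=\deg x$) into \eqref{eq:sigma} and \eqref{eq:rho}: in \eqref{eq:sigma}, $\eta^{1/2}x_{\rm II}(\lambda,\dots)$ has degree $\frac12+(-\frac12)=0$; in \eqref{eq:rho}, using $\deg\nu=-\frac23$ and $\deg(\partial^{\,j}x_{\rm II}/\partial x^{\,j})=-\frac12+\frac{j}{3}$, both summands have degree $0$, whence also $E=\rho^2-4\sigma^2$ has degree $0$ as used above. Finally, for \eqref{eq:homogenity of t_II}, the leading coefficient $t_0^{(0)}=\tfrac12\phi_{\rm II}$ of \eqref{eq:t_0} has degree $-1$; for the higher coefficients I would read from \eqref{eq:transformation of sigma} (equivalently \eqref{eq:definition of t(0)}, \eqref{eq:definition of t_II(k)}) that $e^{2\eta t_{\rm II}}=\sqrt2\,\sigma/\alpha$ has degree $0$, so that $\eta\,t_{\rm II}$ has degree $0$ order by order, i.e.\ $t_{\rm II}$ has degree $-1$. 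The only point requiring genuine care — the main obstacle — is checking that the normalizations underlying the two statements (the branch choice \eqref{eq:branch of x_0} and the vanishing \eqref{eq:x(k)_0 = 0} in Theorem~\ref{transformation theory wrt x}; the conditions \eqref{eq:t_0}, \eqref{eq:t(k)_0 = 0} and $t_1^{(0)}=0$ in Proposition~\ref{transformation theory wrt t}) are invariant under the scaling; but each such condition is of the form ``$=0$'' or ``$=\tfrac12\phi_{\rm II}$'', and the latter has already been shown to have degree $-1$, so invariance is immediate and the remainder is the degree bookkeeping sketched above.
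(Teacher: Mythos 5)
Your proposal is correct and follows essentially the same route as the paper's proof: the degree of $x_0$ is read off from \eqref{eq:x_0}, the higher coefficients are handled through the scale-covariance of \eqref{eq:transformation relation for potential} together with the uniqueness of the normalized solution (which the paper implements as the explicit induction on $(k,\ell)$ via \eqref{eq:x(k)_n} and \eqref{eq:expression of x(k')_ell'} that you mention parenthetically), and then \eqref{eq:homogenity of sigma}, \eqref{eq:homogenity of rho} follow from \eqref{eq:sigma}--\eqref{eq:rho} and \eqref{eq:homogenity of t_II} from \eqref{eq:t_0}, \eqref{eq:definition of t(0)} and \eqref{eq:definition of t_II(k)}. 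The degree bookkeeping you carry out matches the paper's, so no further changes are needed.
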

%%%%%%%%%%%%%%%%%%%%%%%%%%%%%%%%%%%%%%%%%%%%%%%%%%%%%%%%%%%%%%%%%%%%%%%%%%%%%%%%%%%%%%%%%%
\begin{proof}
First we check the homogeneity of $x_{\rm II}$, $\sigma$ and $\rho$. 
Due to \eqref{eq:x_0} we know that 
the homogenious degrees of $x_0$, $\sigma$ and $\rho$ is $- \frac{1}{2}$, $0$ and $0$ respectively. 
In what follows we show that 
the homogenious degrees of $x^{(k)}_{\ell}$, ${\sigma}^{(k)}_{\ell}$ and ${\rho}^{(k)}_{\ell}$ are 
\begin{eqnarray}
x^{({k})}_{\ell} & : & \Bigl( - \frac{1}{2} + \frac{1}{2} k + \ell \Bigr) \hspace{+1.em}  (k \ge 0, \ell \ge 0) ,
     \label{eq:homogenity of x(k)_n} \\
\sigma^{(k)}_{\ell} & : & \Bigl( \frac{1}{2} k + \ell \Bigr) \hspace{+1.em}  (k \ge 0, \ell \ge 0)
     \label{eq:homogenity of sigma(k)_n} \\
\rho^{(k)}_{\ell} & : & \Bigl( \frac{1}{2} k + \ell \Bigr) \hspace{+1.em}  (k \ge 0, \ell \ge 0)
     \label{eq:homogenity of rho(k)_n}
\end{eqnarray}
by induction. 
We note that, since the claims \eqref{eq:homogenity of x(k)_n} $\sim$ 
\eqref{eq:homogenity of rho(k)_n} are true 
for $k \ge 1, \ell = 0$ by \eqref{eq:x(k)_0 = 0}, 
it suffices to confirm \eqref{eq:homogenity of x(k)_n} $\sim$ 
\eqref{eq:homogenity of rho(k)_n}
for $k =k', \ell = \ell'$ ($k', \ell' \ge 0$) under the assumptions that 
they are true for 
$0 \le k \le k' - 1, \ell \ge 0$ and $k = k',  0 \le \ell \le \ell' - 1$. 
In view of \eqref{eq:transformation relation for potential} the differential equation 
which determines $x^{(k')}_{\ell'}$ has the form 
\begin{equation}
8 x_0 \frac{\partial x_0}{\partial x} 
(x_0 \frac{\partial x^{(k')}_{\ell'}}{\partial x} + x^{(k')}_{\ell'} \frac{\partial x_0}{\partial x} )
 = r^{(k')}_{\ell'} (x,t,c), 
\label{eq:x(k)_n}
\end{equation}
and the homogenious degree of $r^{(k')}_{\ell'}$ is ($- \frac{4}{3} + \frac{1}{2} k' + \ell' $) 
by induction hypothesis. 
(We note that $r^{(k')}_{\ell'}$ ($k', \ell' \ge 0$) are holomorphic in $x$ and 
have a zero of order at least 1 at $x = \lambda_0$ because $\sigma$, $\rho$ and $E$ 
are determined by the equations \eqref{eq:sigma} $\sim$ \eqref{eq:E}.) 
Since $x^{(k')}_{\ell'}$ is given, as a unique holomorphic solution of \eqref{eq:x(k)_n} 
at $x = \lambda_0$, by 
\begin{equation} 
x^{(k')}_{\ell'} (x,t,c) = \frac{1}{x_0} \int_{\lambda_0}^{x}    
\frac{r^{(k')}_{\ell'}}{8 x_0 \frac{\partial x_0}{\partial x}} \hspace{+.2em} dx ,
\label{eq:expression of x(k')_ell'}
\end{equation}
it is homogenious with degree $(- \frac{1}{2} + \frac{1}{2} k' + \ell')$ . 
The homogenious degrees of $\sigma^{(k')}_{\ell'}$ and 
$\rho^{(k')}_{\ell'}$ can be computed from \eqref{eq:sigma}, \eqref{eq:rho} 
and \eqref{eq:homogenity of x(k)_n}.
Furthermore, 
taking into account that the homogenious degree of $t_0$ defined by \eqref{eq:t_0} 
is $-1$ and that the formal power series $t^{(k)}_{\rm II}$ ($k \ge 0$) 
are defined by \eqref{eq:definition of t(0)} and \eqref{eq:definition of t_II(k)}, 
we find that the homogenious degree of the formal series $t_{\rm II}$ is $-1$. 
\end{proof}

%%%%%%%%%%%%%%%%%%%%%%%%%%%%%%%%%%%%%%%%%%%%%%%%%%%%%%%%%%%%%%%%%%%%%%%%%%%%%%%%%%%%%%%%%%
%%%%%%%%%%%%%%%%%%%%%%%%%%%%%%%%%%%%%%%%%%%%%%%%%%%%%%%%%%%%%%%%%%%%%%%%%%%%%%%%%%%%%%%%%%
%%%%%%%%%%%%%%%%%%%%%%%%%%%%%%%%%%%%%%%%%%%%%%%%%%%%%%%%%%%%%%%%%%%%%%%%%%%%%%%%%%%%%%%%%%
%%%%%%%%%%%%%%%%%%%%%%%%%%%%%%%%%%%%%%%%%%%%%%%%%%%%%%%%%%%%%%%%%%%%%%%%%%%%%%%%%%%%%%%%%%
%%%%%%%%%%%%%%%%%%%%%%%%%%%%%%%%%%%%%%%%%%%%%%%%%%%%%%%%%%%%%%%%%%%%%%%%%%%%%%%%%%%%%%%%%%
%%%%%%%%%%%%%%%%%%%%%%%%%%%%%%%%%%%%%%%%%%%%%%%%%%%%%%%%%%%%%%%%%%%%%%%%%%%%%%%%%%%%%%%%%%

\section{Holomorphy of $Z$ at $c = 0$}

$Z = Z(c,\eta)$ introduced in Section 6 is a formal power series given by 
\begin{equation}
Z = \frac{1}{2}U^{(0)} + \int_{\infty}^{x}\bigl( S^{(0)}_{\rm odd} - \eta S_{-1} \bigr) dx 
- \eta \bigl( t^{(0)}_{\rm I\hspace{-.1em}I} - t_0 \bigr) 
- \eta \bigl( {x^{(0)}_{\rm I\hspace{-.1em}I}}^2 - {x_0}^2 \bigr).   
\label{eq:Z in Appendix}
\end{equation}
The right-hand side of \eqref{eq:Z in Appendix} is independent of 
both $x$ and $t$. (We can verify this fact by using \eqref{eq:relation for transformation series}.)
The aim of Appendix B is to show that 
the coefficients $Z_{\ell}(c)$ of $\eta^{-\ell}$ in $Z(c,\eta)$ are holomorphic 
at $c = 0$ (for all $\ell \ge 0$) when the 1-parameter solution $\lambda_{\infty}$ 
normalized at $t = \infty$ is substituted into the coefficients of ($SL_{\rm II}$) and ($D_{\rm II}$).
%Since the right-hand side of \eqref{eq:Z in Appendix} is independent of both $x$ and $t$, 
%we investigate each terms of the right-hand side when $t$ is fixed at a point $r e^{-\frac{\pi}{3}}$ 
%with sufficiently large positive number $r$ and $x$ is fixed at a point sufficiently close to $\lambda_0$.
%(But we have to show that $\int_{\infty}^t (\lambda^{(0)} - \lambda_0) dt$ is holomorphic at $c = 0$, 
%so we should consider the holomorphy of $\lambda^{(0)}$ with respect to $c$ on some open set.)

Let  $t_{\ast}$ be a point fixed in the domain in Figure \ref{fig:Riemann surface of sqrt Delta}, 
and $\delta_1, \delta_2, \delta_3$ be positive numbers satisfying that 
\begin{equation}
|c| < \delta_3  \hspace{+.4em}  \Rightarrow  \hspace{+.4em}
\{ t ; |t - t_{\ast}| \le \delta_2 \} \ni \hspace{-.9em}/ \hspace{+.4em} 
\tau_1(c) , \tau_2(c) , \tau_3(c) ,
\label{eq:condition 1}
\end{equation}
\begin{equation}
 |c| < \delta_3  \hspace{+.4em} ,
\hspace{+.4em} |t - t_{\ast}| < \delta_2   
 \hspace{+.4em}   \Rightarrow   \hspace{+.4em}
\{x ; |x - \lambda_0(t,c)| \le \delta_1 \}  \ni \hspace{-.9em}/ \hspace{+.4em} a_1(t,c) , a_2(t,c)  .
\label{eq:condition 2}
\end{equation} 
Note that, since all the three $P$-turning points $\tau_{j}(c)$ of ($P_{\rm II}$) 
tend to $t = 0$ in the $t$-plane and   
the two simple turning points $x = a_{j}$ of ($SL_{\rm II}$) 
tend to $x = - \lambda_{0}(t,0) = + \frac{i}{\sqrt{2}} t^{1/2}$ in the $x$-plane 
when $c$ tends to $0$, we can take such positive numbers $\delta_{j}$. 
For the above $\delta_1, \delta_2, \delta_3$ and $t_{\ast}$,  we define domains 
${\cal D}_1, {\cal D}_2, {\cal D}_3$ and $\cal D$ by
\[
{\cal D}_3 := \{ c ; |c| < \delta_3 \} , 
\]
\[
{\cal D}_2 := \{ t ; |t - t_{\ast}| < \delta_2 \},
\]
\[
{\cal D}_1(t,c) := \{x ; |x - \lambda_0(t,c)| < \delta_1 \}, 
\]
\[
{\cal D} := \{ (x,t,c) ; (t,c) \in {\cal D}_2 \times {\cal D}_3 , x \in {\cal D}_1(t,c)  \}.
\]
We will verify that all the coefficients of the formal power series which appear in the right-hand 
side of \eqref{eq:Z in Appendix} such as $U^{(0)}$, $t^{(0)}_{\rm II}$, $x^{(0)}_{\rm II}$, etc.\ 
\hspace{-.8em} , are holomorphic on $\cal D$. 
($\delta_1, \delta_2$, $\delta_3$ may be chosen sufficiently small again if necessary.)

%%%%%%%%%%%%%%%%%%%%%%%%%%%%%%%%%%%%%%%%%%%%%%%%%%%%%%%%%%%%%%%%%%%%%%%%%%%%%%%%%%%%%%%%%%
\begin{lemm} \label{holomorphic dependence of U on c}
All the coefficients of the formal power series 
\[
U^{(0)}(t,c,\eta) = \int_{\infty}^{t} \bigl( \lambda^{(0)}(t,c,\eta) - \lambda_0(t,c) \bigr) dt
\]
are holomorphic on ${\cal D}_2 \times {\cal D}_3$.
\end{lemm}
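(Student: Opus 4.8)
The plan is to establish holomorphy of the coefficients of $U^{(0)}(t,c,\eta)$ on ${\cal D}_2\times{\cal D}_3$ by descending to the recursive construction of the $0$-parameter solution and the integral defining $U^{(0)}$. First I would recall that $U^{(0)}$ is defined coefficient-by-coefficient: expanding $\lambda^{(0)}(t,c,\eta)=\lambda_0(t,c)+\eta^{-1}\lambda^{(0)}_1(t,c)+\cdots$, the formal power series $U^{(0)}(t,c,\eta)=\eta\int_\infty^t(\lambda^{(0)}-\lambda_0)\,dt$ has coefficient of $\eta^{-\ell}$ equal to $\int_\infty^t\lambda^{(0)}_{\ell+1}(t,c)\,dt$ for $\ell\ge 0$, where the path of integration is the one in Figure \ref{fig:normalization path of lambda(1)_infinity}. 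So it suffices to prove: (a) each $\lambda^{(0)}_k(t,c)$ ($k\ge 1$) is holomorphic on ${\cal D}_2\times{\cal D}_3$; and (b) the tail integral $\int_\infty^t\lambda^{(0)}_{k}(t,c)\,dt$ converges and depends holomorphically on $(t,c)\in{\cal D}_2\times{\cal D}_3$.

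For (a), I would argue by induction on $k$ using the recursion \eqref{eq:recurrence relation of lambda(0)}. The branch $\lambda_0(t,c)$ solving $2\lambda_0^3+t\lambda_0+c=0$ is holomorphic on ${\cal D}_2\times{\cal D}_3$ because, by the choice \eqref{eq:condition 1} of $\delta_2,\delta_3$, no $P$-turning point $\tau_j(c)$ lies in ${\cal D}_2$, hence the discriminant $\Delta=6\lambda_0^2+t$ is nonvanishing there and the implicit function theorem applies (the fixed branch being the one singled out in Lemma \ref{behavior of coeff of 1-parameter solution}). Since $\Delta$ is a nonvanishing holomorphic function on ${\cal D}_2\times{\cal D}_3$, dividing by $6\lambda_0^2+t$ in \eqref{eq:recurrence relation of lambda(0)} preserves holomorphy; the right-hand side $d^2\lambda^{(0)}_{k-2}/dt^2$ and the lower-order cubic terms are holomorphic by the induction hypothesis (with the convention $\lambda^{(0)}_{-1}=\lambda^{(0)}_{-2}=0$), and differentiation in $t$ preserves holomorphy on the open set ${\cal D}_2$. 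This closes the induction.

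For (b), I would use Lemma \ref{behavior of coeff of 1-parameter solution}, which gives the asymptotics as $t\to\infty$ along $\Gamma$: namely $\lambda^{(0)}_2(t,c)=O(t^{-5/2})$ and $\lambda^{(0)}_{2k}(t,c)=O(t^{-11/2})$ for $k\ge 2$, while the odd-index coefficients $\lambda^{(0)}_{2k+1}$ are handled via $\nu^{(0)}_{2k+1}=d\lambda^{(0)}_{2k}/dt$ (so $\lambda^{(0)}_1$ relates to $\nu^{(0)}$ and is $O(t^{-1/2})$ up to the constant from $\lambda_0$; more precisely one checks the relevant $\lambda^{(0)}_k$ for $k\ge 2$ decay at least like $t^{-5/2}$, which is integrable near $\infty$). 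Hence each tail integral $\int_\infty^t\lambda^{(0)}_{k}(t,c)\,dt$ ($k\ge 2$) converges absolutely, uniformly for $(t,c)$ in a compact subset of ${\cal D}_2\times{\cal D}_3$, by the dominated convergence estimate combined with the fact that the asymptotic bounds in Lemma \ref{behavior of coeff of 1-parameter solution} are locally uniform in $c$ (they come from the recursion with the leading behavior \eqref{eq:behavior of lambda_0}, whose constants vary holomorphically with $c$). Uniform convergence of holomorphic integrands yields a holomorphic limit, so $\int_\infty^t\lambda^{(0)}_k(t,c)\,dt$ is holomorphic on ${\cal D}_2\times{\cal D}_3$; together with (a) this gives the claim.

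The main obstacle I anticipate is technical bookkeeping at the lowest indices: one must be careful that the coefficient of $\eta^0$ in $U^{(0)}$ is $\int_\infty^t\lambda^{(0)}_1\,dt$ and that $\lambda^{(0)}_1$ is indeed integrable at $\infty$ along $\Gamma$ (this uses $\nu^{(0)}_1=O(t^{-1/2})$ from Lemma \ref{behavior of coeff of 1-parameter solution} together with $\lambda^{(0)}_1=\int \cdots$ — or equivalently that $\lambda^{(0)}$ itself satisfies the $P_{\rm II}$ recursion so that $\lambda^{(0)}_1$ is expressible through $\lambda_0$ and decays appropriately), and that the path in Figure \ref{fig:normalization path of lambda(1)_infinity} can be deformed to stay in ${\cal D}_2$ near $t_\ast$ while running out to $\infty$ along $\Gamma$ without crossing $P$-turning points — guaranteed by \eqref{eq:condition 1}. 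Apart from this, the argument is a routine induction plus a dominated-convergence/uniform-convergence statement for parameter-dependent improper integrals of holomorphic functions.
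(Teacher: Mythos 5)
Your overall strategy is the same as the paper's: holomorphy of each $\lambda^{(0)}_k$ on the relevant domain by induction through \eqref{eq:recurrence relation of lambda(0)} (using that $\Delta=6\lambda_0^2+t\neq 0$ there), decay at $t=\infty$, and then holomorphy of the parameter-dependent improper integral via locally uniform convergence. However, there is a genuine gap in your treatment of the odd-index coefficients. The coefficient of $\eta^{0}$ in $U^{(0)}$ is $\int_{\infty}^{t}\lambda^{(0)}_{1}\,dt$, and the bound you offer for it, ``$\lambda^{(0)}_1$ is $O(t^{-1/2})$'', would not suffice even if true: $t^{-1/2}$ is not integrable at $t=\infty$, so as written your argument does not establish convergence of this term (and Lemma \ref{behavior of coeff of 1-parameter solution} says nothing about the odd-index $\lambda^{(0)}_k$). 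The fact that saves the statement, and which the paper uses, is that $\lambda^{(0)}_k\equiv 0$ for every odd $k$: for $k=1$ the recursion \eqref{eq:recurrence relation of lambda(0)} degenerates to $(6\lambda_0^2+t)\,\lambda^{(0)}_1=0$, hence $\lambda^{(0)}_1=0$ since $\Delta\neq 0$, and an easy induction propagates the vanishing to all odd orders. You flag exactly this point as your ``main obstacle'' but leave it unresolved, so the proof as proposed is incomplete.

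The second weak point is the assertion that the decay estimates are ``locally uniform in $c$'', which is precisely what you need in order to push holomorphy in $c$ through the improper integral; Lemma \ref{behavior of coeff of 1-parameter solution} gives asymptotics along $\Gamma$ with no stated uniformity in $c$, so this needs an argument. The paper makes it concrete: by induction it derives the closed form \eqref{eq:expression of lambda(0)_2n}, $\lambda^{(0)}_{2n}=p_{2n}(\lambda_0,c)/(4\lambda_0^3-c)^{5n-1}$ with ${\rm deg}_{\lambda_0}(p_{2n})\le 9n-2$, together with the analogous expression \eqref{eq:expression of c-derivation of lambda(0)_2n} for $\partial_c\lambda^{(0)}_{2n}$, and deduces from \eqref{eq:behavior of lambda_0} that the $c$-derivative is integrable at $t=\infty$ uniformly for $c\in{\cal D}_3$, which yields holomorphy of the tail integral. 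Finally, note that \eqref{eq:condition 1} only keeps the $P$-turning points out of the disk around $t_{\ast}$, not out of a neighborhood of the entire integration path; the paper handles this by enlarging ${\cal D}_2$ to a tube ${\cal D}'_2$ around the path and shrinking $\delta_2,\delta_3$ so that $\overline{{\cal D}'_2}$ avoids $\tau_1(c),\tau_2(c),\tau_3(c)$, a point you would need to add to make the holomorphy-in-$t$ part of your argument airtight.
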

%%%%%%%%%%%%%%%%%%%%%%%%%%%%%%%%%%%%%%%%%%%%%%%%%%%%%%%%%%%%%%%%%%%%%%%%%%%%%%%%%%%%%%%%%%
\begin{proof}
Let ${{\cal D}_2}'$ be a domain in $t$-plane given by 
\[
{\cal D}'_2 = \bigcup_{t'} \{ t ; |t - t'| < \delta_2 \} \hspace{+.5em}
(\supset {\cal D}_2) .
\]
%%%%%%%%%%%%%%%%%%%%%%%%%%%%%%%%%%%%%%%%%%%%%%%%%%%%%%%%%%%%%%%%%%%%%%%%%%%%%%%%%%%%%%%%%%
 \begin{figure}[h]
 \begin{center}
 \includegraphics[width=55mm]{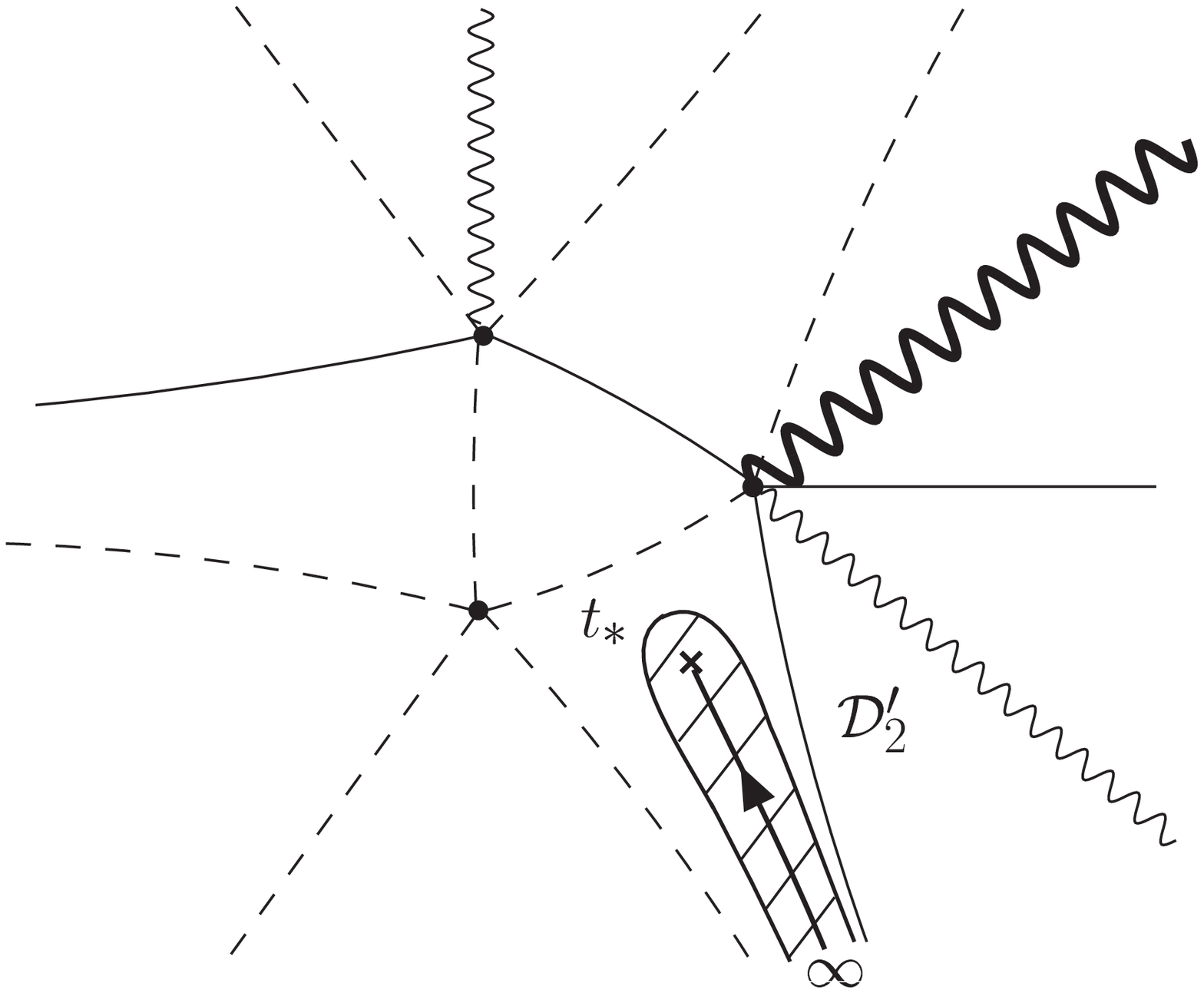}
 \end{center}
 \caption{Domain ${\cal D}'_2$.}
 \end{figure}
%%%%%%%%%%%%%%%%%%%%%%%%%%%%%%%%%%%%%%%%%%%%%%%%%%%%%%%%%%%%%%%%%%%%%%%%%%%%%%%%%%%%%%%%%%
(Here $t'$ runs over all points on the path of integration of $U^{(0)}(t_{\ast},c,\eta)$. 
See Figure \ref{fig:normalization path of lambda(1)_infinity} for the choice of the path of integration.)
We show that all the coefficients of $\lambda^{(0)} - \lambda_0$ are holomorphic 
on ${{\cal D}_2}' \times {\cal D}_3$ and integrable at $t = \infty$.
($\delta_2$ and $\delta_3$ should be chosen smaller so that  
\[
|c| < \delta_3  \Rightarrow
\overline{{\cal D}'_2}
 \ni \hspace{-.9em}/ \hspace{+.4em} 
\tau_1(c) , \tau_2(c) , \tau_3(c)
\]
holds since the coefficients of $\lambda^{(0)}$ are singular at $P$-turning points. )

Since the discriminant of the algebraic equation $2 \lambda^3 + t \lambda + c = 0$ 
for $\lambda$ never vanishes on ${{\cal D}_2}' \times {\cal D}_3$, 
$\lambda_0$ is holomorphic on ${{\cal D}_2}' \times {\cal D}_3$ and
\[
\Delta = 6 \lambda_0^2 + t = \frac{4\lambda_0^3 - c}{\lambda_0}
\]
is also holomorphic and never vanishes on ${{\cal D}_2}' \times {\cal D}_3$. 
The holomorphy of all the coefficients on ${{\cal D}_2}' \times {\cal D}_3$ follows from 
these facts and the recursive relations \eqref{eq:recurrence relation of lambda(0)}. 
Indeed, by induction, we can confirm that the coefficient $\lambda^{(0)}_k$ 
is identically $0$ when $k$ is an odd number and 
has the form
\begin{equation}
\lambda^{(0)}_{2n}(t,c) = \frac{p_{2n}(\lambda_0,c)}{(4 \lambda_0^3 - c)^{5n - 1}} 
\label{eq:expression of lambda(0)_2n}
\end{equation}
when $k = 2n$ ($n \ge 1$) is an even number, where  
$p_{2n} \in {\mathbb C}[\lambda_0,c]$ is a polynomial with 
${\rm deg}_{\lambda_0} (p_{2n}) \le 9n - 2$. 
(Here ${\rm deg}_{\lambda_0} (p_{2n})$ is the degree of the polynomial $p_{2n}$ when 
it is considered a polynomial of $\lambda_0$.) 
Thus the holomorphy of $\lambda^{(0)}_k$ is obvious.
In view of \eqref{eq:expression of lambda(0)_2n} and 
\[
\frac{\partial}{\partial c}\lambda_0 = - \frac{1}{\Delta}, 
\]
the $c$-derivative of $\lambda^{(0)}_{2n}$ is also represented as 
\begin{equation}
\frac{\partial}{\partial c}\lambda^{(0)}_{2n}(t,c) =      
\frac{\hat{p}_{2n}(\lambda_0,c)}{(4 \lambda_0^3 - c)^{5n + 1}}
\hspace{+1.em} (n \ge 1) ,
\label{eq:expression of c-derivation of lambda(0)_2n}
\end{equation}
where $\hat{p}_{2n}(\lambda_0,c) \in {\mathbb C}[\lambda_0, c]$ is also a polynomial with 
${\rm deg}_{\lambda_0} (\hat{p}_{2n}) \le 9n + 4$. 
Since the behavior of $\lambda_0$ when $t \rightarrow \infty$ 
is given by \eqref{eq:behavior of lambda_0}, 
\eqref{eq:expression of c-derivation of lambda(0)_2n} implies that 
$\frac{\partial}{\partial c}\lambda^{(0)}_{2n}(t,c)$ is integrable at $t = \infty$ 
uniformly with respect to $c \in {\cal D}_3$. 
Therefore, 
\[
\int_{\infty}^t \lambda^{(0)}_{2n}(t,c) \hspace{+.1em} dt 
\]
is holomorphic on ${{\cal D}_2}' \times {\cal D}_3 (\supset {\cal D}_2 \times {\cal D}_3)$
for all $n \ge 1$.
\end{proof}

%%%%%%%%%%%%%%%%%%%%%%%%%%%%%%%%%%%%%%%%%%%%%%%%%%%%%%%%%%%%%%%%%%%%%%%%%%%%%%%%%%%%%%%%%%
\begin{lemm} \label{holomorphic dependence of 1-parameter solution on c}
For the 1-parameter solution 
\begin{eqnarray*}
\left\{
\begin{array}{lll}
\displaystyle
\lambda_{\infty}(t,c,\eta;\alpha) & = & \lambda^{(0)}(t,c,\eta) + 
\alpha \eta^{-\frac{1}{2}} \lambda_{\infty}^{(1)}(t,c,\eta) {\rm{e}}^{\eta \phi_{\rm{I\hspace{-.1em}I}}} + 
(\alpha \eta^{-\frac{1}{2}})^2 \lambda_{\infty}^{(2)}(t,c,\eta) {\rm{e}}^{2 \eta \phi_{\rm{I\hspace{-.1em}I}}} 
+ \cdots \\[+.8em]
\displaystyle
\nu_{\infty}(t,c,\eta;\alpha)  & = & 
\displaystyle
\eta^{-1} \frac{d}{dt} \lambda_{\infty}(t,c,\eta;\alpha) \\[+1.em] 
& = &
\displaystyle
\nu^{(0)}(t,c,\eta) + 
\alpha \eta^{-\frac{1}{2}} \nu_{\infty}^{(1)}(t,c,\eta) {\rm{e}}^{\eta \phi_{\rm{I\hspace{-.1em}I}}} + 
(\alpha \eta^{-\frac{1}{2}})^2 \nu_{\infty}^{(2)}(t,c,\eta) {\rm{e}}^{2 \eta \phi_{\rm{I\hspace{-.1em}I}}} 
+ \cdots 
\end{array} \right. 
\end{eqnarray*}
of ($H_{\rm II}$) normalized at $t = \infty$, all the coefficients of 
the formal power series $\lambda_{\infty}^{(k)}(t,c,\eta)$, $\nu_{\infty}^{(k)}(t,c,\eta)$ 
($k \ge 0$) are holomorphic on ${\cal D}_2 \times {\cal D}_3$.
\end{lemm}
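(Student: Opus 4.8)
The plan is to prove Lemma \ref{holomorphic dependence of 1-parameter solution on c} by induction on $k$, using the recursive structure of the 1-parameter solution together with the holomorphy already established in Lemma \ref{holomorphic dependence of U on c} (more precisely, the holomorphy of $\lambda^{(0)}_k$ and of $\Delta = 6\lambda_0^2 + t = (4\lambda_0^3 - c)/\lambda_0$ on ${\cal D}_2' \times {\cal D}_3 \supset {\cal D}_2 \times {\cal D}_3$, and the fact that $\Delta$ never vanishes there). First I would record that the base case $k = 0$ is exactly the content of Lemma \ref{holomorphic dependence of U on c} (and its proof), since the coefficients of $\lambda^{(0)}$ and $\nu^{(0)}$ are rational functions of $\lambda_0$ and $c$ with denominators a power of $4\lambda_0^3 - c$, hence holomorphic on ${\cal D}_2 \times {\cal D}_3$.

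Next I would treat $k = 1$. By \eqref{eq:normalized at infinity-2} the coefficients of $\lambda^{(1)}_\infty$ are $\Delta^{-1/4}$ times integrals of the form $\int_\infty^t R_j\, dt$ with $R_j$ ($j \ge 1$) the coefficients of $R_{\rm odd}$; these $R_j$ are, by the recursion \eqref{eq:R_k+1} and \eqref{eq:R_odd and R_even}, rational in $\lambda_0$ and $c$ with denominator a power of $4\lambda_0^3 - c$, hence holomorphic on ${\cal D}_2' \times {\cal D}_3$, and by Lemma \ref{behavior of coeff of 1-parameter solution} (together with the homogeneity \eqref{eq:homogenity of R_k} — i.e.\ the degree statement for $R_k$) they are integrable at $t = \infty$ uniformly in $c \in {\cal D}_3$, by the same argument as in the proof of Lemma \ref{holomorphic dependence of U on c} (bounding $\partial_c R_j$ via $\partial_c \lambda_0 = -1/\Delta$ and the explicit denominator). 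Thus $\int_\infty^t R_j\, dt$ is holomorphic on ${\cal D}_2 \times {\cal D}_3$, and since $\Delta^{-1/4}$ is holomorphic and nonvanishing there (choosing the branch as in \eqref{eq:normalized at infinity-2}), so is each coefficient of $\lambda^{(1)}_\infty$. The corresponding statement for $\nu^{(1)}_\infty$ follows from \eqref{eq:nu(1)}, $\nu^{(1)} = \eta^{-1} R\, \lambda^{(1)}$, i.e.\ from the holomorphy of the coefficients of $R$ and of $\lambda^{(1)}_\infty$.

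For the inductive step $k \ge 2$, I would invoke the recursion \eqref{eq:lambda_(k)l}: each coefficient $\lambda^{(k)}_\ell$ is obtained by dividing a polynomial expression in the previously constructed coefficients $\lambda^{(k')}_{\ell'}$ (with $k' < k$ or $k' = k,\ \ell' < \ell$), in $\lambda^{(0)}_m$, and in the derivatives $d\phi_{\rm II}/dt = \sqrt{\Delta}$ and $d^2\phi_{\rm II}/dt^2$, by the factor $(k^2 - 1)\Delta(t,c)$. By the induction hypothesis all the inputs are holomorphic on ${\cal D}_2 \times {\cal D}_3$, and $\Delta$ is holomorphic and nonvanishing there and $k^2 - 1 \ne 0$ for $k \ge 2$, so $\lambda^{(k)}_\ell$ is holomorphic on ${\cal D}_2 \times {\cal D}_3$; note that this step requires no integration — once the normalization at $t = \infty$ has been fixed at order $k = 1$, all higher coefficients are determined algebraically. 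Finally $\nu^{(k)}_\infty$ is holomorphic by \eqref{eq:relation between lambda(k) and nu(k) in full order}, which expresses $\nu^{(k)}$ in terms of $\lambda^{(k)}$, its $t$-derivative, and $d\phi_{\rm II}/dt$.

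The main obstacle is the $k = 1$ step: unlike the purely algebraic recursion for $k \ge 2$, the coefficients of $\lambda^{(1)}_\infty$ involve improper integrals $\int_\infty^t R_j\, dt$, so one must check both holomorphy in $c$ of the integrands away from the $P$-turning points (which move with $c$, hence the passage to the slightly enlarged domain ${\cal D}_2'$ as in Lemma \ref{holomorphic dependence of U on c}) and the uniform integrability at $t = \infty$; the decay rates needed for the latter are exactly those supplied by Lemma \ref{behavior of coeff of 1-parameter solution}, and the holomorphy of the limit in $c$ then follows from differentiation under the integral sign using $\partial_c\lambda_0 = -1/\Delta$ together with the explicit rational form of $R_j$ in $(\lambda_0, c)$, just as for $\lambda^{(0)}_{2n}$ in the proof of Lemma \ref{holomorphic dependence of U on c}. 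I would state this as a routine repetition of that argument rather than redo the estimates.
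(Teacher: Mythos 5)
Your proposal is correct and follows essentially the same route as the paper's own proof: the $k=0$ case is taken from Lemma \ref{holomorphic dependence of U on c}, the $k=1$ case rests on the rational structure of the coefficients $R_j$ in $(\lambda_0,c)$ with denominator a power of $4\lambda_0^3-c$ (the paper makes this explicit in \eqref{eq:expression of R_2n}--\eqref{eq:expression of R_2n + 1}) and on the uniform integrability of $\int_{\infty}^{t}R_{2n+1}\,dt$ via the same $\partial_c\lambda_0=-1/\Delta$ argument, and the cases $k\ge 2$ and the $\nu^{(k)}$ follow algebraically from \eqref{eq:lambda_(k)l} and $\nu_{\infty}=\eta^{-1}\frac{d}{dt}\lambda_{\infty}$. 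The only blemish is the citation of a nonexistent label for the homogeneity of $R_k$ (the degree statement is in Appendix A but unlabeled), which is cosmetic rather than mathematical.
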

%%%%%%%%%%%%%%%%%%%%%%%%%%%%%%%%%%%%%%%%%%%%%%%%%%%%%%%%%%%%%%%%%%%%%%%%%%%%%%%%%%%%%%%%%%
\begin{proof}
The holomorphy of all the coefficients of $\lambda^{(0)}$ has been  already 
shown in the proof of Lemma \ref{holomorphic dependence of U on c}.
Next we consider 
\[
\lambda^{(1)}_{\infty} = \frac{1}{\sqrt{\eta^{-1} R_{\rm odd}}} 
\hspace{+.1em} {\rm exp} \Bigl(
\int_{\infty}^t \bigl( R_{\rm odd} - \eta R_{-1} \bigr) dt 
\Bigr) .
\]
It is clear that $R_{-1}(t,c) = \sqrt{\Delta(t,c)}$ is holomorphic on ${{\cal D}_2}' \times {\cal D}_3$.
Due to the recursive relations \eqref{eq:R_k+1} for $R_{k}(t,c)$, 
we can obtain the following expression by induction:
\begin{eqnarray}
R_{2n}(t,c) & = & \frac{q_{2n}(\lambda_0,c)}{(4 \lambda_0^3 - c)^{5n + 2}} \hspace{+1.em} (n \ge 0) ,     
\label{eq:expression of R_2n} \\
R_{2n + 1}(t,c) & = & \frac{\lambda_0^{\frac{1}{2}} q_{2n+1}(\lambda_0,c)} 
{(4 \lambda_0^3 - c)^{5n + \frac{9}{2}}}      \hspace{+1.em} (n \ge 0) , 
\label{eq:expression of R_2n + 1} 
\end{eqnarray}
where $q_{2n}(\lambda_0,c), q_{2n + 1}(\lambda_0,c) \in {\mathbb C}[\lambda_0,c]$
are polynomials satisfying ${\rm deg}_{\lambda_0} ({q}_{2n}) \le 9n + 4$ and  
${\rm deg}_{\lambda_0} ({q}_{2n + 1}) \le 9n + 8$. 
Similarly to the proof of Lemma \ref{holomorphic dependence of U on c}, 
we can show that 
\[
\int_{\infty}^{t} R_{2n + 1}(t,c) \hspace{+.1em} dt
\]
is holomorphic on ${{\cal D}_2}' \times {\cal D}_3(\supset {\cal D}_2 \times {\cal D}_3)$ 
by using the expression \eqref{eq:expression of R_2n + 1} for all $n \ge 0$. 
Thus the holomorphy of the coefficients of $\lambda_{\infty}^{(1)}$ is verified.
The holomorphy of the coefficients of $\lambda_{\infty}^{(k)}$ ($k \ge 2$) 
can be confirmed from the recursive relations \eqref{eq:lambda_(k)l}, and 
the holomorphy of the coefficients of $\nu^{(0)}$ and $\nu_{\infty}^{(k)}$ ($k \ge 1$) 
can also be shown by using the relation 
$\nu_{\infty}(t,c,\eta;\alpha) = \eta^{-1} \frac{d}{dt} \lambda_{\infty}(t,c,\eta;\alpha)$.
\end{proof}

%%%%%%%%%%%%%%%%%%%%%%%%%%%%%%%%%%%%%%%%%%%%%%%%%%%%%%%%%%%%%%%%%%%%%%%%%%%%%%%%%%%%%%%%%%
\begin{rem} \normalfont \label{defference of two normalization in the holomorphy}
When $c$ tends to 0, the three $P$-turning points merge to $t = 0$ simultaneously. 
Hence, if we consider a 1-parameter solution normalized at a $P$-turning point $t = \tau_1$, 
we can not expect that Lemma \ref{holomorphic dependence of 1-parameter solution on c} 
holds because the integration path of $\int_{\tau_1}^t R_{\rm odd} \hspace{+.2em}dt$ is 
pinched by the turning points and all the coefficients of the formal power series $R_{\rm odd}$ 
have a singularity at $P$-turning points. The assumption that a 1-parameter solution is 
normalized at $t = \infty$ is essential. 
\end{rem}
%%%%%%%%%%%%%%%%%%%%%%%%%%%%%%%%%%%%%%%%%%%%%%%%%%%%%%%%%%%%%%%%%%%%%%%%%%%%%%%%%%%%%%%%%%

It follows from Lemma \ref{holomorphic dependence of 1-parameter solution on c} that, 
if $\lambda_{\infty}$ is substituted into $Q_{\rm II}$, then all the coefficients $Q_{\ell}^{(k)}$ 
are holomorphic on ${\cal D} \setminus \{ x = \lambda_0 \}$ and 
bounded as $x$ tends to $\infty$ when $(k,\ell) \ne (0,0)$. 
(Note that $Q^{(0)}_0 = Q_0 = (x - \lambda_0)^2 (x^2 + 2 \lambda_0 x + 3 \lambda_0^2 + t)$ 
for $(k,\ell) = (0,0)$.)
More precisely, $Q^{(k)}_{\ell}$ are represented as
\begin{equation}
Q^{(k)}_{\ell}(x,t,c) = \frac{u(x,t,c)}{(x - \lambda_0)^{m}}   
\label{eq:expression of Q(k)_ell}
\end{equation}
for some integer $m \ge 0$ and a polynomial $u(x,t,c)$ of $x$ 
with coefficients being holomorphic on ${\cal D}_2 \times {\cal D}_3$ 
and ${\rm deg}_{x} (u) \le m$.

%%%%%%%%%%%%%%%%%%%%%%%%%%%%%%%%%%%%%%%%%%%%%%%%%%%%%%%%%%%%%%%%%%%%%%%%%%%%%%%%%%%%%%%%%%
\begin{lemm} \label{holomorphic dependence of integration of S_odd on c}
Assume that the 1-parameter solution $\lambda_{\infty}$ normalized at $t = \infty$ 
is substituted into $Q_{\rm II}$. 
Then all the coefficients of the formal power series  
\[
\int_{\infty}^{x} \bigl( S^{(0)}_{\rm odd}(x,t,c,\eta) - \eta S_{-1}(x,t,c) \bigr) dx    
\]
are holomorphic on ${\cal D}$ and all the coefficients of 
\[
\int_{\infty}^{x} S^{(k)}_{\rm odd}(x,t,c,\eta) dx
\]
are holomorphic on ${\cal D} \setminus \{ x = \lambda_0 \}$. 
Here the paths of integration of the above integrals are taken as in 
Figure \ref{fig:normalization of psi_pm-0}. 
\end{lemm}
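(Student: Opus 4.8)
The plan is to proceed exactly in the spirit of Lemma B.1 and B.3, exploiting the recursive relations \eqref{eq:S^(k)_ell} for $S^{(k)}_\ell$ together with the explicit rational form of $Q^{(k)}_\ell$ recorded in \eqref{eq:expression of Q(k)_ell}. First I would observe that, by \eqref{eq:S_-1}, $S_{-1} = (x-\lambda_0)\sqrt{x^2 + 2\lambda_0 x + 3\lambda_0^2 + t}$ is holomorphic and nonvanishing on $\mathcal{D}$ by the choice \eqref{eq:condition 2} of $\delta_1$ (so that $a_1, a_2 \notin \{x : |x-\lambda_0| < \delta_1\}$), and that the leading coefficient $S_{-1}$ is the only term carrying $\eta^1$. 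Then, solving \eqref{eq:S^(k)_ell} for $S^{(k)}_{\ell+1}$, each $S^{(k)}_\ell$ is a rational function of $x$ whose denominator is a power of $S_{-1}$ times a power of $(x-\lambda_0)$ (coming from the apparent-singularity contributions in $Q^{(k)}_\ell$ and from the $1/S_{-1}$ factors introduced at each recursion step), with numerator holomorphic on $\mathcal{D}_2 \times \mathcal{D}_3$; I would make this precise by induction on $2k+\ell$, tracking (as in \eqref{eq:expression of lambda(0)_2n}, \eqref{eq:expression of R_2n}, \eqref{eq:expression of Q(k)_ell}) the degree in $x$ of the numerator against the order of the pole so as to control the behavior as $x\to\infty$.

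Next I would pass to the odd part. Since $S_{\rm odd} = \frac12(S - S^\dagger)$ and $S^\dagger$ is obtained from $S$ by $S_{-1}\mapsto -S_{-1}$, each coefficient $S^{(k)}_{{\rm odd},\ell}$ is again holomorphic on $\mathcal{D}$ (resp. on $\mathcal{D}\setminus\{x=\lambda_0\}$ for $k\ge 1$, where the apparent singularity of $Q^{(k)}$ produces poles at $x=\lambda_0$ — note that for $k=0$ part (ii) of Lemma \ref{lemma for S(0)_odd} guarantees holomorphy of $S^{(0)}_{\rm odd}$ at $x=\lambda_0$). For $k=0$, by Lemma \ref{lemma for behaviors of S} the coefficients $S^{(0)}_{{\rm odd},\ell}(x,t,c)$ are $O(x^{-2})$ as $x\to\infty$ for $\ell\ge 1$, and $S^{(0)}_{{\rm odd},-1} = S_{-1}$ matches the subtracted term exactly; hence the integrand $S^{(0)}_{\rm odd} - \eta S_{-1}$ has all its $\eta^{-j}$-coefficients $O(x^{-2})$, so the integral $\int_\infty^x$ along the path in Figure \ref{fig:normalization of psi_pm-0} converges and defines a function holomorphic in $(x,t,c)$ on $\mathcal{D}$. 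The holomorphy in $(t,c)$ at fixed $x$ is inherited from that of the integrand on $\mathcal{D}_2\times\mathcal{D}_3$; the holomorphy in $x$ is standard once the path is deformed to avoid $x=\lambda_0$ (which it does, since the endpoint region and the path lie in $\mathcal{D}_1(t,c)$ away from $a_1$, and $\lambda_0$ is the only relevant singular locus). For $k\ge 1$, by \eqref{eq:behavior of S(k)_ell} the coefficients of $S^{(k)}_{\rm odd}$ are $O(x^{-2})$, so $\int_\infty^x S^{(k)}_{\rm odd}\,dx$ likewise converges, and it is holomorphic on $\mathcal{D}\setminus\{x=\lambda_0\}$ since $S^{(k)}_{\rm odd}$ may have poles at $x=\lambda_0$ coming from the apparent singularity.

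The main obstacle I anticipate is the bookkeeping in the inductive step: one must check that at no stage of the recursion \eqref{eq:S^(k)_ell} does the denominator acquire a zero at a point of $\mathcal{D}$ other than $x=\lambda_0$ — in particular that the simple turning points $a_1,a_2$ never enter, which is exactly what \eqref{eq:condition 2} was arranged to rule out — and that the numerator degrees stay low enough (relative to the pole orders, with the correct half-integer powers of $\lambda_0$ and integer powers of $(4\lambda_0^3-c)$ as in \eqref{eq:expression of R_2n}–\eqref{eq:expression of R_2n + 1}) for the $O(x^{-2})$ decay of the integrands to persist, guaranteeing uniform convergence of $\int_\infty^x$ in $(t,c)\in\mathcal{D}_2\times\mathcal{D}_3$. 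Once that estimate is in place, holomorphy follows from Morera's theorem (or differentiation under the integral sign) applied coefficient-by-coefficient, and the lemma is proved. The rest of Appendix B then combines this lemma with Lemmas B.1, B.2 and the analogous holomorphy of $t^{(0)}_{\rm II}$ and $x^{(0)}_{\rm II}$ — which follow from their defining relations \eqref{eq:definition of t(0)}, \eqref{eq:expression of x(k')_ell'} together with the holomorphy of $Q^{(0)}$, $\lambda_0$ and $\Delta$ on $\mathcal{D}$ — to conclude that every coefficient $Z_\ell(c)$ in \eqref{eq:Z in Appendix} is holomorphic at $c=0$, which is what Proposition \ref{explicit form of C_pm} invokes.
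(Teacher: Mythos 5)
Your proposal is correct and takes essentially the same route as the paper's own proof: induction through the recursion \eqref{eq:S^(k)_ell} giving rational expressions for the coefficients of $S^{(k)}_{\rm odd}$ with denominators powers of $(x-\lambda_0)$ and of the quadratic factor $x^2+2\lambda_0 x+3\lambda_0^2+t$, numerator degree low enough to get $O(x^{-2})$ decay, uniform integrability at $x=\infty$ (of the $t$- and $c$-derivatives) to justify holomorphy of the improper integrals in $(t,c)$, and Lemma \ref{lemma for S(0)_odd}(ii) to remove the apparent singularity at $x=\lambda_0$ in the $k=0$ case. The one point you gloss over, which the paper makes explicit, is that \eqref{eq:condition 2} only keeps $a_1,a_2$ out of the small disc ${\cal D}_1(t,c)$, not away from the whole path of integration running to $x=\infty$; the paper handles this by enlarging to a tube ${\cal D}'_1(t,c)$ around the entire path and shrinking $\delta_1,\delta_2,\delta_3$ further so that $a_1,a_2$ avoid its closure (and note, as a minor slip, that $S_{-1}$ is not nonvanishing on ${\cal D}$ since it vanishes at $x=\lambda_0$).
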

%%%%%%%%%%%%%%%%%%%%%%%%%%%%%%%%%%%%%%%%%%%%%%%%%%%%%%%%%%%%%%%%%%%%%%%%%%%%%%%%%%%%%%%%%%
\begin{proof}
For $(t,c) \in {\cal D}_2 \times {\cal D}_3$ 
let ${{\cal D}_1}'(t,c)$ be a domain in the $x$-plane defined by 
\[
{\cal D}'_1(t,c) =  \bigcup_{x'} \{ x ; |x - x'| < \delta_1 \} \hspace{+.5em}
(\supset {\cal D}_1(t,c)) , 
\]
where $x'$ runs over all points on the path of integration of 
$\int_{\infty}^{\lambda_0}(S^{(0)}_{\rm odd} - \eta S_{-1})dx$, 
and let ${\cal D}'$ be the following domain: 
\[
{\cal D}' := \{ (x,t,c) ; (t,c) \in {\cal D}_2 \times {\cal D}_3 , x \in {\cal D}'_1(t,c) \} 
\hspace{+.5em} (\supset {\cal D}) .
\] 
%%%%%%%%%%%%%%%%%%%%%%%%%%%%%%%%%%%%%%%%%%%%%%%%%%%%%%%%%%%%%%%%%%%%%%%%%%%%%%%%%%%%%%%%%%
 \begin{figure}[h]
 \begin{center}
 \includegraphics[width=55mm]{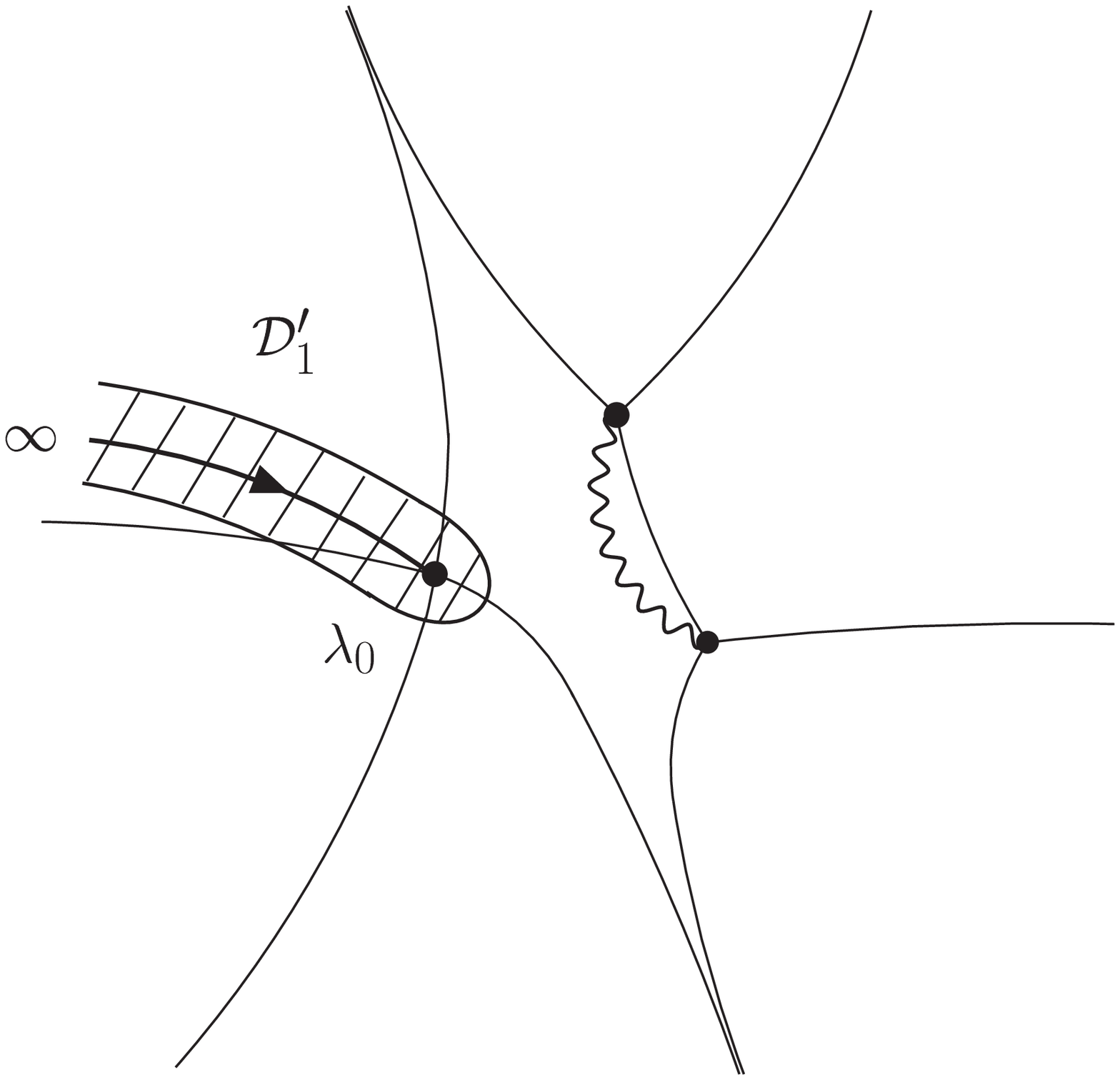}
 \end{center}
 \caption{Domain ${\cal D}'_1$.}
 \end{figure}
%%%%%%%%%%%%%%%%%%%%%%%%%%%%%%%%%%%%%%%%%%%%%%%%%%%%%%%%%%%%%%%%%%%%%%%%%%%%%%%%%%%%%%%%%%
We prove the holomorphy and integrability at $x = \infty$ of all the coefficients
of $S_{\rm odd}^{(0)} - \eta S_{-1}$ and $S_{\rm odd}^{(k)}$. 
($\delta_1,\delta_2,\delta_3$ should be chosen smaller so that  
\[
(t,c) \in {\cal D}_2 \times {\cal D}_3 \Rightarrow  
\overline{{\cal D}'_1(t,c)}  \ni \hspace{-.9em}/ \hspace{+.4em} a_1(t,c), a_2(t,c)
\]
holds, if necessary, since the coefficients of $S_{\rm odd}^{(k)}$ are singular at turning points.
The above condition guarantees that $x^2 + 2 \lambda_0 x + 3 \lambda_0^2 + t \ne 0$ on ${\cal D}'$.)

By the recursive relations \eqref{eq:S^(k)_ell} and \eqref{eq:expression of Q(k)_ell}, 
for $(k,\ell) \ne (0,0)$, we obtain the following expression of $S^{(k)}_{\rm odd ,\ell}$: 
\begin{equation}
S^{(k)}_{\rm odd ,\ell}(x,t,c) = \frac{v(x,t,c)}
{(x - \lambda_0)^{n_1}(x^2 + 2 \lambda_0 x + 2 \lambda_0^2 + t)^{\frac{n_2}{2}}} , 
    \label{eq:expression of S(k)_ell} \\
\end{equation}
where $n_1 , n_2 \ge 0$ are some integers and $v(x,t,c)$ is a polynomial of $x$ 
whose coefficients are all holomorphic on ${\cal D}_2 \times {\cal D}_3$
and which satisfies ${\rm deg}_{x} (v) \le n_1 + n_2 - 2$.
Due to \eqref{eq:expression of S(k)_ell}, we have 
\begin{equation}
\frac{\partial}{\partial t} S^{(k)}_{\rm odd ,\ell}(x,t,c) = 
\frac{\tilde{v}(x,t,c)}
{(x - \lambda_0)^{n_1 + 1}(x^2 + 2 \lambda_0 x + 2 \lambda_0^2 + t)^{\frac{n_2}{2} + 1}} ,     
\label{eq:expression of t-derivation of S(k)_ell} 
\end{equation}
where  $\tilde{v}(x,t,c)$ is also a polynomial of $x$ 
whose coefficients are all holomorphic on ${\cal D}_2 \times {\cal D}_3$  
and which satisfies ${\rm deg}_x (\tilde{v}) \le n_1 + n_2 + 1$.
The $c$-derivative of $S^{(k)}_{\rm odd ,\ell}$ also has a form similar to 
\eqref{eq:expression of t-derivation of S(k)_ell}. 
These facts imply that 
$\frac{\partial}{\partial t} S^{(k)}_{\rm odd ,\ell}$ and 
$\frac{\partial}{\partial c} S^{(k)}_{\rm odd ,\ell}$ are both 
integrable at $x = \infty$ uniformly with respect to 
$(t,c) \in {\cal D}_2 \times {\cal D}_3$. 
Therefore all the coefficients of  
$\int_{\infty}^{x} \bigl( S^{(0)}_{\rm odd} - \eta S_{-1} \bigr) dx $ and 
$\int_{\infty}^{x} S^{(k)}_{\rm odd} dx $ ($k \ge 1$) 
are holomorphic on 
${\cal D}'\setminus\{ x = \lambda_0 \} \hspace{+.3em} (\supset {\cal D}\setminus\{x = \lambda_0\})$.
Furthermore, as noted in Lemma \ref{lemma for S(0)_odd}(ii), 
all the coefficients of $S^{(0)}_{\rm odd}$ are holomorphic at $x = \lambda_0$.
Thus we have the holomorphy on ${\cal D}' \hspace{+.3em} (\supset {\cal D})$ 
of the coefficients of $\int_{\infty}^{x} \bigl( S^{(0)}_{\rm odd} - \eta S_{-1} \bigr) dx $.
\end{proof}

Next we discuss the holomorphy of the coefficients of $x^{(k)}_{\rm II}$ 
and $t^{(k)}_{\rm II}$ on ${\cal D}$.
%%%%%%%%%%%%%%%%%%%%%%%%%%%%%%%%%%%%%%%%%%%%%%%%%%%%%%%%%%%%%%%%%%%%%%%%%%%%%%%%%%%%%%%%%%
\begin{lemm} \label{holomorphic dependence of x_II on c}
Assume that the 1-parameter solution $\lambda_{\infty}$ normalized at $t = \infty$ 
is substituted into the coefficients of ($SL_{\rm II}$) and ($D_{\rm II}$). 
Then all the coefficients of the formal power series 
\[
x^{(k)}_{\rm II}(x,t,c,\eta) \hspace{+1.em} (k \ge 0) 
\]
are holomorphic on ${\mathcal D}$.
\end{lemm}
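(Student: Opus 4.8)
The plan is to prove Lemma \ref{holomorphic dependence of x_II on c} by induction on the pair of indices $(k,\ell)$ appearing in the expansion $x^{(k)}_{\rm II} = \sum_{\ell\ge 0} \eta^{-\ell} x^{(k)}_\ell(x,t,c)$, exactly along the lines already used for the homogeneity statement in Proposition \ref{homogenity of transformation series} and for the holomorphy of $\int_\infty^x (S^{(0)}_{\rm odd}-\eta S_{-1})\,dx$ in Lemma \ref{holomorphic dependence of integration of S_odd on c}. The base case is $x_0 = x^{(0)}_0$, for which I would invoke the explicit formula \eqref{eq:x_0}, namely $x_0(x,t,c) = \bigl[\int_{\lambda_0}^x \sqrt{Q_0(x,t,c)}\,dx\bigr]^{1/2}$: since $Q_0 = (x-\lambda_0)^2(x^2+2\lambda_0 x+3\lambda_0^2+t)$ and, by the choice of $\delta_1,\delta_2,\delta_3$ together with condition \eqref{eq:condition 2}, neither $a_1(t,c)$ nor $a_2(t,c)$ lies in $\overline{{\cal D}_1(t,c)}$, the factor $x^2+2\lambda_0 x+3\lambda_0^2+t$ is nonvanishing and holomorphic on ${\cal D}$; moreover $\lambda_0$ is holomorphic on ${\cal D}_2\times{\cal D}_3$ by the argument in Lemma \ref{holomorphic dependence of U on c} (the discriminant $4\lambda_0^3-c$ never vanishes there). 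Hence $\int_{\lambda_0}^x \sqrt{Q_0}\,dx$ has a double zero at $x=\lambda_0$ and the square root extracts a simple zero, so $x_0$ is holomorphic and nonvanishing away from $x=\lambda_0$ with the prescribed behaviour \eqref{eq:branch of x_0}; together with \eqref{eq:x(0)_1} and \eqref{eq:x(k)_0 = 0} this settles $\ell=0$ for all $k$ and $k=0,\ell=1$.

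For the inductive step I would recall from the proof of Proposition \ref{homogenity of transformation series} that each $x^{(k)}_\ell$ with $(k,\ell)\ne(0,0)$ is the unique solution, holomorphic at $x=\lambda_0$, of the linear first-order ODE
\begin{equation}
8 x_0 \frac{\partial x_0}{\partial x}\Bigl( x_0 \frac{\partial x^{(k)}_\ell}{\partial x} + x^{(k)}_\ell \frac{\partial x_0}{\partial x}\Bigr) = r^{(k)}_\ell(x,t,c),
\label{eq:ode for x(k)_ell holomorphy}
\end{equation}
so that
\begin{equation}
x^{(k)}_\ell(x,t,c) = \frac{1}{x_0}\int_{\lambda_0}^x \frac{r^{(k)}_\ell}{8\,x_0\,\partial x_0/\partial x}\,dx,
\label{eq:integral rep for x(k)_ell holomorphy}
\end{equation}
where $r^{(k)}_\ell$ is a universal polynomial expression in $x_0,\dots,x_{k-1}$ (all lower indices), in the already-constructed $x^{(k)}_{\ell'}$ with $\ell'<\ell$, in the coefficients $Q^{(k')}_{\ell'}$ of $Q_{\rm II}$, and in $\sigma^{(0)}_{\ell'},\rho^{(0)}_{\ell'}$ for $\ell'<\ell$ — the latter two vanishing by Proposition \ref{sigma(0) = rho(0) = 0} and Lemma \ref{lemma for x(0) and z}. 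By the induction hypothesis the $x^{(k')}_{\ell'}$ entering $r^{(k)}_\ell$ are holomorphic on ${\cal D}$, and the $Q^{(k')}_{\ell'}$ are of the form \eqref{eq:expression of Q(k)_ell}, i.e.\ a polynomial in $x$ over ${\cal D}_2\times{\cal D}_3$ divided by a power of $(x-\lambda_0)$; hence $r^{(k)}_\ell$ is holomorphic on ${\cal D}\setminus\{x=\lambda_0\}$ with at worst a pole along $x=\lambda_0$. The key local fact — already used implicitly in \cite{AKT Painleve WKB} and \cite{KT WKB Painleve III} — is that $r^{(k)}_\ell$ actually has a zero of sufficiently high order at $x=\lambda_0$ (because $\sigma,\rho,E$ are themselves fixed by \eqref{eq:sigma}--\eqref{eq:E} so as to cancel the would-be singularity), which makes the integrand in \eqref{eq:integral rep for x(k)_ell holomorphy} integrable near $x=\lambda_0$ and the resulting $x^{(k)}_\ell$ holomorphic at $x=\lambda_0$; away from $x=\lambda_0$, holomorphy on ${\cal D}$ is immediate from \eqref{eq:integral rep for x(k)_ell holomorphy} since $x_0$ and $\partial x_0/\partial x$ are holomorphic and nonvanishing there. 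One then also checks holomorphy in $(t,c)$ by differentiating \eqref{eq:integral rep for x(k)_ell holomorphy} under the integral sign, using that $\partial_t\lambda_0=-\Delta^{-1}$ and $\partial_c\lambda_0=-\Delta^{-1}$ are holomorphic on ${\cal D}_2\times{\cal D}_3$.

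The analogous assertion for $t^{(k)}_{\rm II}$ I would obtain afterwards from the defining relations \eqref{eq:t_0}, \eqref{eq:definition of t(0)} and \eqref{eq:definition of t_II(k)}: $t_0 = \tfrac12\phi_{\rm II} = \int_{\tau_1}^t\sqrt{\Delta}\,dt$ needs a small separate remark because the path starts at a $P$-turning point, but its $\eta^{-\ell}$-corrections $t^{(0)}_\ell$ for $\ell\ge1$ are determined by $\sigma^{(1)}$ via \eqref{eq:definition of t(0)}, and $\sigma^{(1)} = \eta^{1/2}\,\partial_x$-data of $x_{\rm II}$ evaluated at $\lambda^{(0)}$ together with $\lambda^{(1)}_\infty$ — all holomorphic on ${\cal D}$ by Lemma \ref{holomorphic dependence of x_II on c}, Lemma \ref{holomorphic dependence of 1-parameter solution on c} and the fact that $\sigma^{(1)}_0=1/\sqrt2$ is a nonzero constant so that $\log(\sqrt2\,\sigma^{(1)})$ is holomorphic. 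I expect the single genuine obstacle to be the local regularity at $x=\lambda_0$ in the inductive step — i.e.\ rigorously controlling the order of vanishing of $r^{(k)}_\ell$ at $x=\lambda_0$ so that the integral in \eqref{eq:integral rep for x(k)_ell holomorphy} produces a function holomorphic (not merely meromorphic) there; everything else is a bookkeeping exercise in propagating holomorphy through the recursions \eqref{eq:S^(k)_ell}, \eqref{eq:lambda_(k)l} and the explicit rational forms \eqref{eq:expression of Q(k)_ell}, \eqref{eq:expression of S(k)_ell}. Since that local statement is precisely the content of the transformation theorems of \cite{AKT Painleve WKB} and \cite{KT WKB Painleve III} (which already assert holomorphy of the $x^{(k)}_\ell$ in $x$ near $\lambda_0$, uniformly in the remaining variables), I would simply cite it and restrict attention to verifying that the parameter dependence on $(t,c)$ stays within ${\cal D}_2\times{\cal D}_3$, which is guaranteed by conditions \eqref{eq:condition 1}--\eqref{eq:condition 2}.
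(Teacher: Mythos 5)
Your argument follows essentially the same route as the paper's own proof: induction on $(k,\ell)$ with base case $x_0$ holomorphic on ${\cal D}$ via condition \eqref{eq:condition 2}, the integral representation \eqref{eq:expression of x(k')_ell'} for $x^{(k')}_{\ell'}$, and an appeal to the proof of \cite[Theorem3.1]{AKT Painleve WKB} for the crucial fact that $r^{(k')}_{\ell'}$ is holomorphic and vanishes at $x=\lambda_0$ (the paper records a zero of order one) because $\sigma$, $\rho$, $E$ are fixed by \eqref{eq:sigma}--\eqref{eq:E}, which is exactly the point you correctly single out as the crux. The only blemishes are cosmetic: $\partial_t\lambda_0$ equals $-\lambda_0/\Delta$ rather than $-\Delta^{-1}$ (holomorphy on ${\cal D}_2\times{\cal D}_3$ is unaffected), and your closing discussion of $t^{(k)}_{\rm II}$ belongs to the subsequent lemma rather than this one.
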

%%%%%%%%%%%%%%%%%%%%%%%%%%%%%%%%%%%%%%%%%%%%%%%%%%%%%%%%%%%%%%%%%%%%%%%%%%%%%%%%%%%%%%%%%%
\begin{proof}
We prove this lemma by induction. Due to the condition \eqref{eq:condition 2}, 
$x_0(x,t,c) = \bigl[ \int_{\lambda_0}^x \sqrt{Q_0(x,t,c)} dx \bigr]^{\frac{1}{2}}$ 
is holomorphic on ${\mathcal D}$. The $x$-derivative 
$\frac{\partial x_0}{\partial x}$ of $x_0$ behaves as  
\[
\frac{\partial x_0}{\partial x} = \frac{1}{\sqrt{2}}\Delta^{\frac{1}{4}} + {\cal O}(x - \lambda_0)
\]
when $x$ tends to $\lambda_0$. 
Hence, since $\Delta \ne 0$ on ${\cal D}_2 \times {\cal D}_3$, we can assume that 
$\frac{\partial x_0}{\partial x} \ne 0$ holds on ${\mathcal D}$ 
(by taking sufficiently small $\delta_1 > 0$). 

Since the claims are true 
for $k \ge 1, \ell = 0$ by \eqref{eq:x(k)_0 = 0}, 
it suffices to confirm the holomorphies of $x^{(k')}_{\ell'}$ ($k', \ell' \ge 0$) 
under the assumptions that  
$x^{(k)}_{\ell}$ are holomorpic on ${\cal D}$ 
for $0 \le k \le k' - 1, \ell \ge 0$ and $k = k',  0 \le \ell \le \ell' - 1$. 
As noted in the proof of Proposition \ref{homogenity of transformation series}, 
the differential equation 
which determines $x^{(k')}_{\ell'}$ is given by \eqref{eq:x(k)_n}  
and $r^{(k')}_{\ell'}$ is holomorphic on ${\mathcal D} \setminus \{ x = \lambda_0 \}$ 
under the assumptions of induction. 
(Note that $1/x_0$ and $Q^{(k)}_{\ell}$ have a singularity at $x = \lambda_0$.)
However, it follows from the proof of \cite[Theorem3.1]{AKT Painleve WKB} 
that $r^{(k')}_{\ell'}$ is holomorphic and has a zero of order 1 at $x =\lambda_0$ 
because $\sigma, \rho$ and $E$ satisfy \eqref{eq:sigma} $\sim$ \eqref{eq:E}.
Thus $x^{(k')}_{\ell'}$ given by \eqref{eq:expression of x(k')_ell'}  
is holomorphic on ${\cal D}$.
\end{proof}

%%%%%%%%%%%%%%%%%%%%%%%%%%%%%%%%%%%%%%%%%%%%%%%%%%%%%%%%%%%%%%%%%%%%%%%%%%%%%%%%%%%%%%%%%%
\begin{lemm} \label{holomorphic dependence of t_II on c}
Assume that the 1-parameter solution $\lambda_{\infty}$ normalized at $t = \infty$ 
is substituted into the coefficients of ($SL_{\rm II}$) and ($D_{\rm II}$). 
Then all the coefficients of the formal power series 
\[
t^{(0)}_{\rm II}(t,c,\eta) - t_0(t,c) ,
\]
\[
t^{(k)}_{\rm II}(t,c,\eta) \hspace{+1.em}  (k \ge 1) ,
\]
are holomorphic on ${\mathcal D}_2 \times {\mathcal D}_3$. 
\end{lemm}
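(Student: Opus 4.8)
The plan is to reduce the statement to the holomorphy results already established in Appendix~B, namely Lemma~\ref{holomorphic dependence of 1-parameter solution on c} and Lemma~\ref{holomorphic dependence of x_II on c}, combined with purely formal manipulations of power series in $\eta^{-1}$ whose constant term is invertible. Concretely, I would first show that all coefficients of the formal series $\sigma^{(k)}$ and $\rho^{(k)}$ appearing in \eqref{eq:expansion of sigma} and \eqref{eq:expansion of rho} are holomorphic on $\mathcal D_2\times\mathcal D_3$, and then read off the holomorphy of the coefficients of $t^{(0)}_{\rm II}-t_0$ and of $t^{(k)}_{\rm II}$ ($k\ge1$) from the relations \eqref{eq:definition of t(0)} and \eqref{eq:definition of t_II(k)} that define them.

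For the first step, recall from \eqref{eq:sigma}, \eqref{eq:rho}, \eqref{eq:E} that $\sigma$ and $\rho$ are obtained by evaluating $x_{\rm II}$, $\tfrac{\partial x_{\rm II}}{\partial x}$, $\tfrac{\partial^2 x_{\rm II}}{\partial x^2}$ and $\nu$ at $x=\lambda$ and expanding in powers of $(\alpha\eta^{-\frac12})e^{\eta\phi_{\rm II}}$. Writing $\lambda=\lambda^{(0)}+(\text{exponentially small part})$ and Taylor-expanding $x_{\rm II}$ about $x=\lambda_0$ (the leading term of $\lambda^{(0)}$), each coefficient $\sigma^{(k)}_\ell$, $\rho^{(k)}_\ell$ becomes a universal polynomial in the values at $x=\lambda_0$ of the coefficients $x^{(j)}_m(x,t,c)$ of $x_{\rm II}$ and of their $x$-derivatives, in the coefficients $\lambda^{(j)}_m$, $\nu^{(j)}_m$ of the substituted $1$-parameter solution, and in $\bigl(\tfrac{\partial x_0}{\partial x}(\lambda_0,t,c)\bigr)^{-1}$. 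When $\lambda_\infty$ is substituted, the $x^{(j)}_m$ are holomorphic on $\mathcal D$ by Lemma~\ref{holomorphic dependence of x_II on c} (in particular at $x=\lambda_0$, which is interior to $\mathcal D_1(t,c)$), the $\lambda^{(j)}_m$, $\nu^{(j)}_m$ are holomorphic on $\mathcal D_2\times\mathcal D_3$ by Lemma~\ref{holomorphic dependence of 1-parameter solution on c}, and $\tfrac{\partial x_0}{\partial x}(\lambda_0,t,c)\ne0$ on $\mathcal D$ as recorded in the proof of Lemma~\ref{holomorphic dependence of x_II on c}. Hence every $\sigma^{(k)}_\ell$ and $\rho^{(k)}_\ell$ is holomorphic on $\mathcal D_2\times\mathcal D_3$.

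For the second step, note that $\sigma^{(1)}_0(t,c)=\tfrac1{\sqrt2}\ne0$, so $\sqrt2\,\sigma^{(1)}(t,c,\eta)$ is a formal power series in $\eta^{-1}$ with constant term $1$ and with coefficients holomorphic on $\mathcal D_2\times\mathcal D_3$; consequently $1/\sigma^{(1)}$ and each ratio $\sigma^{(k+1)}/\sigma^{(1)}$ again have holomorphic coefficients there. Taking the formal logarithm of \eqref{eq:definition of t(0)} gives
\[
t^{(0)}_{\rm II}(t,c,\eta)-t_0(t,c)=\frac{1}{2\eta}\,\log\bigl(\sqrt2\,\sigma^{(1)}(t,c,\eta)\bigr),
\]
which is a well-defined formal power series in $\eta^{-1}$: since the argument of the logarithm is $1+O(\eta^{-1})$, the logarithm is $O(\eta^{-1})$ and the division by $2\eta$ yields a series starting at order $\eta^{-2}$, consistent with \eqref{eq:t_0} and with $t^{(0)}_1=0$. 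Each of its coefficients is a polynomial in the coefficients of $\sigma^{(1)}$, hence holomorphic on $\mathcal D_2\times\mathcal D_3$. For $k\ge1$, taking the formal logarithm of \eqref{eq:definition of t_II(k)} gives
\[
2\eta\bigl(t_{\rm II}-t^{(0)}_{\rm II}\bigr)=\log\Bigl(1+\sum_{k\ge1}(\alpha\eta^{-\frac12})^k\,\frac{\sigma^{(k+1)}}{\sigma^{(1)}}\,e^{k\eta\phi_{\rm II}}\Bigr),
\]
whose coefficient of $(\alpha\eta^{-\frac12})^k e^{k\eta\phi_{\rm II}}$ is a polynomial in $\sigma^{(2)}/\sigma^{(1)},\dots,\sigma^{(k+1)}/\sigma^{(1)}$, hence a formal power series in $\eta^{-1}$ with coefficients holomorphic on $\mathcal D_2\times\mathcal D_3$. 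Comparing the coefficients of $(\alpha\eta^{-\frac12})^k e^{k\eta\phi_{\rm II}}$ on both sides and dividing by $2\eta$ exhibits $t^{(k)}_{\rm II}$ ($k\ge1$) as such a series, starting at order $\eta^{-1}$ in accordance with \eqref{eq:t(k)_0 = 0}, with holomorphic coefficients on $\mathcal D_2\times\mathcal D_3$. This proves the lemma.

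There is no serious analytic difficulty here: the statement is essentially a formal-power-series consequence of Lemmas~\ref{holomorphic dependence of 1-parameter solution on c} and~\ref{holomorphic dependence of x_II on c}. The step that needs the most care is the first one --- checking that the algebraic relations \eqref{eq:sigma}--\eqref{eq:E}, together with the Taylor expansion of $x_{\rm II}$ at $x=\lambda_0$, genuinely express $\sigma^{(k)}_\ell$ and $\rho^{(k)}_\ell$ as holomorphic combinations of the already-controlled data; this hinges on the nonvanishing of $\tfrac{\partial x_0}{\partial x}$ at $x=\lambda_0$ over all of $\mathcal D$ and on the invertibility of $\sigma^{(1)}$ as a series in $\eta^{-1}$, both of which are supplied by the earlier material.
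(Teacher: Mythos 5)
Your proposal is correct and follows essentially the same route as the paper's own proof: holomorphy of the coefficients of $\sigma^{(k)}$ (via Lemma \ref{holomorphic dependence of x_II on c}, Lemma \ref{holomorphic dependence of 1-parameter solution on c} and the nonvanishing of $\frac{\partial x_0}{\partial x}(\lambda_0,t,c)$), then reading off $t^{(0)}_{\rm II}-t_0$ and $t^{(k)}_{\rm II}$ from \eqref{eq:definition of t(0)} and \eqref{eq:definition of t_II(k)}. The only difference is that you spell out the formal-logarithm step and the invertibility of $\sigma^{(1)}$ (using $\sigma^{(1)}_0=\tfrac{1}{\sqrt2}$), which the paper leaves implicit with ``immediately follows.''
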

%%%%%%%%%%%%%%%%%%%%%%%%%%%%%%%%%%%%%%%%%%%%%%%%%%%%%%%%%%%%%%%%%%%%%%%%%%%%%%%%%%%%%%%%%%
\begin{proof}
By Lemma \ref{holomorphic dependence of x_II on c}, 
\[
\frac{\partial^{n} x^{(k)}_{\ell}}{\partial x^{n}}(\lambda_0,t,c)
\]
is holomorphic on ${\cal D}_2 \times {\cal D}_3$ for $k,\ell,n \ge 0$. 
Moreover, since 
\[
\frac{\partial x_0}{\partial x} (\lambda_0, t, c) \ne 0
\]
holds on ${\cal D}_2 \times {\cal D}_3$, all the coefficients of 
$\sigma^{(k)}(t,c,\eta)$ ($k \ge 0$) are holomorphic on ${\cal D}_2 \times {\cal D}_3$
by the definition \eqref{eq:sigma} of $\sigma$. 
Therefore, the holomorphy of all the coefficients of 
$t^{(0)}_{\rm II}(t,c,\eta) - t_0(t,c)$ and $t^{(k)}_{\rm II}(t,c,\eta)$ ($k \ge 1$) 
immediately follows from \eqref{eq:definition of t(0)} and \eqref{eq:definition of t_II(k)}.
\end{proof}

Thus we finally obtain the following: 
%%%%%%%%%%%%%%%%%%%%%%%%%%%%%%%%%%%%%%%%%%%%%%%%%%%%%%%%%%%%%%%%%%%%%%%%%%%%%%%%%%%%
\begin{prop} \label{holomorphic dependence of Z on c}
Assume that the 1-parameter solution $\lambda_{\infty}$ normalized at $t = \infty$ 
is substituted into the coefficients of ($SL_{\rm II}$) and ($D_{\rm II}$). 
Then all the coefficients of the formal power series $Z(c,\eta)$ which is given by 
\eqref{eq:Z in Appendix} are holomorphic at $c = 0$.
\end{prop}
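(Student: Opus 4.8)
The plan is to prove Proposition \ref{holomorphic dependence of Z on c} by reading off the holomorphy of each individual term appearing on the right-hand side of \eqref{eq:Z in Appendix} from the sequence of lemmas already established in this Appendix, and then invoking the fact that $Z$ is independent of both $x$ and $t$ so that we may evaluate it at a convenient point. Recall that
\[
Z = \frac{1}{2}U^{(0)} + \int_{\infty}^{x}\bigl( S^{(0)}_{\rm odd} - \eta S_{-1} \bigr) dx
- \eta \bigl( t^{(0)}_{\rm II} - t_0 \bigr)
- \eta \bigl( {x^{(0)}_{\rm II}}^2 - {x_0}^2 \bigr),
\]
and every summand has already been analyzed: Lemma \ref{holomorphic dependence of U on c} handles the coefficients of $U^{(0)}$ on ${\cal D}_2 \times {\cal D}_3$; Lemma \ref{holomorphic dependence of integration of S_odd on c} gives the holomorphy of the coefficients of $\int_{\infty}^{x}(S^{(0)}_{\rm odd} - \eta S_{-1}) dx$ on all of ${\cal D}$ (crucially including $x = \lambda_0$, since all coefficients of $S^{(0)}_{\rm odd}$ are holomorphic there by Lemma \ref{lemma for S(0)_odd}(ii)); Lemma \ref{holomorphic dependence of t_II on c} gives the holomorphy of the coefficients of $t^{(0)}_{\rm II} - t_0$ on ${\cal D}_2 \times {\cal D}_3$; and Lemma \ref{holomorphic dependence of x_II on c} together with the holomorphy of $x_0$ on ${\cal D}$ handles ${x^{(0)}_{\rm II}}^2 - {x_0}^2$.

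First I would note that, since ${\cal D}$ projects onto ${\cal D}_2 \times {\cal D}_3$ and every quoted lemma asserts holomorphy on (a set containing) ${\cal D}$, each coefficient $Z_\ell(c)$ of $\eta^{-\ell}$ in $Z(c,\eta)$ is a priori a holomorphic function on ${\cal D}$ of the three variables $(x,t,c)$ — that is, a holomorphic function on $\{(x,t,c) : (t,c)\in {\cal D}_2\times{\cal D}_3,\ x\in{\cal D}_1(t,c)\}$. But as remarked right after \eqref{eq:Z in Appendix}, the right-hand side of that identity is independent of both $x$ and $t$, a fact verified using \eqref{eq:relation for transformation series}; hence $Z_\ell$ descends to a holomorphic function of $c$ alone on ${\cal D}_3 = \{|c| < \delta_3\}$. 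Since $0 \in {\cal D}_3$, this gives holomorphy of $Z_\ell$ at $c = 0$, which is exactly the assertion.

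The one point that requires a little care — and the only place where the normalization $\lambda_\infty$ is genuinely used — is the holomorphy at $c = 0$ itself, as opposed to holomorphy merely on a punctured neighborhood. Near $c = 0$ the three $P$-turning points $\tau_1(c),\tau_2(c),\tau_3(c)$ all collapse to $t = 0$ and the two simple turning points $a_1(t,c),a_2(t,c)$ of ($SL_{\rm II}$) collapse to $-\lambda_0(t,0)$, so one must know that the various integration paths (the path of $U^{(0)}$ from $t = \infty$ along Figure \ref{fig:normalization path of lambda(1)_infinity}, and the paths of the $x$-integrals along Figure \ref{fig:normalization of psi_pm-0}) stay uniformly away from these coalescing singularities when $\delta_1,\delta_2,\delta_3$ are taken small enough; this is precisely what conditions \eqref{eq:condition 1} and \eqref{eq:condition 2} and the enlarged domains ${\cal D}_2',{\cal D}_1'$ in the preceding lemmas are designed to guarantee, and it is the reason (see Remark \ref{defference of two normalization in the holomorphy}) that the analogue fails for $\lambda_{\tau_1}$. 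I would therefore organize the write-up as: (1) recall the $x$- and $t$-independence of $Z$; (2) cite Lemmas \ref{holomorphic dependence of U on c}, \ref{holomorphic dependence of integration of S_odd on c}, \ref{holomorphic dependence of x_II on c}, \ref{holomorphic dependence of t_II on c} to get holomorphy of each coefficient $Z_\ell$ on ${\cal D}$; (3) conclude that $Z_\ell$, being independent of $x$ and $t$, is a holomorphic function on ${\cal D}_3$ and in particular at $c = 0$. The main obstacle is not in $Z$ itself at all but was already dispatched in the earlier lemmas — namely establishing the uniform integrability at $x = \infty$ and $t = \infty$ and the absence of pinching of the integration contours as $c \to 0$; given those, the proof of the Proposition is a short bookkeeping argument.

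\begin{proof}[Proof of Proposition \ref{holomorphic dependence of Z on c}]
By the remark following \eqref{eq:Z in Appendix}, the right-hand side of \eqref{eq:Z in Appendix} does not depend on $x$ or $t$ (this is checked using \eqref{eq:relation for transformation series}); hence each coefficient $Z_\ell(c)$ of $\eta^{-\ell}$ in $Z(c,\eta)$ is a function of $c$ alone. On the other hand, when the $1$-parameter solution $\lambda_\infty$ normalized at $t = \infty$ is substituted into the coefficients of ($SL_{\rm II}$) and ($D_{\rm II}$), Lemma \ref{holomorphic dependence of U on c} shows that the coefficients of $U^{(0)}$ are holomorphic on ${\cal D}_2 \times {\cal D}_3$, Lemma \ref{holomorphic dependence of integration of S_odd on c} shows that the coefficients of $\int_{\infty}^{x}\bigl( S^{(0)}_{\rm odd} - \eta S_{-1} \bigr) dx$ are holomorphic on ${\cal D}$, Lemma \ref{holomorphic dependence of t_II on c} shows that the coefficients of $t^{(0)}_{\rm II} - t_0$ are holomorphic on ${\cal D}_2 \times {\cal D}_3$, and Lemma \ref{holomorphic dependence of x_II on c} (together with the holomorphy of $x_0$ on ${\cal D}$ coming from \eqref{eq:condition 2}) shows that the coefficients of ${x^{(0)}_{\rm II}}^2 - {x_0}^2$ are holomorphic on ${\cal D}$. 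Adding these contributions with the constant coefficients appearing in \eqref{eq:Z in Appendix}, we conclude that each $Z_\ell$ is holomorphic on ${\cal D}$. Since $Z_\ell$ is independent of $x$ and $t$, it is in fact a holomorphic function of $c$ on ${\cal D}_3 = \{\, c : |c| < \delta_3 \,\}$. As $0 \in {\cal D}_3$, the coefficient $Z_\ell(c)$ is holomorphic at $c = 0$ for every $\ell \ge 0$, which is the desired conclusion.
\end{proof}
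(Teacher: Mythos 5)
Your proposal is correct and follows essentially the same route as the paper: the paper's own proof simply cites Lemmas \ref{holomorphic dependence of U on c} through \ref{holomorphic dependence of t_II on c} to conclude holomorphy of all coefficients of $Z$ on ${\cal D}$, then uses the $x$- and $t$-independence of $Z$ to descend to holomorphic functions of $c$ on ${\cal D}_3$, a neighborhood of $c = 0$. Your additional remarks on the role of the normalization at $t=\infty$ and the non-pinching of the integration contours accurately describe where the real work was done in the preceding lemmas, but do not change the argument.
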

%%%%%%%%%%%%%%%%%%%%%%%%%%%%%%%%%%%%%%%%%%%%%%%%%%%%%%%%%%%%%%%%%%%%%%%%%%%%%%%%%%%%
\begin{proof}
By Lemmas \ref{holomorphic dependence of U on c} $\sim$ 
\ref{holomorphic dependence of t_II on c}, we have shown that 
all the coefficients of $Z$ are holomorphic on ${\cal D}$ 
if $\lambda_{\infty}$ is substituted. Moreover, since $Z$ is independent of $x$ and $t$, 
they are holomorphic functions of $c$ on ${\cal D}_3$, which is a neighborhood of $c = 0$. 
\end{proof}

%%%%%%%%%%%%%%%%%%%%%%%%%%%%%%%%%%%%%%%%%%%%%%%%%%%%%%%%%%%%%%%%%%%%%%%%%%%%%%%%%%%%%%%%%%
%%%%%%%%%%%%%%%%%%%%%%%%%%%%%%%%%%%%%%%%%%%%%%%%%%%%%%%%%%%%%%%%%%%%%%%%%%%%%%%%%%%%%%%%%%
%%%%%%%%%%%%%%%%%%%%%%%%%%%%%%%%%%%%%%%%%%%%%%%%%%%%%%%%%%%%%%%%%%%%%%%%%%%%%%%%%%%%%%%%%%
%%%%%%%%%%%%%%%%%%%%%%%%%%%%%%%%%%%%%%%%%%%%%%%%%%%%%%%%%%%%%%%%%%%%%%%%%%%%%%%%%%%%%%%%%%
%%%%%%%%%%%%%%%%%%%%%%%%%%%%%%%%%%%%%%%%%%%%%%%%%%%%%%%%%%%%%%%%%%%%%%%%%%%%%%%%%%%%%%%%%%
%%%%%%%%%%%%%%%%%%%%%%%%%%%%%%%%%%%%%%%%%%%%%%%%%%%%%%%%%%%%%%%%%%%%%%%%%%%%%%%%%%%%%%%%%%
%%%%%%%%%%%%%%%%%%%%%%%%%%%%%%%%%%%%%%%%%%%%%%%%%%%%%%%%%%%%%%%%%%%%%%%%%%%%%%%%%%%%%%%%%%
%%%%%%%%%%%%%%%%%%%%%%%%%%%%%%%%%%%%%%%%%%%%%%%%%%%%%%%%%%%%%%%%%%%%%%%%%%%%%%%%%%%%%%%%%%
%%%%%%%%%%%%%%%%%%%%%%%%%%%%%%%%%%%%%%%%%%%%%%%%%%%%%%%%%%%%%%%%%%%%%%%%%%%%%%%%%%%%%%%%%%
%%%%%%%%%%%%%%%%%%%%%%%%%%%%%%%%%%%%%%%%%%%%%%%%%%%%%%%%%%%%%%%%%%%%%%%%%%%%%%%%%%%%%%%%%%
%%%%%%%%%%%%%%%%%%%%%%%%%%%%%%%%%%%%%%%%%%%%%%%%%%%%%%%%%%%%%%%%%%%%%%%%%%%%%%%%%%%%%%%%%%
%%%%%%%%%%%%%%%%%%%%%%%%%%%%%%%%%%%%%%%%%%%%%%%%%%%%%%%%%%%%%%%%%%%%%%%%%%%%%%%%%%%%%%%%%%

\end{document}